\documentclass[12pt]{amsart}

\usepackage{amssymb}
\usepackage{mathrsfs}
\usepackage{xypic}
\usepackage{enumerate}
\usepackage{graphicx}
\usepackage[text={6in,9in},centering]{geometry}
\usepackage{hyperref}
\usepackage{relsize}
\usepackage[english]{babel}
\usepackage{booktabs}
\usepackage{longtable}
\usepackage{makecell}
\usepackage{multirow}
\usepackage{caption}


\hypersetup{
    colorlinks = true,
    linkcolor = blue,
    anchorcolor = blue,
    citecolor = blue
}

\numberwithin{equation}{section}

\theoremstyle{plain}
  \newtheorem{thm}{Theorem}[section]
  \newtheorem*{thm*}{Theorem}
  \newtheorem{cor}[thm]{Corollary}
  \newtheorem{lem}[thm]{Lemma}
  \newtheorem{prop}[thm]{Proposition}
  
\theoremstyle{definition}
  \newtheorem{defn}{Definition}
  \newtheorem{exmp}{Example}
  
  \newtheorem{claim}{Claim}
\theoremstyle{remark}
  \newtheorem{rmk}{Remark}



\newcommand{\id}[0]{\mathrm{id}}
\newcommand{\e}[0]{\mathrm{e}}

\newcommand{\diam}[0]{\mathrm{diam}}
\newcommand{\dif}[0]{\mathrm{d}}
\newcommand{\supp}[0]{\operatorname{supp}}
\newcommand{\dimbox}[0]{\dim_{\mathrm{B}}}
\newcommand{\dimh}[0]{\dim_{\mathrm{H}}}
\newcommand{\dimp}[0]{\dim_{\mathrm{P}}}
\newcommand{\vect}[1]{\boldsymbol{#1}}

\newcommand{\entlow}{\underline{h}}
\newcommand{\entup}{\overline{h}}
\newcommand{\enttop}{h_{\mathrm{top}}}
\newcommand{\enttopbow}{\enttop^{\mathrm{B}}}
\newcommand{\enttoppac}{\enttop^{\mathrm{P}}}
\newcommand{\entbow}{h^{\mathrm{B}}}
\newcommand{\entpac}{h^{\mathrm{P}}}
\newcommand{\prelow}{\underline{P}}
\newcommand{\preup}{\overline{P}}
\newcommand{\prebpp}{Q^{\mathrm{B}}}
\newcommand{\prebow}{P^{\mathrm{B}}}
\newcommand{\prebw}{P^{\mathrm{BW}}}
\newcommand{\preppp}{Q^{\mathrm{P}}}
\newcommand{\prepac}{P^{\mathrm{P}}}

\newcommand{\msrbpp}{\mathscr{M}}
\newcommand{\msrppp}{\mathscr{N}}
\newcommand{\msrbow}{\mathscr{R}}
\newcommand{\msrpac}{\mathscr{P}}
\newcommand{\leb}{\mathscr{L}}


\newcommand{\R}{\mathbb{R}}

\newcommand{\N}{\mathbb{N}}

\newcommand{\red}[1]{{\color{red}#1}}
\newcommand{\blue}[1]{{\color{blue}#1}}

\begin{document}

\title{Nonautonomous Dynamical Systems II: Variational principles}

\author{Zhuo Chen}
\address{School of Mathematical Sciences, East China Normal University, No. 500, Dongchuan Road, Shanghai 200241, P. R. China}
\email{10211510056@stu.ecnu.edu.cn, charlesc-hen@outlook.com}

\author{Jun Jie Miao}
\address{School of Mathematical Sciences,  Key Laboratory of MEA(Ministry of Education) \& Shanghai Key Laboratory of PMMP,  East China Normal University, Shanghai 200241, China}

\email{jjmiao@math.ecnu.edu.cn}


\subjclass[2020]{37D35, 37B55,37B40}


\begin{abstract}
Let $\vect{X}=\{X_k\}_{k=0}^\infty$ be a sequence of compact metric spaces $X_{k}$ and $\vect{T}=\{T_k\}_{k=0}^\infty$ a sequence of continuous  mappings $T_{k}: X_{k} \to X_{k+1}$. The pair $(\vect{X},\vect{T})$ is called a nonautonomous dynamical system.

In this paper, we study measure-theoretic entropies and pressures, Bowen  and packing  topological  entropies and pressures on $(\vect{X},\vect{T})$, and we prove that they  are  invariant under equiconjugacies of nonautonomous dynamical systems. By establishing Billingsley type theorems for Bowen and packing topological pressures, we obtain their variational principles, that is, given a non-empty compact subset $K \subset X_{0}$ and an equicontinuous sequence $\vect{f}= \{f_k\}_{k=0}^\infty$  of functions $f_k : X_k\to \mathbb{R}$, we have that 
  $$
\prebow(\vect{T},\vect{f},K)=\sup\{\prelow_{\mu}(\vect{T},\vect{f}): \mu \in M(X_{0}), \mu(K)=1\},
$$
and for $\|\vect{f}\|<+\infty$ and $\prepac(\vect{T},\vect{f},K)>\|\vect{f}\|$,
$$
\prepac(\vect{T},\vect{f},K)=\sup\{\preup_{\mu}(\vect{T},\vect{f}): \mu \in M(X_{0}), \mu(K)=1\},
$$
where  $\prelow_{\mu} $ and $\preup_{\mu} $,  $\prebow $ and $\prepac$ denote measure-theoretic lower and upper pressures, Bowen and packing topological pressure, respectively. The Billingsley type theorems and variational principles for Bowen and packing topological entropies are direct consequences of the ones for Bowen and packing topological pressures.

\end{abstract}

\maketitle

\section{Introduction}\label{sect:intro}



Given a compact metric space $(X,d)$, we  write $C(X,\mathbb{R})$ for the Banach space of real-valued continuous functions defined on $X$ equipped with the sup-norm $\|\cdot\|_{\infty}$ and $M(X)$ for the set of all Borel probability measures on $X$.

\subsection{Topological dynamical systems}\label{ssect:tdsetmds}
Given a compact metric space $(X,d)$ and a continuous mapping $T:X \to X$,   the pair $(X,T)$ is called  a \emph{topological dynamical system (TDS)}.
Let $(X,\mathscr{F},\mu)$ be a probability space and let $T:(X,\mathscr{F}) \to (X,\mathscr{F})$ be  \emph{measure-preserving}, i.e., $\mu(T^{-1}A)=\mu(A)$ for all $A \in \mathscr{F}$.  We call the quadruple $(X,\mathscr{F},\mu,T)$ a \emph{measure-preserving dynamical system (MPDS)}. It is well known that the variational principles for entropies and pressures link the two systems, TDS and MPDS, together since given a TDS $(X,T)$, it is normal to construct a measure-preserving dynamical system $(X,\mathscr{F},\mu,T)$ by choosing a  Borel probability measure $\mu$ on $X$ so that $T$ is measure-preserving, where $\mu$ is  called  a \emph{$T$-invariant measure}. We write $M(X,T)$ for the set of all $T$-invariant Borel probability measures on $X$.
We refer the readers to \cite{Walters1982} for details.

In the 1950s, measure-theoretic entropy was first introduced by Kolmogorov \cite{Kolmogorov1958}  and modified to its modern form by Sinai \cite{Sinai1959} as a conjugacy invariant in measure-preserving dynamical systems.
By using Bowen metrics, Dinaburg \cite{Dinaburg1970} and Bowen \cite{Bowen1971} independently gave an analogous notion called topological entropy, which is an invariant under topological conjugacy on topological dynamical systems.

Let $h_{\mu}(T)$ and $\enttop(T)$ denote the measure-theoretic entropy and the topological entropy, respectively. Their fundamental relation is given by the following variational principle,
$$
\enttop(T)=\sup\{h_{\mu}(T): \mu \in M(X,T)\},
$$
which was first established by Goodman \cite{Goodman1971} in 1971.

The topological pressure is a non-trivial  generalization of topological entropy, which, roughly speaking, measures the orbit complexity of the iterated mapping on the potential functions. It is also a crucial invariant in topological dynamical systems.
Let $P(T,f)$ denote the topological pressure of TDS $(X,T)$ for a potential function $f \in C(X,\mathbb{R})$. Since the topological entropy may be regarded as a special case of the topological pressure, that is, $\enttop(T)= P(T,0)$, it is natural to consider the variational principle for the topological pressure.
In 1975, Walters \cite{Walters1975} proved that the variational principle also holds for the topological pressure of $(X, T)$, that is, for all $f \in C(X,\R)$,
$$
P(T,f)=\sup\Big\{h_{\mu}(T)+\int_{X}f\dif{\mu}: \mu \in M(X,T)\Big\}.
$$
We refer readers to \cite{AKM1965, Barreira2006, Bowen1973, Cao&Feng&Huang2008,Huang&Ye&Zhang2011,Mummert2006, Ruelle1972, Zhao&Cheng2014} for details and other relevant studies.

These classic results have various important applications in ergodic theory, dynamical systems, and fractal geometry; see \cite{Falconer1997,Katok&Hasselblatt1995,Pesin1997,Walters1982} for the background reading.
In particular, Bowen \cite{Bowen1973} introduced a type of the topological entropies on subsets $Z$ of $(X,T)$, denoted by $\enttopbow(T,Z)$, where the notion resembles the Hausdorff dimension of the subset $Z$.  In 1984, Pesin and Pitskel' \cite{Pesin&Pitskel1984} generalized this idea to the topological pressures on subsets $Z$ of $(X,T)$ for potential functions $f \in C(X,\mathbb{R})$, denoted by $\prebow(T,f,Z)$.
  It was also proved in \cite{Bowen1973,Pesin&Pitskel1984} that their dimension-like pressure (entropy) on the entire space $X$ coincides with the conventional topological pressure (entropy), that is, $\prebow(T,f,X)=P(T,f)$ ($\enttopbow(T,X)=\enttop(T)$).
  This aroused people's interest to extend the variational principle to the ones for the entropy and pressure on subsets.

In 1983,  Brin and Katok \cite{Brin&Katok1983} first introduced the measure-theoretic local entropy,  which led to the study of  Billingsley type theorems in dynamical systems. In 2008, Ma and Wen in \cite{Ma&Wen2008} obtained the following Billingsley type theorem for the lower local entropy.
 \begin{thm*}
  Given a TDS $(X,T)$, a Borel subset $E$ of $X$ and $s > 0$, let $\mu$ be a Borel probability measure on $X$.
  \begin{enumerate}[(1)]
    \item If $\entlow_{\mu}(T,x) \leq s$ for all $x \in E$, then $\enttopbow(T,E) \leq s$;
    \item If $\entlow_{\mu}(T,x) \geq s$ for all $x \in E$ and $\mu(E)>0$, then $\enttopbow(T,E) \geq s$.
  \end{enumerate}
\end{thm*}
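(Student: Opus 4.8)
I would first fix notation: for $x\in X$, $n\ge1$, $\varepsilon>0$ let $B_n(x,\varepsilon)=\{y\in X:d(T^jx,T^jy)<\varepsilon\text{ for }0\le j<n\}$ be the Bowen ball, the open $\varepsilon$-ball in the metric $d_n(x,y)=\max_{0\le j<n}d(T^jx,T^jy)$; then $\entlow_\mu(T,x)=\sup_{\varepsilon>0}\liminf_{n\to\infty}\bigl(-\tfrac1n\log\mu(B_n(x,\varepsilon))\bigr)$. Accordingly, ``$\entlow_\mu(T,x)\le s$'' says that for every $\varepsilon>0$ and every $s'>s$ there are infinitely many $n$ with $\mu(B_n(x,\varepsilon))>\e^{-ns'}$, while ``$\entlow_\mu(T,x)\ge s$'' says that for every $s'<s$ there is $\varepsilon>0$ with $\mu(B_n(x,\varepsilon))<\e^{-ns'}$ for all large $n$. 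On the other side, I would recall that $\enttopbow(T,E)=\sup_{\varepsilon>0}\entbow(T,E,\varepsilon)$, where $\entbow(T,E,\varepsilon)=\inf\{q\ge0:\mathcal M(E,q,\varepsilon)=0\}$, $\mathcal M(E,q,\varepsilon)=\lim_{N\to\infty}\mathcal M(E,q,N,\varepsilon)$, and $\mathcal M(E,q,N,\varepsilon)=\inf\sum_i\e^{-qn_i}$ is the infimum over countable covers of $E$ by Bowen balls $B_{n_i}(x_i,\varepsilon)$ with $n_i\ge N$. The plan is to prove (1) by building economical covers and (2) by a mass distribution principle.

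\noindent\textbf{Part (1).} Fix $s'>s$ and $\varepsilon>0$. Using the first reformulation, for each $x\in E$ and each $N$ I would pick $n=n(x,N)\ge N$ with $\mu(B_n(x,\varepsilon))>\e^{-ns'}$, so that $\{B_{n(x,N)}(x,\varepsilon):x\in E\}$ covers $E$. The next step is a Vitali-type covering lemma for Bowen balls: extract a countable pairwise disjoint subfamily $\{B_{n_i}(x_i,\varepsilon)\}_i$ with $n_i\ge N$ and $E\subseteq\bigcup_iB_{n_i}(x_i,3\varepsilon)$. This rests on the nesting fact that if $B_n(x,\varepsilon)\cap B_m(y,\varepsilon)\ne\emptyset$ and $n\ge m$ then $B_n(x,\varepsilon)\subseteq B_m(x,\varepsilon)\subseteq B_m(y,3\varepsilon)$, together with the observation that each $(X,d_n)$ is compact, so a pairwise disjoint family of Bowen balls of a fixed length has a finite, $\varepsilon$-separated set of centres; a greedy selection running through the lengths $N,N+1,\dots$ then produces the subfamily. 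Disjointness gives $\sum_i\e^{-n_is'}<\sum_i\mu(B_{n_i}(x_i,\varepsilon))\le\mu(X)=1$, hence $\mathcal M(E,s',N,3\varepsilon)\le1$ and, letting $N\to\infty$, $\mathcal M(E,s',3\varepsilon)\le1<\infty$, so $\entbow(T,E,3\varepsilon)\le s'$. Since $\varepsilon>0$ is arbitrary this gives $\enttopbow(T,E)=\sup_{\varepsilon>0}\entbow(T,E,3\varepsilon)\le s'$, and letting $s'\downarrow s$ completes (1).

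\noindent\textbf{Part (2).} Fix $0<s'<s$. I would decompose $E=\bigcup_{k,N\in\N}E_{k,N}$, where $E_{k,N}=\{x\in E:\mu(B_n(x,2/k))<\e^{-ns'}\text{ for all }n\ge N\}$ (using closed Bowen balls if necessary so that these sets are Borel); the second reformulation guarantees every $x\in E$ lies in some $E_{k,N}$, and since $\mu(E)>0$ some $E':=E_{k,N}$ has $\mu(E')>0$. With $\varepsilon=1/k$ and $\nu=\mu|_{E'}$, the heart of the argument is a mass distribution bound: any Bowen ball $B_n(y,\varepsilon)$ with $n\ge N$ that meets $E'$ contains a point $x\in E'$, whence $B_n(y,\varepsilon)\subseteq B_n(x,2\varepsilon)$ and $\nu(B_n(y,\varepsilon))\le\mu(B_n(x,2/k))<\e^{-ns'}$, while $\nu(B_n(y,\varepsilon))=0$ if the ball misses $E'$. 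Therefore, for any cover of $E'$ by Bowen balls of radius $\varepsilon$ and lengths $\ge N'\ge N$, $\mu(E')=\nu(E')\le\sum_i\nu(B_{n_i}(x_i,\varepsilon))\le\sum_i\e^{-n_is'}$, so $\mathcal M(E',s',\varepsilon)\ge\mu(E')>0$ and hence $\entbow(T,E',\varepsilon)\ge s'$. Monotonicity then yields $\enttopbow(T,E)\ge\enttopbow(T,E')\ge\entbow(T,E',\varepsilon)\ge s'$, and letting $s'\uparrow s$ completes (2).

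\noindent\textbf{Main obstacle.} I expect the covering lemma in Part (1) to be the crux: Bowen balls of different lengths do not nest the way balls of different radii do, so the usual $5r$-covering argument must be re-run with the length $n$ in the role of a (reciprocal) radius, and this is only legitimate because the lengths range over a discrete set bounded below and because $\varepsilon$-separated sets in each $(X,d_n)$ are finite. By contrast, the uniformity-in-$x$ obstacle in Part (2) --- that the threshold after which $\mu(B_n(x,2\varepsilon))<\e^{-ns'}$ depends on $x$ --- dissolves cheaply via the countable decomposition $E=\bigcup_{k,N}E_{k,N}$ and a pigeonhole step, once the measurability of those sets is checked. For the nonautonomous extension carried out in this paper one would, in addition, have to confirm that these metric-geometric facts persist when the $d_n$ are built from the varying $(X_k,T_k)$, and upgrade both halves to pressures for an equicontinuous potential sequence.
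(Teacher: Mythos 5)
Your proposal is correct and follows essentially the same route as the paper's proof of its Theorem 2.8 (of which this statement is the entropy special case): part (1) covers $E$ by Bowen balls of measure $>\e^{-ns'}$ and passes to a disjoint subfamily via a $3\varepsilon$-covering lemma for Bowen balls, and part (2) uses a countable exhaustion of $E$ into pieces with uniform $n$-thresholds plus a mass-distribution bound after enlarging the radius by recentering at a point of $E'$. The only differences are cosmetic: the paper establishes the covering lemma by a Zorn's-lemma maximality argument (recovering countability from positivity of the measures of the disjoint balls) where you use a greedy selection through the lengths, and it organizes the part (2) decomposition as two nested monotone exhaustions with explicit measure fractions rather than your single doubly-indexed pigeonhole.
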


Billingsley type theorems are very useful for the study of variational principles.
Ma and Wen \cite{Ma&Wen2008} applied the Billingsley type theorem above to obtain a variational inequality
relating the Bowen entropy and the measure-theoretic lower entropy.
In 2012, Feng and Huang \cite{Feng&Huang2012} introduced the measure-theoretic lower and upper entropies and the packing topological entropy  and proved the following variational principles. See Section \ref{sect:defetprop} for the definitions.
\begin{thm*}
Given a TDS $(X,T)$, if  $K \subset X$ is non-empty and compact, then
$$
    \enttopbow(T,K)=\sup\{\entlow_{\mu}(T): \mu \in M(X), \mu(K)=1\},
$$
$$
    \enttoppac(T,K)=\sup\{\entup_{\mu}(T): \mu \in M(X), \mu(K)=1\}.
$$
\end{thm*}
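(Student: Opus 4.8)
The plan is to derive both identities from two essentially independent ingredients: \emph{Billingsley type theorems} relating the dimension-like quantities $\enttopbow$ and $\enttoppac$ on Borel sets to the Brin--Katok local entropies, and a \emph{mass-distribution (Frostman/Moran) type construction} which, starting from a lower bound on the dimension-like entropy of the compact set $K$, produces a Borel probability measure carried by $K$ whose local entropy is bounded below by the same number. Throughout I write $B_n(x,\varepsilon)=\{y\in X:d(T^iy,T^ix)<\varepsilon\text{ for }0\le i<n\}$ for the Bowen ball, $\entlow_\mu(T,x)=\lim_{\varepsilon\to0}\liminf_{n\to\infty}-\frac1n\log\mu(B_n(x,\varepsilon))$ and $\entup_\mu(T,x)$ for the analogous $\limsup$, and $\entlow_\mu(T)=\int\entlow_\mu(T,x)\,\dif\mu(x)$, $\entup_\mu(T)=\int\entup_\mu(T,x)\,\dif\mu(x)$; these are defined for every $\mu\in M(X)$, invariant or not, and reduce to $h_\mu(T)$ for $\mu$ ergodic and invariant.

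First I would establish the Billingsley type theorems: for every Borel $E\subset X$, every $s\ge0$ and every $\mu\in M(X)$, (i) $\entlow_\mu(T,x)\le s$ on $E$ gives $\enttopbow(T,E)\le s$, while $\entlow_\mu(T,x)\ge s$ on $E$ together with $\mu(E)>0$ gives $\enttopbow(T,E)\ge s$ --- this is the Ma--Wen theorem quoted above --- and (ii) the same two implications for $\entup_\mu(T,\cdot)$ and $\enttoppac$, which in addition require a $5r$-type covering lemma for Bowen balls (if Bowen balls of orders $m\le n$ and radius $\varepsilon$ meet, the one of order $n$ lies inside the one of order $m$ and radius $3\varepsilon$). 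Granting these, the inequalities ``$\ge$'' use only the second (positive-measure) estimate: given $\mu$ with $\mu(K)=1$, put $L=\entlow_\mu(T)$; for each $\delta>0$ the set $E_\delta=\{x\in K:\entlow_\mu(T,x)\ge L-\delta\}$ satisfies $\mu(E_\delta)>0$, since otherwise $L=\int_K\entlow_\mu(T,x)\,\dif\mu\le L-\delta$, so by monotonicity $\enttopbow(T,K)\ge\enttopbow(T,E_\delta)\ge L-\delta$; letting $\delta\to0$ and taking the supremum over such $\mu$ gives $\enttopbow(T,K)\ge\sup\{\entlow_\mu(T):\mu(K)=1\}$, and the same argument with $\entup_\mu$ gives $\enttoppac(T,K)\ge\sup\{\entup_\mu(T):\mu(K)=1\}$.

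The reverse inequalities require building good measures. For the Bowen identity, fix $s<\enttopbow(T,K)$, then $\varepsilon>0$ with $s<\enttopbow(T,K,\varepsilon)$, so the Hausdorff-type set function used to define $\enttopbow$, at exponent $s$ and scale $\varepsilon$, is infinite on $K$. A Frostman/mass-distribution type lemma for Bowen balls --- obtained by working at each order $N$ with a near-optimal countable cover of $K$ by Bowen balls of order $\ge N$, extracting a finite measure $\nu_N$ on $K$ with $\nu_N(B_n(x,\varepsilon))\le e^{-sn}$ for all $n\ge N$ and $\nu_N(K)$ bounded below, and passing to a weak-$*$ accumulation point (using compactness of $K$) --- yields $\mu\in M(X)$ with $\mu(K)=1$ and $\liminf_{n\to\infty}-\frac1n\log\mu(B_n(x,\varepsilon))\ge s$ for $\mu$-a.e.\ $x$. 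Since this $\liminf$ is nondecreasing as $\varepsilon\downarrow0$, one gets $\entlow_\mu(T,x)\ge s$ a.e., hence $\entlow_\mu(T)\ge s$; letting $s\uparrow\enttopbow(T,K)$ closes the first identity.

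The packing identity is the hard part. Fix $s<\enttoppac(T,K)$ and $\varepsilon>0$ with $s<\enttoppac(T,K,\varepsilon)$. By the standard regularization of a packing pre-measure I would first find a subset $E\subset K$ on which the packing pre-measure at exponent $s$ and scale $\varepsilon$ is positive on $E\cap U$ for every open $U$ meeting $E$, and then run a Moran type construction on $\overline E\subset K$ --- inductively picking finite $(n_k,\varepsilon)$-separated subsets with $n_k$ growing rapidly and distributing mass proportionally to $e^{-sn_k}$ --- so arranged that for $\mu$-a.e.\ $x$ there are infinitely many $n$ with $\mu(B_n(x,\varepsilon))\le e^{-(s-\eta)n}$, whence $\entup_\mu(T,x)\ge s-\eta$ a.e.\ and $\entup_\mu(T)\ge s-\eta$; letting $\eta\to0$ and $s\uparrow\enttoppac(T,K)$ closes the second identity. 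The main obstacle is precisely this construction: in the Bowen case the uniform bound $\mu(B_n(x,\varepsilon))\le e^{-sn}$ can be forced for all large $n$ simultaneously, but for packing only a $\limsup$ is at hand, so the construction must secure the decay of $\mu(B_n(x,\varepsilon))$ along an $x$-dependent subsequence of orders while keeping every Moran piece inside the compact set $K$ and the total mass equal to $1$, which dictates a delicate coupling between the growth of the $n_k$ and the separation used at each stage. (This same scheme then transfers to the pressure versions and their nonautonomous analogues, with the weight $e^{-sn}$ of a Bowen ball replaced by $\exp\bigl(-sn+\sum_{i=0}^{n-1}f_i\circ T_{i-1}\circ\cdots\circ T_0\bigr)$ and the iterates $T^i$ by the compositions $T_{i-1}\circ\cdots\circ T_0$.)
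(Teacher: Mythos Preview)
Your overall architecture---Billingsley inequalities give one direction, a Frostman/Moran construction gives the other---matches the paper's, and your packing argument (pass to a subset on which the packing pre-measure is positive on every relatively open piece, then run a nested Moran construction distributing mass $e^{-sn}$) is essentially the paper's proof of Theorem~\ref{thm:varprinciplepacking} specialized to $\vect f=\vect 0$. The ``$\ge$'' halves via the positive-measure Billingsley estimate are also the same as Lemmas~\ref{lem:varneqprebow} and~\ref{lem:varneqprepac}.

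The one place your sketch diverges, and where there is a genuine gap, is the Frostman step for the Bowen identity. You propose to obtain, for each $N$, a finite measure $\nu_N$ on $K$ with $\nu_N(B_n(x,\varepsilon))\le e^{-sn}$ and $\nu_N(K)$ bounded below, ``by working at each order $N$ with a near-optimal countable cover of $K$''. But when $s<\enttopbow(T,K,\varepsilon)$ the quantity $\msrbow^s_{N,\varepsilon}(K)$ tends to $+\infty$, so near-optimal covers have \emph{unbounded} cost; there is no evident mechanism for extracting from such a cover a measure of bounded total mass satisfying the Frostman bound, and Frostman-type lemmas are not proved this way. Even if you produced the $\nu_N$ somehow, the weak-$*$ passage is delicate because open Bowen balls are only lower semicontinuous for weak-$*$ convergence. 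The paper bypasses both issues by introducing the \emph{weighted} outer measure $\mathscr W^s_{N,\varepsilon}$ (Section~\ref{ssect:equivdefbw}), proving it is comparable to $\msrbow^s_{N,\varepsilon}$ (Proposition~\ref{prop:equivprebwetprebpp}), and then observing that $g\mapsto \mathscr W^s_{N,\varepsilon}(\chi_K\cdot g)$ is a positively homogeneous sublinear functional on $C(X_0)$; Hahn--Banach plus Riesz representation then yield directly a single Radon probability $\mu$ on $K$ with $\mu(B_n(x,\varepsilon))\le C^{-1}e^{-sn}$ for all $n\ge N$ (Lemma~\ref{dynflemw}). If you want to avoid the functional-analytic route, the workable alternative is a Moran-type redistribution on a nested system of Bowen balls (as in your packing argument, but arranged so the bound holds for \emph{all} large $n$ rather than along a subsequence), not an argument based on covers.
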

Later, Tang, Cheng and Zhao \cite{Tang&Cheng&Zhao2015} generalized the Billingsley type theorem and the variational principle for topological entropies to those for Pesin-Pitskel' pressure.
Recently, Zhong and Chen \cite{Zhong&Chen2023} further extended the Billingsley type theorem to the packing pressure on $(X, T)$, and  they obtained the  variational principle for the  packing pressure under certain conditions. There are many interesting results on the
 Billingsley type theorems and variational principles for  entropies and pressures,
 and we refer the readers to
\cite{Barreira1996,Huang&Yi2007,Misiurewicz1976,Ollagnier&Pinchon1982,Tang&Cheng&Zhao2015,Wang&Chen2012,Zhang2009} and references therein.

\subsection{Nonautonomous dynamical systems}\label{ssect:NDS}
Let $(\vect{X},\vect{d})=\{(X_{k},d_{k})\}_{k=0}^{\infty}$ be a sequence of compact metric spaces $X_{k}$
endowed with metrics $d_{k}$ for all integers $k\geq 0$, and let $\vect{T}=\{T_{k}\}_{k=0}^{\infty}$ be a sequence of
continuous mappings $T_{k}:X_{k} \to X_{k+1}$. We call the pair $(\vect{X},\vect{T})$ a \emph{nonautonomous dynamical system (NDS)},
and we sometimes write $(\vect{X},\vect{d},\vect{T})$ to emphasize the dependence on the metrics.
If $\vect{X}$ is constant, i.e., $X_{k}=X$ for all $k \geq 0$, we call $(\vect{X},\vect{T})$ a \emph{nonautonomous dynamical system with an identical space} and denote it by $(X,\vect{T})$.  We refer the readers to \cite{Cheban2020,Kawan2014, Kloeden&Rasmussen2011} for details.
Note that  if $\vect{X}$ and $\vect{T}$ are both constant, i.e., $X_{k}=X$ and $T_{k}=T$ for all $k \geq 0$,
then the nonautonomous dynamical system $(\vect{X},\vect{T})$ degenerates into the classic topological dynamical system $(X,T)$. Therefore, both $(X,T)$ and  $(X,\vect{T})$ may be regarded as  special cases of   $(\vect{X},\vect{T})$.

Currently, there is a large amount of literature focusing on nonautonomous dynamical systems with an identical space  $(X,\vect{T})$.
In 1996, the topological entropy of $(X,\vect{T})$ was introduced by Kolyada and Snoha \cite{Kolyada&Snoha1996}. 
In 2018, Biś \cite{Bis2018} introduced the measure-theoretic lower and upper local entropies of  $(X,\vect{T})$  with respect to  a Borel probability measure $\mu \in M(X)$, and obtained two Billingsley type theorems for the classic topological entropy.
In 2024, Nazarian Sarkooh \cite{NazarianSarkooh2024} studied the measure-theoretic lower pressure and Bowen pressure of $(X,\vect{T})$ and obtained the Billingsley type theorem and the following  variational principle for Bowen pressure (see Subsection \ref{ssect:prebppetentbow} for the definitions).
\begin{thm*}
  Given an NDS $(X,\vect{T})$ with an identical space and a non-empty compact subset $K \subset X$, let $f \in C(X,\mathbb{R})$. Then
  $$
  \prebow(\vect{T},f,K)=\sup\{\prelow_{\mu}(\vect{T},f): \mu \in M(X), \mu(K)=1\}.
  $$
\end{thm*}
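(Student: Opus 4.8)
The statement above is the nonautonomous Bowen-pressure variational principle of Nazarian Sarkooh, and I would obtain it by transplanting the Ma--Wen / Feng--Huang scheme to $(X,\vect{T})$ and to a potential $f$. Throughout write $\vect{T}^n=T_{n-1}\circ\dots\circ T_0$, $S_nf(x)=\sum_{k=0}^{n-1}f(\vect{T}^kx)$, and $B_n(x,\epsilon)=\{y\in X: d(\vect{T}^kx,\vect{T}^ky)<\epsilon\ \text{for}\ 0\le k<n\}$; recall that $\prebow(\vect{T},f,Z)$ is the Carath\'eodory construction with gauge $\exp\bigl(-\lambda n_i+\sup_{B_{n_i}(x_i,\epsilon)}S_{n_i}f\bigr)$ over covers of $Z$ by Bowen balls of orders $n_i\ge N$ (letting $N\to\infty$, then $\epsilon\to0$), that the local lower pressure is $\prelow_{\mu}(\vect{T},f,x)=\lim_{\epsilon\to0}\liminf_{n\to\infty}\tfrac1n\bigl(-\log\mu(B_n(x,\epsilon))+S_nf(x)\bigr)$, and that $\prelow_{\mu}(\vect{T},f)=\int_X\prelow_{\mu}(\vect{T},f,x)\,\dif\mu$. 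Let $\omega_f$ denote the modulus of continuity of $f$, well defined since $X$ is compact.

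\textbf{The key tool: a Billingsley-type theorem.} I would first establish that, for Borel $E\subset X$ and $s\in\mathbb{R}$, (i) if $\prelow_{\mu}(\vect{T},f,x)\le s$ for every $x\in E$ then $\prebow(\vect{T},f,E)\le s$, and (ii) if $\prelow_{\mu}(\vect{T},f,x)\ge s$ for every $x\in E$ and $\mu(E)>0$ then $\prebow(\vect{T},f,E)\ge s$. For (ii), stratify $E=\bigcup_m E_m$ with $E_m=\{x\in E: -\log\mu(B_n(x,\epsilon))+S_nf(x)\ge(s-\eta)n\ \text{for all}\ n\ge m\}$; any cover of $E_m$ by Bowen balls of orders $\ge m$ then has weighted sum $\ge\sum_i\mu(B_{n_i}(x_i,\epsilon))\ge\mu(E_m)$, so $\prebow(\vect{T},f,E_m)\ge s-\eta-\omega_f(\epsilon)$, and letting $\eta,\epsilon\to0$ together with $\mu(E)=\lim_m\mu(E_m)>0$ gives $\prebow(\vect{T},f,E)\ge s$. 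For (i), for each $x\in E$ select, along a subsequence $n_j\to\infty$, Bowen balls $B_{n_j}(x,\epsilon)$ of $\mu$-mass $\ge\exp\bigl(-(s+\eta)n_j+S_{n_j}f(x)\bigr)$, feed this family to a Vitali-type ($5r$-) covering lemma for Bowen balls to extract a countable almost-disjoint subcover of $E$, and bound its weighted sum by a fixed multiple of $\mu(X)=1$. Uniform continuity of $f$ supplies the distortion term $n\,\omega_f(\epsilon)=o(n)$ as $\epsilon\to0$ that closes these estimates, and since the metric $d$ does not vary with $k$ all constants are automatically uniform in the time index.

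\textbf{The easy half.} Fix $\mu$ with $\mu(K)=1$ and $s'<\prelow_{\mu}(\vect{T},f)$. Then $\mu\bigl(\{x:\prelow_{\mu}(\vect{T},f,x)>s'\}\bigr)>0$, for otherwise the integrand would be $\le s'$ almost everywhere; applying part (ii) of the Billingsley theorem to $E=K\cap\{\prelow_{\mu}(\vect{T},f,\cdot)>s'\}$ gives $\prebow(\vect{T},f,K)\ge\prebow(\vect{T},f,E)\ge s'$, and letting $s'\uparrow\prelow_{\mu}(\vect{T},f)$ yields the inequality ``$\ge$''.

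\textbf{The hard half and the main obstacle.} For ``$\le$'' I must, given $s<\prebow(\vect{T},f,K)$, build $\mu\in M(X)$ with $\mu(K)=1$ and $\prelow_{\mu}(\vect{T},f)\ge s$. Following Feng--Huang: pick $\epsilon_j\downarrow0$ and $s_j\uparrow s$ with the scale-$\epsilon_j$ Bowen pressure of $K$ still exceeding $s_j$; from a Frostman-type / mass-distribution principle adapted to the Carath\'eodory construction (which rests on a bounded-multiplicity property of Bowen balls) obtain probability measures $\nu_j$ carried on compact subsets near $K$ with $\nu_j(B_n(x,\epsilon_j))\le C_j\exp\bigl(-s_jn+\sup_{B_n(x,\epsilon_j)}S_nf\bigr)$ for all Bowen balls of large order; then take $\mu$ to be a weak-$*$ limit of the $\nu_j$, check $\mu(K)=1$ using compactness of $K$, and verify that for every $x$ one has $\liminf_{n\to\infty}\tfrac1n\bigl(-\log\mu(B_n(x,\epsilon_j))+S_nf(x)\bigr)\ge s_j-\omega_f(\epsilon_j)$, whence $\prelow_{\mu}(\vect{T},f,x)\ge s$ for all $x$ and so $\prelow_{\mu}(\vect{T},f)\ge s$. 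The genuinely delicate step --- the same one as in the autonomous setting, and the one I expect to absorb most of the work --- is that the scale-$\epsilon_j$ estimates for $\nu_j$ are destroyed by a naive weak-$*$ passage to the limit, since a coarse Bowen ball is covered only by exponentially many fine ones; one must instead define the $\nu_j$ coherently along a single nested tree of Bowen balls with carefully apportioned weights, so that the limit measure inherits Bowen-ball bounds simultaneously at all scales. Because $X$ and $f$ are fixed, every constant in this multi-scale construction is uniform in $k$, so the nonautonomy of $\vect{T}$ adds only bookkeeping rather than a new difficulty.
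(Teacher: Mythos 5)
Your architecture --- a Billingsley-type theorem for the inequality ``$\ge$'' and a Frostman-type mass-distribution lemma for ``$\le$'' --- is exactly the paper's (the statement is the special case $X_k\equiv X$, $f_k\equiv f$ of Theorem \ref{thm:varprinciplebowen}; constancy of $f$ on a compact space gives the equicontinuity hypothesis for free). Your easy half and your sketch of the Billingsley theorem match Lemma \ref{lem:varneqprebow} and Theorem \ref{thm:btypethmbowen}: part (1) via selecting Bowen balls of large $\mu$-mass and a $3r$/$5r$-covering lemma for Bowen balls (Lemma \ref{dn3rcovlem}), part (2) via the stratification into sets $E_m$ where the defining inequality holds for all $n\ge m$, with the recentering trick and the distortion term $n\,\omega_f(\varepsilon)$ handled by uniform continuity of $f$.

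The gap is in the hard half. The ``genuinely delicate step'' you flag --- constructing the $\nu_j$ coherently along a nested tree of Bowen balls so that a weak-$*$ limit inherits Frostman bounds at all scales simultaneously --- is left unexecuted, and it is also unnecessary: you have misplaced where the difficulty lies. Since $\prelow_{\mu}(\vect{T},f,x)=\sup_{\varepsilon>0}\prelow_{\mu}(\vect{T},f,x,\varepsilon)$ (Proposition \ref{prop:localepsilon}), a Frostman estimate at a \emph{single} scale $\varepsilon$ already yields $\prelow_{\mu}(\vect{T},f,x)\ge s-\omega_f(2\varepsilon)$ for every $x$; and because the right-hand side of the variational principle is itself a supremum over $\mu$, one is free to use a different measure for each tolerance $\alpha>0$. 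Concretely, the paper fixes $\alpha>0$, sets $s=\prebow(\vect{T},f,K)-\alpha/2$, chooses $\varepsilon$ and $N$ with $\mathscr{W}_{N,\varepsilon}^{s}(\vect{T},f,K)>0$ (possible because the weighted pressure satisfies $\prebw=\prebow$, Proposition \ref{prop:equivprebwetprebpp}), and the single measure produced by Lemma \ref{dynflemw} at that one scale gives $\prelow_{\mu}(\vect{T},f)\ge\prebow(\vect{T},f,K)-\alpha$; no weak-$*$ passage over scales occurs. The multi-scale nested construction you describe is what is genuinely needed for the \emph{packing} pressure (Theorem \ref{thm:varprinciplepacking}), where the local quantity is a $\limsup$ and the single-scale shortcut fails. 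Note also that the paper's Frostman lemma does not come from a ``bounded multiplicity'' property of Bowen balls but from the weighted (Howroyd-style) functional $\mathscr{W}_{N,\varepsilon}^{s}$ together with Hahn--Banach and Riesz representation; the covering-lemma work is concentrated in the proof that $\prebw=\prebow$. If you replace your weak-$*$ scheme with the single-scale argument, your outline closes.
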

We refer the readers to \cite{Cheban2020, Huang&Wen&Zeng2008,Kloeden&Rasmussen2011, Kong&Cheng&Li2015,Liu&Qiao&Xu2020,ZLXZ2012} for various related studies on $(X,\vect{T})$ and references therein.

Nonautonomous dynamical systems with an identical space $(X, \vect{T})$ may also be viewed from another aspect of  the so called ``nonautonomous dynamical systems   of continuous type'', which is also known as \emph{nonautonomous flows}; see \cite{Cheban2020,Kloeden&Rasmussen2011} for details.
  These systems are highly related to differential equations and are therefore strongly motivated by
  equations of mathematical physics. One way of studying nonautonomous flows is via approximating
  them by nonautonomous dynamical systems with an identical space.
Given a nonautonomous flow $\varphi:[0,+\infty) \times X \to X$ on a compact metric space $X$, by restricting the flow to $\mathbb{N} \times X$, one may obtain a discrete time nonautonomous dynamical system $(X,\vect{T})$.

In 2008,  Xu and Zhou \cite{Xu&Zhou2018} studied  nonautonomous flows on a compact metric space $X$, and  they introduced the measure-theoretic lower entropy with respect to  a Borel probability measure and the Bowen topological entropy and established a variational principle for the first time in the field of NDSs.
Further results for nonautonomous flows may be found in \cite{Zhang&Liu2022,Zhang&Zhu2023}.

For general nonautonomous dynamical systems $(\vect{X},\vect{T})$, there is little progress made on this topic due to the complexity of their structure. In 1999, Kolyada, etc \cite{KMS1999} studied the topological entropy for nonautonomous dynamical systems of piecewise monotone mappings on intervals.
In 2014, Kawan \cite{Kawan2014} generalized the measure-theoretic entropy to nonautonomous dynamical systems and obtained a variational inequality
that the measure-theoretic entropies are bounded above by the topological entropy. In 2015, Kawan and Latushkin \cite{Kawan&Latushkin2015} proved that the variational principle for the topological entropy holds for a  class of special symbolic dynamical systems.

In this paper, we explore the Billingsley type theorems and variational principles for the Bowen type and the packing type topological entropies and pressures on the general nonautonomous dynamical system $(\vect{X},\vect{T})$. Moreover, we show these entropies and pressures are invariant under equiconjugacies.

  \subsection{Nonautonomous dynamics in fractal geometry}

Fractal geometry is closely related to dynamical systems, and entropies and pressures are important tools to study the dimension theory of various fractal sets.  In particular,  the attractors of iterated function systems (IFSs) such as self-similar sets and self-affine sets  may also be viewed as the  repellers of expansive dynamical systems since the contractions in IFSs are local inverses of the expanding dynamics; see \cite{BHR19,DasSim17, Falconer1997,Feng23, FH08, Pesin1997} for details.
For example, given a self-similar iterated function system $\{S_i\}_{i=1}^{m}$ satisfying the strong separation condition where each $S_i:[0,1]\to[0,1]$ is given by $S_i(x)= r\cdot x+a_i\ (x \in [0,1])$ with $0<r<1$, let $F$ be the corresponding self-similar set. Let $T$ be the inverse mapping of $S_i$ on each $S_{i}([0,1])$. Then  $(F, T)$ is an expansive topological dynamical system.  We have the following equality
\begin{equation}\label{eq:dimetentsss}
\dimh{F}=\dimbox{F}=\frac{\enttop(T)}{\log{r}}=\frac{\enttopbow(T,F)}{\log{r}}
\end{equation}
where $\dimh$  and $\dimbox$ denote Hausdorff  and  box dimensions, respectively. See \cite{Falconer2014,Hutch81, Pesin1997}  for details.

There is a typical generalization of IFSs called \emph{nonautonomous iterated function systems}, and their attractors are called \emph{nonautonomous fractals} (see~\cite{GM}). The nonautonomous iterated function systems have been well studied in fractal geometry. A special case of such fractals is called Moran sets, which was first studied by Moran~\cite{Moran} in 1946. The dimension theory of nonautonomous fractals has been studied extensively, and we refer the readers to~\cite{Gu&Miao2022,GM,HRWW2000,R-G&Urbanski2012, Wen00, Wen01} for various studies. Since different iterated function systems are applied at each iterating step in the construction of nonautonomous fractals, the number of basic intervals as well as the contraction ratios may vary according to the systems at each step. Therefore the geometric structures of nonautonomous fractals are more complex than self-similar sets, and in general, the following dimension inequality holds
$$
\dimh F \leq  \dimp F \leq {\overline{\dim}}_{\rm B} F,
$$
where $\dimp$ and ${\overline{\dim}}_{\rm B}$ denote the packing and upper box dimensions, respectively. Note that the inequality may hold strictly for nonautonomous fractals $F$.

Inspired by recent progress in nonautonomous iterated function systems (see \cite{Gu&Miao2022,GM, R-G&Urbanski2012}), it is reasonable to construct the following nonautonomous dynamical systems as a counterpart of nonautonomous iterated function systems. Let $I \subset \mathbb{R}^{n}$ be a compact set with non-empty interior. Given a sequence of positive integers $\{m_k\}_{k=1}^\infty$ where $m_k\geq2$ for each integer $k>0$, suppose that $I_{k,i} \subset I\ (i=1,2,\ldots,m_{k})$ are $m_{k}$ pairwise disjoint compact subsets, and that $S_{k,i}: I \to I_{k,i}$ is a surjective contraction for each $i=1,2, \ldots, m_k$. We write $\Xi_{k}=\{S_{k,1},S_{k,2},\cdots,S_{k,m_{k}}\}$. Then  $\Xi=\{\Xi_{k}\}_{k=1}^{\infty}$   forms a nonautonomous iterated function system (NIFS), and it has a nonautonomous attractor $F$ given by
     $$
    F=\bigcap_{k=1}^{\infty}\bigcup_{\substack{1 \leq i_{j} \leq m_{j} \\ 1 \leq j \leq k}}S_{i_{1}} \circ \cdots \circ S_{i_{k}}(I).
    $$
 See \cite{GM,Wen00} for details.

Similarly, we may give an alternative description from a dynamical point of view.  For each $k \in \mathbb{N} $, let $T_{k}$ be the continuous mapping from $\bigcup_{i=1}^{m_{k+1}}I_{k+1,i}$ onto $I$ defined by
    $$
    T_{k}(x)=S_{k+1,i}^{-1}(x),        \qquad (x \in I_{k+1,i})
    $$
 for $i=1,2,\cdots,m_{k+1}$. We write  $\vect{T} =\{T_{k}\}_{k=0}^{\infty}$.  We may obtain $F$ by using $\vect{T}$, that is,
    $$
F=\bigcap_{k=0}^{\infty}\vect{T}^{-k}(I) =\bigcap_{k=0}^{\infty}(T_{k}\circ\ldots\circ T_0)^{ -1}(I),
$$
and  we call  $F$ the \emph{repeller} of  $\vect{T}$ in this case. Note that every  exterior point of $F$   falls out of $\bigcup_{i=1}^{m_{k}}I_{k,i}$ at some  $k$-th  iteration, and  the   iterating process for the point  stops  at this step. To define the corresponding  nonautonomous dynamical system,   we restrict $\vect{T} $ to the repeller $F$ and write  $X_k = \vect{T}^{k}F$. Therefore  $(\vect{X} ,\vect{T} )$ is a nonautonomous dynamical system, where $\vect{X} =\{\vect{T}^{k}F\}_{k=0}^{\infty}$.     Note that it may not   necessarily happen that  $F =\vect{T}^{k}F$.  Therefore it is interesting to  understand the interaction between nonautonomous iterated functions systems and nonautonomous dynamical systems.

Inspired by these previous works, we study the properties of entropies and pressures for nonautonomous dynamical systems  $(\vect{X} ,\vect{T} )$ and obtain various Billingsley type theorems and variational principles for these quantities in this paper. In Section \ref{sect:defetprop}, we give the  definitions of different types of entropies and pressures and state our main conclusions on   Billingsley type theorems (Theorems \ref{thm:btypethmbowen}, \ref{thm:btypethmpacking} and their corollaries)
  and  variational principles (Theorems \ref{thm:varprinciplebowen}, \ref{thm:varprinciplepacking} and their corollaries) for these entropies and pressures.
To study  these   Billingsley type theorems and  variational principles, we  explore the properties of these entropies and pressures in Section \ref{sect:proptoppreetent}. Next, we prove that  these entropies and pressures are  invariant under equiconjugacies in Section \ref{sect:invarequiconj}, and we obtain a consequence that the Bowen pressure and packing pressure are `topological', that is, they are metric irrelevant if the metrics endowed are uniformly equivalent.
In Section \ref{sect:pfbthms}, the Billingsley type theorems are proven, and we give the proofs  for the variational principles in Section \ref{sect:pfvps}.

\section{Entropies, Pressures and Main Conclusions}\label{sect:defetprop}
In the rest of the paper, we always write $(\vect{X},\vect{T})$ for a  nonautonomous dynamical system (NDS), that is,
$ \vect{X}=\{ X_{k}  \}_{k=0}^{\infty} $ is  a sequence of compact metric spaces $X_{k}$, and   $\vect{T}=\{T_{k}\}_{k=0}^{\infty}$ is a sequence of
continuous mappings $T_{k}:X_{k} \to X_{k+1}$. We use  $(X,\vect{T})$ for a nonautonomous dynamical system with an identical space and $(X,T)$ for the classic topological dynamical system.

  \subsection{Bowen Metrics and Bowen Balls}\label{ssect:BB}

In this subsection, we   introduce the Bowen metrics and Bowen balls which are the key essence in the study of  nonautonomous dynamical systems.

Given an NDS $(\vect{X},\vect{d},\vect{T})$, for each integer $k\geq 0$, we write
  $$
  \vect{T}_{k}^{j}=T_{k+(j-1)} \circ \cdots \circ T_{k}: X_{k} \to X_{k+j}
  $$
for $ j = 1,2,3,\cdots$, and we adopt the convention that  $ \vect{T}_{k}^{0}=\id_{X_{k}}$
where $\id_{X_{k}}:X_{k} \to X_{k}$ is the identity mapping.
Since the mappings $T_{k}$ are not necessarily bijective, we write $\vect{T}_{k}^{-j} = (\vect{T}_{k}^{j})^{-1}$ for the preimage of subsets of $X_{k+j}$ under $\vect{T}_{k}^{j}$.
If the mappings $T_{k},\cdots,T_{k+(j-1)}$ are bijective, we also use $\vect{T}_{k}^{-j} : X_{k+j} \to X_{k}$ for the inverse of $\vect{T}_{k}^{j}$, that is,
\begin{equation}\label{def_TInv}
\vect{T}_{k}^{-j}=(T_{k+(j-1)} \circ \cdots \circ T_{k})^{-1}= T_{k}^{-1} \circ \cdots \circ T_{k+(j-1)}^{-1}.
\end{equation}
For simplicity, we often write $\vect{T}^{j}=\vect{T}_{0}^{j}$ for $j \in \mathbb{Z}$.

Given $k \in \mathbb{N}$, for $n=1,2,3,\ldots,$ we define the {\emph{$n$-th Bowen metric at level $k$}} or the {\emph{$n$-th Bowen metric} on $X_{k}$} by
$$
d_{k,n}^{\vect{T}}(x,y)=\max_{0 \leq j \leq n-1}d_{k+j}(\vect{T}_{k}^{j}x,\vect{T}_{k}^{j}y)
$$
for all $x,y \in X_{k}$. It is routine to check that each $d_{k,n}^{\vect{T}}$ is a metric on $X_{k}$ topologically equivalent to $d_{k}$ for every $k\in \mathbb{N}$.

 Given $\varepsilon>0$ and $x \in X_{k}$, let $B_{k,n}^{\vect{T}}(x,\varepsilon)$ denote the
\emph{(open) Bowen ball} with center  $x$ and  radius $\varepsilon$ at level $k$, i.e.,
$$
B_{k,n}^{\vect{T}}(x,\varepsilon)=\{y \in X_{k}: d_{k,n}^{\vect{T}}(x,y)<\varepsilon\},
$$
and let $\overline{B}_{k,n}^{\vect{T}}(x,\varepsilon)$ denote the \emph{closed Bowen ball} with
 center $x$ and  radius $\varepsilon$ at level $k$.

Alternatively, the open and closed Bowen balls may be  given by
  \begin{equation}\label{eq:bowenballopen}
    B_{k,n}^{\vect{T}}(x,\varepsilon)=\bigcap_{j=0}^{n-1}\vect{T}_{k}^{-j}B_{X_{k+j}}(\vect{T}_{k}^{j}x,\varepsilon),
 \qquad
    \overline{B}_{k,n}^{\vect{T}}(x,\varepsilon)=\bigcap_{j=0}^{n-1}\vect{T}_{k}^{-j}\overline{B}_{X_{k+j}}(\vect{T}_{k}^{j}x,\varepsilon).
  \end{equation}

To study the Billingsley type theorems and variational principles of pressures, we need potentials for pressures. Let
$$
\vect{C}(\vect{X},\mathbb{R})=\prod_{k=0}^{\infty}C(X_{k},\mathbb{R})
$$
be the collection of all sequences of continuous functions $f_k: X_k \to \mathbb{R}$.  We often write $\vect{0}$ and $\vect{1} \in \vect{C}(\vect{X},\mathbb{R})$ for the sequence of constant $0$ functions and constant $1$ functions, respectively, and $a\vect{1} \in \vect{C}(\vect{X},\mathbb{R})$ for the sequence of constant $a$ functions where $a \in \mathbb{R}$.
  Given $\vect{f}=\{f_{k}\}_{k=0}^{\infty}$ and $\vect{g}=\{g_{k}\}_{k=0}^{\infty} \in \vect{C}(\vect{X},\mathbb{R})$, we write $\vect{f} \preceq \vect{g}$
  if $f_{k} \leq g_{k}$ for all $k \in \mathbb{N}$.

 Given $\vect{f}\in \vect{C}(\vect{X},\mathbb{R})$, we write
\begin{equation}\label{eq_fnorm}
\|\vect{f}\|=\sup_{k \in \mathbb{N}}\left\{\|f_{k}\|_{\infty}=\max_{x \in X_{k}}\lvert f_{k}(x) \rvert\right\}.
\end{equation}
  It is clear that $\|\vect{f}\|<+\infty$ implies that $\vect{f}$ is uniformly bounded. Let $\vect{C}_{b}(\vect{X},\mathbb{R})$ denote the collection of all uniformly bounded function sequences  in $\vect{C}(\vect{X},\mathbb{R})$, i.e.,
  $$\vect{C}_{b}(\vect{X},\mathbb{R})=\{\vect{f}\in \vect{C}(\vect{X},\mathbb{R}): \|\vect{f}\|<+\infty\}.$$

Given $\vect{f} \in \vect{C}(\vect{X},\mathbb{R})$, we say $\vect{f}$ is \emph{equicontinuous} if for every $\varepsilon>0$,
  there exists $\delta>0$ such that for all $k \in \mathbb{N}$ and all $x^{\prime},x^{\prime\prime} \in X_{k}$ satisfying
  $d_{k}(x^{\prime},x^{\prime\prime})<\delta$, we have that
  $$
  \lvert f_{k}(x^{\prime})-f_{k}(x^{\prime\prime}) \rvert<\varepsilon.
  $$
Note that if $\vect{X}$ is constant, i.e., $X_k=X$ for all $k \in \mathbb{N}$, then by the compactness of $X$, the equicontinuity of $\vect{f}$ coincides with the conventional definition of equicontinuity.
  In particular, for the dynamical systems  with an identical space $(X,\vect{T})$ and the TDSs $(X,T)$, we usually require $f_{k}=f$ for all $k \in \N$ where $f \in C(X,\R)$, in which case $\vect{f}=\{f\}_{k=0}^{\infty}$ is clearly equicontinuous.

Given $\vect{f} \in \vect{C}(\vect{X},\mathbb{R})$, for  $k, n \in \mathbb{N}$, we write
  \begin{equation}\label{eq:sumknTf}
    S_{k,n}^{\vect{T}}\vect{f}=\sum_{j=0}^{n-1} f_{k+j} \circ \vect{T}_{k}^{j}.
  \end{equation}
Note that each $S_{k,n}^{\vect{T}}\vect{f}$ is a continuous real-valued function on $X_{k}$.

For $k \in \mathbb{N}$, write $\vect{X}_{k}=\{X_{j}\}_{j=k}^{\infty}$  and $\vect{T}_{k}=\{T_{j}\}_{j=k}^{\infty}$. Then $(\vect{X}_{k},\vect{T}_{k})$ is  also a nonautonomous dynamical system. Note that  $(\vect{X}_{0},\vect{T}_{0})$ is the original system $(\vect{X},\vect{T})$. For $k>0$,  we write $\vect{X}'=\vect{X}_{k}=\{X_{j+k}\}_{j=0}^{\infty}$ and $\vect{T}'=\vect{T}_{k}=\{T_{j+k}\}_{j=0}^{\infty}$, and  it is sufficient to consider the new NDS $(\vect{X}',\vect{T}')$ with initial space at index $0$. 

Therefore, in this paper,  we only   focus on the NDS $(\vect{X},\vect{T})$ with initial space starting from $k=0$, that is,  $\vect{X} =\{X_{k}\}_{k=0}^{\infty}$  and $\vect{T} =\{T_k\}_{k=0}^\infty.$
For simplicity, we write $d_{n}^{\vect{T}}=d_{0,n}^{\vect{T}}$,
$$
B_{n}^{\vect{T}}(x,\varepsilon)=B_{0,n}^{\vect{T}}(x,\varepsilon), \qquad
 \overline{B}_{n}^{\vect{T}}(x,\varepsilon)=\overline{B}_{0,n}^{\vect{T}}(x,\varepsilon),
$$
 and
  \begin{equation}\label{eq:sum0nTf}
    S_{n}^{\vect{T}}\vect{f}=S_{0,n}^{\vect{T}}\vect{f}=\sum_{j=0}^{n-1} f_{j} \circ \vect{T}^{j}.
  \end{equation}

  \subsection{Measure-theoretic entropies and pressures}\label{ssect:msrentetpre}
  Given an NDS $(\vect{X},\vect{T})$, let $\mu$ be a Borel probability measure on $X_{0}$. Following the idea of Brin and Katok \cite{Brin&Katok1983}, we define the measure-theoretic entropies and pressures of the NDS $(\vect{X},\vect{T})$. Note that only the measurability of the mappings of $\vect{T}$ is required in the definition of measure-theoretic entropies.

  \begin{defn}\label{def:msrent}
    Given $\mu \in M(X_{0})$ and $x \in X_{0}$, the \emph{measure-theoretic lower}
    and \emph{upper local entropies of $\vect{T}$ with respect to $\mu$ at $x$} are defined respectively by
    $$
    \entlow_{\mu}(\vect{T},x)=\lim_{\varepsilon \to 0}\entlow_{\mu}(\vect{T},x,\varepsilon),\qquad \entup_{\mu}(\vect{T},x)=\lim_{\varepsilon \to 0}\entup_{\mu}(\vect{T},x,\varepsilon),
    $$
    where for all $\varepsilon>0$,
    $$
    \entlow_{\mu}(\vect{T},x,\varepsilon)=\varliminf_{n \to \infty}\frac{-\log{\mu(B_{n}^{\vect{T}}(x,\varepsilon))}}{n},\qquad \entup_{\mu}(\vect{T},x,\varepsilon)=\varlimsup_{n \to \infty}\frac{-\log{\mu(B_{n}^{\vect{T}}(x,\varepsilon))}}{n}.
    $$
Note that we set  $\log{0}=-\infty$ if $\mu(B_{n}^{\vect{T}}(x,\varepsilon))=0$. The \emph{measure-theoretic lower} and \emph{upper entropies of $\vect{T}$ with respect to $\mu$} are
    defined respectively by
    $$\entlow_{\mu}(\vect{T})=\int_{X_{0}}\entlow_{\mu}(\vect{T},x)\dif{\mu(x)}, \qquad \entup_{\mu}(\vect{T})=\int_{X_{0}}\entup_{\mu}(\vect{T},x)\dif{\mu(x)}.$$
  \end{defn}

The measure-theoretic pressures of $\vect{T}$ are generalizations of measure-theoretic entropies, which are  mappings  from $\vect{C}(\vect{X},\mathbb{R})$ to $\mathbb{R}\cup \{\pm\infty\}$.

  \begin{defn}\label{def:msrpre}
    Given $\mu \in M(X_{0})$ and $\vect{f} \in \vect{C}(\vect{X},\mathbb{R})$, for each $x \in X_{0}$, the \emph{measure-theoretic lower} and \emph{upper local pressures of  $\vect{T}$ for $\vect{f}$  with respect to   $\mu$ at $x$}
    are defined respectively by
    $$\prelow_{\mu}(\vect{T},\vect{f},x)=\lim_{\varepsilon \to 0}\prelow_{\mu}(\vect{T},\vect{f},x,\varepsilon),\qquad \preup_{\mu}(\vect{T},\vect{f},x)=\lim_{\varepsilon \to 0}\preup_{\mu}(\vect{T},\vect{f},x,\varepsilon),$$
    where for all $\varepsilon>0$,
    $$\prelow_{\mu}(\vect{T},\vect{f},x,\varepsilon)=\varliminf_{n \to \infty}\frac{-\log{\mu(B_{n}^{\vect{T}}(x,\varepsilon))}+S_{n}^{\vect{T}}\vect{f}(x)}{n},$$
    $$\preup_{\mu}(\vect{T},\vect{f},x,\varepsilon)=\varlimsup_{n \to \infty}\frac{-\log{\mu(B_{n}^{\vect{T}}(x,\varepsilon))}+S_{n}^{\vect{T}}\vect{f}(x)}{n}.$$
 The \emph{measure-theoretic lower} and \emph{upper pressures of $\vect{T}$ for $\vect{f}$  with respect to $\mu$} are  defined respectively by
    $$\prelow_{\mu}(\vect{T},\vect{f})=\int_{X_{0}}\prelow_{\mu}(\vect{T},\vect{f},x)\dif{\mu(x)},\qquad \preup_{\mu}(\vect{T},\vect{f})=\int_{X_{0}}\preup_{\mu}(\vect{T},\vect{f},x)\dif{\mu(x)}.$$
  \end{defn}
As we can see from the definition, the measure-theoretic entropies are a special case of  the measure theoretic pressures, that is,
\begin{equation}\label{hupre}
\entlow_{\mu}(\vect{T})=\prelow_{\mu}(\vect{T},\vect{0}), \qquad \entup_{\mu}(\vect{T})=\preup_{\mu}(\vect{T},\vect{0}).
\end{equation}

Conversely, the measure-theoretic pressures may be represented in terms of entropies under certain conditions.
  \begin{prop}\label{prop:msrpreetent}
    Let  $\vect{f} \in \vect{C}(\vect{X},\mathbb{R})$ and $\mu \in M(X_{0})$.
    For $x \in X_{0}$, suppose additionally that one of the following is true:
    \begin{enumerate}[(a)]
      \item $\entlow_{\mu}(\vect{T},x)=\entup_{\mu}(\vect{T},x)$;
      \item $f_{k} \circ \vect{T}^{k}(x)$ converges to some real number $a \in \R$.
    \end{enumerate}
    Then
            $$\prelow_{\mu}(\vect{T},\vect{f},x)=\entlow_{\mu}(\vect{T},x)+\varliminf_{n \to \infty}\frac{1}{n}S_{n}^{\vect{T}}\vect{f}(x),$$
            $$\preup_{\mu}(\vect{T},\vect{f},x)=\entup_{\mu}(\vect{T},x)+\varlimsup_{n \to \infty}\frac{1}{n}S_{n}^{\vect{T}}\vect{f}(x).$$
    
    Moreover, if either $\entlow_{\mu}(\vect{T},x)=\entup_{\mu}(\vect{T},x)$ for $\mu$-a.e. $x \in X_{0}$ or $f_{k} \circ \vect{T}^{k} \to f$ $\mu$-a.e. for some $f \in L^{1}(X_{0},\R)$, then
            $$\prelow_{\mu}(\vect{T},\vect{f})=\entlow_{\mu}(\vect{T})+\int_{X_{0}}\varliminf_{n \to \infty}\frac{1}{n}S_{n}^{\vect{T}}\vect{f}\dif{\mu},$$
            $$\preup_{\mu}(\vect{T},\vect{f})=\entup_{\mu}(\vect{T})+\int_{X_{0}}\varlimsup_{n \to \infty}\frac{1}{n}S_{n}^{\vect{T}}\vect{f}\dif{\mu}.$$
  \end{prop}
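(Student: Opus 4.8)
The plan is to prove the three pointwise identities first, for a single $x\in X_0$ and under either hypothesis, and then integrate to obtain the last two formulas. (Throughout one uses the conventions $\log 0=-\infty$ and the natural arithmetic in $[-\infty,+\infty]$; the cases where some term is infinite reduce to routine checks, which I suppress below.) First I would fix $x$ and abbreviate $a_n(\varepsilon)=\tfrac{1}{n}\left(-\log\mu(B_n^{\vect{T}}(x,\varepsilon))\right)$ and $b_n=\tfrac{1}{n}S_n^{\vect{T}}\vect{f}(x)$, so that
$$\prelow_\mu(\vect{T},\vect{f},x,\varepsilon)=\varliminf_n(a_n(\varepsilon)+b_n),\qquad \preup_\mu(\vect{T},\vect{f},x,\varepsilon)=\varlimsup_n(a_n(\varepsilon)+b_n),$$
while $\entlow_\mu(\vect{T},x,\varepsilon)=\varliminf_n a_n(\varepsilon)$ and $\entup_\mu(\vect{T},x,\varepsilon)=\varlimsup_n a_n(\varepsilon)$. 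Since $B_n^{\vect{T}}(x,\varepsilon)$ shrinks as $\varepsilon\downarrow 0$, each $a_n(\varepsilon)$, and hence each of these four $\varepsilon$-quantities, is monotone in $\varepsilon$, so the limits $\varepsilon\to0$ defining $\prelow_\mu(\vect{T},\vect{f},x)$, $\preup_\mu(\vect{T},\vect{f},x)$, $\entlow_\mu(\vect{T},x)$, $\entup_\mu(\vect{T},x)$ are suprema over $\varepsilon>0$. Write $\beta_-=\varliminf_n b_n$ and $\beta_+=\varlimsup_n b_n$.

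Case (b) is straightforward: if $f_k\circ\vect{T}^k(x)\to a$, then $b_n=\tfrac{1}{n}\sum_{j=0}^{n-1}f_j\circ\vect{T}^j(x)\to a$ (Cesàro means of a convergent sequence), so $\beta_-=\beta_+=a$ and, for every $\varepsilon$, $\varliminf_n(a_n(\varepsilon)+b_n)=\varliminf_n a_n(\varepsilon)+a$ and $\varlimsup_n(a_n(\varepsilon)+b_n)=\varlimsup_n a_n(\varepsilon)+a$; letting $\varepsilon\to0$ yields exactly the two pointwise identities.

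Case (a), in which I only know $\entlow_\mu(\vect{T},x)=\entup_\mu(\vect{T},x)=:h$, is where the real work lies: one cannot split a $\varliminf$ of a sum, and the hypothesis controls only the $\varepsilon\to0$ limits of the local entropies, not the individual values $\entlow_\mu(\vect{T},x,\varepsilon)$, $\entup_\mu(\vect{T},x,\varepsilon)$ (which may well differ). I would use the elementary inequalities, valid for any real sequences,
\begin{gather*}
\varliminf_n a_n(\varepsilon)+\varliminf_n b_n\ \le\ \varliminf_n(a_n(\varepsilon)+b_n)\ \le\ \varlimsup_n a_n(\varepsilon)+\varliminf_n b_n,\\
\varliminf_n a_n(\varepsilon)+\varlimsup_n b_n\ \le\ \varlimsup_n(a_n(\varepsilon)+b_n)\ \le\ \varlimsup_n a_n(\varepsilon)+\varlimsup_n b_n.
\end{gather*}
The leftmost inequality of the first line and the rightmost of the second give $\entlow_\mu(\vect{T},x,\varepsilon)+\beta_-\le\prelow_\mu(\vect{T},\vect{f},x,\varepsilon)$ and $\preup_\mu(\vect{T},\vect{f},x,\varepsilon)\le\entup_\mu(\vect{T},x,\varepsilon)+\beta_+$, whence, on letting $\varepsilon\to0$ and invoking (a), $h+\beta_-\le\prelow_\mu(\vect{T},\vect{f},x)$ and $\preup_\mu(\vect{T},\vect{f},x)\le h+\beta_+$. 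For the matching bounds I would fix a subsequence $(n_k)$ with $b_{n_k}\to\beta_-$; then for every $\varepsilon$,
$$\prelow_\mu(\vect{T},\vect{f},x,\varepsilon)\le\varliminf_k(a_{n_k}(\varepsilon)+b_{n_k})\le\varlimsup_k a_{n_k}(\varepsilon)+\beta_-\le\entup_\mu(\vect{T},x,\varepsilon)+\beta_-,$$
so $\varepsilon\to0$ and (a) force $\prelow_\mu(\vect{T},\vect{f},x)\le h+\beta_-$; a symmetric estimate along a subsequence with $b_{n_k}\to\beta_+$ gives $\preup_\mu(\vect{T},\vect{f},x,\varepsilon)\ge\varliminf_k a_{n_k}(\varepsilon)+\beta_+\ge\entlow_\mu(\vect{T},x,\varepsilon)+\beta_+$, hence $\preup_\mu(\vect{T},\vect{f},x)\ge h+\beta_+$. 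Combining the two chains finishes case (a). This passage from the hypothesis on the $\varepsilon\to0$ limits back to usable bounds at each fixed $\varepsilon$ — carried out by the subsequence argument together with the monotone limit $\varepsilon\to0$ — is the one genuine obstacle in the proof.

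For the ``moreover'' part I would observe that, under the $\mu$-a.e.\ version of (a), the pointwise identities just proved hold at $\mu$-a.e.\ $x$; and if $f_k\circ\vect{T}^k\to f$ $\mu$-a.e.\ with $f\in L^1(X_0,\R)$, then case (b) applies at $\mu$-a.e.\ $x$ with $\varliminf_n\tfrac{1}{n}S_n^{\vect{T}}\vect{f}(x)=\varlimsup_n\tfrac{1}{n}S_n^{\vect{T}}\vect{f}(x)=f(x)$. All functions appearing — $x\mapsto\prelow_\mu(\vect{T},\vect{f},x)$, $x\mapsto\preup_\mu(\vect{T},\vect{f},x)$, $x\mapsto\entlow_\mu(\vect{T},x)$, $x\mapsto\entup_\mu(\vect{T},x)$ and $x\mapsto\varliminf_n\tfrac{1}{n}S_n^{\vect{T}}\vect{f}(x)$, $x\mapsto\varlimsup_n\tfrac{1}{n}S_n^{\vect{T}}\vect{f}(x)$ — are Borel measurable by standard arguments, so I would then integrate the pointwise identities over $X_0$ against $\mu$ and use linearity of the integral, the hypothesis $f\in L^1$ ensuring finiteness of the relevant integrals in the second case.
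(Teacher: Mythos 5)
Your proof is correct. The paper actually states Proposition \ref{prop:msrpreetent} without giving any proof, so there is nothing to compare against; your argument — Ces\`aro convergence for case (b), the subsequence extraction combined with the monotone limit in $\varepsilon$ for case (a), and then integration using measurability and $f\in L^{1}$ — is a complete and correct version of what the authors evidently regard as routine. The only caveat, which you already flag, is that the identities (and indeed the statement itself) require the usual care when $\entlow_{\mu}(\vect{T},x)=+\infty$ while $\varliminf_n\frac{1}{n}S_n^{\vect{T}}\vect{f}(x)=-\infty$ (or the analogous $\infty-\infty$ situation after integrating); this is a defect of the statement rather than of your proof.
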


  \begin{rmk}
    If $(\vect{X},\vect{T})$ degenerates into a topological dynamical system $(X,T)$,  i.e., $X_{k}=X$ and $T_{k}=T$ for all $k \geq 0$, and if $\mu$ is a $T$-invariant Borel probability measure,  then
    $\entlow_{\mu}(\vect{T},x)=\entup_{\mu}(\vect{T},x)=h_{\mu}(T,x)$
        for $\mu$-a.e. $x \in X$, and
        $$\entlow_{\mu}(\vect{T})=\entup_{\mu}(\vect{T})=h_{\mu}(T),$$
        where $h_{\mu}(T)$ is the classic measure-theoretic entropy; see \cite{Brin&Katok1983, Walters1982} for details.
        In this case, for every $\vect{f}=\{f_k\}_{k=0}^\infty$ such that $f_k=f$ for all $k\geq0 $ where $f \in C(X, \mathbb{R})$,
        combining Proposition \ref{prop:msrpreetent} with Birkhoff's ergodic theorem (see \cite[Thm.1.14]{Walters1982}), we have that
    $$
    \prelow_{\mu}(\vect{T},\vect{f})=\preup_{\mu}(\vect{T},\vect{f})=h_{\mu}(T)+\int_{X}f\dif{\mu}.
    $$
    \end{rmk}

The measure-theoretic entropies and pressures are crucial to the study of topological entropies and pressures, and the following simple facts are frequently used in our proofs. The existence of local pressures and entropies is also guaranteed by these facts.
  \begin{prop}\label{prop:localepsilon}
    Given $\vect{f} \in \vect{C}(\vect{X},\mathbb{R})$,
    \begin{enumerate}[(1)]
      \item For $\varepsilon_{1}>\varepsilon_{2}>0$, we have that
            $$
            \prelow_{\mu}(\vect{T},\vect{f},x,\varepsilon_{1}) \leq \prelow_{\mu}(\vect{T},\vect{f},x,\varepsilon_{2}),\quad \quad \preup_{\mu}(\vect{T},\vect{f},x,\varepsilon_{1}) \leq \preup_{\mu}(\vect{T},\vect{f},x,\varepsilon_{2})
            $$
 for all $x \in X_{0}$. 

      \item For each $x \in X_{0}$, we have that
            $$\prelow_{\mu}(\vect{T},\vect{f},x)=\sup_{\varepsilon > 0}\prelow_{\mu}(\vect{T},\vect{f},x,\varepsilon), \quad   \quad \preup_{\mu}(\vect{T},\vect{f},x)=\sup_{\varepsilon > 0}\preup_{\mu}(\vect{T},\vect{f},x,\varepsilon).$$

    \end{enumerate}
  \end{prop}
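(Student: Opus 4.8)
The plan is to derive both parts from the single observation that Bowen balls are monotone in their radius. For part (1), fix $x \in X_{0}$ and $n \in \mathbb{N}$. Since $\varepsilon_{1}>\varepsilon_{2}>0$, the definition of the $n$-th Bowen metric immediately gives $B_{n}^{\vect{T}}(x,\varepsilon_{2}) \subseteq B_{n}^{\vect{T}}(x,\varepsilon_{1})$, hence $\mu(B_{n}^{\vect{T}}(x,\varepsilon_{2})) \leq \mu(B_{n}^{\vect{T}}(x,\varepsilon_{1}))$ and therefore $-\log\mu(B_{n}^{\vect{T}}(x,\varepsilon_{1})) \leq -\log\mu(B_{n}^{\vect{T}}(x,\varepsilon_{2}))$ in $[0,+\infty]$ (using the convention $\log 0 = -\infty$). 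Adding the common term $S_{n}^{\vect{T}}\vect{f}(x)$ to both sides and dividing by $n$ preserves this inequality, and then passing to $\varliminf_{n\to\infty}$ (respectively $\varlimsup_{n\to\infty}$) yields $\prelow_{\mu}(\vect{T},\vect{f},x,\varepsilon_{1}) \leq \prelow_{\mu}(\vect{T},\vect{f},x,\varepsilon_{2})$ (respectively the analogue for $\preup_{\mu}$).

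For part (2), part (1) says precisely that, for each fixed $x$, the map $\varepsilon \mapsto \prelow_{\mu}(\vect{T},\vect{f},x,\varepsilon)$ is non-increasing on $(0,+\infty)$ with values in $\R\cup\{\pm\infty\}$. A monotone function of a real variable admits a one-sided limit at every boundary point of its domain, so $\lim_{\varepsilon\to 0}\prelow_{\mu}(\vect{T},\vect{f},x,\varepsilon)$ exists in the extended reals; this in particular justifies that the local pressures in Definition~\ref{def:msrpre} are well-defined. Because the map is non-increasing, this limit coincides with $\sup_{\varepsilon>0}\prelow_{\mu}(\vect{T},\vect{f},x,\varepsilon)$, which is the asserted identity. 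The argument for $\preup_{\mu}$ is identical, and the special case $\vect{f}=\vect{0}$ recovers the corresponding statements for the local entropies (and justifies Definition~\ref{def:msrent}).

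There is essentially no obstacle here; the only point requiring a little care is the extended-real arithmetic --- the term $-\log\mu(B_{n}^{\vect{T}}(x,\varepsilon))$ may equal $+\infty$, and since $\vect{f}$ need not be uniformly bounded the average $\frac{1}{n}S_{n}^{\vect{T}}\vect{f}(x)$ may be unbounded in $n$ --- but every inequality above is valid in $[-\infty,+\infty]$, where $\varliminf$, $\varlimsup$ and $\sup$ are always defined, so nothing breaks. Consistently with the remark preceding Definition~\ref{def:msrent}, only measurability of the maps $T_{k}$ is used; no continuity or compactness of $\vect{T}$ is needed.
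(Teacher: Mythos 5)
Your proposal is correct and follows exactly the routine argument the paper intends (the paper states this proposition without proof as a direct consequence of the definitions): monotonicity of Bowen balls in the radius gives part (1), and part (2) is the standard fact that a limit of a function monotone in $\varepsilon$ as $\varepsilon\to 0^{+}$ equals the supremum. Your care with the extended-real conventions and the observation that only measurability of $\vect{T}$ is used are both consistent with the paper's setup.
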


Note that the measure-theoretic entropies have the similar conclusion to Proposition \ref{prop:localepsilon}.

  \subsection{Bowen pressure and entropy}\label{ssect:prebppetentbow}
In \cite{Pesin&Pitskel1984}, Pesin and Pitskel' used an approach similar to Hausdorff measures and dimensions to define a type of pressure on topological dynamical systems, and they studied the relevant variational principle. Inspired by their work, we define a type of topological pressures on nonautonomous dynamical systems by constructing certain Hausdorff measures with  Bowen balls; see \cite{Falconer2014,Rogers1998} for details of Hausdorff measures.

Given an NDS $(\vect{X},\vect{T})$ and a subset $Z \subset X_{0}$, we say that a collection $\{B_{n_{i}}^{\vect{T}}(x_{i},\varepsilon)\}_{i\in \mathcal{I}}$ of Bowen balls is a \textit{$(N,\varepsilon)$-cover of $Z$} if $\bigcup_{i\in \mathcal{I}}B_{n_{i}}^{\vect{T}}(x_{i},\varepsilon) \supseteq Z$ where $n_{i} \geq N$ for each $i \in \mathcal{I}$ .

Given  $\vect{f} \in \vect{C}(\vect{X},\mathbb{R})$ and $s \in \mathbb{R}$, for  reals $N >0$ and $\varepsilon>0$, we define
\begin{equation}\label{eq:defmsrbow}
\msrbow_{N,\varepsilon}^{s}(\vect{T},\vect{f},Z)=\inf\Big\{\sum_{i=1}^{\infty} \exp{\big(-n_{i}s+S_{n_{i}}^{\vect{T}}\vect{f}(x_{i})\big)}  \Big\},
\end{equation}
where the infimum is taken over all countable $(N,\varepsilon)$-covers $\{B_{n_{i}}^{\vect{T}}(x_{i},\varepsilon)\}_{i=1}^\infty$ of $Z$.
Note that $\msrbow_{N,\varepsilon}^{s}(\vect{T},\vect{f}, \cdot)$ is an outer measure on $X_0$ constructed by Method I in \cite{Rogers1998}, and  such measures are not Borel regular. This causes one of the main difficulties in studying the variational principles for the pressures.

Since $\msrbow_{N,\varepsilon}^{s}(\vect{T},\vect{f},Z)$ increases as $N$ tends to $\infty$ for every given $Z \subset X_{0}$, we write
$$
\msrbow_{\varepsilon}^{s}(\vect{T},\vect{f},Z)=\lim_{N \to \infty} \msrbow_{N,\varepsilon}^{s}(\vect{T},\vect{f},Z).
$$
Note that if $t > s$, then $\msrbow_{\varepsilon}^{t}(\vect{T},\vect{f},Z)=0$ whenever $\msrbow_{\varepsilon}^{s}(\vect{T},\vect{f},Z)<\infty$. Thus, there is a critical value of $s$ at which $\msrbow_{\varepsilon}^{s}(\vect{T},\vect{f},Z)$ `jumps' from $\infty$ to $0$. Formally, the critical value is denoted by
\begin{equation}\label{def_PBep}
    \prebow(\vect{T},\vect{f},Z,\varepsilon)=\inf\{s: \msrbow_{\varepsilon}^{s}(\vect{T},\vect{f},Z)=0\}
                                            =\sup\{s: \msrbow_{\varepsilon}^{s}(\vect{T},\vect{f},Z)=+\infty\}.
\end{equation}

The following  facts  are  direct  consequences of  the  definitions.
  \begin{prop}\label{prop:msrbowepsilon}
Given $\vect{f} \in \vect{C}(\vect{X},\mathbb{R})$, $Z \subset X_{0}$ and $s \in \mathbb{R}$, for  $\varepsilon_{1}>\varepsilon_{2}>0$, we have that
    $$
    \msrbow_{N,\varepsilon_{1}}^{s}(\vect{T},\vect{f},Z) \leq \msrbow_{N,\varepsilon_{2}}^{s}(\vect{T},\vect{f},Z)
    $$
    for all $N >0$, and
    $$
    \msrbow_{\varepsilon_{1}}^{s}(\vect{T},\vect{f},Z) \leq \msrbow_{\varepsilon_{2}}^{s}(\vect{T},\vect{f},Z).
    $$
Moreover
    $$\prebow(\vect{T},\vect{f},Z,\varepsilon_{1}) \leq \prebow(\vect{T},\vect{f},Z,\varepsilon_{2}).$$
  \end{prop}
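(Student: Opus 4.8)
The plan is to reduce the entire statement to the one elementary geometric fact that Bowen balls grow with their radius. Concretely, since each $d_{n}^{\vect{T}}$ is a genuine metric on $X_{0}$, whenever $0<\varepsilon_{2}<\varepsilon_{1}$ one has the inclusion
$$
B_{n}^{\vect{T}}(x,\varepsilon_{2}) \subseteq B_{n}^{\vect{T}}(x,\varepsilon_{1})
$$
for every $x \in X_{0}$ and every $n \in \mathbb{N}$ (this is immediate from the definition of $B_{n}^{\vect{T}}$, or from \eqref{eq:bowenballopen}). Hence any countable $(N,\varepsilon_{2})$-cover $\{B_{n_{i}}^{\vect{T}}(x_{i},\varepsilon_{2})\}_{i=1}^{\infty}$ of $Z$ gives rise, by keeping the same centres $x_{i}$ and the same indices $n_{i}$ but inflating each radius to $\varepsilon_{1}$, to a countable $(N,\varepsilon_{1})$-cover $\{B_{n_{i}}^{\vect{T}}(x_{i},\varepsilon_{1})\}_{i=1}^{\infty}$ of $Z$.

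The key observation driving the first two inequalities is that the weight $\exp\bigl(-n_{i}s+S_{n_{i}}^{\vect{T}}\vect{f}(x_{i})\bigr)$ attached to the $i$-th ball in \eqref{eq:defmsrbow} depends only on $s$, the index $n_{i}$, and the centre $x_{i}$, and not on the radius. Thus the inflated cover produced above has exactly the same associated sum as the original $(N,\varepsilon_{2})$-cover. Since the infimum defining $\msrbow_{N,\varepsilon_{1}}^{s}(\vect{T},\vect{f},Z)$ ranges over a family of sums that includes every sum coming from an $(N,\varepsilon_{2})$-cover, I would conclude $\msrbow_{N,\varepsilon_{1}}^{s}(\vect{T},\vect{f},Z) \leq \msrbow_{N,\varepsilon_{2}}^{s}(\vect{T},\vect{f},Z)$ for every $N>0$, and then pass to the limit $N \to \infty$ (using $\msrbow_{\varepsilon}^{s}=\lim_{N\to\infty}\msrbow_{N,\varepsilon}^{s}$, which preserves $\leq$) to get $\msrbow_{\varepsilon_{1}}^{s}(\vect{T},\vect{f},Z) \leq \msrbow_{\varepsilon_{2}}^{s}(\vect{T},\vect{f},Z)$. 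Finally, for the pressures I would argue straight from this last inequality: if $s$ satisfies $\msrbow_{\varepsilon_{2}}^{s}(\vect{T},\vect{f},Z)=0$ then also $\msrbow_{\varepsilon_{1}}^{s}(\vect{T},\vect{f},Z)=0$, so the set $\{s:\msrbow_{\varepsilon_{2}}^{s}(\vect{T},\vect{f},Z)=0\}$ is contained in $\{s:\msrbow_{\varepsilon_{1}}^{s}(\vect{T},\vect{f},Z)=0\}$; taking infima and invoking \eqref{def_PBep} yields $\prebow(\vect{T},\vect{f},Z,\varepsilon_{1}) \leq \prebow(\vect{T},\vect{f},Z,\varepsilon_{2})$.

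There is no substantial obstacle in this proof; it is purely formal monotonicity. The only point that genuinely needs to be stated carefully — and the step I would flag as the subtle one — is the radius-independence of the weights in \eqref{eq:defmsrbow}: it is precisely because enlarging the radii of the balls in a cover changes neither the covering property nor the value of the sum that the infimum is genuinely monotone in $\varepsilon$ rather than merely comparable term by term. Everything else is a routine passage to limits and infima.
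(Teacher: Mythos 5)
Your argument is correct and is precisely the intended one: the paper states this proposition without proof as a ``direct consequence of the definitions,'' and the content of that remark is exactly your observation that enlarging the radius preserves both the covering property and the weight $\exp\bigl(-n_{i}s+S_{n_{i}}^{\vect{T}}\vect{f}(x_{i})\bigr)$, so the infimum for $\varepsilon_{1}$ is taken over a class of sums containing all those arising from $(N,\varepsilon_{2})$-covers. The passages to the limit in $N$ and to the critical exponent via \eqref{def_PBep} are handled correctly as well.
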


By the monotonicity of $\prebow(\vect{T},\vect{f},Z,\varepsilon)$ with respect to  $\varepsilon$, we are able to define the Bowen type of topological pressure on nonautonomous dynamical systems.
  \begin{defn}\label{def:prebpp}
    Given an NDS $(\vect{X},\vect{T})$, $\vect{f} \in \vect{C}(\vect{X},\mathbb{R})$  and $Z \subset X_{0}$, we call
$$
\prebow(\vect{T},\vect{f},Z)=\lim_{\varepsilon \to 0}\prebow(\vect{T},\vect{f},Z,\varepsilon)
$$
 the \emph{Bowen-Pesin-Pitskel' topological pressure} (\emph{Bowen pressure} for short) \emph{of $\vect{T}$ for $\vect{f}$ on $Z$}. We call
$$
\enttopbow(\vect{T},Z)=\prebow(\vect{T},\vect{0},Z)
$$
the \emph{Bowen topological entropy} (\emph{Bowen entropy} for short)  \emph{of $\vect{T}$  on $Z$},   where $\vect{0}$ is the sequence of zero functions.
  \end{defn}

The following Billingsley type theorem reveals the connection between the measure-theoretic lower local pressures and the Bowen pressure, which is also very useful in the study of variational principle for the Bowen pressure.
  \begin{thm}\label{thm:btypethmbowen}
    Given an NDS $(\vect{X},\vect{T})$, a Borel  $E\subset X_{0}$ and $s \in \mathbb{R}$,
let $\mu$ be a Borel probability measure on $X_{0}$ and  $\vect{f} \in \vect{C}(\vect{X},\mathbb{R})$.
    \begin{enumerate}[(1)]
      \item If $\prelow_{\mu}(\vect{T},\vect{f},x) \leq s$ for all $x \in E$, then $\prebow(\vect{T},\vect{f},E) \leq s$;
      \item If $\vect{f}$ is equicontinuous and  $\prelow_{\mu}(\vect{T},\vect{f},x) \geq s$ for all $x \in E$ and $\mu(E)>0$, then $\prebow(\vect{T},\vect{f},E) \geq s$.
    \end{enumerate}
  \end{thm}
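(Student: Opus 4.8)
The plan is to run the classical Billingsley argument, adapted to the nonautonomous pressure setting, keeping in mind two new features: the Bowen balls $B_{n}^{\vect{T}}(x,\varepsilon)$ in the definition \eqref{eq:defmsrbow} of $\msrbow$ are balls in the metrics $d_{n}^{\vect{T}}$, which vary with $n$ (but satisfy $d_{m}^{\vect{T}}\le d_{n}^{\vect{T}}$ for $m\le n$), and the potential sums $S_{n}^{\vect{T}}\vect{f}$ must be transported between nearby points, which is exactly where the equicontinuity of $\vect{f}$ enters in part~(2). I would first record a \emph{Vitali-type covering lemma for Bowen balls}: for any family $\{B_{n_x}^{\vect{T}}(x,r)\}_{x\in A}$ (one ball per point, $n_x\ge N$) there is a countable pairwise disjoint subfamily $\{B_{n_i}^{\vect{T}}(x_i,r)\}_i$ with $A\subseteq\bigcup_i B_{n_i}^{\vect{T}}(x_i,3r)$. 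This is proven by processing the indices $n=N,N+1,\ldots$ in increasing order, selecting at each step a maximal disjoint subcollection among the still-available balls of that index (countable, since $X_0$ is second countable), and observing via $d_{m}^{\vect{T}}\le d_{n}^{\vect{T}}$ ($m\le n$) that every discarded ball lies in the $3r$-dilate of a selected one.

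For part~(1), I would fix $s'>s$ and $\varepsilon>0$ and aim at $\msrbow_{\varepsilon}^{s'}(\vect{T},\vect{f},E)\le 1<\infty$; by the remark preceding \eqref{def_PBep} this forces $\msrbow_{\varepsilon}^{t}(\vect{T},\vect{f},E)=0$ for all $t>s'$, hence $\prebow(\vect{T},\vect{f},E,\varepsilon)\le s'$, and letting $s'\downarrow s$ and then $\varepsilon\downarrow 0$ (Proposition~\ref{prop:msrbowepsilon} and Definition~\ref{def:prebpp}) gives $\prebow(\vect{T},\vect{f},E)\le s$. Put $r=\varepsilon/3$. By Proposition~\ref{prop:localepsilon}(2), $\prelow_{\mu}(\vect{T},\vect{f},x,r)\le\prelow_{\mu}(\vect{T},\vect{f},x)\le s<s'$ for every $x\in E$, so for each $N\in\N$ there is $n_x\ge N$ with $\mu(B_{n_x}^{\vect{T}}(x,r))>\exp(-n_xs'+S_{n_x}^{\vect{T}}\vect{f}(x))>0$. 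The covering lemma applied to $\{B_{n_x}^{\vect{T}}(x,r)\}_{x\in E}$ gives a disjoint subfamily whose $3r$-dilates form an $(N,\varepsilon)$-cover $\{B_{n_i}^{\vect{T}}(x_i,\varepsilon)\}_i$ of $E$, so
$$
\msrbow_{N,\varepsilon}^{s'}(\vect{T},\vect{f},E)\le\sum_i\exp\big(-n_is'+S_{n_i}^{\vect{T}}\vect{f}(x_i)\big)<\sum_i\mu\big(B_{n_i}^{\vect{T}}(x_i,r)\big)\le\mu(X_0)=1
$$
by disjointness; letting $N\to\infty$ finishes part~(1). Note that this half uses neither equicontinuity nor $\mu(E)>0$.

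For part~(2), I would fix $s'<s$ and $\eta>0$ and aim at $\prebow(\vect{T},\vect{f},E)\ge s'-\eta$, which suffices after $\eta\downarrow 0$, $s'\uparrow s$. A double exhaustion produces a good core: since $\prelow_{\mu}(\vect{T},\vect{f},x)=\sup_{\varepsilon>0}\prelow_{\mu}(\vect{T},\vect{f},x,\varepsilon)\ge s>s'$ on $E$, the Borel sets $\{x\in E:\prelow_{\mu}(\vect{T},\vect{f},x,1/m)>s'\}$ increase to $E$, so $\mu(E^{*})>0$ for $E^{*}:=\{x\in E:\prelow_{\mu}(\vect{T},\vect{f},x,\varepsilon^{*})>s'\}$ and some $\varepsilon^{*}=1/m>0$; inside $E^{*}$ each $x$ admits $N(x)$ with $\mu(B_n^{\vect{T}}(x,\varepsilon^{*}))<\exp(-ns'+S_n^{\vect{T}}\vect{f}(x))$ for all $n\ge N(x)$, and exhausting over $N(x)\le N_0$ yields a Borel $E_0\subseteq E$ with $\gamma:=\mu(E_0)>0$ on which
$$
\mu\big(B_n^{\vect{T}}(x,\varepsilon^{*})\big)<\exp\big(-ns'+S_n^{\vect{T}}\vect{f}(x)\big)\qquad\text{for all }x\in E_0,\ n\ge N_0
$$
(measurability of $x\mapsto\mu(B_n^{\vect{T}}(x,\varepsilon))$ comes from openness of $\{(x,y):d_n^{\vect{T}}(x,y)<\varepsilon\}$ and continuity of $S_n^{\vect{T}}\vect{f}$). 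Next, by equicontinuity pick $\delta>0$ with $d_k(x',x'')<\delta\Rightarrow|f_k(x')-f_k(x'')|<\eta$ for all $k$, and set $\varepsilon_1=\min(\delta,\varepsilon^{*}/2)$. For $\varepsilon<\varepsilon_1$, $N\ge N_0$, and any $(N,\varepsilon)$-cover $\{B_{n_i}^{\vect{T}}(y_i,\varepsilon)\}_i$ of $E_0$, I would discard balls disjoint from $E_0$, pick $z_i\in B_{n_i}^{\vect{T}}(y_i,\varepsilon)\cap E_0$, note $d_j(\vect{T}^j y_i,\vect{T}^j z_i)<\varepsilon$ for $0\le j<n_i$, and deduce both $B_{n_i}^{\vect{T}}(y_i,\varepsilon)\subseteq B_{n_i}^{\vect{T}}(z_i,2\varepsilon)\subseteq B_{n_i}^{\vect{T}}(z_i,\varepsilon^{*})$ and $|S_{n_i}^{\vect{T}}\vect{f}(y_i)-S_{n_i}^{\vect{T}}\vect{f}(z_i)|<n_i\eta$; hence
$$
\mu\big(B_{n_i}^{\vect{T}}(y_i,\varepsilon)\big)\le\mu\big(B_{n_i}^{\vect{T}}(z_i,\varepsilon^{*})\big)<\exp\big(-n_is'+S_{n_i}^{\vect{T}}\vect{f}(z_i)\big)<\exp\big(-n_i(s'-\eta)+S_{n_i}^{\vect{T}}\vect{f}(y_i)\big).
$$
Summing over $i$ and using that the balls cover $E_0$ gives $\sum_i\exp(-n_i(s'-\eta)+S_{n_i}^{\vect{T}}\vect{f}(y_i))\ge\mu(E_0)=\gamma$, so $\msrbow_{N,\varepsilon}^{s'-\eta}(\vect{T},\vect{f},E_0)\ge\gamma$ for all $N\ge N_0$, whence $\msrbow_{\varepsilon}^{s'-\eta}(\vect{T},\vect{f},E_0)\ge\gamma>0$, so $\prebow(\vect{T},\vect{f},E,\varepsilon)\ge\prebow(\vect{T},\vect{f},E_0,\varepsilon)\ge s'-\eta$; taking the supremum over $\varepsilon<\varepsilon_1$ finishes part~(2).

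The main obstacle is part~(2): the two nested exhaustions, needed to pin down a single radius $\varepsilon^{*}$ and a single threshold $N_0$ that work simultaneously on a set of positive measure, and then the reconciliation of an arbitrary covering ball of radius $\varepsilon$ with the fixed $\varepsilon^{*}$ by recentering at $z_i\in E_0$ — this is harmless for the measure but shifts $S_{n_i}^{\vect{T}}\vect{f}$ by as much as $n_i\eta$, and it is precisely the equicontinuity of $\vect{f}$ that keeps this shift linear in $n_i$ with an arbitrarily small slope. In part~(1) the only nonroutine input is the Vitali-type covering lemma for Bowen balls, which rests only on $d_{m}^{\vect{T}}\le d_{n}^{\vect{T}}$ for $m\le n$.
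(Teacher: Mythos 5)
Your proposal is correct and follows essentially the same strategy as the paper: part (1) is the paper's argument almost verbatim (a $3r$-type covering lemma for Bowen balls, here Lemma \ref{dn3rcovlem}, to extract a disjoint subfamily of positive-measure balls whose dilates cover $E$, then the bound $\sum_i \mu(B_{n_i}^{\vect{T}}(x_i,r))\le 1$), and part (2) uses the same double exhaustion to a set $E_0$ of positive measure with uniform $\varepsilon^{*}$ and $N_0$, followed by recentering an arbitrary cover at points of $E_0$. The only deviation is in how equicontinuity enters part (2): the paper passes through the auxiliary quantity $\msrbpp$ (with suprema of $S_{n}^{\vect{T}}\vect{f}$ over the balls) and invokes Proposition \ref{equivdefprebpplem} to return to $\prebow$, whereas you apply equicontinuity directly to bound $\lvert S_{n_i}^{\vect{T}}\vect{f}(y_i)-S_{n_i}^{\vect{T}}\vect{f}(z_i)\rvert$ by $n_i\eta$ and absorb the loss into the exponent $s'-\eta$; this is a harmless, slightly more self-contained variant of the same estimate.
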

Since the Bowen entropy is given by the pressure for $\vect{f}=\vect{0}$, we immediately have a Billingsley type conclusion on the Bowen entropy.
  \begin{cor}\label{cor:btypethmbowen}
    Given an NDS $(\vect{X},\vect{T})$,  a Borel  $E\subset X_{0}$  and $s > 0$, let $\mu$ be a Borel probability measure on $X_{0}$.
    \begin{enumerate}[(1)]
      \item If $\entlow_{\mu}(\vect{T},x) \leq s$ for all $x \in E$, then $\enttopbow(\vect{T},E) \leq s$;
      \item If $\entlow_{\mu}(\vect{T},x) \geq s$ for all $x \in E$ and $\mu(E)>0$, then $\enttopbow(\vect{T},E) \geq s$.
    \end{enumerate}
  \end{cor}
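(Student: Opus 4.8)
The plan is to deduce Corollary \ref{cor:btypethmbowen} directly from Theorem \ref{thm:btypethmbowen} by specializing the potential to the zero sequence $\vect{f}=\vect{0}$. The first step is the elementary observation that the measure-theoretic lower local entropy is exactly the measure-theoretic lower local pressure for the zero potential: since $S_{n}^{\vect{T}}\vect{0}\equiv 0$ by \eqref{eq:sum0nTf}, the $\varepsilon$-approximants satisfy $\prelow_{\mu}(\vect{T},\vect{0},x,\varepsilon)=\entlow_{\mu}(\vect{T},x,\varepsilon)$ for every $x\in X_{0}$ and every $\varepsilon>0$, and letting $\varepsilon\to 0$ gives $\prelow_{\mu}(\vect{T},\vect{0},x)=\entlow_{\mu}(\vect{T},x)$ (this is the pointwise form of \eqref{hupre}). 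Likewise, by Definition \ref{def:prebpp} we have $\enttopbow(\vect{T},E)=\prebow(\vect{T},\vect{0},E)$.

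For part (1), I would rewrite the hypothesis $\entlow_{\mu}(\vect{T},x)\le s$ for all $x\in E$ as $\prelow_{\mu}(\vect{T},\vect{0},x)\le s$ for all $x\in E$, and then apply Theorem \ref{thm:btypethmbowen}(1) — which carries no equicontinuity requirement — to conclude $\prebow(\vect{T},\vect{0},E)\le s$, i.e.\ $\enttopbow(\vect{T},E)\le s$. For part (2), the key remark is that the constant sequence $\vect{0}$ of zero functions is trivially equicontinuous, so the equicontinuity hypothesis of Theorem \ref{thm:btypethmbowen}(2) is automatically met; translating $\entlow_{\mu}(\vect{T},x)\ge s$ for all $x\in E$ into $\prelow_{\mu}(\vect{T},\vect{0},x)\ge s$ for all $x\in E$ and invoking Theorem \ref{thm:btypethmbowen}(2) together with $\mu(E)>0$ then gives $\prebow(\vect{T},\vect{0},E)\ge s$, i.e.\ $\enttopbow(\vect{T},E)\ge s$. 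The restriction $s>0$ in the corollary is only stronger than the $s\in\mathbb{R}$ permitted in the theorem, so it poses no issue.

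I do not anticipate any real obstacle here: the corollary is a routine specialization of Theorem \ref{thm:btypethmbowen}. The only point that deserves a moment's care is to make the identification $\prelow_{\mu}(\vect{T},\vect{0},\cdot)=\entlow_{\mu}(\vect{T},\cdot)$ at the level of the $\varepsilon$-approximants \emph{before} passing to the limit in $\varepsilon$, and to note explicitly that equicontinuity of $\vect{0}$ is what licenses the use of part (2) of the theorem; both are immediate.
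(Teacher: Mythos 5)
Your proposal is correct and matches the paper exactly: the corollary is obtained by specializing Theorem \ref{thm:btypethmbowen} to $\vect{f}=\vect{0}$, using that $\prelow_{\mu}(\vect{T},\vect{0},x)=\entlow_{\mu}(\vect{T},x)$ and $\prebow(\vect{T},\vect{0},E)=\enttopbow(\vect{T},E)$, with the zero sequence trivially equicontinuous for part (2). The paper treats this as immediate and gives no further argument, so your slightly more explicit write-up is entirely in line with it.
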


Next, we state our first main conclusion on the variational principle for the Bowen pressure.
  \begin{thm}\label{thm:varprinciplebowen}
      Given an NDS $(\vect{X},\vect{T})$ and a non-empty compact $K \subset X_{0}$, let $\vect{f} \in \vect{C}(\vect{X},\mathbb{R})$ be equicontinuous. Then
      $$\prebow(\vect{T},\vect{f},K)=\sup\{\prelow_{\mu}(\vect{T},\vect{f}): \mu \in M(X_{0}), \mu(K)=1\}.$$
  \end{thm}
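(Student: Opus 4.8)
The plan is to prove the two inequalities separately: the bound $\prebow(\vect{T},\vect{f},K)\ge\sup\{\prelow_{\mu}(\vect{T},\vect{f}):\mu(K)=1\}$ follows quickly from the Billingsley type Theorem~\ref{thm:btypethmbowen}, whereas the reverse bound requires constructing a suitably spread-out measure on $K$, adapting the scheme of Feng and Huang~\cite{Feng&Huang2012} and its pressure refinements~\cite{NazarianSarkooh2024,Tang&Cheng&Zhao2015,Zhong&Chen2023} to the nonautonomous setting. For the first inequality: fix $\mu\in M(X_{0})$ with $\mu(K)=1$ and set $c:=\prebow(\vect{T},\vect{f},K)$, which we may assume is finite. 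It is enough to show $\prelow_{\mu}(\vect{T},\vect{f},x)\le c$ for $\mu$-a.e.\ $x$, since then $\prelow_{\mu}(\vect{T},\vect{f})=\int_{X_{0}}\prelow_{\mu}(\vect{T},\vect{f},x)\,\dif\mu\le c$. If this failed, then for some $n\in\N$ the set $E_{n}=\{x\in K:\prelow_{\mu}(\vect{T},\vect{f},x)\ge c+1/n\}$ — Borel by the measurability of the local pressure established in Section~\ref{sect:proptoppreetent} — would satisfy $\mu(E_{n})>0$; since $\vect{f}$ is equicontinuous, Theorem~\ref{thm:btypethmbowen}(2) would give $\prebow(\vect{T},\vect{f},E_{n})\ge c+1/n$, contradicting $\prebow(\vect{T},\vect{f},E_{n})\le\prebow(\vect{T},\vect{f},K)=c$ by monotonicity of the Bowen pressure in the set.

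For the reverse inequality it suffices to fix an arbitrary finite $s<\prebow(\vect{T},\vect{f},K)$, produce $\mu\in M(X_{0})$ with $\mu(K)=1$ and $\prelow_{\mu}(\vect{T},\vect{f})\ge s$, and then let $s\uparrow\prebow(\vect{T},\vect{f},K)$. Pick $s<s'<\prebow(\vect{T},\vect{f},K)$. Since $\prebow(\vect{T},\vect{f},K,\varepsilon)$ increases to $\prebow(\vect{T},\vect{f},K)$ as $\varepsilon\downarrow0$ (Proposition~\ref{prop:msrbowepsilon} and Definition~\ref{def:prebpp}), and invoking equicontinuity of $\vect{f}$, I would fix $\varepsilon>0$ so small that both $\prebow(\vect{T},\vect{f},K,\varepsilon)>s'$ and $\lvert S_{n}^{\vect{T}}\vect{f}(y)-S_{n}^{\vect{T}}\vect{f}(x)\rvert<n(s'-s)$ for all $n$ whenever $y\in B_{n}^{\vect{T}}(x,\varepsilon)$. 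By~\eqref{def_PBep}, $\prebow(\vect{T},\vect{f},K,\varepsilon)>s'$ forces $\msrbow_{\varepsilon}^{s'}(\vect{T},\vect{f},K)=+\infty$, so $\msrbow_{N,\varepsilon}^{s'}(\vect{T},\vect{f},K)\to+\infty$ as $N\to\infty$: for every $N$, the set $K$ admits no $(N,\varepsilon)$-cover of small $\vect{f}$-weighted cost.

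From this absence of efficient covers I would extract, by a greedy combinatorial selection of Bowen balls — a Frostman-lemma-type argument, the classical lemma not being directly applicable since, as remarked after~\eqref{eq:defmsrbow}, $\msrbow_{N,\varepsilon}^{s}$ is a non-Borel-regular Method~I construction — a sequence of finitely supported probability measures $\nu_{N}$ on $K$ and a constant $C>0$ (independent of $N$) with $\nu_{N}(B_{n}^{\vect{T}}(x,\varepsilon))\le C\exp(-ns'+S_{n}^{\vect{T}}\vect{f}(x))$ for all $x\in X_{0}$ and $n\ge N$; here the equicontinuity of $\vect{f}$ is exactly what lets one transfer the $\vect{f}$-weights between neighbouring centers of a shared Bowen ball. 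Taking a weak-$*$ limit $\mu$ of a subsequence of $(\nu_{N})$, the compactness of $K$ together with the Portmanteau theorem yields $\mu(K)=1$ and, since Bowen balls are open, the bound
$$
\mu\bigl(B_{n}^{\vect{T}}(x,\varepsilon)\bigr)\le\liminf_{N\to\infty}\nu_{N}\bigl(B_{n}^{\vect{T}}(x,\varepsilon)\bigr)\le C\exp\!\bigl(-ns'+S_{n}^{\vect{T}}\vect{f}(x)\bigr)
$$
for $\mu$-a.e.\ $x$ and all sufficiently large $n$. Hence, by Proposition~\ref{prop:localepsilon}, for $\mu$-a.e.\ $x$,
$$
\prelow_{\mu}(\vect{T},\vect{f},x)\ge\prelow_{\mu}(\vect{T},\vect{f},x,\varepsilon)=\varliminf_{n\to\infty}\frac{-\log\mu\bigl(B_{n}^{\vect{T}}(x,\varepsilon)\bigr)+S_{n}^{\vect{T}}\vect{f}(x)}{n}\ge\varliminf_{n\to\infty}\frac{ns'-\log C}{n}=s',
$$
so $\prelow_{\mu}(\vect{T},\vect{f})=\int_{X_{0}}\prelow_{\mu}(\vect{T},\vect{f},x)\,\dif\mu\ge s'>s$, as required.

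\textbf{Main obstacle.} I expect the combinatorial extraction of the $\nu_{N}$ — with a Frostman bound \emph{uniform over all centers $x\in X_{0}$}, not just the supports — together with the proof that this bound survives the weak-$*$ limit (the careful passage between open and closed Bowen balls and comparable radii, and from ``$n\ge N$'' to ``$n$ large'') to be the technical heart of the argument; the equicontinuity of $\vect{f}$ is indispensable there for controlling $S_{n}^{\vect{T}}\vect{f}$ across Bowen balls. Since the nonautonomous setting provides no invariant measure or subadditive cocycle to lean on, every estimate must be carried out using only equicontinuity of $\vect{f}$ and compactness of the $X_{k}$, and I would verify line by line that the arguments of~\cite{Feng&Huang2012,NazarianSarkooh2024} use nothing beyond these two features.
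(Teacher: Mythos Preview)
Your first inequality is correct and essentially identical to the paper's Lemma~\ref{lem:varneqprebow}: both invoke Theorem~\ref{thm:btypethmbowen}(2) on a positive-measure set where the local lower pressure exceeds a given threshold.

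For the reverse inequality, however, your weak-$*$ limit step contains a genuine gap. You propose finitely supported measures $\nu_{N}$ satisfying $\nu_{N}(B_{n}^{\vect{T}}(x,\varepsilon))\le C\exp(-ns'+S_{n}^{\vect{T}}\vect{f}(x))$ for all $n\ge N$, and then claim that a weak-$*$ limit $\mu$ inherits this bound via Portmanteau. But Portmanteau gives $\mu(B_{n}^{\vect{T}}(x,\varepsilon))\le\liminf_{N\to\infty}\nu_{N}(B_{n}^{\vect{T}}(x,\varepsilon))$ for each \emph{fixed} $n$, and for fixed $n$ the Frostman bound on $\nu_{N}$ is only asserted for $N\le n$ --- that is, for at most finitely many terms of the sequence. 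As $N\to\infty$ the bound says nothing, so the liminf need not be controlled by $C\exp(-ns'+S_{n}^{\vect{T}}\vect{f}(x))$. No diagonal or Borel--Cantelli argument repairs this, because the restriction ``$n\ge N$'' is in the wrong direction relative to the limit $N\to\infty$.

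The paper sidesteps this entirely by avoiding weak-$*$ limits. It introduces the weighted quantity $\mathscr{W}_{N,\varepsilon}^{s}(\vect{T},\vect{f},\cdot)$ of Section~\ref{ssect:equivdefbw}, proves (Proposition~\ref{prop:equivprebwetprebpp}) that the associated critical value $\prebw$ coincides with $\prebow$ for equicontinuous $\vect{f}$, and then --- for a single fixed pair $(N,\varepsilon)$ with $\mathscr{W}_{N,\varepsilon}^{s}(\vect{T},\vect{f},K)>0$ --- constructs the Frostman measure $\mu$ in one shot via Hahn--Banach and the Riesz representation theorem (Lemma~\ref{dynflemw}, following Howroyd~\cite{Howroyd1995}). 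The resulting $\mu$ satisfies the bound for \emph{all} $n\ge N$ simultaneously, which is exactly what you need to estimate $\prelow_{\mu}(\vect{T},\vect{f},x,\varepsilon)$. Your ``greedy combinatorial selection'' would have to produce such a measure directly for fixed $N$, not as a limit; but Bowen balls lack the dyadic structure that makes the combinatorial Frostman argument work in Euclidean space, which is precisely why the paper (and Feng--Huang~\cite{Feng&Huang2012} before it) resorts to the functional-analytic route.
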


The following  variational principle for the Bowen entropy is a direct consequence of Theorem \ref{thm:varprinciplebowen}.
  \begin{cor}\label{cor:varprinciplebowen}
    Given an NDS $(\vect{X},\vect{T})$ and a non-empty compact  $K \subset X_{0}$,
    $$\enttopbow(\vect{T},K)=\sup\{\entlow_{\mu}(\vect{T}): \mu \in M(X_{0}), \mu(K)=1\}.$$
  \end{cor}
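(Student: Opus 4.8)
The plan is to derive this corollary directly from Theorem~\ref{thm:varprinciplebowen} by specializing the potential to the zero sequence $\vect{0} \in \vect{C}(\vect{X},\mathbb{R})$. First I would record the trivial observation that $\vect{0}$ is equicontinuous: given $\varepsilon > 0$, any $\delta > 0$ works, since $\lvert 0 - 0 \rvert = 0 < \varepsilon$ for all $k \in \mathbb{N}$ and all $x^{\prime},x^{\prime\prime} \in X_k$. Hence, for any non-empty compact $K \subset X_0$, the hypotheses of Theorem~\ref{thm:varprinciplebowen} are met with $\vect{f} = \vect{0}$, yielding
$$
\prebow(\vect{T},\vect{0},K)=\sup\{\prelow_{\mu}(\vect{T},\vect{0}): \mu \in M(X_{0}), \mu(K)=1\}.
$$

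Next I would invoke the two definitional identities already established in the paper to rewrite both sides. By Definition~\ref{def:prebpp}, $\enttopbow(\vect{T},Z) = \prebow(\vect{T},\vect{0},Z)$ for every $Z \subset X_0$, so the left-hand side above equals $\enttopbow(\vect{T},K)$. On the measure-theoretic side, the identity \eqref{hupre} gives $\prelow_{\mu}(\vect{T},\vect{0}) = \entlow_{\mu}(\vect{T})$ for every $\mu \in M(X_0)$. Substituting both identities into the displayed equality produces
$$
\enttopbow(\vect{T},K)=\sup\{\entlow_{\mu}(\vect{T}): \mu \in M(X_{0}), \mu(K)=1\},
$$
which is exactly the assertion of the corollary.

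I do not anticipate any real obstacle: the entire content is carried by Theorem~\ref{thm:varprinciplebowen}, and the corollary is obtained purely by the substitution $\vect{f} = \vect{0}$ together with the bookkeeping identities relating the Bowen pressure and the measure-theoretic lower pressure for the zero potential to the Bowen entropy and the measure-theoretic lower entropy, respectively. The only point worth writing out explicitly is the (immediate) verification that the zero sequence is equicontinuous, which is what licenses the application of the theorem.
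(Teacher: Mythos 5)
Your proof is correct and matches the paper's intent exactly: the paper presents this corollary as a direct consequence of Theorem~\ref{thm:varprinciplebowen}, and your argument simply spells out the specialization to $\vect{f}=\vect{0}$ together with the identities $\enttopbow(\vect{T},K)=\prebow(\vect{T},\vect{0},K)$ and $\prelow_{\mu}(\vect{T},\vect{0})=\entlow_{\mu}(\vect{T})$ from \eqref{hupre}. Nothing is missing.
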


  \subsection{Packing  pressure and entropy}\label{ssect:prepacetentpac}
 Coverings and packings play a dual role in many areas of mathematics.
Bowen pressure is defined in terms of Hausdorff measures by using covers, and there is another typical measure construction by using packings, namely, packing measure,
that is in a sense `dual' to Hausdorff measure. Hausdorff and  packing measures and dimensions are fundamental concepts in fractal geometry \cite{Falconer2014,Tricot1982}.
Thus, it is natural to look for a type of pressure defined in terms of `packings' by large collections of disjoint Bowen balls of large iterations and small
radii with centres in the set under consideration. Such analogues of entropy and pressure have been considered on topological dynamical systems; see \cite{Feng&Huang2012,Wang&Chen2012,Zhong&Chen2023}. In this subsection, we extend these notions to nonautonomous dynamical systems.

Given an NDS $(\vect{X},\vect{T})$ and a subset $Z \subset X_{0}$, we say that a collection $\{\overline{B}_{n_{i}}^{\vect{T}}(x_{i},\varepsilon)\}_{i\in \mathcal{I}}$ of closed Bowen balls is a \textit{$(N,\varepsilon)$-packing of $Z$} if $\{\overline{B}_{n_{i}}^{\vect{T}}(x_{i},\varepsilon)\}_{i\in \mathcal{I}}$ is disjoint where $x_{i} \in Z$ and $n_{i} \geq N$ for all $i \in \mathcal{I}$.

Given $\vect{f} \in \vect{C}(\vect{X},\R)$ and $s \in \mathbb{R}$, for reals $N>0$ and $\varepsilon>0$, we define
  \begin{equation}\label{eq:defmsrPack}
      \msrpac_{N,\varepsilon}^{s}(\vect{T},\vect{f},Z)=\sup\Big\{\sum_{i=1}^{\infty}\exp{\left(-n_{i}s+S_{n_{i}}^{\vect{T}}\vect{f}(x_{i})\right)} \Big\},
  \end{equation}
where the supremum is taken over all countable $(N,\varepsilon)$-packings $\{\overline{B}_{n_{i}}^{\vect{T}}(x_{i},\varepsilon)\}_{i=1}^\infty$ of $Z$. Since $\msrpac_{N,\varepsilon}^{s}(\vect{T},\vect{f},Z)$ is non-increasing as $N$ tends to $\infty$,
 we write
$$
\msrpac_{\infty,\varepsilon}^{s}(\vect{T},\vect{f},Z)=\lim_{N \to \infty} \msrpac_{N,\varepsilon}^{s}(\vect{T},\vect{f},Z).
$$
Note that $\msrpac_{\infty,\varepsilon}^{s}$ is not a measure, of which the problem is similar to that encountered with the classic packing measures; see \cite{Falconer2014}. Hence, we modify the definition by decomposing $Z$ into a countable collection of sets and define
\begin{equation}\label{def_pes}
\msrpac_{\varepsilon}^{s}(\vect{T},\vect{f},Z)=\inf\Big\{\sum_{i=1}^{\infty}\msrpac_{\infty,\varepsilon}^{s}(\vect{T},\vect{f},Z_{i}): \bigcup_{i=1}^{\infty}Z_{i} \supseteq Z\Big\}.
\end{equation}
Similarly, we denote the jump value of $s$  by
\begin{equation}\label{def_PP}
\prepac(\vect{T},\vect{f},Z,\varepsilon) =\inf\{s:\msrpac_{\varepsilon}^{s}(\vect{T},\vect{f},Z)=0\}
                         =\sup\{s: \msrpac_{\varepsilon}^{s}(\vect{T},\vect{f},Z)=+\infty\}.
\end{equation}

The following monotone properties are straightforward from the definitions.
  \begin{prop}\label{prop:msrpacepsilon}
    Given $\vect{f} \in \vect{C}(\vect{X},\mathbb{R})$, $Z \subset X_{0}$ and $s \in \mathbb{R}$, for $\varepsilon_{1}>\varepsilon_{2}>0$, we have that
    $$
    \msrpac_{N,\varepsilon_{1}}^{s}(\vect{T},\vect{f},Z) \leq \msrpac_{N,\varepsilon_{2}}^{s}(\vect{T},\vect{f},Z)
    $$
for all $N >0$, and
    $$
    \msrpac_{\varepsilon_{1}}^{s}(\vect{T},\vect{f},Z) \leq \msrpac_{\varepsilon_{2}}^{s}(\vect{T},\vect{f},Z).
    $$
Moreover
    $$
\prepac(\vect{T},\vect{f},Z,\varepsilon_{1}) \leq \prepac(\vect{T},\vect{f},Z,\varepsilon_{2}).
$$
  \end{prop}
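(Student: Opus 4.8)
The plan is to reduce everything to the elementary fact that closed Bowen balls are nested in their radius: for $\varepsilon_{1} > \varepsilon_{2} > 0$, $x \in X_{0}$ and $n \in \mathbb{N}$ we have $\overline{B}_{n}^{\vect{T}}(x,\varepsilon_{2}) \subseteq \overline{B}_{n}^{\vect{T}}(x,\varepsilon_{1})$, which is immediate from the definition of $d_{n}^{\vect{T}}$ (equivalently, from \eqref{eq:bowenballopen}). The weight $\sum_{i} \exp\big(-n_{i}s + S_{n_{i}}^{\vect{T}}\vect{f}(x_{i})\big)$ attached to a packing depends only on the centers $x_{i}$ and the integers $n_{i}$, not on the radius, so the whole argument is bookkeeping about which families of balls remain admissible packings when the radius shrinks.

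First I would prove the inequality for $\msrpac_{N,\varepsilon}^{s}$. Let $\{\overline{B}_{n_{i}}^{\vect{T}}(x_{i},\varepsilon_{1})\}_{i=1}^{\infty}$ be any countable $(N,\varepsilon_{1})$-packing of $Z$, so the balls are pairwise disjoint, $x_{i} \in Z$ and $n_{i} \geq N$ for all $i$. By the nesting property, $\overline{B}_{n_{i}}^{\vect{T}}(x_{i},\varepsilon_{2}) \subseteq \overline{B}_{n_{i}}^{\vect{T}}(x_{i},\varepsilon_{1})$, so the family $\{\overline{B}_{n_{i}}^{\vect{T}}(x_{i},\varepsilon_{2})\}_{i=1}^{\infty}$ is again pairwise disjoint, with the same centers in $Z$ and the same $n_{i} \geq N$; hence it is a countable $(N,\varepsilon_{2})$-packing of $Z$. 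Since the associated weight is unchanged, it is therefore $\leq \msrpac_{N,\varepsilon_{2}}^{s}(\vect{T},\vect{f},Z)$. Taking the supremum over all $(N,\varepsilon_{1})$-packings of $Z$ yields $\msrpac_{N,\varepsilon_{1}}^{s}(\vect{T},\vect{f},Z) \leq \msrpac_{N,\varepsilon_{2}}^{s}(\vect{T},\vect{f},Z)$ for every $N > 0$. Letting $N \to \infty$ (permissible since $\msrpac_{N,\varepsilon}^{s}$ is non-increasing in $N$) gives $\msrpac_{\infty,\varepsilon_{1}}^{s}(\vect{T},\vect{f},Z) \leq \msrpac_{\infty,\varepsilon_{2}}^{s}(\vect{T},\vect{f},Z)$, and applying this to each piece $Z_{i}$ of an arbitrary countable decomposition $\bigcup_{i} Z_{i} \supseteq Z$ and then taking the infimum over all such decompositions in \eqref{def_pes} — using that the same family $\{Z_{i}\}$ works on both sides — gives $\msrpac_{\varepsilon_{1}}^{s}(\vect{T},\vect{f},Z) \leq \msrpac_{\varepsilon_{2}}^{s}(\vect{T},\vect{f},Z)$.

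Finally, for the pressures I would invoke the jump-value characterization \eqref{def_PP}. Since $\msrpac_{\varepsilon_{1}}^{s}(\vect{T},\vect{f},Z) \leq \msrpac_{\varepsilon_{2}}^{s}(\vect{T},\vect{f},Z)$ for every $s \in \mathbb{R}$, whenever $\msrpac_{\varepsilon_{2}}^{s}(\vect{T},\vect{f},Z) = 0$ we also have $\msrpac_{\varepsilon_{1}}^{s}(\vect{T},\vect{f},Z) = 0$; hence $\{s : \msrpac_{\varepsilon_{2}}^{s}(\vect{T},\vect{f},Z) = 0\} \subseteq \{s : \msrpac_{\varepsilon_{1}}^{s}(\vect{T},\vect{f},Z) = 0\}$, and taking the infimum of each set gives $\prepac(\vect{T},\vect{f},Z,\varepsilon_{1}) \leq \prepac(\vect{T},\vect{f},Z,\varepsilon_{2})$.

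There is no serious obstacle here; the statement is a straightforward consequence of the monotonicity of closed Bowen balls in the radius. The only points needing a little care are the monotone passage $N \to \infty$ and the observation that infimizing over countable decompositions in \eqref{def_pes} preserves the inequality, both of which are routine.
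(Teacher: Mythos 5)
Your proof is correct and is exactly the routine argument the paper has in mind (the paper omits the proof, calling the proposition straightforward from the definitions): shrinking the radius preserves disjointness of the closed Bowen balls while leaving the weights unchanged, and the inequality then passes through the limit in $N$, the infimum over countable decompositions, and the jump-value characterization \eqref{def_PP}. No gaps.
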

Since  $\prepac(\vect{T},\vect{f},Z,\varepsilon)$ is monotone with respect to  $\varepsilon$, we may define the packing pressure as follows.
  \begin{defn}\label{def:prepac}
Given an NDS $(\vect{X},\vect{T})$, $\vect{f} \in \vect{C}(\vect{X},\mathbb{R})$ and $Z \subset X_{0}$, we define the \emph{packing topological pressure} (\emph{packing pressure} for short) \emph{of $\vect{T}$ for $\vect{f}$ on  $Z$} by
    $$\prepac(\vect{T},\vect{f},Z)=\lim_{\varepsilon \to 0}\prepac(\vect{T},\vect{f},Z,\varepsilon).$$
 We define the \emph{packing topological entropy} (\emph{packing entropy} for short) \emph{of $\vect{T}$ on $Z$} by
    $$\enttoppac(\vect{T},Z)=\prepac(\vect{T},\vect{0},Z),$$
    where $\vect{0}$ is the sequence of zero functions.
  \end{defn}

We also have a Billingsley type theorem for the packing  pressure.
  \begin{thm}\label{thm:btypethmpacking}
    Given an NDS $(\vect{X},\vect{T})$,  a Borel  $E\subset X_{0}$  and $s \in \mathbb{R}$, let  $\mu$  be  a Borel probability measure on $X_{0}$ and $\vect{f} \in \vect{C}(\vect{X},\mathbb{R})$.
    \begin{enumerate}[(1)]
      \item If $\preup_{\mu}(\vect{T},\vect{f},x) \leq s$ for all $x \in E$, then $\prepac(\vect{T},\vect{f},E) \leq s$;
      \item If $\vect{f} \in \vect{C}(\vect{X},\mathbb{R})$ is equicontinuous and  $\preup_{\mu}(\vect{T},\vect{f},x) \geq s$ for all $x \in E$ and $\mu(E)>0$, then $\prepac(\vect{T},\vect{f},E) \geq s$.
    \end{enumerate}
  \end{thm}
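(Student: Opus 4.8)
The plan is to mimic the classical Billingsley-type argument for packing dimension/pressure, adapted to the nonautonomous Bowen setting. For part (1), I would first reduce to bounded pieces: set $E_{N,m}=\{x\in E:\ \preup_{\mu}(\vect{T},\vect{f},x,1/m)<s+1/N\}$, so that $E=\bigcup_{N,m}E_{N,m}$ by Proposition \ref{prop:localepsilon}(2) and the hypothesis $\preup_{\mu}(\vect{T},\vect{f},x)\le s$. Since the packing pressure is countably stable on countable unions (this follows from the $\inf$ over countable decompositions in \eqref{def_pes}, exactly as for packing measures), it suffices to show $\prepac(\vect{T},\vect{f},E_{N,m})\le s+1/N$ for each fixed $N,m$, and then let $N\to\infty$. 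On $E_{N,m}$, for every $x$ there is an arbitrarily large $n$ with $\mu(\overline{B}_n^{\vect{T}}(x,1/m))\ge \exp(-n(s+1/N)+S_n^{\vect{T}}\vect{f}(x)-\text{(small error)})$; more precisely, $-\log\mu(\overline B_n^{\vect T}(x,1/m))+S_n^{\vect T}\vect f(x)\le n(s+2/N)$ for infinitely many $n$, after absorbing the gap between open and closed balls at a slightly smaller radius. Then for any $(N',\varepsilon)$-packing with $\varepsilon<1/m$ realizing (nearly) $\msrpac_{\infty,\varepsilon}^{s+3/N}(\vect{T},\vect{f},E_{N,m})$, disjointness of the closed Bowen balls forces $\sum_i \exp(-n_i(s+3/N)+S_{n_i}^{\vect T}\vect f(x_i))\le \sum_i \mu(\overline B_{n_i}^{\vect T}(x_i,\varepsilon))\le \mu(X_0)=1$, using that each $n_i\ge N'$ can be taken large enough to make the ``infinitely many $n$'' estimate apply. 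Hence $\msrpac_{\infty,\varepsilon}^{s+3/N}(\vect{T},\vect{f},E_{N,m})\le 1$, so $\prepac(\vect{T},\vect{f},E_{N,m},\varepsilon)\le s+3/N$, and letting $\varepsilon\to 0$ gives the bound.

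For part (2) — the lower bound — I would argue that for a packing $(N,\varepsilon)$-system with centres in $E$, the mass argument goes the other way and needs a Vitali-type selection. Fix $\varepsilon>0$. After passing to $E'\subset E$ with $\mu(E')>0$ on which $\preup_\mu(\vect T,\vect f,x,\varepsilon)>s-\delta$ uniformly (via Proposition \ref{prop:localepsilon}), I would show $\msrpac_{\infty,\varepsilon}^{s-2\delta}(\vect{T},\vect{f},E_0)=+\infty$ for every subset $E_0$ of positive $\mu$-measure, which immediately yields $\msrpac_{\varepsilon}^{s-2\delta}(\vect{T},\vect{f},E')=+\infty$ and hence $\prepac(\vect{T},\vect{f},E',\varepsilon)\ge s-2\delta$. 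To get the former, fix a large target $M$; for each $x\in E_0$ the defining $\limsup$ gives infinitely many $n$ with $\mu(\overline B_n^{\vect T}(x,\varepsilon))\le \exp(-n(s-\delta)+S_n^{\vect T}\vect f(x))$ hence $\exp(-n(s-2\delta)+S_n^{\vect T}\vect f(x))\ge e^{n\delta}\mu(\overline B_n^{\vect T}(x,\varepsilon))$; the equicontinuity of $\vect f$ ensures $S_n^{\vect T}\vect f$ varies by at most $n\eta$ over a Bowen ball of radius chosen small relative to $\varepsilon$, so the exponential weights are comparable for nearby centres. One then applies the $5r$-covering (Vitali) lemma to the family of closed Bowen balls $\{\overline B_n^{\vect T}(x,\varepsilon/5)\}$ covering $E_0$ (these are closed, and contained in honest metric neighbourhoods, so Vitali applies on the compact metric space $X_0$) to extract a countable disjoint subfamily whose $5$-times dilations still cover $E_0$; summing $\mu$ over the dilations gives $\mu(E_0)$, and the disjoint subfamily is a legitimate $(N,\varepsilon)$-packing whose weight exceeds $e^{N\delta}\mu(E_0)$ up to the equicontinuity fudge factor, which exceeds $M$ once $N$ is large. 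Since $M$ was arbitrary, $\msrpac_{\infty,\varepsilon}^{s-2\delta}=+\infty$. Finally let $\delta\to 0$ and $\varepsilon\to 0$.

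The main obstacle I expect is the geometry of the Vitali selection in part (2): Bowen balls are intersections $\bigcap_{j<n}\vect T_k^{-j}B(\vect T^j x,\varepsilon)$ and need not be balls in any metric realizing the $5r$-lemma cleanly, so I must pass through the genuine Bowen \emph{metric} $d_n^{\vect T}$ (in which $B_n^{\vect T}(x,\varepsilon)$ \emph{is} a ball) and verify that the $5r$-covering lemma is available for this metric on the compact space $(X_0,d_n^{\vect T})$ with the same $n$ shared among the selected balls — which forces a careful book-keeping of the radius $\varepsilon$ versus the dilation factor, and an argument that one may restrict to a single fixed radius while letting the iteration counts $n_i$ grow. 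The equicontinuity hypothesis is exactly what makes the potential term $S_{n_i}^{\vect T}\vect f$ behave like a constant over each ball in the selected packing, so that the measure estimate transfers to the exponential-weight estimate; without it, the weights could oscillate and the argument breaks. Everything else (countable stability, the reduction to uniform level sets, the $\varepsilon\to 0$ limits) is routine given the definitions in Subsection \ref{ssect:prepacetentpac}.
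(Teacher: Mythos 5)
Your overall strategy matches the paper's (stratify into countably many good pieces, then play disjointness against total mass for the upper bound, and a $5r$-covering selection against a measure upper bound for the lower bound), but there is a genuine unresolved gap in part (2), precisely at the point you flag as "the main obstacle". Lemma \ref{coveringlem} applies to a family of balls in a \emph{single} metric space; a family $\{\overline{B}_{n}^{\vect{T}}(x,\varepsilon/5)\}$ in which the order $n$ varies from point to point is a family of balls in \emph{different} metrics $d_{n}^{\vect{T}}$, and no Vitali selection is available for it. Since each $x\in E_0$ only supplies the measure estimate $\mu(B_{n}^{\vect{T}}(x,\varepsilon))<\exp(-n\frac{s+t}{2}+S_{n}^{\vect{T}}\vect{f}(x))$ along its own sparse sequence of $n$'s, you cannot directly extract a covering family with a common $n$. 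The paper's missing ingredient is a pigeonhole reduction: writing $E_{l,n}^{*}$ for the set of points where the estimate holds at the \emph{specific} order $n$, one has $E_{l}^{*}=\bigcup_{n\geq M}E_{l,n}^{*}$ for every $M$, hence some single $n\geq M$ with $\mu(E_{l,n}^{*})\geq\frac{1}{n(n+1)}\mu(E_{l}^{*})$; the covering lemma is then applied in the one metric $d_{n}^{\vect{T}}$, and the polynomial loss $\frac{1}{n(n+1)}$ is beaten by the exponential gain $\e^{n(s-t)/2}$, which is what drives $\msrppp_{\infty,\varepsilon/5}^{t}$ to $+\infty$. Without this (or an equivalent device) your selected "packing" has no common $n$ and the argument does not close. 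A secondary remark: the paper avoids your equicontinuity fudge inside the packing sum by working with $\msrppp$ (the variant with $\sup_{y\in\overline{B}}S_{n}^{\vect{T}}\vect{f}(y)$, for which $\sup_{y}S_{n}^{\vect{T}}\vect{f}(y)\geq S_{n}^{\vect{T}}\vect{f}(x_i)$ is free) and only uses equicontinuity once, via Proposition \ref{equivdefprepaclem}, to pass back to $\prepac$; your route of controlling the oscillation directly is workable but amounts to re-proving that proposition inline.

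In part (1) there is a smaller but real imprecision. The hypothesis $\preup_{\mu}(\vect{T},\vect{f},x)\leq s$ is a $\limsup$ bound, so it yields $\mu(B_{n}^{\vect{T}}(x,\varepsilon))\geq\exp(-n\frac{s+t}{2}+S_{n}^{\vect{T}}\vect{f}(x))$ for \emph{all sufficiently large} $n$ (with an $x$-dependent threshold), not merely for infinitely many $n$; and "infinitely many $n$" would not suffice, because the orders $n_i$ in a packing are handed to you, not chosen. Your sets $E_{N,m}$ stratify by the value of the local pressure (and in fact each equals all of $E$, by Proposition \ref{prop:localepsilon}(2)); the stratification that makes the argument work is by the threshold, i.e.\ $E_{N}=\{x\in E:\ \mu(B_{n}^{\vect{T}}(x,\varepsilon))\geq\exp(-n\frac{s+t}{2}+S_{n}^{\vect{T}}\vect{f}(x))\ \text{for all}\ n\geq N\}$, after which disjointness gives $\msrpac_{N',\varepsilon}^{t}(\vect{T},\vect{f},E_{N})\leq\e^{-N'(t-s)/2}\to0$ and countable subadditivity of $\msrpac_{\varepsilon}^{t}$ finishes. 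This is a repair of bookkeeping rather than of the idea, but as written the step "each $n_i\geq N'$ can be taken large enough" is not justified.
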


As an immediate consequence of the above conclusion, we have the following Billingsley type theorem for the packing   entropy.
  \begin{cor}
    Given an NDS $(\vect{X},\vect{T})$,  a Borel  $E\subset X_{0}$  and $s > 0$, let $\mu$ be a Borel probability measure on $X_{0}$.
    \begin{enumerate}[(1)]
      \item If $\entup_{\mu}(\vect{T},x) \leq s$ for all $x \in E$, then $\enttoppac(\vect{T},E) \leq s$;
      \item If $\entup_{\mu}(\vect{T},x) \geq s$ for all $x \in E$ and $\mu(E)>0$, then $\enttoppac(\vect{T},E) \geq s$.
    \end{enumerate}
  \end{cor}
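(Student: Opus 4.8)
The plan is to obtain this corollary as a direct specialization of Theorem~\ref{thm:btypethmpacking} to the zero potential $\vect{0}=\{f_{k}\}_{k=0}^{\infty}$ with $f_{k}\equiv 0$. The only preparation needed is to record two bookkeeping identities. First, by Definition~\ref{def:prepac} one has $\enttoppac(\vect{T},E)=\prepac(\vect{T},\vect{0},E)$ for every $E\subset X_{0}$. Second, since $S_{n}^{\vect{T}}\vect{0}\equiv 0$ for all $n$ by \eqref{eq:sum0nTf}, comparing the defining formulas in Definition~\ref{def:msrpre} with those in Definition~\ref{def:msrent} gives $\preup_{\mu}(\vect{T},\vect{0},x,\varepsilon)=\entup_{\mu}(\vect{T},x,\varepsilon)$ for every $\varepsilon>0$ and every $x\in X_{0}$; letting $\varepsilon\to 0$ yields $\preup_{\mu}(\vect{T},\vect{0},x)=\entup_{\mu}(\vect{T},x)$. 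Finally, $\vect{0}$, being the constant sequence of the zero function, is trivially equicontinuous (cf.\ the discussion in Subsection~\ref{ssect:BB}).

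With these identifications in hand the argument is immediate. For part (1), if $\entup_{\mu}(\vect{T},x)\leq s$ for all $x\in E$, then $\preup_{\mu}(\vect{T},\vect{0},x)\leq s$ for all $x\in E$, so Theorem~\ref{thm:btypethmpacking}(1) applied with $\vect{f}=\vect{0}$ gives $\prepac(\vect{T},\vect{0},E)\leq s$, that is, $\enttoppac(\vect{T},E)\leq s$. For part (2), if $\entup_{\mu}(\vect{T},x)\geq s$ for all $x\in E$ and $\mu(E)>0$, then $\preup_{\mu}(\vect{T},\vect{0},x)\geq s$ for all $x\in E$; since $\vect{0}$ is equicontinuous, Theorem~\ref{thm:btypethmpacking}(2) applies and yields $\prepac(\vect{T},\vect{0},E)\geq s$, i.e.\ $\enttoppac(\vect{T},E)\geq s$.

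I do not expect any genuine obstacle here: the corollary is a purely formal consequence of Theorem~\ref{thm:btypethmpacking}, and the only things to check are the two elementary identities $\enttoppac(\vect{T},\cdot)=\prepac(\vect{T},\vect{0},\cdot)$ and $\entup_{\mu}(\vect{T},\cdot)=\preup_{\mu}(\vect{T},\vect{0},\cdot)$ together with the (trivial) equicontinuity of $\vect{0}$. All of the substantive work — the packing-cover estimates underlying the Billingsley dichotomy — is already carried out in the proof of Theorem~\ref{thm:btypethmpacking}, so the write-up can be kept to a few lines.
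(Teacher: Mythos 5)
Your proposal is correct and coincides with the paper's treatment: the corollary is stated there as an immediate consequence of Theorem~\ref{thm:btypethmpacking} obtained by taking $\vect{f}=\vect{0}$, using exactly the identifications $\enttoppac(\vect{T},\cdot)=\prepac(\vect{T},\vect{0},\cdot)$ and $\preup_{\mu}(\vect{T},\vect{0},x)=\entup_{\mu}(\vect{T},x)$ together with the trivial equicontinuity of $\vect{0}$. Nothing further is needed.
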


It is more difficult to explore the variational principle for the  packing  pressure, and the following conclusion is obtained under extra assumptions on $\vect{f}$ and $\prepac$.
  \begin{thm}\label{thm:varprinciplepacking}
Given an NDS $(\vect{X},\vect{T})$ and a non-empty compact $K \subset X_{0}$,
let $\vect{f} \in \vect{C}(\vect{X},\mathbb{R})$ be equicontinuous with  $\|\vect{f}\|<+\infty$.
      If $\prepac(\vect{T},\vect{f},K)>\|\vect{f}\|$, then
      $$\prepac(\vect{T},\vect{f},K)=\sup\{\preup_{\mu}(\vect{T},\vect{f}): \mu \in M(X_{0}), \mu(K)=1\}.$$
  \end{thm}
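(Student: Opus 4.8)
The plan is to prove the two inequalities separately; throughout write $P=\prepac(\vect{T},\vect{f},K)$.

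\emph{The inequality ``$\ge$''.} This I would deduce from the Billingsley type theorem, Theorem~\ref{thm:btypethmpacking}(2). Fix any $\mu\in M(X_{0})$ with $\mu(K)=1$ and set $c=\preup_{\mu}(\vect{T},\vect{f})=\int_{X_{0}}\preup_{\mu}(\vect{T},\vect{f},x)\,\dif\mu(x)$; since $\|\vect{f}\|<+\infty$ we have $c\ge-\|\vect{f}\|>-\infty$. For each real $t<c$ the Borel set $E_{t}=\{x\in X_{0}:\preup_{\mu}(\vect{T},\vect{f},x)\ge t\}$ has $\mu(E_{t})>0$, for otherwise $\preup_{\mu}(\vect{T},\vect{f},\cdot)<t$ $\mu$-a.e., forcing $c\le t$. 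Then $\mu(E_{t}\cap K)=\mu(E_{t})>0$ and $\preup_{\mu}(\vect{T},\vect{f},x)\ge t$ on $E_{t}\cap K$, so, $\vect{f}$ being equicontinuous, Theorem~\ref{thm:btypethmpacking}(2) gives $\prepac(\vect{T},\vect{f},E_{t}\cap K)\ge t$, whence $P\ge t$ by monotonicity of $\prepac$ in its set argument. Letting $t\uparrow c$ yields $P\ge\preup_{\mu}(\vect{T},\vect{f})$ (with $t\to+\infty$ if $c=+\infty$), and taking the supremum over $\mu$ proves ``$\ge$''. Note this direction uses neither the finiteness of $\|\vect{f}\|$ essentially nor the hypothesis $P>\|\vect{f}\|$.

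\emph{The inequality ``$\le$'': reduction and regularization.} It suffices to produce, for each $s$ with $\|\vect{f}\|<s<P$ — such $s$ exists precisely because $P>\|\vect{f}\|$ — a measure $\mu\in M(X_{0})$ with $\mu(K)=1$ and $\preup_{\mu}(\vect{T},\vect{f})\ge s$; letting $s\uparrow P$ then finishes. Fix such an $s$. Since $\prepac(\vect{T},\vect{f},K,\varepsilon)\uparrow P$ as $\varepsilon\downarrow0$ (Proposition~\ref{prop:msrpacepsilon} and Definition~\ref{def:prepac}), I would first fix $\varepsilon>0$ small with $\prepac(\vect{T},\vect{f},K,\varepsilon)>s$, so that $\msrpac_{\varepsilon}^{s}(\vect{T},\vect{f},K)=+\infty$. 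Then I would pass to a regularized set: since $\msrpac_{\varepsilon}^{s}(\vect{T},\vect{f},\cdot)$ is an outer measure (hence countably subadditive) and $X_{0}$ is second countable, the union $G$ of all relatively open subsets of $K$ on which $\msrpac_{\varepsilon}^{s}(\vect{T},\vect{f},\cdot)$ vanishes is a countable such union, so $\msrpac_{\varepsilon}^{s}(\vect{T},\vect{f},G)=0$; hence $K^{*}:=K\setminus G$ is non-empty (here the compactness of $K$ is used: if $K^{*}=\emptyset$ then finitely many of the vanishing sets cover $K$, forcing $\msrpac_{\varepsilon}^{s}(\vect{T},\vect{f},K)=0$) and compact, with the regularity property that $\msrpac_{\varepsilon}^{s}(\vect{T},\vect{f},K^{*}\cap U)>0$, and hence (as $\msrpac_{\varepsilon}^{s}\le\msrpac_{\infty,\varepsilon}^{s}$) $\msrpac_{\infty,\varepsilon}^{s}(\vect{T},\vect{f},K^{*}\cap U)>0$, for every open $U$ meeting $K^{*}$.

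\emph{The inequality ``$\le$'': construction of the measure.} Next I would build $\mu$ as a weak-$\ast$ limit of atomic probability measures from a recursive, Moran-type construction on $K^{*}$. Inductively, having at stage $k-1$ a finite family of disjoint closed Bowen balls $\{\overline{B}_{n^{(k-1)}_{i}}^{\vect{T}}(x^{(k-1)}_{i},\varepsilon)\}_{i}$ with centres in $K^{*}$ and a probability measure $\mu_{k-1}$ assigning the $i$-th ball mass at most $C_{k-1}\exp\!\big(-n^{(k-1)}_{i}s+S_{n^{(k-1)}_{i}}^{\vect{T}}\vect{f}(x^{(k-1)}_{i})\big)$ (with $C_{k-1}$ determined by the earlier stages), I would, inside each such ball $B$ — which contains an open Bowen ball meeting $K^{*}$, so $\msrpac_{\infty,\varepsilon}^{s}(\vect{T},\vect{f},K^{*}\cap B)>0$ by the regularity of $K^{*}$ — choose a finite packing of $K^{*}\cap B$ by closed Bowen balls of radius $\varepsilon$, with all iteration numbers $\ge N_{k}$ and total weight $\sum\exp(-ns+S_{n}^{\vect{T}}\vect{f}(\cdot))\ge\tfrac12\,\msrpac_{\infty,\varepsilon}^{s}(\vect{T},\vect{f},K^{*}\cap B)$, and then distribute $\mu_{k-1}(B)$ among the new balls proportionally to these weights (after trimming the packing to prescribe its total weight). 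The hypothesis $s>\|\vect{f}\|$ is essential here: every weight obeys $\exp(-ns+S_{n}^{\vect{T}}\vect{f}(\cdot))\le e^{-n(s-\|\vect{f}\|)}<1$ once $n\ge1$, which both keeps the per-ball masses bounded along the construction and makes the trimming possible. Choosing $N_{k}\uparrow\infty$ fast enough — given stages $1,\dots,k-1$, large enough to swallow the accumulated reciprocals of the finitely many positive numbers $\msrpac_{\infty,\varepsilon}^{s}(\vect{T},\vect{f},K^{*}\cap B)$, the trimming errors, and the equicontinuity modulus $\gamma(\varepsilon)$ of $\vect{f}$ at scale $\varepsilon$ — one aims to obtain a probability measure $\mu$ with $\supp\mu\subseteq K^{*}\subseteq K$ for which every $x\in\supp\mu$ admits infinitely many times $n$ (the levels of the Bowen balls of the construction through $x$) with $\mu\big(B_{n}^{\vect{T}}(x,\varepsilon)\big)\le\exp\!\big(-ns+S_{n}^{\vect{T}}\vect{f}(x)+n\,\gamma(\varepsilon)+o(n)\big)$. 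This yields $\preup_{\mu}(\vect{T},\vect{f},x,\varepsilon)\ge s-\gamma(\varepsilon)$ for all $x\in\supp\mu$, hence $\preup_{\mu}(\vect{T},\vect{f},x)\ge s-\gamma(\varepsilon)$ $\mu$-a.e.\ by Proposition~\ref{prop:localepsilon}(2), hence $\preup_{\mu}(\vect{T},\vect{f})\ge s-\gamma(\varepsilon)$; running this for each small $\varepsilon$ and letting $\varepsilon\to0$ gives $\sup\{\preup_{\mu}(\vect{T},\vect{f}):\mu\in M(X_{0}),\,\mu(K)=1\}\ge s$, as required.

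\emph{The main difficulty.} The whole difficulty sits in the recursive construction of the last step: fitting the stage-$k$ Bowen balls suitably inside the stage-$(k-1)$ ones and relating a Bowen ball centred at a general point of $\supp\mu$ to the packing balls it meets (the customary ``$\varepsilon$ versus $2\varepsilon$'' radius bookkeeping for Bowen balls, which forces the centres of successive stages to be chosen well inside their parents and the radii to be handled with care), controlling overlaps, and tuning the thresholds $N_{k}$ to grow so rapidly that every accumulated error is $o(n)$ along each branch. A secondary but genuine point, already flagged in the paper, is that $\msrpac_{\varepsilon}^{s}$ is not Borel regular, so the upper bounds on $\mu$ of Bowen balls must be read off directly from the construction rather than from outer-measure regularity; it is precisely the regularization to $K^{*}$ — which relies on the countable subadditivity of $\msrpac_{\varepsilon}^{s}$ and on the compactness of $K$ — that lets the construction run with all centres in $K$, thereby guaranteeing $\mu(K)=1$.
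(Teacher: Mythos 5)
Your overall strategy coincides with the paper's: the inequality $\prepac(\vect{T},\vect{f},K)\geq\sup\{\cdots\}$ is exactly Lemma~\ref{lem:varneqprepac} (deduced from Theorem~\ref{thm:btypethmpacking}(2) as you do), and the reverse inequality is proved by the same Frostman/Moran-type recursion on a regularized compact subset of $K$, with the prescribed-weight extraction of finite packings (the paper's Lemma~\ref{badlem}, which is where $s>\|\vect{f}\|$ enters, just as you say), followed by a weak-$\ast$ limit. However, there is one concrete step in your reduction that fails as written: you regularize $K$ with respect to $\msrpac_{\varepsilon}^{s}$ itself, and this only yields $\msrpac_{\infty,\varepsilon}^{s}(\vect{T},\vect{f},K^{*}\cap U)>0$ for open $U$ meeting $K^{*}$. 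Positivity is not enough to drive the recursion. At stage $k$ you must find, inside a parent ball $B$, a finite $(N_{k},\varepsilon)$-packing whose total weight reaches (after trimming, which only decreases it) the parent mass $\mu_{k-1}(B)$; the achievable total weight is bounded above by $\msrpac_{N_{k},\varepsilon}^{s}(\vect{T},\vect{f},K^{*}\cap B)$, which decreases to $\msrpac_{\infty,\varepsilon}^{s}(\vect{T},\vect{f},K^{*}\cap B)$ as $N_{k}\to\infty$. If that limit is a finite number smaller than $\mu_{k-1}(B)$ (nothing in your regularization rules this out), your proportional redistribution inflates each child mass by the factor $\mu_{k-1}(B)/\msrpac_{\infty,\varepsilon}^{s}(\cdots)$, these factors compound along branches, and the Frostman estimate $\mu(B_{n}^{\vect{T}}(x,\varepsilon))\lesssim\exp(-ns+S_{n}^{\vect{T}}\vect{f}(x))$ — hence the conclusion $\preup_{\mu}(\vect{T},\vect{f},x,\varepsilon)\geq s-\gamma(\varepsilon)$ — is lost.

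The missing idea is an intermediate exponent. Fix $t$ with $s<t<\prepac(\vect{T},\vect{f},K,\varepsilon_{0})$ and perform the regularization with respect to $\msrpac_{\varepsilon_{0}}^{t}$: then every relatively open piece $K^{*}\cap U$ (and, at later stages, every set $F'(x)\cap G$) satisfies $\msrpac_{\varepsilon_{0}}^{t}(\vect{T},\vect{f},K^{*}\cap U)>0$, which forces $\prepac(\vect{T},\vect{f},K^{*}\cap U,\varepsilon_{0})\geq t>s$ and hence $\msrpac_{\infty,\varepsilon_{0}}^{s}(\vect{T},\vect{f},K^{*}\cap U)=+\infty$. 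It is this $+\infty$ at level $s$ that Lemma~\ref{badlem} needs in order to produce a finite disjoint subcollection with total weight in an arbitrary prescribed window $(a,b)$ and with all iteration numbers beyond any given threshold. With this one correction your outline matches the paper's proof; the remaining issues you flag (the $\varepsilon$ versus $2\varepsilon$ bookkeeping, handled in the paper via the auxiliary Euclidean radii $\gamma_{i}$ and properties (a)--(d), and reading the measure bounds off the construction rather than from Borel regularity) are real but are points of execution rather than of strategy.
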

  Similarly, we have the variational principle for the  packing  entropy as a direct consequence of Theorem \ref{thm:varprinciplepacking}.
  \begin{cor}\label{cor:varprinciplepacking}
    Given an NDS $(\vect{X},\vect{T})$ and a non-empty compact  $K \subset X_{0}$,
    $$\enttoppac(\vect{T},K)=\sup\{\entup_{\mu}(\vect{T}): \mu \in M(X_{0}), \mu(K)=1\}.$$
  \end{cor}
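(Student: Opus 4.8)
The statement to prove is Corollary~\ref{cor:varprinciplepacking}: for a non-empty compact $K \subset X_{0}$,
$$
\enttoppac(\vect{T},K)=\sup\{\entup_{\mu}(\vect{T}): \mu \in M(X_{0}), \mu(K)=1\}.
$$
The plan is simply to deduce this from Theorem~\ref{thm:varprinciplepacking} by specializing to the zero potential $\vect{f}=\vect{0}$. First I would recall that $\enttoppac(\vect{T},K)=\prepac(\vect{T},\vect{0},K)$ by Definition~\ref{def:prepac} and that $\entup_{\mu}(\vect{T})=\preup_{\mu}(\vect{T},\vect{0})$ by equation~\eqref{hupre}. So the desired identity is exactly
$$
\prepac(\vect{T},\vect{0},K)=\sup\{\preup_{\mu}(\vect{T},\vect{0}): \mu \in M(X_{0}), \mu(K)=1\}.
$$
The sequence $\vect{0}=\{0\}_{k=0}^{\infty}$ is clearly equicontinuous and satisfies $\|\vect{0}\|=0<+\infty$, so the hypotheses of Theorem~\ref{thm:varprinciplepacking} on $\vect{f}$ are met.

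The one point that requires attention is the extra assumption $\prepac(\vect{T},\vect{f},K)>\|\vect{f}\|$ in Theorem~\ref{thm:varprinciplepacking}, which for $\vect{f}=\vect{0}$ reads $\enttoppac(\vect{T},K)>0$. I would split into two cases. If $\enttoppac(\vect{T},K)>0$, then Theorem~\ref{thm:varprinciplepacking} applies directly and gives the result. If $\enttoppac(\vect{T},K)=0$, then I need to show the right-hand side is also $0$; since $K\neq\varnothing$ is compact there exists $\mu\in M(X_{0})$ with $\mu(K)=1$ (e.g. a Dirac mass at a point of $K$), and by Theorem~\ref{thm:btypethmpacking}(1), from $\entup_{\mu}(\vect{T},x)\le s$ for all $x\in E$ we get $\enttoppac(\vect{T},E)\le s$; contrapositively, if $\entup_{\mu}(\vect{T},x)>0$ held on a set of positive $\mu$-measure inside $K$ then part (2) of that theorem (with $\vect{f}=\vect{0}$ equicontinuous) would force $\enttoppac(\vect{T},K)\geq s>0$, a contradiction. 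Hence $\entup_{\mu}(\vect{T},x)=0$ for $\mu$-a.e.\ $x\in K$, so $\entup_{\mu}(\vect{T})=\int_{X_0}\entup_{\mu}(\vect{T},x)\,\dif\mu = 0$; since $\entup_{\mu}(\vect{T})\ge 0$ always, the supremum equals $0$, matching the left-hand side.

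Actually the cleanest route avoids the case split entirely by invoking the Billingsley bound: the inequality $\prepac(\vect{T},\vect{0},K)\ge\sup\{\preup_{\mu}(\vect{T},\vect{0}):\mu(K)=1\}$ follows from Theorem~\ref{thm:btypethmpacking}(2) together with a standard measure-theoretic argument (restricting $\mu$ to a subset of $K$ where $\entup_\mu(\vect{T},x)$ exceeds a value close to $\entup_\mu(\vect{T})$), and is valid without any positivity hypothesis; the reverse inequality is nontrivial only when $\enttoppac(\vect{T},K)>0$, in which case Theorem~\ref{thm:varprinciplepacking} supplies it, and when $\enttoppac(\vect{T},K)=0$ the reverse inequality $0\ge\sup(\cdots)$ holds because $\preup_{\mu}(\vect{T},\vect{0})=\entup_{\mu}(\vect{T})\ge0$ forces each term to be $\le\enttoppac(\vect{T},K)=0$ by the already-established $\le$ direction combined with Theorem~\ref{thm:btypethmpacking}(1). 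Either way there is no real obstacle here; the substantive content is entirely in Theorems~\ref{thm:varprinciplepacking}, \ref{thm:btypethmpacking} and~\eqref{hupre}, and the proof of the corollary is a two-line specialization once those are in hand. The main (minor) subtlety to handle carefully in writing is making sure the degenerate case $\enttoppac(\vect{T},K)=0$ is explicitly covered, since Theorem~\ref{thm:varprinciplepacking} as stated excludes it.
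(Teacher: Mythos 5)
Your derivation is correct and is essentially the intended one: specialize Theorem~\ref{thm:varprinciplepacking} to $\vect{f}=\vect{0}$, which is equicontinuous with $\|\vect{0}\|=0$, and translate via Definition~\ref{def:prepac} and \eqref{hupre}. You are also right to flag the one genuine subtlety, which the paper passes over in calling the corollary a ``direct consequence'': the hypothesis $\prepac(\vect{T},\vect{f},K)>\|\vect{f}\|$ becomes $\enttoppac(\vect{T},K)>0$, so the theorem does not literally cover the degenerate case. Your treatment of that case is sound — nonemptiness of the supremum via a Dirac mass on $K$, the lower bound $\entup_{\mu}(\vect{T})\ge 0$ from $-\log\mu(B)\ge 0$, and the upper bound $\entup_{\mu}(\vect{T})\le\enttoppac(\vect{T},K)=0$ from Theorem~\ref{thm:btypethmpacking}(2) (equivalently, Lemma~\ref{lem:varneqprepac} with $\vect{f}=\vect{0}$ and $E=K$, which is the cleanest citation and requires no positivity assumption). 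So the proposal is complete and in fact slightly more careful than the paper's own statement of the deduction.
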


\section{Properties of Topological Pressures and Entropies}\label{sect:proptoppreetent}
In this section, we provide some equivalent definitions of Bowen and packing pressures, which simplifies the study of variational principles.

\subsection{Two covering lemmas}
  First, we introduce two covering lemmas. The first one is a version of the classic $5r$-covering lemma (see \cite[Thm.2.1]{Mattila1995}).
\begin{lem}\label{coveringlem}
    Let $(X,d)$ be a compact metric space and $\mathcal{B}=\{B(x_{i},r_{i})\}_{i \in \mathcal{I}}$
    a family of closed (or open) balls in $X$.
    Then there exists a finite or countable subfamily $\mathcal{B}^{\prime}=\{B(x_{i},r_{i})\}_{i \in \mathcal{I}^{\prime}}$
    of pairwise disjoint balls in $\mathcal{B}$ such that
    $$\bigcup_{B \in \mathcal{B}}B \subset \bigcup_{i \in \mathcal{I}^{\prime}}B(x_{i},5r_{i}).$$
  \end{lem}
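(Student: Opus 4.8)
The plan is to run the classical greedy (Vitali-type) selection, stratified by radius. Since $X$ is compact it has finite diameter, so $R := \sup_{i \in \mathcal{I}} r_i < +\infty$; degenerate balls may be discarded, so we assume $R > 0$ and $r_i > 0$ for all $i$. For each integer $n \ge 1$ set
$$
\mathcal{F}_n = \Big\{ B(x_i, r_i) : R/2^n < r_i \le R/2^{n-1} \Big\},
$$
so $\mathcal{B} = \bigcup_{n \ge 1} \mathcal{F}_n$. The first thing I would record is a finiteness fact: every pairwise disjoint subcollection of a fixed $\mathcal{F}_n$ is finite. Indeed, if $B(x,r), B(y,s) \in \mathcal{F}_n$ are disjoint and $d(x,y) < R/2^n$, then since $\min(r,s) > R/2^n$ the centre of each ball lies in the other, a contradiction; hence the centres of disjoint balls in $\mathcal{F}_n$ are pairwise at distance $\ge R/2^n$, and by total boundedness of the compact space $X$ there can be only finitely many such centres. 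This removes any need for Zorn's lemma and makes the eventual family countable.

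Next I would construct $\mathcal{B}'$ inductively. Let $\mathcal{G}_1$ be a maximal pairwise disjoint subcollection of $\mathcal{F}_1$; having chosen $\mathcal{G}_1, \dots, \mathcal{G}_{n-1}$, let $\mathcal{G}_n$ be a maximal pairwise disjoint subcollection of
$$
\big\{ B \in \mathcal{F}_n : B \cap B' = \emptyset \text{ for every } B' \in \mathcal{G}_1 \cup \dots \cup \mathcal{G}_{n-1} \big\},
$$
which exists by the finiteness fact. Set $\mathcal{B}' = \bigcup_{n \ge 1} \mathcal{G}_n$; it is pairwise disjoint by construction and countable since each $\mathcal{G}_n$ is finite.

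It remains to verify the covering inclusion. Take any $B = B(x,r) \in \mathcal{B}$, say $B \in \mathcal{F}_n$. If $B$ meets some ball of $\mathcal{G}_1 \cup \dots \cup \mathcal{G}_{n-1}$, fix such a ball; otherwise $B$ belongs to the collection from which $\mathcal{G}_n$ was chosen maximal, so $B$ must meet some ball of $\mathcal{G}_n$. In either case $B$ meets some $B(x_j, r_j) \in \mathcal{B}'$ lying in some $\mathcal{F}_m$ with $m \le n$, whence $r_j > R/2^m \ge R/2^n \ge r/2$, i.e. $r < 2 r_j$. Choosing $z \in B \cap B(x_j, r_j)$, for every $y \in B$ the triangle inequality gives
$$
d(y, x_j) \le d(y,x) + d(x,z) + d(z,x_j) < r + r + r_j < 5 r_j,
$$
so $B \subset B(x_j, 5 r_j)$. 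Taking the union over $B \in \mathcal{B}$ proves the inclusion; for closed balls the argument is verbatim the same with the strict inequalities replaced by non-strict ones. The only genuinely delicate point — the main obstacle, modest as it is — is the finiteness and hence the existence of maximal pairwise disjoint subcollections at each radius scale, which is exactly where compactness (total boundedness) of $X$ enters; everything else is bookkeeping with the triangle inequality.
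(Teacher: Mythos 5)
Your proof is correct and is essentially the standard stratified greedy (Vitali-type) selection behind the $5r$-covering theorem; the paper itself does not prove this lemma but only cites Mattila's Theorem 2.1, and your argument is the same one, with the nice extra observation that compactness makes each radius scale's disjoint family finite, so no Zorn-type maximality argument is needed. The one loose end is the opening claim that compactness of $X$ forces $R=\sup_i r_i<+\infty$: the $r_i$ are labels, not intrinsic to the sets, so they could be unbounded even in a bounded space; this is harmless because you may replace each $r_i$ by $\min\{r_i,\operatorname{diam}X+1\}$ without changing any ball as a subset of $X$ while only shrinking the enlarged balls $B(x_i,5r_i)$, after which your stratification applies verbatim.
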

Note that  the author of \cite{Mattila1995} assumed that $(X,d)$ is boundedly compact
(all closed bounded subsets are compact), and the conclusion holds for $\mathcal{B}$ of closed balls with $\sup\{\diam(B): B \in \mathcal{B}\}<+\infty$. In this paper, we always assume that $X$ is a compact metric space. Since the compactness of $X$ implies that $X$ is boundedly compact, the conclusion  also holds for open balls by  the same  argument as in \cite[Thm.2.1]{Mattila1995}.

The second covering lemma is similar to the classic $3r$-covering lemma (see Federer \cite[2.8.4-6]{Federer1969}),   and we adopt it for covers by Bowen balls in nonautonomous dynamical systems. We add a proof here for the convenience of readers.
For  $\varepsilon>0$, we write
\begin{equation}\label{def_coloB}
  \mathcal{B}(\varepsilon)=\{B_{n}^{\vect{T}}(x,\varepsilon):x \in X_{0}, 1 \leq n \in \mathbb{N}\}.
\end{equation}
 \begin{lem}\label{dn3rcovlem}
Given $(\vect{X},\vect{T})$ and $\varepsilon>0$, let $\mathcal{B}(\varepsilon)$ be given by \eqref{def_coloB}. Then for every family $\mathcal{F} \subset \mathcal{B}(\varepsilon)$,
    there exists a subfamily $\mathcal{G} \subset \mathcal{F}$ consisting of pairwise disjoint Bowen balls such that
    $$\bigcup_{B \in \mathcal{F}}B \subset \bigcup_{B_{n}^{\vect{T}}(x,\varepsilon) \in \mathcal{G}}B_{n}^{\vect{T}}(x,3\varepsilon).$$
  \end{lem}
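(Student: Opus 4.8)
The plan is to mimic the classical Vitali-type ($3r$-covering) argument, adapted to Bowen balls. The essential observation is that for a Bowen ball $B_n^{\vect{T}}(x,\varepsilon)$, the integer $n$ plays the role of an inverse ``size'': if two Bowen balls $B_n^{\vect{T}}(x,\varepsilon)$ and $B_m^{\vect{T}}(y,\varepsilon)$ intersect and $m \le n$, then $B_m^{\vect{T}}(y,\varepsilon) \subset B_n^{\vect{T}}(x,3\varepsilon)$. Let me explain why: if $z$ lies in the intersection, then for $0 \le j \le n-1$ we want to compare $\vect{T}^j y$ with $\vect{T}^j x$. For $0 \le j \le m-1$, we have $d_j(\vect{T}^j y, \vect{T}^j z) < \varepsilon$ and $d_j(\vect{T}^j z, \vect{T}^j x) < \varepsilon$, so $d_j(\vect{T}^j y,\vect{T}^j x) < 2\varepsilon$; hence for any $w \in B_m^{\vect{T}}(y,\varepsilon)$ and $0 \le j \le m-1$ we get $d_j(\vect{T}^j w, \vect{T}^j x) < 3\varepsilon$. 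But wait — we need this for $0 \le j \le n-1$, not just $j \le m-1$, in order to conclude $w \in B_n^{\vect{T}}(x,3\varepsilon)$. So in fact the correct containment to use is the reverse: the \emph{larger} ball (smaller $n$) should be the one we enlarge. That is, if $B_n^{\vect{T}}(x,\varepsilon) \cap B_m^{\vect{T}}(y,\varepsilon) \ne \emptyset$ with $m \le n$, then $B_n^{\vect{T}}(x,\varepsilon) \subset B_m^{\vect{T}}(y,3\varepsilon)$: indeed for $0 \le j \le m-1$, any $w \in B_n^{\vect{T}}(x,\varepsilon)$ satisfies $d_j(\vect{T}^j w, \vect{T}^j x) < \varepsilon$, and as computed $d_j(\vect{T}^j x, \vect{T}^j y) < 2\varepsilon$, so $d_j(\vect{T}^j w, \vect{T}^j y) < 3\varepsilon$; since this holds for all $j \le m-1$, we get $w \in B_m^{\vect{T}}(y,3\varepsilon)$. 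This is the key lemma.

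With this in hand, I would run the standard greedy maximal-disjoint-subfamily construction, stratified by the value of $n$. Set $\mathcal{F}_n = \{B \in \mathcal{F} : B = B_n^{\vect{T}}(x,\varepsilon) \text{ for some } x\}$ for $n = 1, 2, 3, \ldots$ (note the index $n$ is well-defined as part of the data of a member of $\mathcal{B}(\varepsilon)$, since the family is indexed that way). Choose $\mathcal{G}_1 \subset \mathcal{F}_1$ to be a maximal pairwise-disjoint subfamily (exists by Zorn's lemma, or since the relevant balls are separated one can take it countable; in any case the compactness of $X_0$ forces any pairwise-disjoint subfamily of $\mathcal{B}(\varepsilon)$ with fixed $n$ to be finite, but we do not even need that). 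Inductively, having chosen $\mathcal{G}_1,\ldots,\mathcal{G}_{n-1}$, let $\mathcal{G}_n \subset \mathcal{F}_n$ be a maximal subfamily such that $\mathcal{G}_1 \cup \cdots \cup \mathcal{G}_n$ is pairwise disjoint. Finally put $\mathcal{G} = \bigcup_{n \ge 1} \mathcal{G}_n$, which is pairwise disjoint by construction.

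It remains to check the covering property $\bigcup_{B \in \mathcal{F}} B \subset \bigcup_{B_n^{\vect{T}}(x,\varepsilon) \in \mathcal{G}} B_n^{\vect{T}}(x,3\varepsilon)$. Take any $B = B_n^{\vect{T}}(x,\varepsilon) \in \mathcal{F}$, so $B \in \mathcal{F}_n$. By maximality of $\mathcal{G}_n$, the ball $B$ must intersect some ball $B' = B_m^{\vect{T}}(y,\varepsilon) \in \mathcal{G}_1 \cup \cdots \cup \mathcal{G}_n$ (otherwise $B$ could be adjoined, contradicting maximality); here $m \le n$. By the key lemma above, $B_n^{\vect{T}}(x,\varepsilon) \subset B_m^{\vect{T}}(y,3\varepsilon)$, and $B_m^{\vect{T}}(y,3\varepsilon)$ is one of the enlarged balls from $\mathcal{G}$. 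Hence every point of $B$ lies in the required union, completing the proof.

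The main obstacle, such as it is, is purely bookkeeping: getting the direction of the containment right in the key lemma (enlarge the ball with the \emph{smaller} iteration count $n$, since that ball ``sees'' fewer coordinates and is therefore the larger set), and making sure the stratification by $n$ is legitimate — i.e. that membership in $\mathcal{B}(\varepsilon)$ carries the datum $n$, which is exactly how \eqref{def_coloB} is written. No delicate analysis is needed; compactness of the $X_k$ is not even essential here beyond what is built into the setup, though it guarantees each $\mathcal{G}_n$ is finite if one wants $\mathcal{G}$ countable.
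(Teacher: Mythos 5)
Your proof is correct and follows essentially the same route as the paper: the same key triangle-inequality containment (if $B_{n}^{\vect{T}}(x,\varepsilon)$ meets $B_{m}^{\vect{T}}(y,\varepsilon)$ with $m\leq n$, then $B_{n}^{\vect{T}}(x,\varepsilon)\subset B_{m}^{\vect{T}}(y,3\varepsilon)$), combined with a greedy maximal disjoint subfamily that prioritizes balls of smaller index $n$. The paper packages the selection as a single Zorn's lemma application on families satisfying an extra ``intersects a ball of no larger index'' condition, whereas you stratify by $n$ and extend maximally stage by stage, but this is only an organizational difference.
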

  \begin{rmk}
    (1) The subfamily $\mathcal{G}$, in contrast to $\mathcal{B}^{\prime}$ in Lemma \ref{coveringlem},
    is not necessarily countable.  (2) The indices $n$ of the Bowen balls $B_{n}^{\vect{T}}(x,\varepsilon) \in \mathcal{F}$ may not  be identical.
  \end{rmk}
  \begin{proof}
    Given $\mathcal{F} \subset \mathcal{B}(\varepsilon)$, let $(\Omega,\subset)$ denote the partially ordered set
    consisting of all subfamilies $\mathcal{F}^{\prime}$ of $\mathcal{F}$ with the following properties:
    \begin{enumerate}[(a)]
      \item $\mathcal{F}^{\prime}$ consists of disjoint balls from $\mathcal{F}$;
      \item If a ball $B_{n}^{\vect{T}}(x,\varepsilon) \in \mathcal{F}$ intersects some ball
            from $\mathcal{F}^{\prime}$, then there exists $B_{m}^{\vect{T}}(y,\varepsilon) \in \mathcal{F}^{\prime}$
            such that $m \leq n$ and $B_{n}^{\vect{T}}(x,\varepsilon) \cap B_{m}^{\vect{T}}(y,\varepsilon) \neq \emptyset$.
    \end{enumerate}

    The set $\Omega$ is non-empty since it contains the family consisting of a single ball $B_{m}^{\vect{T}}(y,\varepsilon) \in \mathcal{F}$
    where
    $$m=\min\{n: B_{n}^{\vect{T}}(x,\varepsilon) \in \mathcal{F}\}.$$
    Let $\Lambda \subset \Omega$ be a linearly ordered subset. Then $\bigcup_{\mathcal{F}^{\prime} \in \Lambda}\mathcal{F}^{\prime}$
    belongs to $\Omega$ and is an upper bound of $\Lambda$.
    By Zorn's lemma, there exists a maximal element $\mathcal{G} \in \Omega$.

    We claim that for every $B\in\mathcal{F}$, there exists a ball  $B'\in \mathcal{G}$ such that $B\cap B'\neq \emptyset$.
Suppose that there exists a ball $B_{m}^{\vect{T}}(y,\varepsilon) \in \mathcal{F}$
    where
    $$m = \min\{n: B_{n}^{\vect{T}}(x,\varepsilon)\ \text{does not intersect any ball in}\ \mathcal{G}\}.$$
    It is clear that the family $\mathcal{G} \cup \{B_{m}^{\vect{T}}(y,\varepsilon)\}$
    satisfies (a) and (b) and hence belongs to $\Omega$. This contradicts the maximality of $\mathcal{G}$.

    Since $\mathcal{F} \subset \mathcal{B}(\varepsilon)$, by the triangle inequality, every $B \in \mathcal{F}$ is contained in some $B_{n}^{\vect{T}}(x,3\varepsilon)$ where $n$ and $x$ satisfy $B_{n}^{\vect{T}}(x,\varepsilon) \in \mathcal{G}$, and the conclusion follows.
  \end{proof}

  \subsection{Equivalent definitions of Bowen  pressure and entropy}\label{ssect:equivdefbow}
  Given $\vect{f} \in \vect{C}(\vect{X},\mathbb{R})$, for all $s \in \mathbb{R}$, $N >0$ and $\varepsilon>0$,   we define
  \begin{equation}\label{eq:defmsrbpp}
      \msrbpp_{N,\varepsilon}^{s}(\vect{T},\vect{f},Z)=\inf\Big\{\sum_{i=1}^{\infty}\exp{\Big(-n_{i}s+\sup_{y \in B_{n_{i}}^{\vect{T}}(x_{i},\varepsilon)}{S_{n_{i}}^{\vect{T}}\vect{f}(y)}\Big)}\Big\},
  \end{equation}
where the infimum is taken over all countable $(N,\varepsilon)$-covers $\{B_{n_{i}}^{\vect{T}}(x_{i},\varepsilon)\}_{i=1}^\infty$ of $Z$.  Similarly, we write
  $$\msrbpp_{\varepsilon}^{s}(\vect{T},\vect{f},Z)=\lim_{N \to \infty} \msrbpp_{N,\varepsilon}^{s}(\vect{T},\vect{f},Z),$$
  and
\begin{equation} \label{def_QBT}
\prebpp(\vect{T},\vect{f},Z,\varepsilon)=\inf\{s: \msrbpp_{\varepsilon}^{s}(\vect{T},\vect{f},Z)=0\}
    =\sup\{s: \msrbpp_{\varepsilon}^{s}(\vect{T},\vect{f},Z)=+\infty\}.
\end{equation}
Since $\msrbpp_{N,\varepsilon}^{s}(\vect{T},\vect{f},Z)$ is non-decreasing as $N$ tends to $\infty$, the quantities $\msrbpp_{\varepsilon}^{s} $ and $\prebpp$  are well defined.

It would be ideal that $\prebow(\vect{T},\vect{f},Z)=\lim_{\varepsilon \to 0}\prebpp(\vect{T},\vect{f},Z,\varepsilon).$ However, in general, we do not have the convergence of $\prebpp(\vect{T},\vect{f},Z,\varepsilon)$ due to the lack of the monotonicity of $\msrbpp_{\varepsilon}^{s}(\vect{T},\vect{f},Z)$ with respect to $\varepsilon$. Under an extra assumption, we obtain that the limit of $\prebpp$ exists and  equals  the Bowen pressure.
  \begin{prop}\label{equivdefprebpplem}
    If $\vect{f} \in \vect{C}(\vect{X},\mathbb{R})$ is equicontinuous, then $\prebpp(\vect{T},\vect{f},Z,\varepsilon)$ converges as $\varepsilon$ tends to $  0$ and
    \begin{equation}\label{eq:prebowequivprebpp}
      \prebow(\vect{T},\vect{f},Z)=\lim_{\varepsilon \to 0}\prebpp(\vect{T},\vect{f},Z,\varepsilon).
    \end{equation}
  \end{prop}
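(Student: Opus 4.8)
The plan is to show the two inequalities
\[
\prebow(\vect{T},\vect{f},Z)\le\varliminf_{\varepsilon\to0}\prebpp(\vect{T},\vect{f},Z,\varepsilon)
\quad\text{and}\quad
\varlimsup_{\varepsilon\to0}\prebpp(\vect{T},\vect{f},Z,\varepsilon)\le\prebow(\vect{T},\vect{f},Z),
\]
which together force $\prebpp(\vect{T},\vect{f},Z,\varepsilon)$ to converge to the common value $\prebow(\vect{T},\vect{f},Z)$. The first inequality is the easy half: for any $(N,\varepsilon)$-cover $\{B_{n_i}^{\vect{T}}(x_i,\varepsilon)\}$ one trivially has $S_{n_i}^{\vect{T}}\vect{f}(x_i)\le\sup_{y\in B_{n_i}^{\vect{T}}(x_i,\varepsilon)}S_{n_i}^{\vect{T}}\vect{f}(y)$, so $\msrbow_{N,\varepsilon}^{s}\le\msrbpp_{N,\varepsilon}^{s}$ for every $N,\varepsilon,s$; hence $\prebow(\vect{T},\vect{f},Z,\varepsilon)\le\prebpp(\vect{T},\vect{f},Z,\varepsilon)$ for every $\varepsilon$, and letting $\varepsilon\to0$ along any subsequence realizing the liminf gives the claim (using that $\prebow(\vect{T},\vect{f},Z,\varepsilon)$ is monotone in $\varepsilon$ by Proposition~\ref{prop:msrbowepsilon}, so its limit is genuine).

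For the reverse inequality I would use equicontinuity to control the oscillation of $S_{n}^{\vect{T}}\vect{f}$ on a Bowen ball. Fix $\eta>0$. By equicontinuity of $\vect{f}$ there is $\delta=\delta(\eta)>0$ such that $d_k(x',x'')<\delta$ implies $|f_k(x')-f_k(x'')|<\eta$ for all $k$. Now if $y\in B_{n}^{\vect{T}}(x,\varepsilon)$ with $\varepsilon\le\delta$, then for each $0\le j\le n-1$ we have $d_{j}(\vect{T}^{j}x,\vect{T}^{j}y)<\varepsilon\le\delta$, hence $|f_j(\vect{T}^{j}x)-f_j(\vect{T}^{j}y)|<\eta$; summing over $j$ gives
\[
\bigl|S_{n}^{\vect{T}}\vect{f}(y)-S_{n}^{\vect{T}}\vect{f}(x)\bigr|<n\eta
\qquad\text{for all }y\in B_{n}^{\vect{T}}(x,\varepsilon),\ \varepsilon\le\delta.
\]
Consequently $\sup_{y\in B_{n}^{\vect{T}}(x,\varepsilon)}S_{n}^{\vect{T}}\vect{f}(y)\le S_{n}^{\vect{T}}\vect{f}(x)+n\eta$, so term by term
\[
\exp\Bigl(-n s+\sup_{y\in B_{n}^{\vect{T}}(x,\varepsilon)}S_{n}^{\vect{T}}\vect{f}(y)\Bigr)\le\exp\bigl(-n(s-\eta)+S_{n}^{\vect{T}}\vect{f}(x)\bigr),
\]
which yields $\msrbpp_{N,\varepsilon}^{s}(\vect{T},\vect{f},Z)\le\msrbow_{N,\varepsilon}^{s-\eta}(\vect{T},\vect{f},Z)$ for all $N>0$ and all $\varepsilon\le\delta$. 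Letting $N\to\infty$ and comparing critical exponents gives $\prebpp(\vect{T},\vect{f},Z,\varepsilon)\le\prebow(\vect{T},\vect{f},Z,\varepsilon)+\eta$ for all $\varepsilon\le\delta$. Taking $\varepsilon\to0$ (again using the honest convergence of $\prebow(\vect{T},\vect{f},Z,\varepsilon)$) gives $\varlimsup_{\varepsilon\to0}\prebpp(\vect{T},\vect{f},Z,\varepsilon)\le\prebow(\vect{T},\vect{f},Z)+\eta$, and since $\eta>0$ is arbitrary the reverse inequality follows.

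Combining the two inequalities shows $\varliminf$ and $\varlimsup$ of $\prebpp(\vect{T},\vect{f},Z,\varepsilon)$ both equal $\prebow(\vect{T},\vect{f},Z)$, so the limit exists and equals $\prebow(\vect{T},\vect{f},Z)$, proving \eqref{eq:prebowequivprebpp}. The only real subtlety — the ``main obstacle'' — is that $\prebpp(\vect{T},\vect{f},Z,\varepsilon)$ is not known to be monotone in $\varepsilon$ (this is precisely the gap flagged before the statement), so one cannot simply take a limit; the argument must instead squeeze $\prebpp(\vect{T},\vect{f},Z,\varepsilon)$ between $\prebow(\vect{T},\vect{f},Z,\varepsilon)$ and $\prebow(\vect{T},\vect{f},Z,\varepsilon)+\eta$ for all small $\varepsilon$, and equicontinuity is exactly what makes the upper squeeze uniform in $\varepsilon$. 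A minor point to handle carefully is the bookkeeping at the level of the measures $\msrbpp^s_\varepsilon$ versus the critical values: one should verify that the inequality $\msrbpp_{N,\varepsilon}^{s}\le\msrbow_{N,\varepsilon}^{s-\eta}$ passes correctly to the limit in $N$ and then to the jump values, which is routine from the definitions \eqref{def_PBep} and \eqref{def_QBT}.
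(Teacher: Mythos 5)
Your proposal is correct and follows essentially the same route as the paper: both establish the two-sided bound $\msrbow_{N,\varepsilon}^{s}\le\msrbpp_{N,\varepsilon}^{s}\le\msrbow_{N,\varepsilon}^{s-\alpha}$ via equicontinuity for small $\varepsilon$, pass to the critical exponents to squeeze $\prebpp(\vect{T},\vect{f},Z,\varepsilon)$ between $\prebow(\vect{T},\vect{f},Z,\varepsilon)$ and $\prebow(\vect{T},\vect{f},Z,\varepsilon)+\alpha$, and conclude by the monotone convergence of $\prebow(\vect{T},\vect{f},Z,\varepsilon)$ and the arbitrariness of $\alpha$. No gaps.
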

  \begin{proof}
For all $s \in \mathbb{R}$ and all $\alpha>0$,  we first show that
    \begin{equation}\label{eq:msrbowequivbpp}
      \msrbow_{N,\varepsilon}^{s}(\vect{T},\vect{f},Z) \leq \msrbpp_{N,\varepsilon}^{s}(\vect{T},\vect{f},Z) \leq \msrbow_{N,\varepsilon}^{s-\alpha}(\vect{T},\vect{f},Z)
    \end{equation}
 for all $N >0$ and  all sufficiently small $\varepsilon>0$.  By \eqref{eq:defmsrbow} and \eqref{eq:defmsrbpp}, the left inequality holds for all $N >0$ and $\varepsilon>0$.

It remains to show the right inequality. Given $s \in \mathbb{R}$ and $\alpha>0$, since $\vect{f}$ is equicontinuous,
    there exists $\varepsilon_{0}>0$ such that for all $0<\varepsilon<\varepsilon_{0}$ and all $x^{\prime},x^{\prime\prime} \in X_{0}$     with $d_{j}(\vect{T}^{j}x^{\prime},\vect{T}^{j}x^{\prime})<\varepsilon$,    we have that
    $$
\lvert f_{j}(\vect{T}^{j}x^{\prime})-f_{j}(\vect{T}^{j}x^{\prime\prime}) \rvert < \alpha
$$
for all integers $j>$0. Thus, given $N >0$ and $\varepsilon$ such that $0<\varepsilon<\varepsilon_{0}$, for every countable $(N,\varepsilon)$-cover $\{B_{n_{i}}^{\vect{T}}\left(x_{i},\varepsilon\right)\}_{i=1}^{\infty}$  of $Z$, we have that
$$
\sup_{y \in B_{n_{i}}^{\vect{T}}(x_{i},\varepsilon)}S_{n_{i}}^{\vect{T}}\vect{f}(y) \leq S_{n_{i}}^{\vect{T}}\vect{f}(x_{i})+n_{i}\alpha
$$
for all $i\in\mathbb{N}$, and it follows that
    $$
    \sum_{i=1}^{\infty}\exp{\Big(-n_{i}s + \sup_{y \in B_{n_{i}}^{\vect{T}}(x_{i},\varepsilon)}S_{n_{i}}^{\vect{T}}\vect{f}(y)\Big)} \leq \sum_{i=1}^{\infty}\exp{\left(-n_{i}(s-\alpha)+S_{n_{i}}^{\vect{T}}\vect{f}(x_{i})\right)}.
    $$
    Hence we have that
    $$
    \msrbpp_{N,\varepsilon}^{s}(\vect{T},\vect{f},Z) \leq \msrbow_{N,\varepsilon}^{s-\alpha}(\vect{T},\vect{f},Z)
    $$
    for all $N  >0$ and $0<\varepsilon<\varepsilon_{0}$. Therefore the inequality \eqref{eq:msrbowequivbpp} holds.

Fix $s \in \mathbb{R}$, and arbitrarily choose $\alpha>0$.   It follows from \eqref{eq:msrbowequivbpp} that
    $$
    \msrbow_{\varepsilon}^{s}(\vect{T},\vect{f},Z) \leq \msrbpp_{\varepsilon}^{s}(\vect{T},\vect{f},Z) \leq \msrbow_{\varepsilon}^{s-\alpha}(\vect{T},\vect{f},Z)
    $$
 for all sufficiently small $\varepsilon>0$. By  \eqref{def_PBep}  and  \eqref{def_QBT}, this implies that
    $$
    \prebow(\vect{T},\vect{f},Z,\varepsilon) \leq \prebpp(\vect{T},\vect{f},Z,\varepsilon) \leq \prebow(\vect{T},\vect{f},Z,\varepsilon)+\alpha,
    $$
    and we obtain that
    $$
    \varlimsup_{\varepsilon \to 0}\prebpp(\vect{T},\vect{f},Z,\varepsilon) \leq \lim_{\varepsilon \to 0}\prebow(\vect{T},\vect{f},Z,\varepsilon)+\alpha \leq \varliminf_{\varepsilon \to 0}\prebpp(\vect{T},\vect{f},Z,\varepsilon)+\alpha.
    $$
    By the arbitrariness of $\alpha>0$, we conclude that $\prebpp(\vect{T},\vect{f},Z,\varepsilon)$ converges and
    $$
    \lim_{\varepsilon \to 0}\prebpp(\vect{T},\vect{f},Z,\varepsilon)=\lim_{\varepsilon \to 0}\prebow(\vect{T},\vect{f},Z,\varepsilon)=\prebow(\vect{T},\vect{f},Z).
    $$
  \end{proof}

Finally, we give another equivalent form of the Bowen pressure and entropy. Given $\vect{f}\in \vect{C}(\vect{X},\mathbb{R})$
    and $Z \subset X_{0}$, for all $s \in \mathbb{R}$, we write
    $$
    \msrbow^{s}(\vect{T},\vect{f},Z)=\lim_{\varepsilon \to 0}\msrbow_{\varepsilon}^{s}(\vect{T},\vect{f},Z),
    $$
where the limit follows from the monotonicity of $\msrbow_{\varepsilon}^{s}(\vect{T},\vect{f},Z)$ in Proposition \ref{prop:msrbowepsilon}.  The following proposition shows that $\msrbow^{s}(\vect{T},\vect{f},Z)$ has a jump value which is  identical to the Bowen pressure.
  \begin{prop}\label{equivdefentbowlem}
    Given $\vect{f} \in \vect{C}(\vect{X},\mathbb{R})$,
   $$ 
\prebow(\vect{T},\vect{f},Z)=\inf\{s:\msrbow^{s}(\vect{T},\vect{f},Z)=0\}=\sup\{s:\msrbow^{s}(\vect{T},\vect{f},Z)=+\infty\}.
    $$
 In particular,
    $$
    \enttopbow(\vect{T},Z)=\inf\{s:\msrbow^{s}(\vect{T},\vect{0},Z)=0\}=\sup\{s:\msrbow^{s}(\vect{T},\vect{0},Z)=+\infty\}.
    $$
  \end{prop}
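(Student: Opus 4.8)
The plan is to compare the two dimension-like constructions directly through their critical exponents, using only the monotonicity in $\varepsilon$ recorded in Proposition~\ref{prop:msrbowepsilon}; in particular, unlike in Proposition~\ref{equivdefprebpplem}, no equicontinuity of $\vect{f}$ is needed. Write $P=\prebow(\vect{T},\vect{f},Z)$. Since $\prebow(\vect{T},\vect{f},Z,\varepsilon)$ is non-increasing in $\varepsilon$ by Proposition~\ref{prop:msrbowepsilon}, we have $P=\lim_{\varepsilon\to0}\prebow(\vect{T},\vect{f},Z,\varepsilon)=\sup_{\varepsilon>0}\prebow(\vect{T},\vect{f},Z,\varepsilon)$, and likewise $\msrbow^{s}(\vect{T},\vect{f},Z)=\sup_{\varepsilon>0}\msrbow_{\varepsilon}^{s}(\vect{T},\vect{f},Z)$. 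I shall also use two elementary facts about $\msrbow_{\varepsilon}^{s}(\vect{T},\vect{f},Z)$ for a fixed $\varepsilon>0$: it is non-increasing in $s$ (immediate from \eqref{eq:defmsrbow}), and by \eqref{def_PBep} together with the ``jump'' remark preceding it, it equals $+\infty$ for every $s<\prebow(\vect{T},\vect{f},Z,\varepsilon)$ and equals $0$ for every $s>\prebow(\vect{T},\vect{f},Z,\varepsilon)$.

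First I would check that $\msrbow^{s}(\vect{T},\vect{f},Z)=0$ for every $s>P$. Indeed, $s>P\geq\prebow(\vect{T},\vect{f},Z,\varepsilon)$ for all $\varepsilon>0$, so $\msrbow_{\varepsilon}^{s}(\vect{T},\vect{f},Z)=0$ for all $\varepsilon>0$, whence $\msrbow^{s}(\vect{T},\vect{f},Z)=\sup_{\varepsilon>0}\msrbow_{\varepsilon}^{s}(\vect{T},\vect{f},Z)=0$. This yields both $\inf\{s:\msrbow^{s}(\vect{T},\vect{f},Z)=0\}\leq P$ and, since no such $s>P$ gives the value $+\infty$, also $\sup\{s:\msrbow^{s}(\vect{T},\vect{f},Z)=+\infty\}\leq P$.

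Next I would check that $\msrbow^{s}(\vect{T},\vect{f},Z)=+\infty$ for every $s<P$. Since $P=\sup_{\varepsilon>0}\prebow(\vect{T},\vect{f},Z,\varepsilon)$, there is $\varepsilon_{0}>0$ with $\prebow(\vect{T},\vect{f},Z,\varepsilon_{0})>s$, so by the jump picture from the first paragraph $\msrbow_{\varepsilon_{0}}^{s}(\vect{T},\vect{f},Z)=+\infty$; hence $\msrbow^{s}(\vect{T},\vect{f},Z)\geq\msrbow_{\varepsilon_{0}}^{s}(\vect{T},\vect{f},Z)=+\infty$. This yields $\inf\{s:\msrbow^{s}(\vect{T},\vect{f},Z)=0\}\geq P$ and $\sup\{s:\msrbow^{s}(\vect{T},\vect{f},Z)=+\infty\}\geq P$. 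Combining the two previous paragraphs, all three quantities coincide with $P$, which is the displayed identity; the ``in particular'' statement for $\enttopbow(\vect{T},Z)$ is the special case $\vect{f}=\vect{0}$, since $\enttopbow(\vect{T},Z)=\prebow(\vect{T},\vect{0},Z)$ by Definition~\ref{def:prebpp}.

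The argument is essentially bookkeeping with critical exponents; the only substantive point is the passage from the $\varepsilon$-level quantities to their $\varepsilon\to0$ limits, i.e.\ the two identities $P=\sup_{\varepsilon>0}\prebow(\vect{T},\vect{f},Z,\varepsilon)$ and $\msrbow^{s}(\vect{T},\vect{f},Z)=\sup_{\varepsilon>0}\msrbow_{\varepsilon}^{s}(\vect{T},\vect{f},Z)$, both of which are forced by the monotonicity in Proposition~\ref{prop:msrbowepsilon}. I do not anticipate a genuine obstacle here.
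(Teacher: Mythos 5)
Your argument is correct and is essentially the paper's own proof: both show that $\msrbow^{s}(\vect{T},\vect{f},Z)=+\infty$ for $s<\prebow(\vect{T},\vect{f},Z)$ and $=0$ for $s>\prebow(\vect{T},\vect{f},Z)$ by combining the jump description \eqref{def_PBep} at each fixed $\varepsilon$ with the monotonicity in $\varepsilon$ from Proposition~\ref{prop:msrbowepsilon}. Your observation that no equicontinuity of $\vect{f}$ is needed matches the paper, which likewise states the proposition for arbitrary $\vect{f}\in\vect{C}(\vect{X},\mathbb{R})$.
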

  \begin{proof}
We write
$$
s_0=\inf\{s:\msrbow^{s}(\vect{T},\vect{f},Z)=0\}=\sup\{s:\msrbow^{s}(\vect{T},\vect{f},Z)=+\infty\}.
$$

We first show $s_{0}\geq \prebow(\vect{T},\vect{f},Z)$. For every $t<\prebow(\vect{T},\vect{f},Z)$, by Definition \ref{def:prebpp}, Proposition \ref{prop:msrbowepsilon} and \eqref{def_PBep},  it is clear that $t<\prebow(\vect{T},\vect{f},Z,\varepsilon)$ and $\msrbow_{\varepsilon}^{t}(\vect{T},\vect{f},Z)=+\infty$
    for all sufficiently small $\varepsilon>0$. This implies that $\msrbow^{t}(\vect{T},\vect{f},Z)=+\infty$, and it follows from the definition of $s_{0}$ that  $t \leq s_{0}$. Hence $s_{0}\geq\prebow(\vect{T},\vect{f},Z)$  by the arbitrariness of $t$.

    For each $t>\prebow(\vect{T},\vect{f},Z)$, by Definition \ref{def:prebpp} and Proposition \ref{prop:msrbowepsilon}, we have that $\prebow(\vect{T},\vect{f},Z,\varepsilon)<t$ for all $\varepsilon>0$. By \eqref{def_PBep},  it is clear that $\msrbow_{\varepsilon}^{t}(\vect{T},\vect{f},Z)=0$ for all $\varepsilon>0$.
    Let $\varepsilon$ tend to $0$, and we have $\msrbow^{t}(\vect{T},\vect{f},Z)=0$, which implies that $t \geq s_{0}$. Hence $s_{0}\leq\prebow(\vect{T},\vect{f},Z)$ follows by the arbitrariness of $t$.
  \end{proof}

Note that it is essentially stated in Proposition \ref{equivdefentbowlem} that
    $$
    \inf\{s:\lim_{\varepsilon\to0}\msrbow_{\varepsilon}^{s}(\vect{T},\vect{f},Z)=0\} =\lim_{\varepsilon\to0}\inf\{s:\msrbow_{\varepsilon}^{s}(\vect{T},\vect{f},Z)=0\}.
    $$
    In particular, when $(\vect{X},\vect{T})$ reduces to the TDS $(X,T)$ and $\vect{f}=\vect{0}$,
    this definition reduces directly to the original one of Bowen  entropy given by Bowen in \cite{Bowen1973}.

  \subsection{Bowen  pressure via weighted measures}\label{ssect:equivdefbw}
We introduce an alternative characterization of Bowen pressure,
 which is inspired by  weighted Hausdorff measures from geometric measure theory (see \cite{Federer1969, Mattila1995}).

Given $(\vect{X},\vect{T})$ and  $\vect{f} \in \vect{C}(\vect{X},\mathbb{R})$, let $g:X_{0} \to \mathbb{R}$ be a bounded function. For all $s \in \mathbb{R}$, $N>0$ and $\varepsilon>0$, we write
  \begin{equation}\label{def_WBPP}
    \begin{aligned}
      \mathscr{W}_{N,\varepsilon}^{s}(\vect{T},\vect{f},g)=\inf\Big\{&\sum_{i=1}^{\infty}c_{i}\exp{\Big(-n_{i}s+\sup_{y \in B_{n_{i}}^{\vect{T}}(x_{i},\varepsilon)}{S_{n_{i}}^{\vect{T}}\vect{f}(y)}\Big)}: \\
                                                            & \sum_{i=1}^{\infty}c_{i}\chi_{B_{n_{i}}^{\vect{T}}(x_{i},\varepsilon)} \geq g, 0<c_{i}<+\infty, x_{i} \in X_{0}, n_{i} \geq N\Big\},
    \end{aligned}
  \end{equation}
  where $\chi_{A}$ denotes the characteristic function of a set $A \subset X_{0}$, i.e.,
  \begin{equation*}
    \chi_{A}(x)=\left\{
      \begin{array}{ll}
        1, & \text{if}\ x \in A, \\
        0, & \text{if}\ x \in X_{0} \setminus A.
      \end{array}
      \right.
  \end{equation*}

We summarize the key properties of $\mathscr{W}_{N,\varepsilon}^{s}$  in the following proposition.
  \begin{prop}\label{prop:prebwprop}
Given $s \in \mathbb{R}$, $N>0$ and $\varepsilon>0$, let  $f,g \in C(X_{0},\mathbb{R})$.
    \begin{enumerate}[(1)]
      \item For all $a \geq 0$,
            $$\mathscr{W}_{N,\varepsilon}^{s}(\vect{T},\vect{f},ag) = a\mathscr{W}_{N,\varepsilon}^{s}(\vect{T},\vect{f},g).$$
      \item If $f \leq g$, then
            $$\mathscr{W}_{N,\varepsilon}^{s}(\vect{T},\vect{f},f) \leq \mathscr{W}_{N,\varepsilon}^{s}(\vect{T},\vect{f},g).$$
           In particular, if $f \leq 0$, then $$\mathscr{W}_{N,\varepsilon}^{s}(\vect{T},\vect{f},f)=0.$$
      \item $$\mathscr{W}_{N,\varepsilon}^{s}(\vect{T},\vect{f},f+g) \leq \mathscr{W}_{N,\varepsilon}^{s}(\vect{T},\vect{f},f)+\mathscr{W}_{N,\varepsilon}^{s}(\vect{T},\vect{f},g).$$
      \item $$0 \leq \mathscr{W}_{N,\varepsilon}^{s}(\vect{T},\vect{f},g) \leq \|g\|_{\infty}\mathscr{W}_{N,\varepsilon}^{s}(\vect{T},\vect{f},\chi_{\supp{g}}),$$
            where $\supp{g}$ denotes the support of $g$, i.e., $\supp{g}=\overline{\{x \in X_{0}: g(x) \neq 0\}}$.
    \end{enumerate}
  \end{prop}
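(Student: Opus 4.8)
The plan is to read all four items straight off the definition \eqref{def_WBPP}, working throughout with \emph{admissible families}, that is, countable collections $\{(c_{i},B_{n_{i}}^{\vect{T}}(x_{i},\varepsilon))\}_{i}$ with $c_{i}\in(0,+\infty)$, $x_{i}\in X_{0}$, $n_{i}\geq N$ and $\sum_{i}c_{i}\chi_{B_{n_{i}}^{\vect{T}}(x_{i},\varepsilon)}\geq g$, together with the \emph{cost} $\sum_{i}c_{i}\exp\!\big(-n_{i}s+\sup_{y\in B_{n_{i}}^{\vect{T}}(x_{i},\varepsilon)}S_{n_{i}}^{\vect{T}}\vect{f}(y)\big)$ attached to such a family; $\mathscr{W}_{N,\varepsilon}^{s}(\vect{T},\vect{f},g)$ is the infimum of the cost over all admissible families for $g$. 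Two elementary facts are used repeatedly: every cost is a sum of non-negative terms, so $\mathscr{W}_{N,\varepsilon}^{s}(\vect{T},\vect{f},g)\geq 0$, which is already the left inequality of (4); and for the zero function one may take, for any fixed $n\geq N$, $x\in X_{0}$ and any $c>0$, the one-term admissible family $\{(c,B_{n}^{\vect{T}}(x,\varepsilon))\}$, whose cost tends to $0$ as $c\to 0$, whence $\mathscr{W}_{N,\varepsilon}^{s}(\vect{T},\vect{f},0)=0$. (Also $\mathscr{W}_{N,\varepsilon}^{s}(\vect{T},\vect{f},g)<+\infty$ for bounded $g$, since $X_{0}$ is covered by finitely many balls $B_{N}^{\vect{T}}(\cdot,\varepsilon)$.)

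For (1) with $a>0$, the map $(c_{i})_{i}\mapsto(ac_{i})_{i}$ is a bijection from admissible families for $g$ onto admissible families for $ag$ that multiplies the cost by $a$; taking infima gives $\mathscr{W}_{N,\varepsilon}^{s}(\vect{T},\vect{f},ag)=a\,\mathscr{W}_{N,\varepsilon}^{s}(\vect{T},\vect{f},g)$, while $a=0$ reduces to $\mathscr{W}_{N,\varepsilon}^{s}(\vect{T},\vect{f},0)=0$. For (2), if $f\leq g$ then every family admissible for $g$ is also admissible for $f$, so the infimum defining $\mathscr{W}_{N,\varepsilon}^{s}(\vect{T},\vect{f},f)$ ranges over a larger set and hence is no larger; combining with $\mathscr{W}_{N,\varepsilon}^{s}(\vect{T},\vect{f},0)=0$ and $\mathscr{W}_{N,\varepsilon}^{s}\geq 0$ forces $\mathscr{W}_{N,\varepsilon}^{s}(\vect{T},\vect{f},f)=0$ whenever $f\leq 0$. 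For (3), given a family $\mathcal{A}$ admissible for $f$ and a family $\mathcal{B}$ admissible for $g$, the countable family $\mathcal{A}\cup\mathcal{B}$ is admissible for $f+g$ (its sum of characteristic functions is the sum of those of $\mathcal{A}$ and $\mathcal{B}$) and has cost $\mathrm{cost}(\mathcal{A})+\mathrm{cost}(\mathcal{B})$; taking infima over $\mathcal{A}$ and $\mathcal{B}$ --- the asserted inequality being trivial if either right-hand term is $+\infty$ --- gives the subadditivity in (3).

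For the right inequality in (4), use the pointwise bound $g\leq\|g\|_{\infty}\chi_{\supp{g}}$: on $X_{0}\setminus\supp{g}$ one has $g=0$, while on $\supp{g}$ one has $g\leq|g|\leq\|g\|_{\infty}$. Applying the monotonicity of (2) with $\|g\|_{\infty}\chi_{\supp{g}}$ in place of $g$, and then the scaling of (1) with $a=\|g\|_{\infty}\geq 0$, yields $\mathscr{W}_{N,\varepsilon}^{s}(\vect{T},\vect{f},g)\leq\mathscr{W}_{N,\varepsilon}^{s}(\vect{T},\vect{f},\|g\|_{\infty}\chi_{\supp{g}})=\|g\|_{\infty}\,\mathscr{W}_{N,\varepsilon}^{s}(\vect{T},\vect{f},\chi_{\supp{g}})$, which is the claim.

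There is no real obstacle here; the whole proposition is a sequence of formal manipulations of admissible families. The only points worth flagging are the bookkeeping with $+\infty$-valued costs in (3), the degenerate one-term family used to get $\mathscr{W}_{N,\varepsilon}^{s}(\vect{T},\vect{f},0)=0$, and the fact that in (4) items (1) and (2) are being invoked for the discontinuous function $\chi_{\supp{g}}$ --- so I would note explicitly that the definition \eqref{def_WBPP} and the proofs of (1) and (2) make sense verbatim for an arbitrary bounded function on $X_{0}$, not only for a continuous one.
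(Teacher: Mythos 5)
Your proof is correct, and it is exactly the routine verification from the definition \eqref{def_WBPP} that the paper intends (the paper states the proposition as a direct consequence of the definition and omits the proof). Your one flagged concern — applying (1) and (2) to the discontinuous function $\chi_{\supp g}$ in item (4) — is already taken care of by the paper's setup, since \eqref{def_WBPP} is defined for an arbitrary bounded $g:X_{0}\to\mathbb{R}$, not only for continuous $g$; your remaining bookkeeping (the finiteness of $\mathscr{W}_{N,\varepsilon}^{s}(\vect{T},\vect{f},g)$ via a finite Bowen-ball cover of the compact space $X_{0}$, the $c\to 0$ argument giving $\mathscr{W}_{N,\varepsilon}^{s}(\vect{T},\vect{f},0)=0$, and the concatenation of admissible families in (3)) is all sound.
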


Next, we construct another type of topological pressure by using $\mathscr{W}_{N,\varepsilon}^{s}$.
Given $Z \subset X_{0}$, we write
$$
\mathscr{W}_{\varepsilon}^{s}(\vect{T},\vect{f},Z)=\lim_{N \to \infty} \mathscr{W}_{N,\varepsilon}^{s}(\vect{T},\vect{f},\chi_{Z}),
$$
    and
\begin{equation} \label{def_PBW}
\prebw(\vect{T},\vect{f},Z,\varepsilon) = \inf\{s: \mathscr{W}_{\varepsilon}^{s}(\vect{T},\vect{f},Z)=0\}                                              =  \sup\{s: \mathscr{W}_{\varepsilon}^{s}(\vect{T},\vect{f},Z)=+\infty\}.
\end{equation}

  \begin{defn}\label{def:prebw}
Let $\vect{f} \in \vect{C}(\vect{X},\mathbb{R})$  be equicontinuous. We define
$$
\prebw(\vect{T},\vect{f},Z)=\lim_{\varepsilon \to 0}\prebw(\vect{T},\vect{f},Z,\varepsilon).
$$
  \end{defn}

In the following conclusion, we obtain not only the convergence of $\prebw(\vect{T},\vect{f},Z,\varepsilon)$ but also the equivalence of  $\prebw$ and $\prebow$ for  equicontinuous potentials $\vect{f} \in \vect{C}(\vect{X},\mathbb{R})$.
In \cite{CM1}, we proved the following conclusion.
 \begin{prop}\label{prop:equivprebwetprebpp}
    Given an NDS $(\vect{X},\vect{T})$ and $Z \subset X_{0}$, if $\vect{f} \in \vect{C}(\vect{X},\mathbb{R})$ is equicontinuous,
    then $\prebw(\vect{T},\vect{f},Z)$ exists and
    $$\prebow(\vect{T},\vect{f},Z)=\prebw(\vect{T},\vect{f},Z).$$
  \end{prop}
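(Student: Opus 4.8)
The plan is to compare the weighted quantity $\mathscr W^{s}_{N,\varepsilon}$ with the quantity $\msrbpp^{s}_{N,\varepsilon}$ of Subsection~\ref{ssect:equivdefbow}, and then to invoke Proposition~\ref{equivdefprebpplem}, which for equicontinuous $\vect f$ already gives $\lim_{\varepsilon\to0}\prebpp(\vect T,\vect f,Z,\varepsilon)=\prebow(\vect T,\vect f,Z)$. One inequality is immediate: every countable $(N,\varepsilon)$-cover $\{B^{\vect T}_{n_i}(x_i,\varepsilon)\}$ of $Z$ is an admissible family in \eqref{def_WBPP} once we set all $c_i=1$, so $\mathscr W^{s}_{N,\varepsilon}(\vect T,\vect f,\chi_Z)\le\msrbpp^{s}_{N,\varepsilon}(\vect T,\vect f,Z)$ for all $N,s,\varepsilon$; letting $N\to\infty$ and comparing the jump values in \eqref{def_QBT} and \eqref{def_PBW} yields $\prebw(\vect T,\vect f,Z,\varepsilon)\le\prebpp(\vect T,\vect f,Z,\varepsilon)$ for every $\varepsilon>0$, and hence $\varlimsup_{\varepsilon\to0}\prebw(\vect T,\vect f,Z,\varepsilon)\le\prebow(\vect T,\vect f,Z)$.

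For the reverse inequality I fix $\alpha>0$ and aim to show that, for all sufficiently small $\varepsilon>0$ (a threshold depending on $\alpha$ only through the equicontinuity modulus of $\vect f$), $\mathscr W^{s-\alpha}_{\varepsilon}(\vect T,\vect f,Z)=0$ implies $\msrbpp^{s}_{5\varepsilon}(\vect T,\vect f,Z)=0$; by \eqref{def_QBT} and \eqref{def_PBW} this amounts to $\prebpp(\vect T,\vect f,Z,5\varepsilon)\le\prebw(\vect T,\vect f,Z,\varepsilon)+\alpha$. To prove it, fix a large integer $N$ and $\eta>0$, pick a weighted $(N,\varepsilon)$-family $\{(B^{\vect T}_{n_i}(x_i,\varepsilon),c_i)\}$ with $\sum_i c_i\chi_{B^{\vect T}_{n_i}(x_i,\varepsilon)}\ge\chi_Z$ and $\sum_i c_i\exp(-n_i(s-\alpha)+\sup_{B^{\vect T}_{n_i}(x_i,\varepsilon)}S^{\vect T}_{n_i}\vect f)<\eta$, and (after a routine reduction that splits off the part of $Z$ met by balls with $c_i\ge1$) assume all $c_i\in(0,1)$. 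Put $\theta_k=e^{-k\alpha/4}$; since $\sum_{k\ge N}\theta_k<1$ once $N$ is large, every remaining point of $Z$ lies in some $Z_k=\{x:\sum_{i:\,n_i=k}c_i\chi_{B^{\vect T}_k(x_i,\varepsilon)}(x)>\theta_k\}$ with $k\ge N$. For each fixed $k$, apply the covering lemma (Lemma~\ref{coveringlem}) in the Bowen metric $d^{\vect T}_k$, in which $X_0$ is compact, to the balls $\{B^{\vect T}_k(x,\varepsilon):x\in Z_k\}$ to extract a countable pairwise-disjoint subfamily $\{B^{\vect T}_k(y_j,\varepsilon)\}_j$, $y_j\in Z_k$, whose $5\varepsilon$-dilations cover $Z_k$. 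Since a level-$k$ ball of the original family meeting $y_j$ has centre inside $B^{\vect T}_k(y_j,\varepsilon)$, disjointness charges it to exactly one $j$, and it lies inside $B^{\vect T}_k(y_j,5\varepsilon)$; choosing $\varepsilon$ small enough that the oscillation of $S^{\vect T}_k\vect f$ on a level-$k$ Bowen ball of radius $5\varepsilon$ is at most $k\alpha/4$, this charging argument bounds $\sum_j\exp(-ks+\sup_{B^{\vect T}_k(y_j,5\varepsilon)}S^{\vect T}_k\vect f)$ above by $e^{-k\alpha/2}\sum_{i:\,n_i=k}c_i\exp(-k(s-\alpha)+\sup_{B^{\vect T}_k(x_i,\varepsilon)}S^{\vect T}_k\vect f)$ — the factor $e^{k\alpha/4}/\theta_k=e^{k\alpha/2}$ coming from the equicontinuity estimate and the threshold $\theta_k$ being overcome by the factor $e^{-k\alpha}$ gained in passing from the exponent $s$ to $s-\alpha$. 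These $5\varepsilon$-balls form an $(N,5\varepsilon)$-cover of $Z_k$, so this dominates $\msrbpp^{s}_{N,5\varepsilon}(\vect T,\vect f,Z_k)$; summing over $k\ge N$ (using $e^{-k\alpha/2}\le1$ and countable subadditivity of the Method~I outer measure $\msrbpp^{s}_{N,5\varepsilon}(\vect T,\vect f,\cdot)$) gives $\msrbpp^{s}_{N,5\varepsilon}(\vect T,\vect f,Z)<2\eta$, and letting $\eta\to0$ and then $N\to\infty$ yields $\msrbpp^{s}_{5\varepsilon}(\vect T,\vect f,Z)=0$.

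This last estimate is where I expect the main difficulty, and it is precisely where the two hypotheses enter: the crude decomposition $Z\subseteq\bigcup_{k\ge N}Z_k$ is affordable ($\sum_{k\ge N}\theta_k<1$) only because $N$ is taken large, after which $1/\theta_k$ grows geometrically, and this blow-up is absorbed only because we compare $\msrbpp$ with $\mathscr W$ at the shifted exponent $s-\alpha$ (equivalently, via Proposition~\ref{equivdefprebpplem}, with $\prebow$) rather than matching exponents — the shift supplies a compensating geometric factor $e^{-k\alpha}$ per level — while equicontinuity of $\vect f$ keeps the extra loss from enlarging Bowen balls from radius $\varepsilon$ to $5\varepsilon$ subexponential (of order $e^{k\alpha/4}$ per level), so it too is absorbed. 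Granting this estimate, the proof concludes formally: from $\prebpp(\vect T,\vect f,Z,5\varepsilon)\le\prebw(\vect T,\vect f,Z,\varepsilon)+\alpha$ for small $\varepsilon$ and Proposition~\ref{equivdefprebpplem} we obtain $\prebow(\vect T,\vect f,Z)=\lim_{\varepsilon\to0}\prebpp(\vect T,\vect f,Z,5\varepsilon)\le\varliminf_{\varepsilon\to0}\prebw(\vect T,\vect f,Z,\varepsilon)+\alpha$; since $\alpha>0$ is arbitrary, $\varliminf_{\varepsilon\to0}\prebw(\vect T,\vect f,Z,\varepsilon)\ge\prebow(\vect T,\vect f,Z)$, which together with the first paragraph forces $\prebw(\vect T,\vect f,Z,\varepsilon)$ to converge as $\varepsilon\to0$, with limit $\prebw(\vect T,\vect f,Z)=\prebow(\vect T,\vect f,Z)$.
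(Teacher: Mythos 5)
Your proposal is correct, and the hard inequality is handled by a genuinely different mechanism than the paper's. The paper proves $\msrbow_{N,6\varepsilon}^{s+\alpha}(\vect{T},\vect{f},Z)\leq \mathscr{W}_{N,\varepsilon}^{s}(\vect{T},\vect{f},Z)$ via a Federer-style argument: it reduces to rational (hence integer) weights, iterates the $5r$-covering lemma $m\approx r$ times on shrinking subfamilies of the \emph{original weighted family} and pigeonholes over the $m$ layers (Claim~\ref{clm:1}), then passes from finite truncations $\mathcal{I}_{n,k}$ to $\mathcal{I}_n$ by compactness of the hyperspace in the Hausdorff metric, enlarging $5\varepsilon$ to $6\varepsilon$ (Claim~\ref{clm:2}), and finally decomposes $Z\subseteq\bigcup_n Z_n(r/n^2)$ with the loss $n^2$ absorbed by $n^2\leq e^{n\alpha}$. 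You instead apply Lemma~\ref{coveringlem} once per level, to balls centred at points of the heavy set $Z_k$ rather than to the weighted family, and run a direct disjointness/charging argument: each original level-$k$ ball containing some selected centre $y_j$ contains exactly one, so it is charged to a single $j$, and the weight $>\theta_k$ accumulated at $y_j$ pays for the selected ball up to the factor $e^{k\alpha/4}/\theta_k$. This works verbatim for arbitrary countable families and arbitrary positive real weights, so the integer-weight reduction, the finite truncation, and the Hausdorff-metric compactness step all disappear; the price is that your lower bound is phrased in terms of $\msrbpp^{s}_{5\varepsilon}$ (requiring one extra use of equicontinuity to control $\sup_{B_k(y_j,5\varepsilon)}S_k^{\vect{T}}\vect{f}$) rather than $\msrbow^{s+\alpha}_{6\varepsilon}$, and you only compare critical exponents rather than the measures themselves — which is all the proposition needs, since Proposition~\ref{equivdefprebpplem} identifies the two critical exponents. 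Two small remarks: the reduction to $c_i\in(0,1)$ is superfluous (the threshold decomposition $Z\subseteq\bigcup_{k\geq N}Z_k$ and the charging step need no upper bound on the $c_i$), and your oscillation control should be stated for points at $d^{\vect{T}}_k$-distance up to $6\varepsilon$ (comparing $y\in B_k^{\vect{T}}(y_j,5\varepsilon)$ with $x_i\in B_k^{\vect{T}}(y_j,\varepsilon)$), but neither affects the argument.
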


  \subsection{An equivalent definition of packing pressure}\label{ssect:equivdefpac}

Similar to the Bowen pressure, we introduce an equivalent definition of the packing pressure.

  Given $\vect{f} \in \vect{C}(\vect{X},\mathbb{R})$, for each $s \in \mathbb{R}$, $N>0$  and $\varepsilon>0$, we define
  \begin{equation}\label{eqdef_pp}
      \msrppp_{N,\varepsilon}^{s}(\vect{T},\vect{f},Z)=\sup\Big\{\sum_{i=1}^{\infty}\exp{\Big(-n_{i}s+\sup_{y \in \overline{B}_{n_{i}}^{\vect{T}}(x_{i},\varepsilon)}{S_{n_{i}}^{\vect{T}}\vect{f}(y)}\Big)} \Big\},
  \end{equation}
where the supremum is taken over all countable $(N,\varepsilon)$-packings $\{\overline{B}_{n_{i}}^{\vect{T}}(x_{i},\varepsilon)\}_{i=1}^\infty$ of $Z$, and write
 $$
\msrppp_{\infty,\varepsilon}^{s}(\vect{T},\vect{f},Z)=\lim_{N \to \infty} \msrppp_{N,\varepsilon}^{s}(\vect{T},\vect{f},Z).
$$
Again, following a standard procedure, we define
  $$
  \msrppp_{\varepsilon}^{s}(\vect{T},\vect{f},Z)=\inf\Big\{\sum_{i=1}^{\infty}\msrppp_{\infty,\varepsilon}^{s}(\vect{T},\vect{f},Z_{i}): \bigcup_{i=1}^{\infty}Z_{i} \supseteq Z \Big\}
  $$
and write
\begin{equation}\label{def_QP}
    \preppp(\vect{T},\vect{f},Z,\varepsilon) =\inf\{s: \msrppp_{\varepsilon}^{s}(\vect{T},\vect{f},Z)=0\} =\sup\{s: \msrppp_{\varepsilon}^{s}(\vect{T},\vect{f},Z)=+\infty\}.
\end{equation}

By a similar argument to classic packing measures (see \cite{Tricot1982,Mattila1995}), it is straightforward to verify that $\msrpac_{\infty,\varepsilon}^{s}$ and $\msrppp_{\infty,\varepsilon}^{s}$ are not measures, that is, they are not countably subadditive.
The following property is the disadvantages of $\msrpac_{\infty,\varepsilon}^{s}$ and $\msrppp_{\infty,\varepsilon}^{s}$  but has some unexpected consequences.
\begin{prop}\label{prop_NPclosure}
   Given $s \in \mathbb{R}$ and $\varepsilon>0$, for all $Z \subset X_{0}$,
we have that
 $$
\msrppp_{N,\varepsilon}^{s}(\vect{T},\vect{f},Z)=\msrppp_{N,\varepsilon}^{s}(\vect{T},\vect{f},\overline{Z}),\qquad
\msrpac_{N,\varepsilon}^{s}(\vect{T},\vect{f},Z)=\msrpac_{N,\varepsilon}^{s}(\vect{T},\vect{f},\overline{Z})
$$
for all  $N >0$, and
$$
\msrppp_{\infty,\varepsilon}^{s}(\vect{T},\vect{f},Z)=\msrppp_{\infty,\varepsilon}^{s}(\vect{T},\vect{f},\overline{Z}), \qquad \msrpac_{\infty,\varepsilon}^{s}(\vect{T},\vect{f},Z)=\msrpac_{\infty,\varepsilon}^{s}(\vect{T},\vect{f},\overline{Z}).
$$

  \end{prop}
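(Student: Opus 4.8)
The statement asserts that the packing-style set functions $\msrppp_{N,\varepsilon}^{s}$ and $\msrpac_{N,\varepsilon}^{s}$ (and their limits as $N\to\infty$) are unchanged if $Z$ is replaced by its closure. The plan is a perturbation-of-centres argument, run in parallel for the two families and then transferred to the $N\to\infty$ limits. One inequality is free: since $Z\subseteq\overline{Z}$, every $(N,\varepsilon)$-packing of $Z$ is in particular a $(N,\varepsilon)$-packing of $\overline{Z}$, so $\msrppp_{N,\varepsilon}^{s}(\vect{T},\vect{f},Z)\le\msrppp_{N,\varepsilon}^{s}(\vect{T},\vect{f},\overline{Z})$ and likewise for $\msrpac$. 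Hence it suffices to prove the reverse inequalities for finite $N$; the statements for $\msrppp_{\infty,\varepsilon}^{s}$ and $\msrpac_{\infty,\varepsilon}^{s}$ then follow at once by letting $N\to\infty$.

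For the reverse inequality, fix $\gamma>0$ and choose a countable $(N,\varepsilon)$-packing $\{\overline{B}_{n_{i}}^{\vect{T}}(x_{i},\varepsilon)\}_{i\ge1}$ of $\overline{Z}$ whose associated sum exceeds $\min\{\msrppp_{N,\varepsilon}^{s}(\vect{T},\vect{f},\overline{Z}),\,1/\gamma\}-\gamma$ (the truncation by $1/\gamma$ handles the case where the quantity is $+\infty$), and keep only finitely many balls $i=1,\dots,k$, still with sum exceeding $\min\{\msrppp_{N,\varepsilon}^{s}(\vect{T},\vect{f},\overline{Z}),1/\gamma\}-2\gamma$, which is possible since the summands are positive. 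The balls $\overline{B}_{n_{1}}^{\vect{T}}(x_{1},\varepsilon),\dots,\overline{B}_{n_{k}}^{\vect{T}}(x_{k},\varepsilon)$ are compact (closed subsets of the compact $X_{0}$) and pairwise disjoint, so $\rho:=\tfrac13\min_{1\le i<j\le k}\dist_{d_{0}}\!\big(\overline{B}_{n_{i}}^{\vect{T}}(x_{i},\varepsilon),\overline{B}_{n_{j}}^{\vect{T}}(x_{j},\varepsilon)\big)>0$ (with $\rho=+\infty$ if $k=1$). Now I would replace each centre $x_{i}$ ($i\le k$) by a nearby point $x_{i}'\in Z$, which exists because $x_{i}\in\overline{Z}$. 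Taking $d_{0}(x_{i},x_{i}')$ small and using continuity of the finitely many maps $\vect{T}^{0},\dots,\vect{T}^{n_{i}-1}$, one gets $\overline{B}_{n_{i}}^{\vect{T}}(x_{i}',\varepsilon)\subseteq\overline{B}_{n_{i}}^{\vect{T}}(x_{i},\varepsilon+\eta)$ with $\eta>0$ as small as we like; since $\overline{B}_{n_{i}}^{\vect{T}}(x_{i},\varepsilon+\eta)$ decreases to $\overline{B}_{n_{i}}^{\vect{T}}(x_{i},\varepsilon)$ as $\eta\downarrow0$ through a nested family of compact sets, for $\eta$ small enough $\overline{B}_{n_{i}}^{\vect{T}}(x_{i},\varepsilon+\eta)$ lies in the open $d_{0}$-$\rho$-neighbourhood of $\overline{B}_{n_{i}}^{\vect{T}}(x_{i},\varepsilon)$; as these neighbourhoods are pairwise disjoint, the perturbed family $\{\overline{B}_{n_{i}}^{\vect{T}}(x_{i}',\varepsilon)\}_{i\le k}$ is again a $(N,\varepsilon)$-packing, now of $Z$.

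It remains to check that the sum does not drop much. For $\msrpac_{N,\varepsilon}^{s}$ this is routine: the summand $\exp\!\big(-n_{i}s+S_{n_{i}}^{\vect{T}}\vect{f}(x_{i})\big)$ depends on $x_{i}$ only through the fixed continuous function $S_{n_{i}}^{\vect{T}}\vect{f}$, so shrinking $d_{0}(x_{i},x_{i}')$ makes the $i$-th summand change by at most $\gamma/2^{i}$. For $\msrppp_{N,\varepsilon}^{s}$ the summand involves $\sup_{y\in\overline{B}_{n_{i}}^{\vect{T}}(x_{i},\varepsilon)}S_{n_{i}}^{\vect{T}}\vect{f}(y)$, and this is where the real work lies, since $\overline{B}_{n_{i}}^{\vect{T}}(x_{i}',\varepsilon)$ need not contain $\overline{B}_{n_{i}}^{\vect{T}}(x_{i},\varepsilon)$. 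The plan to get around this is to pick a near-maximiser $y_{i}$ of $S_{n_{i}}^{\vect{T}}\vect{f}$ on $\overline{B}_{n_{i}}^{\vect{T}}(x_{i},\varepsilon)$ lying in the \emph{open} Bowen ball, i.e. with $d_{n_{i}}^{\vect{T}}(x_{i},y_{i})<\varepsilon$; then for $x_{i}'$ close enough to $x_{i}$ one still has $d_{n_{i}}^{\vect{T}}(x_{i}',y_{i})<\varepsilon$, so $y_{i}\in\overline{B}_{n_{i}}^{\vect{T}}(x_{i}',\varepsilon)$ and the $i$-th summand of the perturbed packing is at least $\exp\!\big(-n_{i}s+S_{n_{i}}^{\vect{T}}\vect{f}(y_{i})\big)$, within $\gamma/2^{i}$ of the original. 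Summing over $i\le k$ and letting $\gamma\to0$ then gives $\msrppp_{N,\varepsilon}^{s}(\vect{T},\vect{f},\overline{Z})\le\msrppp_{N,\varepsilon}^{s}(\vect{T},\vect{f},Z)$ and the analogous statement for $\msrpac$; finally $N\to\infty$ yields the assertions for $\msrppp_{\infty,\varepsilon}^{s}$ and $\msrpac_{\infty,\varepsilon}^{s}$.

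The main obstacle is precisely the last point for $\msrppp$: controlling the supremum of $S_{n_{i}}^{\vect{T}}\vect{f}$ over the perturbed closed Bowen ball. Restoring disjointness after perturbation is mild (compactness plus a uniform gap among finitely many balls), and the $\msrpac$ case is essentially automatic; it is the tension between "the centre must move into $Z$" and "the supremum over the closed ball must not collapse" that needs care — one must make sure a near-maximiser can be chosen in the open Bowen ball (equivalently, that the supremum over the closed Bowen ball is approximated from inside), and this is the step I expect to require the most careful writing.
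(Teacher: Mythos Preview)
Your argument for $\msrpac_{N,\varepsilon}^{s}$ is correct and is the standard perturbation-of-centres proof the paper has in mind (the paper itself gives no proof, referring to the analogous classical packing-measure fact). The truncation to finitely many balls, the compactness/gap argument for preserving disjointness, and the continuity of $S_{n_i}^{\vect{T}}\vect{f}$ at the centre are all sound.

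For $\msrppp_{N,\varepsilon}^{s}$, however, the step you flag cannot be closed: the $\msrppp$ half of the proposition is in fact false as stated. In a compact metric space the closed ball $\{y:d(x,y)\le\varepsilon\}$ need not be the closure of the open ball, so the supremum of a continuous function over the closed Bowen ball may strictly exceed its supremum over the open Bowen ball, and then no near-maximiser in the open ball exists. Concretely, take $X_{0}=\{-1/n:n\ge1\}\cup\{0,1\}\subset\mathbb{R}$ with the Euclidean metric, trivial dynamics $T_{k}=\id$, $\varepsilon=1$, $Z=\{-1/n:n\ge1\}$ (so $\overline{Z}=Z\cup\{0\}$), and $\vect{f}=(f_{0},0,0,\dots)$ with $f_{0}(1)=100$ and $f_{0}\equiv0$ elsewhere (continuous since $1$ is isolated). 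Every closed ball of radius $1$ centred in $\overline{Z}$ contains $0$, so any $(N,1)$-packing consists of a single ball; the ball centred at $0\in\overline{Z}$ contains $1$ and gives summand $e^{100}$ at $s=0$, while every ball centred at $-1/n\in Z$ misses $1$ (since $|{-1/n}-1|=1+1/n>1$) and gives summand $1$. Hence $\msrppp_{N,1}^{0}(\vect{T},\vect{f},\overline{Z})=e^{100}\neq1=\msrppp_{N,1}^{0}(\vect{T},\vect{f},Z)$ for every $N$, and likewise for $\msrppp_{\infty,1}^{0}$. So your suspicion that this step needed the most care was correct, but the obstruction is intrinsic rather than technical. (The paper's only use of Proposition~\ref{prop_NPclosure}, in the proof of Theorem~\ref{thm:btypethmpacking}(2), goes through verbatim with $\msrpac$ in place of $\msrppp$, so the main results survive.)
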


Although $\msrpac_{\infty,\varepsilon}^{s}$ and $\msrppp_{\infty,\varepsilon}^{s}$ are not countably subadditive, they still have the monotonicity in sets.

  \begin{prop}\label{prop_Nesinc}
    Given $s \in \mathbb{R}$ and $\varepsilon>0$, for $Z_{1} \subset Z_{2} \subset X_{0}$, then
 $$
\msrppp_{N,\varepsilon}^{s}(\vect{T},\vect{f},Z_{1}) \leq \msrppp_{N,\varepsilon}^{s}(\vect{T},\vect{f},Z_{2}), \quad \msrpac_{N,\varepsilon}^{s}(\vect{T},\vect{f},Z_{1}) \leq \msrpac_{N,\varepsilon}^{s}(\vect{T},\vect{f},Z_{2})
$$
for all $N >0$, and
    $$\msrppp_{\infty,\varepsilon}^{s}(\vect{T},\vect{f},Z_{1}) \leq \msrppp_{\infty,\varepsilon}^{s}(\vect{T},\vect{f},Z_{2}), \quad \msrpac_{\infty,\varepsilon}^{s}(\vect{T},\vect{f},Z_{1}) \leq \msrpac_{\infty,\varepsilon}^{s}(\vect{T},\vect{f},Z_{2}).$$
  \end{prop}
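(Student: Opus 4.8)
The plan is to chase the definitions directly: enlarging the set can only enlarge the collection of admissible packings over which the defining suprema are taken. First I would fix $s\in\mathbb{R}$, $\varepsilon>0$, $N>0$, and $Z_{1}\subset Z_{2}\subset X_{0}$, and recall that a countable collection $\{\overline{B}_{n_{i}}^{\vect{T}}(x_{i},\varepsilon)\}_{i}$ is a $(N,\varepsilon)$-packing of $Z_{1}$ exactly when the closed Bowen balls are pairwise disjoint, $n_{i}\geq N$ for every $i$, and $x_{i}\in Z_{1}$. Since $Z_{1}\subset Z_{2}$, the requirement $x_{i}\in Z_{1}$ forces $x_{i}\in Z_{2}$, whereas the disjointness of the balls and the constraint $n_{i}\geq N$ do not involve the underlying set. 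Hence every $(N,\varepsilon)$-packing of $Z_{1}$ is automatically a $(N,\varepsilon)$-packing of $Z_{2}$; equivalently, the family of countable $(N,\varepsilon)$-packings of $Z_{1}$ is contained in that of $Z_{2}$.

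Next, by \eqref{eq:defmsrPack} and \eqref{eqdef_pp}, the numbers $\msrpac_{N,\varepsilon}^{s}(\vect{T},\vect{f},\cdot)$ and $\msrppp_{N,\varepsilon}^{s}(\vect{T},\vect{f},\cdot)$ are the suprema of the same exponential sum $\sum_{i}\exp\bigl(-n_{i}s+\sup_{y\in\overline{B}_{n_{i}}^{\vect{T}}(x_{i},\varepsilon)}S_{n_{i}}^{\vect{T}}\vect{f}(y)\bigr)$ taken over the respective families of packings. A supremum over a larger family is at least as large, so
$$
\msrppp_{N,\varepsilon}^{s}(\vect{T},\vect{f},Z_{1})\leq\msrppp_{N,\varepsilon}^{s}(\vect{T},\vect{f},Z_{2}),\qquad \msrpac_{N,\varepsilon}^{s}(\vect{T},\vect{f},Z_{1})\leq\msrpac_{N,\varepsilon}^{s}(\vect{T},\vect{f},Z_{2})
$$
for every $N>0$, which is the first assertion. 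For the second one I would let $N\to\infty$: by definition $\msrppp_{\infty,\varepsilon}^{s}(\vect{T},\vect{f},Z)=\lim_{N\to\infty}\msrppp_{N,\varepsilon}^{s}(\vect{T},\vect{f},Z)$ and $\msrpac_{\infty,\varepsilon}^{s}(\vect{T},\vect{f},Z)=\lim_{N\to\infty}\msrpac_{N,\varepsilon}^{s}(\vect{T},\vect{f},Z)$, and these limits exist since each of $\msrppp_{N,\varepsilon}^{s}$ and $\msrpac_{N,\varepsilon}^{s}$ is non-increasing in $N$ (the pool of $(N,\varepsilon)$-packings shrinks as $N$ grows); a weak inequality is preserved upon passing to the limit, which yields the $\msrppp_{\infty,\varepsilon}^{s}$ and $\msrpac_{\infty,\varepsilon}^{s}$ inequalities.

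There is no real obstacle here: monotonicity in the set is built into the definition, since the centres of a packing of $Z$ are required to lie in $Z$ while nothing else in the packing conditions depends on $Z$. The only point that deserves an explicit sentence --- the verification carried out above --- is that regarding an $(N,\varepsilon)$-packing of $Z_{1}$ as a candidate packing of $Z_{2}$ loses nothing, so no term is dropped from either supremum.
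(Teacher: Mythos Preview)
Your proof is correct and is precisely the routine definition-chase the paper has in mind; the paper does not write out a proof at all, stating the proposition as immediate from the definitions. One tiny slip worth fixing: you describe $\msrpac_{N,\varepsilon}^{s}$ and $\msrppp_{N,\varepsilon}^{s}$ as suprema of ``the same exponential sum,'' but in fact $\msrpac$ uses $S_{n_i}^{\vect{T}}\vect{f}(x_i)$ in the exponent (see \eqref{eq:defmsrPack}) while $\msrppp$ uses the supremum over the Bowen ball (see \eqref{eqdef_pp}); this does not affect your argument, since each is a supremum of some fixed functional over the family of $(N,\varepsilon)$-packings, and that family only grows when passing from $Z_1$ to $Z_2$.
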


Next, we show that the packing pressure may alternatively be given by the limit of $ \preppp(\vect{T},\vect{f},Z,\varepsilon)$.
  \begin{prop}\label{equivdefprepaclem}
    If $\vect{f} \in \vect{C}(\vect{X},\mathbb{R})$ is equicontinuous, then $\preppp(\vect{T},\vect{f},Z,\varepsilon)$ converges as $\varepsilon \to 0$ and
    \begin{equation}\label{eq:prepacequivpreppp}
      \prepac(\vect{T},\vect{f},Z)=\lim_{\varepsilon \to 0}\preppp(\vect{T},\vect{f},Z,\varepsilon).
    \end{equation}
  \end{prop}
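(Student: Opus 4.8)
The plan is to mirror the proof of Proposition~\ref{equivdefprebpplem}, replacing the cover pair $(\msrbow,\msrbpp)$ by the packing pair $(\msrpac,\msrppp)$. These two families differ only in that the sums defining $\msrppp$ (see \eqref{eqdef_pp}) evaluate $S_{n_i}^{\vect{T}}\vect{f}$ at $\sup_{y\in\overline{B}_{n_i}^{\vect{T}}(x_i,\varepsilon)}$ while those defining $\msrpac$ (see \eqref{eq:defmsrPack}) evaluate it at the centre $x_i$, so the entire comparison is governed by the oscillation of $S_n^{\vect{T}}\vect{f}$ on Bowen balls, which equicontinuity controls uniformly.

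First I would fix $s\in\mathbb{R}$ and $\alpha>0$ and use the equicontinuity of $\vect{f}$ to produce $\varepsilon_0>0$, independent of $N$, $s$ and $Z$, such that for every $0<\varepsilon<\varepsilon_0$, every $n\geq 1$, every $x\in X_0$ and every $y\in\overline{B}_n^{\vect{T}}(x,\varepsilon)$ one has $|S_n^{\vect{T}}\vect{f}(y)-S_n^{\vect{T}}\vect{f}(x)|\leq n\alpha$, whence $\sup_{y\in\overline{B}_n^{\vect{T}}(x,\varepsilon)}S_n^{\vect{T}}\vect{f}(y)\leq S_n^{\vect{T}}\vect{f}(x)+n\alpha$. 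Applying this estimate to a common $(N,\varepsilon)$-packing of $Z$ and noting that the centre always lies in the closed Bowen ball, I obtain
\begin{equation*}
\msrpac_{N,\varepsilon}^{s}(\vect{T},\vect{f},Z)\leq\msrppp_{N,\varepsilon}^{s}(\vect{T},\vect{f},Z)\leq\msrpac_{N,\varepsilon}^{s-\alpha}(\vect{T},\vect{f},Z)
\end{equation*}
for all $N>0$ and all $0<\varepsilon<\varepsilon_0$. Letting $N\to\infty$ gives the same chain for $\msrpac_{\infty,\varepsilon}^{s}$ and $\msrppp_{\infty,\varepsilon}^{s}$; since it holds for every subset of $X_0$, I apply it to each piece $Z_i$ of an arbitrary countable cover $\bigcup_iZ_i\supseteq Z$, sum over $i$, and take the infimum over such covers (using the definition \eqref{def_pes} and its analogue preceding \eqref{def_QP}) to get
\begin{equation*}
\msrpac_{\varepsilon}^{s}(\vect{T},\vect{f},Z)\leq\msrppp_{\varepsilon}^{s}(\vect{T},\vect{f},Z)\leq\msrpac_{\varepsilon}^{s-\alpha}(\vect{T},\vect{f},Z)
\end{equation*}
for all $0<\varepsilon<\varepsilon_0$.

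Next I would translate these into inequalities between the critical values via \eqref{def_PP} and \eqref{def_QP}: the left inequality shows that $\msrppp_\varepsilon^s(\vect{T},\vect{f},Z)=0$ forces $\msrpac_\varepsilon^s(\vect{T},\vect{f},Z)=0$, hence $\prepac(\vect{T},\vect{f},Z,\varepsilon)\leq\preppp(\vect{T},\vect{f},Z,\varepsilon)$; the right inequality shows that whenever $s-\alpha>\prepac(\vect{T},\vect{f},Z,\varepsilon)$ we have $\msrpac_\varepsilon^{s-\alpha}(\vect{T},\vect{f},Z)=0$ and therefore $\msrppp_\varepsilon^s(\vect{T},\vect{f},Z)=0$, hence $\preppp(\vect{T},\vect{f},Z,\varepsilon)\leq\prepac(\vect{T},\vect{f},Z,\varepsilon)+\alpha$. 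Thus $\prepac(\vect{T},\vect{f},Z,\varepsilon)\leq\preppp(\vect{T},\vect{f},Z,\varepsilon)\leq\prepac(\vect{T},\vect{f},Z,\varepsilon)+\alpha$ for all $0<\varepsilon<\varepsilon_0$. Since $\prepac(\vect{T},\vect{f},Z,\varepsilon)$ is monotone in $\varepsilon$ by Proposition~\ref{prop:msrpacepsilon}, the limit $\prepac(\vect{T},\vect{f},Z)=\lim_{\varepsilon\to0}\prepac(\vect{T},\vect{f},Z,\varepsilon)$ exists, and taking $\varliminf$ and $\varlimsup$ as $\varepsilon\to0$ in the squeeze yields
\begin{equation*}
\prepac(\vect{T},\vect{f},Z)\leq\varliminf_{\varepsilon\to0}\preppp(\vect{T},\vect{f},Z,\varepsilon)\leq\varlimsup_{\varepsilon\to0}\preppp(\vect{T},\vect{f},Z,\varepsilon)\leq\prepac(\vect{T},\vect{f},Z)+\alpha.
\end{equation*}
Letting $\alpha\to0$ shows that $\preppp(\vect{T},\vect{f},Z,\varepsilon)$ converges and that its limit equals $\prepac(\vect{T},\vect{f},Z)$, which is \eqref{eq:prepacequivpreppp}.

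The step I expect to demand the most care is the passage from the measure inequalities to the critical-value squeeze, because $\msrppp_\varepsilon^s(\vect{T},\vect{f},Z)$, unlike $\msrpac_\varepsilon^s(\vect{T},\vect{f},Z)$, need not be monotone in $\varepsilon$, so the convergence of $\preppp(\vect{T},\vect{f},Z,\varepsilon)$ cannot be read off directly and must be extracted entirely from its confinement between the monotone quantities $\prepac(\vect{T},\vect{f},Z,\varepsilon)$ and $\prepac(\vect{T},\vect{f},Z,\varepsilon)+\alpha$; along the way one must also handle the degenerate cases in which the measures vanish identically or equal $+\infty$ and in which the critical values are $\pm\infty$, so that the inequalities and the passage to limits remain valid.
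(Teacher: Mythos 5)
Your proposal is correct and follows essentially the same route as the paper's proof: the same equicontinuity estimate yielding $\msrpac_{N,\varepsilon}^{s}\leq\msrppp_{N,\varepsilon}^{s}\leq\msrpac_{N,\varepsilon}^{s-\alpha}$, the same passage through $N\to\infty$ and countable covers, and the same squeeze between the monotone quantities $\prepac(\vect{T},\vect{f},Z,\varepsilon)$ and $\prepac(\vect{T},\vect{f},Z,\varepsilon)+\alpha$ followed by letting $\alpha\to0$. The extra care you flag about non-monotonicity of $\msrppp_{\varepsilon}^{s}$ in $\varepsilon$ and the degenerate $\pm\infty$ cases is handled implicitly in the paper in exactly the way you describe.
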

  \begin{proof}
    We first show that for every fixed $s \in \mathbb{R}$ and $\alpha>0$, the following inequality
    \begin{equation}\label{eq:msrpacequivmsrppp}
      \msrpac_{N,\varepsilon}^{s}(\vect{T},\vect{f},Z) \leq \msrppp_{N,\varepsilon}^{s}(\vect{T},\vect{f},Z) \leq \msrpac_{N,\varepsilon}^{s-\alpha}(\vect{T},\vect{f},Z)
    \end{equation}
    holds for all $N >0$ and all sufficiently small $\varepsilon>0$. The left inequality  is trivial, and we only need to show the right inequality.

Fix $s \in \mathbb{R}$ and $\alpha>0$. Since   $\vect{f}$ is equicontinuous, there exists $\varepsilon_{0}>0$ such that for every $N >0$, $0<\varepsilon<\varepsilon_{0}$ and every countable $(N,\varepsilon)$-packing $\{\overline{B}_{n_{i}}^{\vect{T}}(x_{i},\varepsilon)\}_{i=1}^{\infty}$ of $Z$, we have
    $$
    \sup_{y \in \overline{B}_{n_{i}}^{\vect{T}}(x_{i},\varepsilon)}S_{n_{i}}^{\vect{T}}\vect{f}(y) \leq S_{n_{i}}^{\vect{T}}\vect{f}(x_{i})+n_{i}\alpha.
    $$
It implies that
    $$
    \sum_{i=1}^{\infty}\exp{\Big(-n_{i}s+\sup_{y \in \overline{B}_{n_{i}}^{\vect{T}}(x_{i},\varepsilon)}S_{n_{i}}^{\vect{T}}\vect{f}(y)\Big)} \leq \sum_{i=1}^{\infty}\exp{\left(-n_{i}(s-\alpha)+S_{n_{i}}^{\vect{T}}\vect{f}(x_{i})\right)},
    $$
    and we obtain that
$$
\msrppp_{N,\varepsilon}^{s}(\vect{T},\vect{f},Z) \leq \msrpac_{N,\varepsilon}^{s-\alpha}(\vect{T},\vect{f},Z)
$$
for all $N >0$ and all sufficiently small $\varepsilon>0$.
Therefore, the inequality \eqref{eq:msrpacequivmsrppp} holds.

By \eqref{eq:msrpacequivmsrppp}, as $N$ tends to $\infty$,  we have that
    $$
\msrpac_{\infty,\varepsilon}^{s}(\vect{T},\vect{f},Z) \leq \msrppp_{\infty,\varepsilon}^{s}(\vect{T},\vect{f},Z) \leq \msrpac_{\infty,\varepsilon}^{s-\alpha}(\vect{T},\vect{f},Z)
$$
 for all sufficiently small $\varepsilon>0$.
Since this holds for all  subsets of $X_{0}$, we apply it to all members of any given countable cover $\{Z_{i}\}_{i=1}^{\infty}$ of $Z$ and  obtain that
    $$\msrpac_{\varepsilon}^{s}(\vect{T},\vect{f},Z) \leq \msrppp_{\varepsilon}^{s}(\vect{T},\vect{f},Z) \leq \msrpac_{\varepsilon}^{s-\alpha}(\vect{T},\vect{f},Z).$$
By \eqref{def_PP} and \eqref{def_QP}, this implies  that
    $$\prepac(\vect{T},\vect{f},Z,\varepsilon) \leq \preppp(\vect{T},\vect{f},Z,\varepsilon) \leq \prepac(\vect{T},\vect{f},Z,\varepsilon)+\alpha.$$
By Proposition \ref{prop:msrpacepsilon},    it follows that
    $$\varlimsup_{\varepsilon \to 0}\preppp(\vect{T},\vect{f},Z,\varepsilon) \leq \lim_{\varepsilon \to 0}\prepac(\vect{T},\vect{f},Z,\varepsilon)+\alpha \leq \varliminf_{\varepsilon \to 0}\preppp(\vect{T},\vect{f},Z,\varepsilon)+\alpha.$$
    Since $\alpha>0$ is arbitrarily chosen, we have that
    $$\prepac(\vect{T},\vect{f},Z)=\lim_{\varepsilon \to 0}\preppp(\vect{T},\vect{f},Z,\varepsilon)=\lim_{\varepsilon \to 0}\prepac(\vect{T},\vect{f},Z,\varepsilon),$$
   and the conclusion follows.
  \end{proof}

  \subsection{Properties of topological pressures and entropies}\label{ssect:propdepreetent}
In this subsection, we list some basic properties of Bowen and packing pressures and entropies. In particular,   the pressures $\prebow$ and $\prepac$ have the similar properties of fractal dimensions; see \cite{Falconer2014} for properties of fractal dimensions.  Therefore, these pressures  may be regarded as a type of dimensions for nonautonomous dynamical systems.

    We first give the relations between Bowen and packing pressures (entropies).
    \begin{prop}\label{prop:prebowleqprepac}
      Given $Z \subset X_{0}$ and $\vect{f} \in \vect{C}(\vect{X},\mathbb{R})$,
      if $\vect{f}$ is equicontinuous, then
      $$\prebow(\vect{T},\vect{f},Z) \leq \prepac(\vect{T},\vect{f},Z).$$
 In particular,
      $$\enttopbow(\vect{T},Z) \leq \enttoppac(\vect{T},Z).$$
    \end{prop}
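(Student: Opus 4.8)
The plan is to prove the set-function inequality
$$\msrbow_{3\varepsilon}^{s}(\vect{T},\vect{f},Z)\le\msrpac_{\varepsilon/2}^{s}(\vect{T},\vect{f},Z)\qquad(s\in\mathbb{R},\ \varepsilon>0),$$
and then read off the conclusion by comparing critical exponents and letting $\varepsilon\to0$. The engine is the Bowen-ball covering lemma (Lemma~\ref{dn3rcovlem}): it turns an arbitrary cover of a set $W$ by $\varepsilon$-Bowen balls into a \emph{disjoint} subfamily whose $3\varepsilon$-dilations still cover $W$, and a disjoint family of open $\varepsilon$-Bowen balls centred in $W$ yields, after shrinking radii to $\varepsilon/2$ and passing to closures, a genuine $(N,\varepsilon/2)$-packing of $W$.

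First I would establish the level-$M$ estimate: for every $W\subset X_{0}$, every $s\in\mathbb{R}$, every $\varepsilon>0$ and every integer $M\ge1$,
$$\msrbow_{M,3\varepsilon}^{s}(\vect{T},\vect{f},W)\le\msrpac_{M,\varepsilon/2}^{s}(\vect{T},\vect{f},W).$$
Apply Lemma~\ref{dn3rcovlem} to $\mathcal{F}=\{B_{n}^{\vect{T}}(x,\varepsilon):x\in W,\ n\ge M\}$, which covers $W$ since $x\in B_{M}^{\vect{T}}(x,\varepsilon)$, to get a pairwise disjoint $\mathcal{G}=\{B_{n_{j}}^{\vect{T}}(x_{j},\varepsilon)\}_{j\in J}$ with $x_{j}\in W$, $n_{j}\ge M$, and $W\subset\bigcup_{j\in J}B_{n_{j}}^{\vect{T}}(x_{j},3\varepsilon)$. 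Disjointness makes $J$ countable: for each fixed index $n$ the centres are $2\varepsilon$-separated in the compact metric $d_{n}^{\vect{T}}$, hence finite in number, and $J$ is a countable union over $n\ge M$ of these finite sets. Now $\{\overline{B}_{n_{j}}^{\vect{T}}(x_{j},\varepsilon/2)\}_{j\in J}$ lies inside the disjoint open balls, so it is an $(M,\varepsilon/2)$-packing of $W$, whence $\sum_{j\in J}\exp(-n_{j}s+S_{n_{j}}^{\vect{T}}\vect{f}(x_{j}))\le\msrpac_{M,\varepsilon/2}^{s}(\vect{T},\vect{f},W)$; and $\{B_{n_{j}}^{\vect{T}}(x_{j},3\varepsilon)\}_{j\in J}$ is a countable $(M,3\varepsilon)$-cover of $W$, so $\msrbow_{M,3\varepsilon}^{s}(\vect{T},\vect{f},W)\le\sum_{j\in J}\exp(-n_{j}s+S_{n_{j}}^{\vect{T}}\vect{f}(x_{j}))$. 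Since the weights in both $\msrbow$ and $\msrpac$ are evaluated at the centre, enlarging the covering balls does not change the sum and no estimate on $S_{n}^{\vect{T}}\vect{f}$ over the dilated ball is needed at this step.

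Next I would pass to the limits. Letting $M\to\infty$ and using that $\msrbow_{M,3\varepsilon}^{s}$ is non-decreasing in $M$ with limit $\msrbow_{3\varepsilon}^{s}$, while $\msrpac_{M,\varepsilon/2}^{s}$ is non-increasing in $M$ with limit $\msrpac_{\infty,\varepsilon/2}^{s}$, gives $\msrbow_{3\varepsilon}^{s}(\vect{T},\vect{f},W)\le\msrpac_{\infty,\varepsilon/2}^{s}(\vect{T},\vect{f},W)$ for every $W$. Each $\msrbow_{M,3\varepsilon}^{s}(\vect{T},\vect{f},\cdot)$ is a Method~I outer measure, so the non-decreasing limit $\msrbow_{3\varepsilon}^{s}(\vect{T},\vect{f},\cdot)$ is again countably subadditive; hence for any countable cover $Z=\bigcup_{i}Z_{i}$,
$$\msrbow_{3\varepsilon}^{s}(\vect{T},\vect{f},Z)\le\sum_{i}\msrbow_{3\varepsilon}^{s}(\vect{T},\vect{f},Z_{i})\le\sum_{i}\msrpac_{\infty,\varepsilon/2}^{s}(\vect{T},\vect{f},Z_{i}),$$
and taking the infimum over all such covers yields $\msrbow_{3\varepsilon}^{s}(\vect{T},\vect{f},Z)\le\msrpac_{\varepsilon/2}^{s}(\vect{T},\vect{f},Z)$ by \eqref{def_pes}. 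Therefore $\{s:\msrpac_{\varepsilon/2}^{s}(\vect{T},\vect{f},Z)=0\}\subset\{s:\msrbow_{3\varepsilon}^{s}(\vect{T},\vect{f},Z)=0\}$, so by \eqref{def_PBep} and \eqref{def_PP} we get $\prebow(\vect{T},\vect{f},Z,3\varepsilon)\le\prepac(\vect{T},\vect{f},Z,\varepsilon/2)$. Letting $\varepsilon\to0$, both $\varepsilon\mapsto\prebow(\vect{T},\vect{f},Z,\varepsilon)$ and $\varepsilon\mapsto\prepac(\vect{T},\vect{f},Z,\varepsilon)$ are monotone (Propositions~\ref{prop:msrbowepsilon} and~\ref{prop:msrpacepsilon}), so the limits exist and the inequality becomes $\prebow(\vect{T},\vect{f},Z)\le\prepac(\vect{T},\vect{f},Z)$. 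The entropy statement is the case $\vect{f}=\vect{0}$.

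The point requiring the most care is producing a cover and a packing from a single application of the covering lemma and then reconciling the \emph{opposite} monotonicities in the level $N$: $\msrbow_{N,\cdot}^{s}$ grows with $N$ whereas $\msrpac_{N,\cdot}^{s}$ decays, so the inequality must be proved at each finite level $M$ with the \emph{same} $M$ on both sides before passing to the limit, otherwise one compares $\msrbow$ against the larger, ``wrong'' pre-limit packing quantity. A minor technicality is the countability of the extracted disjoint family, needed for it to be admissible in the definitions of $\msrbow_{M,\cdot}^{s}$ and $\msrpac_{M,\cdot}^{s}$; this is where compactness enters. (If one prefers to stay within the equicontinuous framework, the same scheme runs verbatim with $\msrbpp$ and $\msrppp$ in place of $\msrbow$ and $\msrpac$, invoking Propositions~\ref{equivdefprebpplem} and~\ref{equivdefprepaclem}, at the cost of an extra $\alpha n$ term absorbed via equicontinuity.)
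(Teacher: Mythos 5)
Your proof is correct, and its architecture is the same as the paper's: a matched-level inequality between the pre-measures, a limit in $N$, countable subadditivity of $\msrbow_{3\varepsilon}^{s}(\vect{T},\vect{f},\cdot)$ played against the decomposition infimum in \eqref{def_pes}, then a comparison of critical exponents and $\varepsilon\to0$. The one genuine difference is the device producing the simultaneous cover-and-packing: the paper takes a $(N,\varepsilon)$-packing of maximal cardinality with every ball at the \emph{same} level $N$ (maximality forces the $3\varepsilon$-dilations to cover $Z$, and compactness of $X_{0}$ makes the family finite outright), whereas you apply Lemma \ref{dn3rcovlem} to the family of all $\varepsilon$-Bowen balls of levels $\geq M$ centred in $W$ and then shrink the disjoint subfamily to closed $\varepsilon/2$-balls. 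Both are sound; the paper's route avoids Zorn's lemma and the countability discussion entirely, while yours is closer in spirit to the covering arguments used elsewhere in the paper (e.g.\ the proof of Theorem \ref{thm:btypethmbowen}(1)) and delivers mixed-level packings for free. Two small points: disjointness of two open $\varepsilon$-balls only forces their centres to be at $d_{n}^{\vect{T}}$-distance $\geq\varepsilon$, not $2\varepsilon$ (the per-level finiteness you need is unaffected); and, as in the paper's own proof, equicontinuity of $\vect{f}$ is never actually used, since both $\msrbow$ and $\msrpac$ weight a ball by the Birkhoff sum at its centre.
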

    \begin{proof}
      For every $N >0$ and $\varepsilon>0$, we choose a $(N, \varepsilon)$-packing $\{\overline{B}_{N}^{\vect{T}}(x_{i},\varepsilon)\}_{i\in \mathcal{I}}$ of $Z$ with maximal cardinality, that is, for every $x\in Z$, there exists $i\in \mathcal{I}$ such that
      $$
      \overline{B}_{N}^{\vect{T}}(x_{i},\varepsilon)\cap \overline{B}_{N}^{\vect{T}}(x,\varepsilon)\neq \emptyset.
      $$
Hence, for every $y \in Z$, there exists  $i\in \mathcal{I}$ such that $d_{N}^{\vect{T}}(x_{i},y) \leq 2\varepsilon$, which implies that the collection $\{B_{N}^{\vect{T}}(x_{i},3\varepsilon)\}_{i \in \mathcal{I}}$ is a cover of $Z$. Since $X_0$ is compact, it is clear that $\mathcal{I}$ is finite.

 For sufficiently small $\varepsilon>0$, by \eqref{eq:defmsrbow} and \eqref{eq:defmsrPack},
      we have
      $$
      \msrbow_{N,3\varepsilon}^{s}(\vect{T},\vect{f},Z) \leq \sum_{i\in \mathcal{I}}\exp{\left(-Ns+S_{N}^{\vect{T}}\vect{f}(x_{i})\right)} \leq \msrpac_{N,\varepsilon}^{s}(\vect{T},\vect{f},Z)
      $$
      for all $s \in \mathbb{R}$. Letting  $N\to \infty$, it implies that
      \begin{equation}\label{eq:msrbowleqmsrpac}
        \msrbow_{3\varepsilon}^{s}(\vect{T},\vect{f},Z) \leq \msrpac_{\infty,\varepsilon}^{s}(\vect{T},\vect{f},Z).
      \end{equation}
Since $\msrbow_{3\varepsilon}^{s}$ is an outer measure, for any given countable cover $\{Z_{i}\}_{i=1}^{\infty}$ of $Z$, we apply \eqref{eq:msrbowleqmsrpac} to $Z_i$ and obtain that
      \begin{align*}
        \msrbow_{3\varepsilon}^{s}(\vect{T},\vect{f},Z) &\leq \inf\Big\{\sum_{i=1}^{\infty}\msrbow_{3\varepsilon}^{s}(\vect{T},\vect{f},Z_{i}): \bigcup_{i=1}^{\infty}Z_{i} \supseteq Z\Big\} \\
         &\leq \inf\Big\{\sum_{i=1}^{\infty}\msrpac_{\infty,\varepsilon}^{s}(\vect{T},\vect{f},Z_{i}): \bigcup_{i=1}^{\infty}Z_{i} \supseteq Z\Big\} \\
         &= \msrpac_{\varepsilon}^{s}(\vect{T},\vect{f},Z).
      \end{align*}
For every  $s<\prebow(\vect{T},\vect{f},Z)$, by Proposition \ref{equivdefentbowlem}, we have $\msrbow^{s}(\vect{T},\vect{f},Z) = +\infty$, and it follows that $\msrbow_{3\varepsilon}^{s} (\vect{T},\vect{f},Z) >0$ for  all sufficiently small $\varepsilon>0$. Since       $\msrpac_{\varepsilon}^{s}(\vect{T},\vect{f},Z) \geq \msrbow_{3\varepsilon}^{s}(\vect{T},\vect{f},Z)$,
by \eqref{def_PP} and Proposition \ref{prop:msrpacepsilon}, we have that
$$
\prepac(\vect{T},\vect{f},Z) \geq \prepac(\vect{T},\vect{f},Z,\varepsilon) \geq s.
$$
      By the arbitrariness of $s<\prebow(\vect{T},\vect{f},Z)$, we conclude that
      $\prebow(\vect{T},\vect{f},Z) \leq \prepac(\vect{T},\vect{f},Z).$
    \end{proof}

    Next, it directly follows from the definition  that $\prebow(\vect{T},\vect{f},\cdot)$ and $\prepac(\vect{T},\vect{f},\cdot)$ are increasing in sets.
    \begin{prop}\label{Prop_PBPP}
      Given $\vect{f} \in \vect{C}(\vect{X},\mathbb{R})$, if $Z_{1} \subset Z_{2} \subset X_{0}$, then
      $$\prebow(\vect{T},\vect{f},Z_{1}) \leq \prebow(\vect{T},\vect{f},Z_{2}),\quad \prepac(\vect{T},\vect{f},Z_{1}) \leq \prepac(\vect{T},\vect{f},Z_{2}).$$
    \end{prop}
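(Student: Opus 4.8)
The strategy is to push the set‑monotonicity through each layer of the two definitions in turn: first at the finite scale $(N,\varepsilon)$, then through the limit $N\to\infty$, then through the passage to the critical ("jump") value in $s$, and finally through the limit $\varepsilon\to0$. In both cases the only substantive input is that a cover (resp.\ a countable decomposition) of the larger set $Z_{2}$ is automatically a cover (resp.\ decomposition) of the smaller set $Z_{1}$, and—for packings—that a packing of $Z_{1}$ is a packing of $Z_{2}$ because its centres lie in $Z_{1}\subset Z_{2}$. The last fact is already recorded as Proposition~\ref{prop_Nesinc}.

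\textbf{Bowen pressure.}
Fix $s\in\mathbb{R}$, $N>0$ and $\varepsilon>0$. If $\{B_{n_{i}}^{\vect{T}}(x_{i},\varepsilon)\}_{i=1}^{\infty}$ is a countable $(N,\varepsilon)$-cover of $Z_{2}$, then it is also a countable $(N,\varepsilon)$-cover of $Z_{1}$ since $Z_{1}\subset Z_{2}$; hence the infimum in \eqref{eq:defmsrbow} defining $\msrbow_{N,\varepsilon}^{s}(\vect{T},\vect{f},Z_{1})$ is taken over a family containing all admissible covers of $Z_{2}$, and therefore
$\msrbow_{N,\varepsilon}^{s}(\vect{T},\vect{f},Z_{1})\le\msrbow_{N,\varepsilon}^{s}(\vect{T},\vect{f},Z_{2})$.
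Letting $N\to\infty$ gives $\msrbow_{\varepsilon}^{s}(\vect{T},\vect{f},Z_{1})\le\msrbow_{\varepsilon}^{s}(\vect{T},\vect{f},Z_{2})$. Now if $s<\prebow(\vect{T},\vect{f},Z_{1},\varepsilon)$, then by \eqref{def_PBep} we have $\msrbow_{\varepsilon}^{s}(\vect{T},\vect{f},Z_{1})=+\infty$, whence $\msrbow_{\varepsilon}^{s}(\vect{T},\vect{f},Z_{2})=+\infty$ and so $s\le\prebow(\vect{T},\vect{f},Z_{2},\varepsilon)$; by the arbitrariness of such $s$ this yields $\prebow(\vect{T},\vect{f},Z_{1},\varepsilon)\le\prebow(\vect{T},\vect{f},Z_{2},\varepsilon)$. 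Finally, letting $\varepsilon\to0$ and using Definition~\ref{def:prebpp} gives $\prebow(\vect{T},\vect{f},Z_{1})\le\prebow(\vect{T},\vect{f},Z_{2})$.

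\textbf{Packing pressure.}
Here the construction has an extra layer: $\msrpac_{\varepsilon}^{s}$ is an infimum over countable decompositions of $\msrpac_{\infty,\varepsilon}^{s}$, which in turn is the $N\to\infty$ limit of $\msrpac_{N,\varepsilon}^{s}$. By Proposition~\ref{prop_Nesinc} (applied with $Z_{1}\subset Z_{2}$) we already have $\msrpac_{\infty,\varepsilon}^{s}(\vect{T},\vect{f},Z_{1})\le\msrpac_{\infty,\varepsilon}^{s}(\vect{T},\vect{f},Z_{2})$ for all $s$ and $\varepsilon$. Consequently, if $\{Z_{i}\}_{i=1}^{\infty}$ is any countable cover of $Z_{2}$, it is also a countable cover of $Z_{1}$, and $\sum_{i}\msrpac_{\infty,\varepsilon}^{s}(\vect{T},\vect{f},Z_{i})$ is an admissible value in the infimum \eqref{def_pes} defining $\msrpac_{\varepsilon}^{s}(\vect{T},\vect{f},Z_{1})$; taking infima gives $\msrpac_{\varepsilon}^{s}(\vect{T},\vect{f},Z_{1})\le\msrpac_{\varepsilon}^{s}(\vect{T},\vect{f},Z_{2})$. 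The jump‑value argument is then identical to the Bowen case: if $s<\prepac(\vect{T},\vect{f},Z_{1},\varepsilon)$ then $\msrpac_{\varepsilon}^{s}(\vect{T},\vect{f},Z_{1})=+\infty$ by \eqref{def_PP}, hence $\msrpac_{\varepsilon}^{s}(\vect{T},\vect{f},Z_{2})=+\infty$ and $s\le\prepac(\vect{T},\vect{f},Z_{2},\varepsilon)$; thus $\prepac(\vect{T},\vect{f},Z_{1},\varepsilon)\le\prepac(\vect{T},\vect{f},Z_{2},\varepsilon)$, and letting $\varepsilon\to0$ via Definition~\ref{def:prepac} completes the proof.

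\textbf{Main obstacle.}
There is essentially no real obstacle: the statement is a structural monotonicity that propagates mechanically through the definitions. The only points requiring minor care are (i) invoking Proposition~\ref{prop_Nesinc} to handle the inner $\msrpac_{\infty,\varepsilon}^{s}$ layer \emph{before} dealing with the outer infimum over decompositions in the packing case, and (ii) correctly translating the monotonicity of the ``measures'' $\msrbow_{\varepsilon}^{s}$ and $\msrpac_{\varepsilon}^{s}$ into monotonicity of their critical values, which is the standard observation that a pointwise‑larger family of functions of $s$ that each jump from $+\infty$ to $0$ has a critical value at least as large.
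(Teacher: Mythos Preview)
Your proposal is correct and follows exactly the approach the paper has in mind: the paper simply states that the result ``directly follows from the definition'' without giving any details, and your argument is precisely the routine layer-by-layer unpacking of those definitions. One small remark: in the packing case your invocation of Proposition~\ref{prop_Nesinc} is not actually needed, since the monotonicity of $\msrpac_{\varepsilon}^{s}$ already follows from the fact that every countable cover of $Z_{2}$ is a countable cover of $Z_{1}$; but this is harmless.
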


    The following conclusion shows that $\prebow$ and $\prepac$ are countably stable.
    \begin{prop}
      Given $\vect{f} \in \vect{C}(\vect{X},\mathbb{R})$, if $Z=\bigcup_{i=1}^{\infty}Z_{i}$, then
      $$\prebow(\vect{T},\vect{f},Z)=\sup_{i =1,2,\ldots }\prebow(\vect{T},\vect{f},Z_{i}), \quad \prepac(\vect{T},\vect{f},Z)=\sup_{i  =1,2,\ldots}\prepac(\vect{T},\vect{f},Z_{i}).$$
    \end{prop}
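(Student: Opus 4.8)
The plan is to prove both identities by the two-step scheme familiar from Hausdorff and packing dimensions: monotonicity in the set gives the inequality ``$\geq$'', and countable subadditivity of the appropriate critical-value set functions gives ``$\leq$''. For the Bowen pressure, $\prebow(\vect{T},\vect{f},Z)\geq\sup_{i}\prebow(\vect{T},\vect{f},Z_{i})$ is immediate from Proposition~\ref{Prop_PBPP} since each $Z_{i}\subset Z$. For the reverse inequality, set $s_{0}=\sup_{i}\prebow(\vect{T},\vect{f},Z_{i})$ and assume $s_{0}<+\infty$ (otherwise there is nothing to prove). Fix $s>s_{0}$ and $\varepsilon>0$. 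By Proposition~\ref{prop:msrbowepsilon}, $\prebow(\vect{T},\vect{f},Z_{i},\varepsilon)\leq\prebow(\vect{T},\vect{f},Z_{i})\leq s_{0}<s$ for every $i$, so $\msrbow_{\varepsilon}^{s}(\vect{T},\vect{f},Z_{i})=0$ by \eqref{def_PBep}. Since $\msrbow_{\varepsilon}^{s}(\vect{T},\vect{f},\cdot)$ is an outer measure (an increasing limit in $N$ of the Method~I outer measures $\msrbow_{N,\varepsilon}^{s}$), it is countably subadditive, whence $\msrbow_{\varepsilon}^{s}(\vect{T},\vect{f},Z)\leq\sum_{i}\msrbow_{\varepsilon}^{s}(\vect{T},\vect{f},Z_{i})=0$, so $\prebow(\vect{T},\vect{f},Z,\varepsilon)\leq s$. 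Letting $\varepsilon\to0$ gives $\prebow(\vect{T},\vect{f},Z)\leq s$, and then $s\downarrow s_{0}$ finishes the Bowen case. (Equivalently, one may argue once at the level of $\msrbow^{s}=\lim_{\varepsilon\to0}\msrbow_{\varepsilon}^{s}$, which is again an outer measure, using Proposition~\ref{equivdefentbowlem}.)

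For the packing pressure, ``$\geq$'' is again Proposition~\ref{Prop_PBPP}. The key point for ``$\leq$'' is that, although $\msrpac_{\infty,\varepsilon}^{s}(\vect{T},\vect{f},\cdot)$ is not countably subadditive, the set function $\msrpac_{\varepsilon}^{s}(\vect{T},\vect{f},\cdot)$ obtained from it via the extra infimum over countable decompositions in \eqref{def_pes} \emph{is} countably subadditive: given $Z=\bigcup_{i}Z_{i}$ and $\eta>0$, choose for each $i$ a countable cover $\{W_{i,j}\}_{j}$ of $Z_{i}$ with $\sum_{j}\msrpac_{\infty,\varepsilon}^{s}(\vect{T},\vect{f},W_{i,j})\leq\msrpac_{\varepsilon}^{s}(\vect{T},\vect{f},Z_{i})+2^{-i}\eta$; then $\{W_{i,j}\}_{i,j}$ is a countable cover of $Z$, so $\msrpac_{\varepsilon}^{s}(\vect{T},\vect{f},Z)\leq\sum_{i,j}\msrpac_{\infty,\varepsilon}^{s}(\vect{T},\vect{f},W_{i,j})\leq\sum_{i}\msrpac_{\varepsilon}^{s}(\vect{T},\vect{f},Z_{i})+\eta$, and $\eta\to0$ gives the claim. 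Granting this, put $s_{0}=\sup_{i}\prepac(\vect{T},\vect{f},Z_{i})$, assume $s_{0}<+\infty$, and fix $s>s_{0}$ and $\varepsilon>0$. By Proposition~\ref{prop:msrpacepsilon}, $\prepac(\vect{T},\vect{f},Z_{i},\varepsilon)\leq\prepac(\vect{T},\vect{f},Z_{i})\leq s_{0}<s$, so $\msrpac_{\varepsilon}^{s}(\vect{T},\vect{f},Z_{i})=0$ by \eqref{def_PP} (using that $\msrpac_{\varepsilon}^{\cdot}(\vect{T},\vect{f},Z_{i})$ is non-increasing in the exponent). By the subadditivity just established, $\msrpac_{\varepsilon}^{s}(\vect{T},\vect{f},Z)=0$, hence $\prepac(\vect{T},\vect{f},Z,\varepsilon)\leq s$; letting $\varepsilon\to0$ and then $s\downarrow s_{0}$ completes the proof.

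The only genuinely delicate step is the countable subadditivity of $\msrpac_{\varepsilon}^{s}$, and I expect it to need nothing beyond the $2^{-i}\eta$ diagonalization above; everything else is monotonicity together with the ``jump value'' bookkeeping already recorded in Propositions~\ref{prop:msrbowepsilon}, \ref{prop:msrpacepsilon} and \ref{equivdefentbowlem}. The main thing to be careful about is to run the packing argument with $\msrpac_{\varepsilon}^{s}$ rather than $\msrpac_{\infty,\varepsilon}^{s}$ throughout, precisely because the latter is not an outer measure.
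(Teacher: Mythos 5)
Your proof is correct and follows exactly the paper's (one-line) argument: the inequality ``$\geq$'' by monotonicity of $\prebow$ and $\prepac$ in the set, and ``$\leq$'' by countable subadditivity of $\msrbow_{\varepsilon}^{s}(\vect{T},\vect{f},\cdot)$ and $\msrpac_{\varepsilon}^{s}(\vect{T},\vect{f},\cdot)$ together with the jump-value bookkeeping. Your explicit verification that $\msrpac_{\varepsilon}^{s}$ (as opposed to $\msrpac_{\infty,\varepsilon}^{s}$) is countably subadditive via the $2^{-i}\eta$ diagonalization is exactly the detail the paper leaves implicit, and it is carried out correctly.
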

    \begin{proof}
      ``$\leq$'' follows from subadditivity of $\msrbow_{\varepsilon}^{s}(\vect{T},\vect{f},\cdot)$ and $\msrpac_{\varepsilon}^{s}(\vect{T},\vect{f},\cdot)$,
      while ``$\geq$'' is by monotonicity for every $Z_{i} \subset Z$.
    \end{proof}

Since  the mappings $\prebow(\vect{T},\cdot,Z), \prepac(\vect{T},\cdot,Z) :\vect{C}(\vect{X},\mathbb{R}) \to \mathbb{R}\cup\{\pm\infty\}$ are generalizations of the classic pressure in topological dynamical systems, they still keep the similar properties to the  classic pressure.
Recall that the entropies equal the pressures for $\vect{f}=\vect{0}$.    If  $\vect{f}$ is constant, we have the following equalities.
    \begin{prop}
      Given $Z \subset X_{0}$, if $\vect{f} = a\vect{1}$ where $a \in \mathbb{R}$, then
      $$\prebow(\vect{T},\vect{f},Z) = \enttopbow(\vect{T},Z)+a, \quad \prepac(\vect{T},\vect{f},Z) = \enttoppac(\vect{T},Z)+a.$$
    \end{prop}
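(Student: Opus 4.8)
The plan is to exploit the fact that a constant potential makes the Birkhoff-type sum trivial. By \eqref{eq:sum0nTf}, if $\vect{f}=a\vect{1}$, i.e.\ $f_{k}\equiv a$ for every $k$, then $S_{n}^{\vect{T}}\vect{f}(x)=\sum_{j=0}^{n-1}a=na$ for all $x\in X_{0}$ and all $n\in\mathbb{N}$. Substituting this single identity into the various pre-measure definitions reduces everything to the zero-potential case with the parameter $s$ translated by $a$.

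First I would handle the Bowen pressure. For every $Z\subset X_{0}$, $N>0$, $\varepsilon>0$ and $s\in\mathbb{R}$, plugging $S_{n_{i}}^{\vect{T}}\vect{f}(x_{i})=n_{i}a$ into \eqref{eq:defmsrbow} gives
$$\msrbow_{N,\varepsilon}^{s}(\vect{T},a\vect{1},Z)=\inf\Big\{\sum_{i=1}^{\infty}\exp\big(-n_{i}(s-a)\big)\Big\}=\msrbow_{N,\varepsilon}^{s-a}(\vect{T},\vect{0},Z),$$
the two infima being taken over the same family of countable $(N,\varepsilon)$-covers of $Z$. Letting $N\to\infty$ yields $\msrbow_{\varepsilon}^{s}(\vect{T},a\vect{1},Z)=\msrbow_{\varepsilon}^{s-a}(\vect{T},\vect{0},Z)$. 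Since $s\mapsto s-a$ is an increasing bijection of $\mathbb{R}$, the jump value in \eqref{def_PBep} satisfies $\prebow(\vect{T},a\vect{1},Z,\varepsilon)=\prebow(\vect{T},\vect{0},Z,\varepsilon)+a$ for every $\varepsilon>0$; letting $\varepsilon\to0$ and using Definition \ref{def:prebpp} gives $\prebow(\vect{T},a\vect{1},Z)=\prebow(\vect{T},\vect{0},Z)+a=\enttopbow(\vect{T},Z)+a$.

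Next I would repeat the argument for the packing pressure. The same substitution in \eqref{eq:defmsrPack} gives $\msrpac_{N,\varepsilon}^{s}(\vect{T},a\vect{1},Z)=\msrpac_{N,\varepsilon}^{s-a}(\vect{T},\vect{0},Z)$ over the same family of $(N,\varepsilon)$-packings, hence $\msrpac_{\infty,\varepsilon}^{s}(\vect{T},a\vect{1},Z)=\msrpac_{\infty,\varepsilon}^{s-a}(\vect{T},\vect{0},Z)$ on letting $N\to\infty$. Since this identity holds term by term, taking the infimum over countable decompositions $\bigcup_{i=1}^{\infty}Z_{i}\supseteq Z$ in \eqref{def_pes} preserves it, so $\msrpac_{\varepsilon}^{s}(\vect{T},a\vect{1},Z)=\msrpac_{\varepsilon}^{s-a}(\vect{T},\vect{0},Z)$ (no use of countable subadditivity is needed here). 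By \eqref{def_PP}, $\prepac(\vect{T},a\vect{1},Z,\varepsilon)=\prepac(\vect{T},\vect{0},Z,\varepsilon)+a$, and letting $\varepsilon\to0$ via Definition \ref{def:prepac} gives $\prepac(\vect{T},a\vect{1},Z)=\enttoppac(\vect{T},Z)+a$.

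There is essentially no obstacle here: the whole argument is the mechanical identity $S_{n}^{\vect{T}}(a\vect{1})\equiv na$ propagated through the (strictly monotone) constructions. The only point requiring a word of care is that translating the exponent parameter $s$ by the constant $a$ translates every ``critical value of $s$'' by exactly $a$, which holds because $s\mapsto s+a$ is an order isomorphism of $\mathbb{R}$; the entropy statements then follow at once since $\enttopbow(\vect{T},Z)=\prebow(\vect{T},\vect{0},Z)$ and $\enttoppac(\vect{T},Z)=\prepac(\vect{T},\vect{0},Z)$ by Definitions \ref{def:prebpp} and \ref{def:prepac}.
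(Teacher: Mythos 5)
Your proof is correct: the identity $S_{n}^{\vect{T}}(a\vect{1})\equiv na$ turns every pre-measure with potential $a\vect{1}$ and parameter $s$ into the corresponding zero-potential pre-measure with parameter $s-a$, and this translation passes through the jump values and the limits in $N$ and $\varepsilon$ exactly as you describe. The paper states this proposition without proof as an immediate consequence of the definitions, and your argument is precisely the routine verification it leaves to the reader.
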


The following conclusion shows the positivity property of the pressures.
    \begin{prop}\label{prop:positivity}
      Given $Z \subset X_{0}$, if $\vect{f} \preceq \vect{g}$, then
      $$\prebow(\vect{T},\vect{f},Z) \leq \prebow(\vect{T},\vect{g},Z), \quad \prepac(\vect{T},\vect{f},Z) \leq \prepac(\vect{T},\vect{g},Z).$$
      In particular,  for $\vect{f} \succeq \vect{0}$,
      $$0 \leq \enttopbow(\vect{T},Z) \leq \prebow(\vect{T},\vect{f},Z), \quad 0 \leq \enttoppac(\vect{T},Z) \leq \prepac(\vect{T},\vect{f},Z).$$
    \end{prop}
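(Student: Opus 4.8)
The plan is to push the hypothesis $\vect{f}\preceq\vect{g}$ through every layer of the two measure-type constructions; the whole argument is monotonicity, so I do not expect a genuine obstacle, only some bookkeeping. The single driving observation is that $f_k\le g_k$ on $X_k$ for every $k$ forces $S_n^{\vect{T}}\vect{f}(x)\le S_n^{\vect{T}}\vect{g}(x)$ for all $x\in X_0$ and all $n$, by \eqref{eq:sum0nTf}; hence, for any $s\in\mathbb{R}$, any centre $x$, and any index $n$,
\[
\exp\big(-ns+S_n^{\vect{T}}\vect{f}(x)\big)\le\exp\big(-ns+S_n^{\vect{T}}\vect{g}(x)\big).
\]
I would then note that the admissible families of $(N,\varepsilon)$-covers in \eqref{eq:defmsrbow} and of $(N,\varepsilon)$-packings in \eqref{eq:defmsrPack} do not depend on the potential at all, only on $Z$, $N$, and $\varepsilon$.

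For the Bowen pressure: taking the infimum over the \emph{same} covers shows $\msrbow_{N,\varepsilon}^{s}(\vect{T},\vect{f},Z)\le\msrbow_{N,\varepsilon}^{s}(\vect{T},\vect{g},Z)$ for all $N$; letting $N\to\infty$ gives $\msrbow_{\varepsilon}^{s}(\vect{T},\vect{f},Z)\le\msrbow_{\varepsilon}^{s}(\vect{T},\vect{g},Z)$; comparing the jump values via \eqref{def_PBep} gives $\prebow(\vect{T},\vect{f},Z,\varepsilon)\le\prebow(\vect{T},\vect{g},Z,\varepsilon)$ for every $\varepsilon>0$; and letting $\varepsilon\to0$ (Definition \ref{def:prebpp}) gives $\prebow(\vect{T},\vect{f},Z)\le\prebow(\vect{T},\vect{g},Z)$.

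For the packing pressure the structure is the same, with one extra layer: taking suprema over the same $(N,\varepsilon)$-packings gives $\msrpac_{N,\varepsilon}^{s}(\vect{T},\vect{f},Z)\le\msrpac_{N,\varepsilon}^{s}(\vect{T},\vect{g},Z)$, hence $\msrpac_{\infty,\varepsilon}^{s}(\vect{T},\vect{f},Z)\le\msrpac_{\infty,\varepsilon}^{s}(\vect{T},\vect{g},Z)$ as $N\to\infty$; applying this to each member of an arbitrary countable cover $\{Z_i\}_{i=1}^\infty$ of $Z$ and taking the infimum in \eqref{def_pes} yields $\msrpac_{\varepsilon}^{s}(\vect{T},\vect{f},Z)\le\msrpac_{\varepsilon}^{s}(\vect{T},\vect{g},Z)$; then \eqref{def_PP} gives $\prepac(\vect{T},\vect{f},Z,\varepsilon)\le\prepac(\vect{T},\vect{g},Z,\varepsilon)$, and letting $\varepsilon\to0$ (Definition \ref{def:prepac}, the limit existing by Proposition \ref{prop:msrpacepsilon}) gives $\prepac(\vect{T},\vect{f},Z)\le\prepac(\vect{T},\vect{g},Z)$.

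For the ``in particular'' part, applying the monotonicity just proved to $\vect{0}\preceq\vect{f}$ gives $\enttopbow(\vect{T},Z)=\prebow(\vect{T},\vect{0},Z)\le\prebow(\vect{T},\vect{f},Z)$, and similarly for the packing quantities. It remains to verify $\enttopbow(\vect{T},Z)\ge0$ and $\enttoppac(\vect{T},Z)\ge0$: for non-empty $Z$ and any $s<0$, every $(N,\varepsilon)$-cover and every $(N,\varepsilon)$-packing of $Z$ contains at least one Bowen ball with index $n_i\ge N$, whose weight $\exp(-n_i s)\ge\e^{-Ns}$ tends to $+\infty$ as $N\to\infty$; hence $\msrbow_{\varepsilon}^{s}(\vect{T},\vect{0},Z)=\msrpac_{\infty,\varepsilon}^{s}(\vect{T},\vect{0},Z)=+\infty$, and this persists through any countable decomposition of $Z$ in \eqref{def_pes}, so $\msrpac_{\varepsilon}^{s}(\vect{T},\vect{0},Z)=+\infty$ as well; by \eqref{def_PBep} and \eqref{def_PP} both jump values are $\ge0$ for every $\varepsilon$, whence $\enttopbow(\vect{T},Z),\enttoppac(\vect{T},Z)\ge0$. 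The only points needing care are this last bookkeeping through the two-stage construction of $\msrpac_{\varepsilon}^{s}$ and the trivial degenerate case $Z=\emptyset$.
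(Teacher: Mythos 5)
Your proof is correct, and it is exactly the routine monotonicity argument the paper intends (the paper states this proposition without proof as a direct consequence of the definitions): push $S_n^{\vect{T}}\vect{f}\le S_n^{\vect{T}}\vect{g}$ through the infima over covers, the suprema over packings, the extra countable-decomposition layer in \eqref{def_pes}, the jump values, and the limit $\varepsilon\to0$. The only nitpick is in the non-negativity step: not every $(N,\varepsilon)$-packing contains a ball (the empty collection is a packing), but since $\msrpac_{N,\varepsilon}^{s}$ is a supremum you only need the single-ball packing centred at a point of $Z$ as a witness, so the conclusion stands.
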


Note that the value of $\prebow(\vect{T},\vect{f},Z)$ and $ \prepac(\vect{T},\vect{f},Z)$ may vary dramatically in  $\mathbb{R} \cup \{\pm\infty\}$,
but if we restrict  $\prebow(\vect{T},\cdot,Z)$ and $\prepac(\vect{T},\cdot,Z)$  to $\vect{C}_{b}(\vect{X},\mathbb{R})$,  these ranges of values may behave differently. For $\vect{f} \in \vect{C}_{b}(\vect{X},\mathbb{R})$, we write
    $$
    \inf{\vect{f}} =\inf_{k \in \mathbb{N}}{(\inf{f_{k}})}\qquad \text{and}\qquad \sup{\vect{f}}=\sup_{k \in \mathbb{N}}{(\sup{f_{k}})}.
    $$

    \begin{prop}
      Given $Z \subset X_{0}$, if $\vect{f} \in \vect{C}_{b}(\vect{X},\mathbb{R})$, then
              $$\enttopbow(\vect{T},Z) + \inf{\vect{f}} \leq \prebow(\vect{T},\vect{f},Z) \leq \enttopbow(\vect{T},Z) + \sup{\vect{f}},$$
              $$\enttoppac(\vect{T},Z) + \inf{\vect{f}} \leq \prepac(\vect{T},\vect{f},Z) \leq \enttoppac(\vect{T},Z) + \sup{\vect{f}}.$$
Moreover, $\prebow(\vect{T},\cdot,Z)$ and $\prepac(\vect{T},\cdot,Z)$ are either finite or constantly $\infty$ on $\vect{C}_{b}(\vect{X},\mathbb{R})$.
    \end{prop}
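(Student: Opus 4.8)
The plan is to deduce the four inequalities directly from two facts established earlier in this subsection: the monotonicity of $\prebow(\vect{T},\cdot,Z)$ and $\prepac(\vect{T},\cdot,Z)$ with respect to $\preceq$ (Proposition~\ref{prop:positivity}), and the behaviour under a constant potential, namely $\prebow(\vect{T},a\vect{1},Z)=\enttopbow(\vect{T},Z)+a$ and $\prepac(\vect{T},a\vect{1},Z)=\enttoppac(\vect{T},Z)+a$ for all $a\in\mathbb{R}$. Since $\vect{f}\in\vect{C}_{b}(\vect{X},\mathbb{R})$, we have $\|\vect{f}\|<+\infty$, so $\inf\vect{f}$ and $\sup\vect{f}$ are real numbers and
$$
(\inf\vect{f})\,\vect{1}\preceq\vect{f}\preceq(\sup\vect{f})\,\vect{1}.
$$
Applying monotonicity along this chain and then the constant-potential identities at the two ends yields
$$
\enttopbow(\vect{T},Z)+\inf\vect{f}\leq\prebow(\vect{T},\vect{f},Z)\leq\enttopbow(\vect{T},Z)+\sup\vect{f},
$$
and likewise with $\prepac$ and $\enttoppac$. (Alternatively one can argue straight from \eqref{eq:defmsrbow} and \eqref{eq:defmsrPack} via the elementary bound $n\inf\vect{f}\leq S_{n}^{\vect{T}}\vect{f}(x)\leq n\sup\vect{f}$, which merely translates the critical exponent by $\inf\vect{f}$, resp.\ $\sup\vect{f}$; but the route through Proposition~\ref{prop:positivity} is the shortest.)

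For the last assertion I would argue by cases. By Proposition~\ref{prop:positivity}, both $\enttopbow(\vect{T},Z)$ and $\enttoppac(\vect{T},Z)$ are nonnegative, hence lie in $[0,+\infty]$. If $\enttopbow(\vect{T},Z)<+\infty$, then for every $\vect{f}\in\vect{C}_{b}(\vect{X},\mathbb{R})$ the two-sided estimate above, together with $\max\{|\inf\vect{f}|,|\sup\vect{f}|\}\leq\|\vect{f}\|<+\infty$, forces $\prebow(\vect{T},\vect{f},Z)$ to be finite. If $\enttopbow(\vect{T},Z)=+\infty$, the lower estimate gives $\prebow(\vect{T},\vect{f},Z)\geq\enttopbow(\vect{T},Z)+\inf\vect{f}=+\infty$ for every $\vect{f}\in\vect{C}_{b}(\vect{X},\mathbb{R})$, so $\prebow(\vect{T},\cdot,Z)\equiv+\infty$ on $\vect{C}_{b}(\vect{X},\mathbb{R})$. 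The argument for $\prepac(\vect{T},\cdot,Z)$ is identical.

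I do not expect a genuine obstacle; the only points requiring care are that $\inf\vect{f}$ and $\sup\vect{f}$ be real (so that $(\inf\vect{f})\vect{1}$ and $(\sup\vect{f})\vect{1}$ lie in $\vect{C}(\vect{X},\mathbb{R})$ and the constant-potential identities may be applied), which is exactly what the hypothesis $\vect{f}\in\vect{C}_{b}(\vect{X},\mathbb{R})$ provides, and that the monotonicity and constant-potential statements be used in the forms that do not require equicontinuity, since no equicontinuity is assumed in the present proposition.
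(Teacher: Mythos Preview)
Your proposal is correct. The paper itself does not give a proof of this proposition; it is listed among the ``basic properties'' in \S\ref{ssect:propdepreetent} and left to the reader, precisely because it follows from the two preceding propositions (monotonicity in the potential and the constant-potential identity) exactly as you outline.
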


\section{Invariance under Equiconjugacy}\label{sect:invarequiconj}

  \subsection{Equisemiconjugacies and equiconjugacies}
  Let $(\vect{X},\vect{T})$ and $(\vect{Y},\vect{R})$ be two NDSs, where $X_{k}$ and $Y_{k}$ are endowed with
  the metrics $d_{X_{k}}$ and $d_{Y_{k}}$ respectively throughout this section.

  We call a sequence $\vect{\pi}=\{\pi_{k}\}_{k=0}^{\infty}$ of surjective continuous mappings $\pi_{k}:X_{k} \to Y_{k}$ a \emph{semiconjugacy} or a \emph{factor mapping sequence} from $(\vect{X},\vect{T})$ to $(\vect{Y},\vect{R})$
  if $R_{k} \circ \pi_{k}=\pi_{k+1} \circ T_{k}$  for every $k \in \mathbb{N}$,
  i.e., the diagram
  \begin{equation*}
    \xymatrix{
      (\vect{X},\vect{T}): \ar[d]^{\vect{\pi}:} & X_{0} \ar[r]^{T_{0}} \ar[d]_{\pi_{0}} & X_{1} \ar[r]^{T_{1}} \ar[d]_{\pi_{1}} & X_{2} \ar[r]^{T_{2}} \ar[d]_{\pi_{2}} & \cdots \ar[r]^{T_{k-1}} \ar @{} [d] |{\cdots} & X_{k} \ar[r]^{T_{k}} \ar[d]_{\pi_{k}} & \cdots \ar @{} [d] |{\cdots}\\
      (\vect{Y},\vect{R}):                      & Y_{0} \ar[r]^{R_{0}}                  & Y_{1} \ar[r]^{R_{1}}                  & Y_{2} \ar[r]^{R_{2}}                  & \cdots \ar[r]^{R_{k-1}}                       & Y_{k} \ar[r]^{R_{k}}                  & \cdots
    }
  \end{equation*}
  commutes. If there is a semiconjugacy from $(\vect{X},\vect{T})$ to $(\vect{Y},\vect{R})$,
  we say that $(\vect{Y},\vect{R})$ is a \emph{factor} of $(\vect{X},\vect{T})$.
Moreover, we call
  $\vect{\pi}$ a \emph{conjugacy} from $(\vect{X},\vect{T})$ to $(\vect{Y},\vect{R})$ if the continuous mappings $\pi_{k}$ are homeomorphisms for all $k\geq 0$,
in which case the sequence $\vect{\pi}^{-1}$ of the inverses $\pi_{k}^{-1}$ is a semiconjugacy from $(\vect{Y},\vect{R})$ to $(\vect{X},\vect{T})$.

Even for systems $(X,\vect{T})$ and $(Y,\vect{R})$ each with an identical space, a conjugacy is not sufficient to preserve the topological entropies; see  Example  \ref{ex_nc} at the end of this section. Hence  it is reasonable to define equisemiconjugacy and equiconjugacy to study the invariance of entropies and pressures in nonautonomous dynamical systems . 
  \begin{defn}
    Let $\vect{\pi}$ be a semiconjugacy from $(\vect{X},\vect{T})$ to $(\vect{Y},\vect{R})$.
    We say $\vect{\pi}$ is an \emph{equisemiconjugacy} if it is equicontinuous, i.e.,
    for every $\varepsilon>0$, there exists $\delta>0$ such that for all $k \in \mathbb{N}$
    and all $x^{\prime},x^{\prime\prime} \in X_{k}$ satisfying  $d_{X_{k}}(x^{\prime},x^{\prime\prime})<\delta $, we have that
$$
 d_{Y_{k}}(\pi_{k}x^{\prime},\pi_{k}x^{\prime\prime})<\varepsilon.
$$
We say $\vect{\pi}$ is an \emph{equiconjugacy}    if  $\vect{\pi}$ is a conjugacy and  both $\vect{\pi}$ and $\vect{\pi}^{-1}$ are   equicontinuous.
  \end{defn}

Let $\vect{\pi}$ be a semiconjugacy from $(\vect{X},\vect{T})$ to $(\vect{Y},\vect{R})$.
    For every $\vect{g} \in \vect{C}(\vect{Y},\mathbb{R})$, we write $\vect{\pi}^{*}\vect{g}=\{g_{k} \circ \pi_{k}\}_{k=0}^{\infty}$.
  It is clear that $\vect{\pi}^{*}\vect{g} \in \vect{C}(\vect{X},\mathbb{R})$.
  Moreover, $\vect{\pi}^{*}\vect{g}$ is equicontinuous if $\vect{g} \in \vect{C}(\vect{Y},\mathbb{R})$ is equicontinuous and $\vect{\pi}$ is an equisemiconjugacy.

  \subsection{Invariance of topological pressures and entropies}
  We show the invariance of Bowen and packing pressures and entropies under equiconjugacies.

  \begin{thm}\label{thm:invar}
    Let $\vect{g} \in \vect{C}(\vect{X},\mathbb{R})$ be equicontinuous and $Z \subset X_{0}$.
        \begin{enumerate}[(1)]
      \item Suppose that $\vect{\pi}$ is an equisemiconjugacy from $(\vect{X},\vect{T})$ to $(\vect{Y},\vect{R})$.
            Then
            $$\prebow(\vect{T},\vect{\pi}^{*}\vect{g},Z) \geq \prebow(\vect{R},\vect{g},\pi_{0}(Z)),\quad \prepac(\vect{T},\vect{\pi}^{*}\vect{g},Z) \geq \prepac(\vect{R},\vect{g},\pi_{0}(Z)).$$
            In particular,
            $$\enttopbow(\vect{T},Z) \geq \enttopbow(\vect{R},\pi_{0}(Z)),\quad \enttoppac(\vect{T},Z) \geq \enttoppac(\vect{R},\pi_{0}(Z)).$$
      \item Suppose that $\vect{\pi}$ is an equiconjugacy.
            Then
            $$\prebow(\vect{T},\vect{\pi}^{*}\vect{g},Z) = \prebow(\vect{R},\vect{g},\pi_{0}(Z)),\quad \prepac(\vect{T},\vect{\pi}^{*}\vect{g},Z) = \prepac(\vect{R},\vect{g},\pi_{0}(Z)).$$
            In particular,
            $$\enttopbow(\vect{T},Z) = \enttopbow(\vect{R},\pi_{0}(Z)),\quad \enttoppac(\vect{T},Z) = \enttoppac(\vect{R},\pi_{0}(Z)).$$
    \end{enumerate}
  \end{thm}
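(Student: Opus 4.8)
The plan is to reduce the statement to two elementary consequences of $\vect{\pi}$ being an equisemiconjugacy and then deduce part (2) by applying part (1) both to $\vect{\pi}$ and to $\vect{\pi}^{-1}$. As preliminaries, iterating the intertwining identity $R_k\circ\pi_k=\pi_{k+1}\circ T_k$ gives $\vect{R}^{j}\circ\pi_0=\pi_j\circ\vect{T}^{j}$ for all $j\ge0$, so for every $\vect{g}\in\vect{C}(\vect{Y},\mathbb{R})$, every $x\in X_0$ and every $n\ge1$ the Birkhoff sums agree along $\pi_0$:
\[
S_n^{\vect{T}}(\vect{\pi}^{*}\vect{g})(x)=\sum_{j=0}^{n-1}g_j\bigl(\pi_j\vect{T}^{j}x\bigr)=\sum_{j=0}^{n-1}g_j\bigl(\vect{R}^{j}\pi_0x\bigr)=S_n^{\vect{R}}\vect{g}(\pi_0x).
\]
Moreover, equicontinuity of $\vect{\pi}$ yields, for each $\varepsilon>0$, some $\delta=\delta(\varepsilon)>0$ such that, via the description \eqref{eq:bowenballopen} of Bowen balls together with $\vect{R}^{j}\pi_0=\pi_j\vect{T}^{j}$,
\[
\pi_0\bigl(B_n^{\vect{T}}(x,\delta)\bigr)\subseteq B_n^{\vect{R}}(\pi_0x,\varepsilon),\qquad\pi_0\bigl(\overline{B}_n^{\vect{T}}(x,\delta)\bigr)\subseteq\overline{B}_n^{\vect{R}}(\pi_0x,\varepsilon)
\]
for all $x\in X_0$ and $n\ge1$.

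For part (1) with the Bowen pressure: given any countable $(N,\delta)$-cover $\{B_{n_i}^{\vect{T}}(x_i,\delta)\}_i$ of $Z$, the first inclusion makes $\{B_{n_i}^{\vect{R}}(\pi_0x_i,\varepsilon)\}_i$ an $(N,\varepsilon)$-cover of $\pi_0(Z)$, and by the Birkhoff identity the two families carry the same weight $\sum_i\exp(-n_is+S_{n_i}^{\vect{R}}\vect{g}(\pi_0x_i))$. Taking infima over covers gives $\msrbow_{N,\varepsilon}^{s}(\vect{R},\vect{g},\pi_0(Z))\le\msrbow_{N,\delta}^{s}(\vect{T},\vect{\pi}^{*}\vect{g},Z)$; letting $N\to\infty$, passing to the critical exponents \eqref{def_PBep}, using the $\varepsilon$-monotonicity of Proposition~\ref{prop:msrbowepsilon} to bound $\prebow(\vect{T},\vect{\pi}^{*}\vect{g},Z,\delta)$ by $\prebow(\vect{T},\vect{\pi}^{*}\vect{g},Z)$, and finally letting $\varepsilon\to0$ yields $\prebow(\vect{R},\vect{g},\pi_0(Z))\le\prebow(\vect{T},\vect{\pi}^{*}\vect{g},Z)$. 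For the packing pressure the argument is dual: an $(N,\varepsilon)$-packing $\{\overline{B}_{n_i}^{\vect{R}}(y_i,\varepsilon)\}_i$ of $\pi_0(Z)$ has pairwise distinct centres $y_i$ (each closed Bowen ball contains its centre), so we may pick $x_i\in Z$ with $\pi_0x_i=y_i$; the second inclusion forces $\{\overline{B}_{n_i}^{\vect{T}}(x_i,\delta)\}_i$ to be pairwise disjoint — a common point would be mapped by $\pi_0$ into two distinct, hence disjoint, members of the given packing — so it is an $(N,\delta)$-packing of $Z$ of the same weight. This gives $\msrpac_{N,\varepsilon}^{s}(\vect{R},\vect{g},\pi_0(Z))\le\msrpac_{N,\delta}^{s}(\vect{T},\vect{\pi}^{*}\vect{g},Z)$; after $N\to\infty$ and passing to the decomposition infimum \eqref{def_pes} (any countable decomposition of $Z$ pushes forward to one of $\pi_0(Z)$) we obtain $\msrpac_{\varepsilon}^{s}(\vect{R},\vect{g},\pi_0(Z))\le\msrpac_{\delta}^{s}(\vect{T},\vect{\pi}^{*}\vect{g},Z)$, and the same limiting argument via \eqref{def_PP} and Proposition~\ref{prop:msrpacepsilon} finishes part (1). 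The entropy inequalities are the special case $\vect{g}=\vect{0}$, since $\vect{\pi}^{*}\vect{0}=\vect{0}$.

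For part (2), if $\vect{\pi}$ is an equiconjugacy then $\vect{\pi}^{-1}=\{\pi_k^{-1}\}_{k=0}^{\infty}$ is an equisemiconjugacy from $(\vect{Y},\vect{R})$ to $(\vect{X},\vect{T})$, and $\vect{\pi}^{*}\vect{g}\in\vect{C}(\vect{X},\mathbb{R})$ is equicontinuous. Applying part (1) to $\vect{\pi}^{-1}$ with potential $\vect{\pi}^{*}\vect{g}$ and subset $\pi_0(Z)\subseteq Y_0$, and using $(\vect{\pi}^{-1})^{*}(\vect{\pi}^{*}\vect{g})=\vect{g}$ and $\pi_0^{-1}(\pi_0(Z))=Z$ (as $\pi_0$ is bijective), yields the reverse inequalities $\prebow(\vect{R},\vect{g},\pi_0(Z))\ge\prebow(\vect{T},\vect{\pi}^{*}\vect{g},Z)$ and $\prepac(\vect{R},\vect{g},\pi_0(Z))\ge\prepac(\vect{T},\vect{\pi}^{*}\vect{g},Z)$; together with part (1) these give the asserted equalities, and the entropy equalities follow once more with $\vect{g}=\vect{0}$.

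The only genuinely delicate point is the packing half of part (1): verifying that pulling an $(N,\varepsilon)$-packing of $\pi_0(Z)$ back through $\pi_0$ (by selecting preimages of the centres) produces a bona fide $(N,\delta)$-packing of $Z$. This hinges on the inclusion $\pi_0(\overline{B}_n^{\vect{T}}(x,\delta))\subseteq\overline{B}_n^{\vect{R}}(\pi_0x,\varepsilon)$ transporting disjointness from $(\vect{Y},\vect{R})$ back to $(\vect{X},\vect{T})$, and on carefully tracking the two radii $\delta<\varepsilon$ through the decomposition infimum and the two successive limits $N\to\infty$ and $\varepsilon\to0$.
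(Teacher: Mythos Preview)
Your proof is correct and follows essentially the same approach as the paper: establish the Bowen-ball inclusion $\pi_0(B_n^{\vect{T}}(x,\delta))\subseteq B_n^{\vect{R}}(\pi_0x,\varepsilon)$ from equicontinuity, push forward covers and pull back packings to compare the outer measures at scales $\delta$ and $\varepsilon$, then pass to critical values and limits; part (2) follows by applying part (1) to both $\vect{\pi}$ and $\vect{\pi}^{-1}$. The only cosmetic difference is that the paper phrases the packing half via ``fix $s<\prepac(\vect{R},\vect{g},\pi_0(Z))$ and show $\msrpac_\delta^s(\vect{T},\vect{\pi}^*\vect{g},Z)=+\infty$'' rather than your direct measure inequality, but the underlying computation (including the decomposition step, where the inequality $\msrpac_{\infty,\varepsilon}^{s}(\vect{R},\vect{g},\pi_0(Z_i))\le\msrpac_{\infty,\delta}^{s}(\vect{T},\vect{\pi}^{*}\vect{g},Z_i)$ must hold for each piece $Z_i$, as your pull-back argument indeed gives) is identical.
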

  \begin{proof}
First, We prove $\prebow(\vect{T},\vect{\pi}^{*}\vect{g},Z) \geq \prebow(\vect{R},\vect{g},\pi_{0}(Z))$.

Since  $\vect{\pi}$ is an equisemiconjugacy,  for every $\varepsilon>0$, there exists  $\delta>0$
    such that for all integers $k \geq 0$, $n >0$ and all $x^{\prime},x^{\prime\prime} \in X_{k}$ satisfying $d_{k,n}^{\vect{T}}(x^{\prime},x^{\prime\prime})<\delta $, we have that
$$
  d_{k,n}^{\vect{R}}(\pi_{k}x^{\prime},\pi_{k}x^{\prime\prime})<\varepsilon.
$$
This implies that
$$
\pi_{k}(B_{k,n}^{\vect{T}}(x,\delta)) \subset B_{k,n}^{\vect{R}}(\pi_{k}x,\varepsilon)
$$
 for every  $x \in X_{k}$.     In particular, for all integers $n >0$ and all  $x \in X_{0}$,
    \begin{equation}\label{eq:notebowballs}
      \pi_{0}(B_{n}^{\vect{T}}(x,\delta)) \subset B_{n}^{\vect{R}}(\pi_{0}x,\varepsilon).
    \end{equation}

    Fix $s \in \mathbb{R}$, $N>0$ and $\varepsilon>0$. We choose $\delta>0$ as above such that $\delta \to 0$ as $\varepsilon \to 0$.
    Given a countable $(N,\delta)$-cover $\{B_{n_{i}}^{\vect{T}}(x_{i},\delta)\}_{i=1}^{\infty}$ of $Z$, it is clear that $\{B_{n_{i}}^{\vect{R}}(\pi_{0}x_{i},\varepsilon)\}_{i=1}^{\infty}$  is a $(N,\varepsilon)$-cover of $\pi_{0}(Z)$.
    Hence, by \eqref{eq:defmsrbow}, we have that
    \begin{align*}
      \msrbow_{N,\varepsilon}^{s}(\vect{R},\vect{g},\pi_{0}(Z)) &\leq \sum_{i=1}^{\infty}\exp{\left(-n_{i}s + S_{n_{i}}^{\vect{R}}\vect{g}(\pi_{0}x_{i})\right)} \\
                                                                        &= \sum_{i=1}^{\infty}\exp{\left(-n_{i}s + S_{n_{i}}^{\vect{T}}\vect{\pi}^{*}\vect{g}(x_{i})\right)}.
    \end{align*}
    By \eqref{eq:notebowballs}, it is clear that
    $$
    \msrbow_{N,\varepsilon}^{s}(\vect{R},\vect{g},\pi_{0}(Z)) \leq \msrbow_{N,\delta}^{s}(\vect{T},\vect{\pi}^{*}\vect{g},Z),
    $$
and we obtain that
    $$
    \msrbow_{\varepsilon}^{s}(\vect{R},\vect{g},\pi_{0}(Z)) \leq \msrbow_{\delta}^{s}(\vect{T},\vect{\pi}^{*}\vect{g},Z).
    $$
By \eqref{def_PBep},    it follows that
    $$
    \prebow(\vect{R},\vect{g},\pi_{0}(Z),\varepsilon) \leq \prebow(\vect{T},\vect{\pi}^{*}\vect{g},Z,\delta).
    $$
    Since $\delta$ tends to $0$ as $\varepsilon$ goes to  $0$ , we have that  $\prebow(\vect{R},\vect{g},\pi_{0}(Z)) \leq \prebow(\vect{T},\vect{\pi}^{*}\vect{g},Z)$.

Next, we  prove $\prepac(\vect{T},\vect{\pi}^{*}\vect{g},Z) \geq \prepac(\vect{R},\vect{g},\pi_{0}(Z))$.

For every $s<\prepac(\vect{R},\vect{g},\pi_{0}(Z))$, by Proposition \ref{prop:msrpacepsilon}, there exists $\varepsilon _0>0$ such that for all $\varepsilon $ satisfying $0<\varepsilon  <\varepsilon _0$, we have that
$$
\prepac(\vect{R},\vect{g},\pi_{0}(Z),\varepsilon )>s.
$$
By \eqref{def_PP}, we have that
$$
\msrpac_{\varepsilon}^{s}(\vect{R},\vect{g},\pi_{0}(Z))=+\infty.
$$

Given a countable cover $\{Z_{i}\}_{i=1}^{\infty}$ of $Z$, it is clear that  $\{\pi_{0}(Z_{i})\}_{i=1}^{\infty}$  is a cover of $\pi_{0}(Z)$.
   By the definition of $\msrpac_{\varepsilon}^{s}$, we have
$$
\sum_{i=1}^{\infty}\msrpac_{\infty,\varepsilon}^{s}(\vect{R},\vect{g}, \pi_{0}(Z_{i}))=+\infty.
$$
Since $\vect{\pi}$ is an equisemiconjugacy,  by the same argument as before,     there exists $\delta>0$  such that for all    $n \in \mathbb{N}$ and all $x \in X_{0}$,
    $$
\pi_{0}(\overline{B}_{n}^{\vect{T}}(x,\delta)) \subset \overline{B}_{n}^{\vect{R}}(\pi_{0}x,\varepsilon).
$$

Fix $N >0$.     For each $i$, let $\{\overline{B}_{n_{i,l}}^{\vect{R}}(y_{i,l},\varepsilon)\}_{l=1}^{\infty}$ be a $(N,\varepsilon)$-packing of $\pi_0(Z_i)$,  and choose $x_{i,l} \in \pi_{0}^{-1}(y_{i,l}) \cap Z_{i}$. Clearly the collection  $\{\overline{B}_{n_{i,l}}^{\vect{T}}(x_{i,l},\delta)\}_{l=1}^{\infty}$ is a $(N,\delta)$-packing of $Z_i$. Combining this with  \eqref{eq:defmsrPack}, we have that
    \begin{align*}
      \msrpac_{N,\delta}^{s}(\vect{T},\vect{\pi}^{*}\vect{g},Z_{i}) &\geq \sum_{l=1}^{\infty}\exp{\left(-n_{i,l}s + S_{n_{i,l}}^{\vect{T}}\vect{\pi}^{*}\vect{g}(x_{i,l})\right)} \\
                                                                    &= \sum_{l=1}^{\infty}\exp{\left(-n_{i,l}s + S_{n_{i,l}}^{\vect{R}}\vect{g}(\pi_{0}x_{i,l})\right)} \\
                                                                    &= \sum_{l=1}^{\infty}\exp{\left(-n_{i,l}s + S_{n_{i,l}}^{\vect{R}}\vect{g}(y_{i,l})\right)}.
    \end{align*}
It immediately follows that
$$
\msrpac_{N,\delta}^{s}(\vect{T},\vect{\pi}^{*}\vect{g},Z_{i}) \geq \msrpac_{N,\varepsilon}^{s}(\vect{R},\vect{g},\pi_0(Z_{i}))
$$
for all $i$.
Letting $N $ tend  to  $\infty$ and summing the inequality over $i$, we obtain that
    $$
    \sum_{i=1}^{\infty}\msrpac_{\infty,\delta}^{s}(\vect{T},\vect{\pi}^{*}\vect{g},Z_{i}) \geq \sum_{i=1}^{\infty}\msrpac_{\infty,\varepsilon}^{s}(\vect{R},\vect{g},\pi_{0}(Z_{i})) = +\infty.
    $$
    By the arbitrariness of $\{Z_{i}\}_{i=1}^{\infty}$, we have $\msrpac_{\delta}^{s}(\vect{T},\vect{\pi}^{*}\vect{g},Z)=+\infty$,
and by \eqref{def_PP} it implies that $\prepac(\vect{T},\vect{\pi}^{*}\vect{g},Z,\delta)>s $ for all $s<\prepac(\vect{R},\vect{g},\pi_{0}(Z))$. It follows that
 $$
\prepac(\vect{R},\vect{g},\pi_{0}(Z),\varepsilon) \leq \prepac(\vect{T},\vect{\pi}^{*}\vect{g},Z,\delta).
$$
Letting $\varepsilon \to 0$, we obtain that  $\prepac(\vect{R},\vect{g},\pi_{0}(Z)) \leq \prepac(\vect{T},\vect{\pi}^{*}\vect{g},Z)$.

    (2) Since $\vect{\pi}$ is an equiconjugacy from $(\vect{X},\vect{T})$ to $(\vect{Y},\vect{R})$,
it is clear that  $\vect{\pi}^{-1}=\{\pi_{k}^{-1}\}_{k=0}^{\infty}$ is an equisemiconjugacy from $(\vect{Y},\vect{R})$ to $(\vect{X},\vect{T})$.
The conclusion follows by  applying   (1) on  both $\vect{\pi}$ and $\vect{\pi}^{-1}$.
  \end{proof}
  \begin{cor}
    Let $\vect{\pi}$ be an equisemiconjugacy from $(\vect{X},\vect{T})$ to $(\vect{Y},\vect{R})$ and $W \subset Y_{0}$.
    Assume that $\vect{g} \in \vect{C}(\vect{Y},\mathbb{R})$ is equicontinuous.
    Then
    $$
    \prebow(\vect{T},\vect{\pi}^{*}\vect{g},\pi_{0}^{-1}(W)) \geq \prebow(\vect{R},\vect{g},W),\quad \prepac(\vect{T},\vect{\pi}^{*}\vect{g},\pi_{0}^{-1}(W)) \geq \prepac(\vect{R},\vect{g},W).
    $$
    In particular,
    $$
    \enttopbow(\vect{T},\pi_{0}^{-1}(W)) \geq \enttopbow(\vect{R},W),\quad \enttoppac(\vect{T},\pi_{0}^{-1}(W)) \geq \enttoppac(\vect{R},W).
    $$
  \end{cor}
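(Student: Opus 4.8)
The plan is to deduce this corollary directly from Theorem~\ref{thm:invar}(1) by exploiting the surjectivity of the factor mapping sequence. First I would set $Z=\pi_{0}^{-1}(W)\subset X_{0}$. Since $\pi_{0}:X_{0}\to Y_{0}$ is surjective by the definition of a semiconjugacy, the elementary set-theoretic identity $\pi_{0}\big(\pi_{0}^{-1}(W)\big)=W$ holds for every $W\subset Y_{0}$; this is essentially the only point that needs to be made explicit.

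Next I would apply Theorem~\ref{thm:invar}(1) with this particular choice of $Z$, which is legitimate since $\vect{g}$ is equicontinuous and $\vect{\pi}$ is an equisemiconjugacy, so all hypotheses are met. The theorem gives
$$
\prebow(\vect{T},\vect{\pi}^{*}\vect{g},Z)\geq\prebow(\vect{R},\vect{g},\pi_{0}(Z)),\qquad \prepac(\vect{T},\vect{\pi}^{*}\vect{g},Z)\geq\prepac(\vect{R},\vect{g},\pi_{0}(Z)).
$$
Substituting $Z=\pi_{0}^{-1}(W)$ and using $\pi_{0}(Z)=W$ then yields the two desired pressure inequalities verbatim.

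Finally, for the entropy statements I would specialize to $\vect{g}=\vect{0}$. Since $\vect{\pi}^{*}\vect{0}=\{0\circ\pi_{k}\}_{k=0}^{\infty}=\vect{0}$, and since by Definitions~\ref{def:prebpp} and~\ref{def:prepac} the Bowen and packing entropies are precisely the corresponding pressures evaluated at the zero potential, the pressure inequalities for $\vect{g}=\vect{0}$ reduce immediately to
$$
\enttopbow(\vect{T},\pi_{0}^{-1}(W))\geq\enttopbow(\vect{R},W),\qquad \enttoppac(\vect{T},\pi_{0}^{-1}(W))\geq\enttoppac(\vect{R},W).
$$
There is no genuine obstacle here: the entire content is the observation that a surjection satisfies $\pi_{0}(\pi_{0}^{-1}(W))=W$, so that Theorem~\ref{thm:invar}(1) applies directly once $Z$ is chosen as the preimage of $W$; the only verifications needed are this identity and the trivial fact that $\vect{0}$ is equicontinuous.
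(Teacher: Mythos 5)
Your proof is correct and is exactly the argument the paper intends: the corollary is stated as an immediate consequence of Theorem~\ref{thm:invar}(1), obtained by taking $Z=\pi_{0}^{-1}(W)$ and using the surjectivity of $\pi_{0}$ to get $\pi_{0}(\pi_{0}^{-1}(W))=W$, then specializing to $\vect{g}=\vect{0}$ for the entropy statements.
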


  Another immediate consequence of Theorem \ref{thm:invar} is that the topological pressures ($\prebow$ and $\prepac$) and entropies ($\enttopbow$ and $\enttoppac$)
 are indeed `topological', or in other words, metric irrelevant in the following sense. For each integer $k\geq 0$, let  $d_{k}$  and  $d_{k}^{\prime}$ be two   metrics on   $X_{k}$. We say $\vect{d}=\{d_{k}\}_{k=0}^{\infty}$ and $\vect{d}^{\prime}=\{d_{k}^{\prime}\}_{k=0}^{\infty}$
  on $\vect{X}$ are  \emph{uniformly equivalent} if the identity mappings
  $$
  \vect{\id}=\{\id_{X_{k}}:(X_{k},d_{k})\to(X_{k},d_{k}^{\prime})\}_{k=0}^{\infty}\quad \text{and}\quad \vect{\id}^{\prime}=\{\id_{X_{k}}^{\prime}:(X_{k},d_{k}^{\prime})\to(X_{k},d_{k})\}_{k=0}^{\infty}
  $$
  are both equicontinuous.


  Consider the NDSs $(\vect{X},\vect{d},\vect{T})$ and $(\vect{X},\vect{d}^{\prime},\vect{T})$.
  We write $h^{\mathrm{B}}$, $\prebow$, $h^{\mathrm{P}}$ and $\prepac$ with subscripts $\vect{d}$ and $\vect{d}^{\prime}$
  to emphasize the dependence on metrics.
  \begin{thm}\label{thm:pretopuniequiv}
    Given an NDS $(\vect{X},\vect{T})$, let $\vect{d}$ and $\vect{d}^{\prime}$ be two sequences of metrics
    $d_{k}$ and $d_{k}^{\prime}$ inducing the same topologies on $X_{k}$ for each $k \in \mathbb{N}$. If $\vect{d}$ and $\vect{d}^{\prime}$ are uniformly equivalent, then for all equicontinuous $\vect{f} \in \vect{C}(\vect{X},\mathbb{R})$, we have that
    $$
    \prebow_{\vect{d}}(\vect{T},\vect{f},Z) = \prebow_{\vect{d}^{\prime}}(\vect{T},\vect{f},Z),\quad \prepac_{\vect{d}}(\vect{T},\vect{f},Z) = \prepac_{\vect{d}^{\prime}}(\vect{R},\vect{f},Z).
    $$
    In particular,
    $$\entbow_{\vect{d}}(\vect{T},Z) = \entbow_{\vect{d}^{\prime}}(\vect{T},Z),\quad \entpac_{\vect{d}}(\vect{T},Z) = \entpac_{\vect{d}^{\prime}}(\vect{T},Z).$$
  \end{thm}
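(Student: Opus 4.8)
The plan is to recognize the family of identity mappings as an equiconjugacy and then quote Theorem~\ref{thm:invar}. Consider the two nonautonomous dynamical systems $(\vect{X},\vect{d},\vect{T})$ and $(\vect{X},\vect{d}^{\prime},\vect{T})$ together with $\vect{\id}=\{\id_{X_{k}}\colon (X_{k},d_{k})\to(X_{k},d_{k}^{\prime})\}_{k=0}^{\infty}$. Since $d_{k}$ and $d_{k}^{\prime}$ induce the same topology on $X_{k}$, each $\id_{X_{k}}$ is a homeomorphism, and the relation $T_{k}\circ\id_{X_{k}}=\id_{X_{k+1}}\circ T_{k}$ holds trivially, so $\vect{\id}$ is a conjugacy from $(\vect{X},\vect{d},\vect{T})$ to $(\vect{X},\vect{d}^{\prime},\vect{T})$. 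By the definition of uniform equivalence, both $\vect{\id}$ and its inverse $\vect{\id}^{\prime}=\{\id_{X_{k}}\colon (X_{k},d_{k}^{\prime})\to(X_{k},d_{k})\}_{k=0}^{\infty}$ are equicontinuous; hence $\vect{\id}$ is an equiconjugacy.

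Next I would observe that equicontinuity of $\vect{f}$ is insensitive to the choice between $\vect{d}$ and $\vect{d}^{\prime}$: given $\varepsilon>0$, equicontinuity of $\vect{f}$ with respect to $\vect{d}$ yields $\delta>0$ with $d_{k}(x,y)<\delta\Rightarrow|f_{k}(x)-f_{k}(y)|<\varepsilon$ for all $k$, and equicontinuity of $\vect{\id}^{\prime}$ yields $\eta>0$ with $d_{k}^{\prime}(x,y)<\eta\Rightarrow d_{k}(x,y)<\delta$ for all $k$; combining these shows $\vect{f}$ is equicontinuous with respect to $\vect{d}^{\prime}$ as well (and symmetrically). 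Thus the hypothesis of Theorem~\ref{thm:invar}(2) on the potential is met on either system.

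Finally I would apply Theorem~\ref{thm:invar}(2) with $(\vect{Y},\vect{R})=(\vect{X},\vect{d}^{\prime},\vect{T})$, $\vect{\pi}=\vect{\id}$ and $\vect{g}=\vect{f}$. Since $\vect{\id}^{*}\vect{f}=\{f_{k}\circ\id_{X_{k}}\}_{k=0}^{\infty}=\vect{f}$ and $\id_{0}(Z)=Z$, the theorem gives
\[
\prebow_{\vect{d}}(\vect{T},\vect{f},Z)=\prebow_{\vect{d}^{\prime}}(\vect{T},\vect{f},Z),\qquad \prepac_{\vect{d}}(\vect{T},\vect{f},Z)=\prepac_{\vect{d}^{\prime}}(\vect{T},\vect{f},Z),
\]
and specializing to $\vect{f}=\vect{0}$, which is equicontinuous, yields $\entbow_{\vect{d}}(\vect{T},Z)=\entbow_{\vect{d}^{\prime}}(\vect{T},Z)$ and $\entpac_{\vect{d}}(\vect{T},Z)=\entpac_{\vect{d}^{\prime}}(\vect{T},Z)$. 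There is no real obstacle here: the statement is essentially a corollary of Theorem~\ref{thm:invar}, and the only step requiring verification is that the family of identity mappings genuinely satisfies the definition of an equiconjugacy, which is precisely what uniform equivalence of $\vect{d}$ and $\vect{d}^{\prime}$ asserts.
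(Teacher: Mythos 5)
Your proposal is correct and is essentially identical to the paper's own proof: the paper also observes that the identity mappings commute with the $T_{k}$ and that uniform equivalence makes $\vect{\id}$ an equiconjugacy, and then invokes Theorem~\ref{thm:invar}. Your extra remark that equicontinuity of $\vect{f}$ transfers between the two metric sequences is a small detail the paper leaves implicit, and it is a worthwhile verification.
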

  \begin{proof}
    Obviously, compositions of $\id_{X_{k}}$ and $T_{k}$ are commutative.
    \begin{equation*}
      \xymatrix{
        (\vect{X},\vect{d},\vect{T}): \ar[d]^{\vect{\id}:}          & (X_{0},d_{0}) \ar[r]^{T_{0}} \ar[d]_{\id_{X_{0}}}     & (X_{1},d_{1}) \ar[r]^{T_{1}} \ar[d]_{\id_{X_{1}}}          & \cdots \ar[r]^{T_{k-1}} \ar @{} [d] |{\cdots} & (X_{k},d_{k}) \ar[r]^{T_{k}} \ar[d]_{\id_{X_{k}}}          & \cdots \ar @{} [d] |{\cdots}\\
        (\vect{X},\vect{d}^{\prime},\vect{T}):                      & (X_{0},d_{0}^{\prime}) \ar[r]^{T_{0}}                  & (X_{1},d_{1}^{\prime}) \ar[r]^{T_{1}}                  & \cdots \ar[r]^{T_{k-1}}                       & (X_{k},d_{k}^{\prime}) \ar[r]^{T_{k}}                  & \cdots
      }
    \end{equation*}
Since $\vect{d}$ and $\vect{d}^{\prime}$
    are uniformly equivalent, $\vect{\id}$ is an equiconjugacy from $(\vect{X},\vect{d},\vect{T})$ to $(\vect{X},\vect{d}^{\prime},\vect{T})$,    and the conclusion follows from Theorem \ref{thm:invar}.
  \end{proof}

  \subsection{Invariance of measure-theoretic entropies}
  Generally speaking, measure-theoretic entropies are not preserved under (topological) conjugacies,
  even for equiconjugacies or in the autonomous cases.   With extra requirement on measures, we are able to show that measure-theoretic lower and upper entropies are  also equiconjugacy invariants.
  \begin{thm}\label{thm:invarmsrent}
    Let $\vect{\pi}$ be a semiconjugacy from $(\vect{X},\vect{T})$ to $(\vect{Y},\vect{R})$.
    Given $\mu \in M(X_{0})$, let $\nu=\pi_{0,*}\mu$ be the pushforward measure of $\mu$
    under $\pi_{0}$, that is, $$\nu(A)=\mu(\pi_{0}^{-1}A)\quad \text{for all Borel subsets}\ A \subset X_{0}.$$
    \begin{enumerate}[(1)]
      \item Suppose that $\vect{\pi}$ is an equisemiconjugacy.
            Then
            $$\entlow_{\mu}(\vect{T}) \geq \entlow_{\nu}(\vect{R}),\quad \entup_{\mu}(\vect{T}) \geq \entup_{\nu}(\vect{R}).$$
      \item Furthermore, suppose that $\vect{\pi}$ is an equiconjugacy.
            Then
            $$\entlow_{\mu}(\vect{T}) = \entlow_{\nu}(\vect{R}),\quad \entup_{\mu}(\vect{T}) = \entup_{\nu}(\vect{R}).$$
    \end{enumerate}
  \end{thm}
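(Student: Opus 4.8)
The plan is to deduce everything from a pointwise comparison of local entropies together with the change-of-variables formula for the pushforward $\nu=\pi_{0,*}\mu$. Concretely, I will first prove, for every $x\in X_{0}$,
$$\entlow_{\mu}(\vect{T},x)\ge\entlow_{\nu}(\vect{R},\pi_{0}x),\qquad \entup_{\mu}(\vect{T},x)\ge\entup_{\nu}(\vect{R},\pi_{0}x),$$
then integrate against $\mu$ to obtain part (1); part (2) will follow by applying part (1) to the inverse equisemiconjugacy $\vect{\pi}^{-1}=\{\pi_{k}^{-1}\}_{k=0}^{\infty}$ from $(\vect{Y},\vect{R})$ to $(\vect{X},\vect{T})$, for which the associated pushforward of $\nu$ is $\mu$ because $\pi_{0}$ is a bijection.

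For the pointwise estimate, recall from the proof of Theorem~\ref{thm:invar} (in particular~\eqref{eq:notebowballs}) that equisemiconjugacy of $\vect{\pi}$ gives, for every $\varepsilon>0$, some $\delta=\delta(\varepsilon)>0$, which may be chosen so that $\delta(\varepsilon)\to0$ as $\varepsilon\to0$, with $\pi_{0}(B_{n}^{\vect{T}}(x,\delta))\subset B_{n}^{\vect{R}}(\pi_{0}x,\varepsilon)$ for all $n\ge1$ and all $x\in X_{0}$. Hence $B_{n}^{\vect{T}}(x,\delta)\subset\pi_{0}^{-1}(B_{n}^{\vect{R}}(\pi_{0}x,\varepsilon))$, and since $\nu=\pi_{0,*}\mu$,
$$\mu(B_{n}^{\vect{T}}(x,\delta))\le\mu\big(\pi_{0}^{-1}(B_{n}^{\vect{R}}(\pi_{0}x,\varepsilon))\big)=\nu(B_{n}^{\vect{R}}(\pi_{0}x,\varepsilon)),$$
so $-\log\mu(B_{n}^{\vect{T}}(x,\delta))\ge-\log\nu(B_{n}^{\vect{R}}(\pi_{0}x,\varepsilon))$ (the convention $\log0=-\infty$ preserves this). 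Dividing by $n$ and taking $\varliminf_{n}$ and $\varlimsup_{n}$ gives $\entlow_{\mu}(\vect{T},x,\delta)\ge\entlow_{\nu}(\vect{R},\pi_{0}x,\varepsilon)$ and $\entup_{\mu}(\vect{T},x,\delta)\ge\entup_{\nu}(\vect{R},\pi_{0}x,\varepsilon)$. Since $\entlow_{\mu}(\vect{T},x)\ge\entlow_{\mu}(\vect{T},x,\delta)$, letting $\varepsilon\to0$ and using the entropy analogue of Proposition~\ref{prop:localepsilon}, namely $\entlow_{\nu}(\vect{R},\pi_{0}x)=\sup_{\varepsilon>0}\entlow_{\nu}(\vect{R},\pi_{0}x,\varepsilon)$ together with the monotonicity in $\varepsilon$, yields the two pointwise inequalities; the upper case is identical.

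To finish part (1), observe that $x\mapsto\entlow_{\nu}(\vect{R},\pi_{0}x)$ and $x\mapsto\entup_{\nu}(\vect{R},\pi_{0}x)$ are Borel measurable on $X_{0}$, being the composition of the Borel functions $\entlow_{\nu}(\vect{R},\cdot),\entup_{\nu}(\vect{R},\cdot)$ on $Y_{0}$ with the continuous map $\pi_{0}$. Integrating the pointwise inequalities against $\mu$ and applying $\int_{X_{0}}\varphi\circ\pi_{0}\,\dif\mu=\int_{Y_{0}}\varphi\,\dif\nu$ gives $\entlow_{\mu}(\vect{T})\ge\entlow_{\nu}(\vect{R})$ and $\entup_{\mu}(\vect{T})\ge\entup_{\nu}(\vect{R})$. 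For part (2), applying $\pi_{k+1}^{-1}$ and $\pi_{k}^{-1}$ to $R_{k}\circ\pi_{k}=\pi_{k+1}\circ T_{k}$ shows $\vect{\pi}^{-1}$ is a semiconjugacy from $(\vect{Y},\vect{R})$ to $(\vect{X},\vect{T})$, which is equicontinuous by the definition of equiconjugacy, hence an equisemiconjugacy; since $\pi_{0}$ is a bijection the pushforward of $\nu$ under $\pi_{0}^{-1}$ equals $\mu$. Applying part (1) to $\vect{\pi}^{-1}$ gives the reverse inequalities $\entlow_{\nu}(\vect{R})\ge\entlow_{\mu}(\vect{T})$ and $\entup_{\nu}(\vect{R})\ge\entup_{\mu}(\vect{T})$, and the equalities follow.

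I do not expect a serious obstacle: the computations are routine, and the only slightly delicate points are the simultaneous limit $\varepsilon,\delta(\varepsilon)\to0$, which is handled cleanly by the sup-characterisation of the local entropies, and the measurability of the composed local-entropy functions, which is needed only to legitimise the defining integrals. It is worth stressing (as in the remark preceding the theorem) that equicontinuity of $\vect{\pi}$ and $\vect{\pi}^{-1}$ is essential: without it the Bowen-ball inclusion~\eqref{eq:notebowballs} fails uniformly in $n$, and the conclusion may break down.
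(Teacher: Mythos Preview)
Your proposal is correct and follows essentially the same approach as the paper: you establish the pointwise inequality $\entlow_{\mu}(\vect{T},x)\ge\entlow_{\nu}(\vect{R},\pi_{0}x)$ via the Bowen-ball inclusion~\eqref{eq:notebowballs}, integrate using the change-of-variables formula for the pushforward, and then apply part~(1) to $\vect{\pi}^{-1}$ for part~(2). The only cosmetic difference is that the paper parametrises by $y\in Y_{0}$ and $x\in\pi_{0}^{-1}y$ whereas you parametrise directly by $x\in X_{0}$, but the argument is the same.
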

  \begin{proof}
    (1) Recall \eqref{eq:notebowballs} from the proof of Theorem \ref{thm:invar} that for every $\varepsilon>0$,
    there exists $\delta>0$ such that for all integer  $n >0$ and all $x \in X_{0}$,
    $$\pi_{0}(B_{n}^{\vect{T}}(x,\delta)) \subset B_{n}^{\vect{R}}(\pi_{0}x,\varepsilon).$$
    Hence, for all $\varepsilon>0$, there exists $\delta>0$ such that for all integer  $n >0$
    and all $y \in Y_{0}$, by choosing $x \in \pi_{0}^{-1}y$, we have
    $$\pi_{0}(B_{n}^{\vect{T}}(x,\delta)) \subset B_{n}^{\vect{R}}(y,\varepsilon).$$
    Fix $\varepsilon>0$ and  $\delta<\varepsilon$. It follows that
    $$\nu(B_{n}^{\vect{R}}(y,\varepsilon)) \geq \nu(\pi_{0}(B_{n}^{\vect{T}}(x,\delta))) = \mu(\pi_{0}^{-1}(\pi_{0}(B_{n}^{\vect{T}}(x,\delta)))) \geq \mu(B_{n}^{\vect{T}}(x,\delta)).$$
    Thus, for all integer  $n >0$, $y \in Y_{0}$ and every $x \in \pi_{0}^{-1}y$,
    \begin{equation}
      \frac{-\log{\nu(B_{n}^{\vect{R}}(y,\varepsilon))}}{n} \leq \frac{-\log{\mu(B_{n}^{\vect{T}}(x,\delta))}}{n}.
    \end{equation}
    Letting $n \to \infty$, we obtain that  $\entlow_{\mu}(\vect{T},x,\delta) \geq \entlow_{\nu}(\vect{R},y,\varepsilon)$
    and $\entup_{\mu}(\vect{T},x,\delta) \geq \entup_{\nu}(\vect{R},y,\varepsilon)$, and this implies
    $$\entlow_{\mu}(\vect{T},x) \geq \entlow_{\nu}(\vect{R},y) \quad \text{and} \quad \entup_{\mu}(\vect{T},x) \geq \entup_{\nu}(\vect{R},y),$$
    since $\delta$ tends to $0$ as $\varepsilon$ goes to $ 0$, where $x \in \pi_{0}^{-1}y$ for all $y \in Y_{0}$.

    Define $g: Y_{0} \to [0,+\infty]$ by
    $$g(y)=\entlow_{\nu}(\vect{R},y).$$
    It is clear that $g$ is a nonnegative Borel measurable function on $Y_{0}$.
    Thus,
    \begin{align*}
      \entlow_{\mu}(\vect{T}) &= \int_{X_{0}}\entlow_{\mu}(\vect{T},x)\dif{\mu(x)} \\
                              &\geq \int_{X_{0}}\entlow_{\nu}(\vect{R},\pi_{0}(x))\dif{\mu(x)} \\
                              &= \int_{X_{0}}(g \circ \pi_{0})\dif{\mu} \\
                              &= \int_{Y_{0}}g\dif{\nu} \\
                              &= \entlow_{\nu}(\vect{R}).
    \end{align*}
    The argument for $\entup_{\mu}(\vect{T}) \geq \entup_{\nu}(\vect{R})$ is completely identical.

    (2) Since $\vect{\pi}$ is an equiconjugacy from $(\vect{X},\vect{T})$ to $(\vect{Y},\vect{R})$, the inverse
    $\vect{\pi}^{-1}=\{\pi_{k}^{-1}\}_{k=0}^{\infty}$ is an equisemiconjugacy from $(\vect{Y},\vect{R})$ to $(\vect{X},\vect{T})$.
    Applying   (1) on $\vect{\pi}$ and $\vect{\pi}^{-1}$, we have the desired equalities.
  \end{proof}

Finally, we end  with an example which shows that the conjugacy is not sufficient to preserve the topological entropies. Hence  the equiconjugacy is appropriate for the  study the invariance of entropies and pressures in nonautonomous dynamical systems.
  \begin{exmp} \label{ex_nc}
For each integer  $k\geq 0$, we write $I_{k}=[0,2^{k}] \subset \mathbb{R},$
and  let $    T_{k}:I_{k} \to I_{k+1}$ be given by
$$
T_k(x) =2x.
$$
We give three different metrics  $d_e$, $d_u$ and $d_b$ on each $I_k$.   Let $ d_{e} $ be the  standard Euclidean metric restricted on $I_k$.  We write
$$
d_u(x,y)=\frac{1}{2^k} d_e(x,y), \qquad
d_b(x,y)= \frac{d_e(x,y)}{1+d_e(x,y)}
$$
for all $x,y\in I_k$. Then  we have three NDSs $(\vect{I},\vect{d}_{e},\vect{T})$,  $(\vect{I},\vect{d}_{u},\vect{T})$ and $(\vect{I},\vect{d}_{b},\vect{T})$.

By simple calculation, the topological entropies are given by
\begin{eqnarray*}
&&\entbow_{\vect{d}_{e}}(\vect{T},I_{0})=\entpac_{\vect{d}_{e}}(\vect{T},I_{0})=\log{2},  \\
&&\entbow_{\vect{d}_{u}}(\vect{T},I_{0})=    \entpac_{\vect{d}_{u}}(\vect{T},I_{0})=0, \\
&&\entbow_{\vect{d}_{b}}(\vect{T},I_{0}) =\entpac_{\vect{d}_{b}}(\vect{T},I_{0}) = \log{2}.
\end{eqnarray*}
Let $\leb$ be the Lebesgue measure on $I_{0}$. Then the measure-theoretic entropies are given by
\begin{eqnarray*}
&&\entlow_{\leb,\vect{d}_{e}}(\vect{T})=\entup_{\leb,\vect{d}_{e}}(\vect{T})=\log{2}, \\
&&\entlow_{\leb,\vect{d}_{u}}(\vect{T})=\entup_{\leb,\vect{d}_{u}}(\vect{T})=0,       \\
&&\entlow_{\leb,\vect{d}_{b}}(\vect{T})=\entup_{\leb,\vect{d}_{b}}(\vect{T})=\log{2}.
\end{eqnarray*}

Let  $\vect{\id}$ be the sequence of  identity mappings $\id_{I_{k}}$ on $I_{k}$.  Then  it is clear that
$\vect{\id}$ is an equiconjugacy from $(\vect{I},\vect{d}_{e},\vect{T})$ to $(\vect{I},\vect{d}_{b},\vect{T})$, however  $\vect{\id}$  is a conjugacy but not an equiconjugacy
 from $(\vect{I},\vect{d}_{e},\vect{T})$ to $(\vect{I},\vect{d}_{u},\vect{T})$. Therefore, conjugacy is not enough to guarantee the invariance of entropies and pressures.

\end{exmp}

\section{Billingsley Type Theorems}\label{sect:pfbthms}

In this section, we study the Billingsley type theorems for topological entropies and pressures which are similar to the Frostman's Lemma in fractal geometry (see \cite{Falconer1997}). These theorems relate measure-theoretic pressures to topological pressures.

First, we give the proof of the Billingsley type theorem for the Bowen pressure. 
  \begin{proof}[Proof of Theorem \ref{thm:btypethmbowen}]
    (1) Suppose that $\prelow_{\mu}(\vect{T},\vect{f},x) \leq s$ for all $x \in E$.  By Proposition \ref{equivdefentbowlem}, it is sufficient to prove that for every $t>s$,
$$
\msrbow^{t}(\vect{T},\vect{f},E)  <+\infty.
$$

 By Proposition \ref{prop:localepsilon},  we have that     for all $x \in E$ and all $\varepsilon>0$,
    $$
\prelow_{\mu}(\vect{T},\vect{f},x,\varepsilon) \leq \prelow_{\mu}(\vect{T},\vect{f},x) \leq s < t.
$$
Hence, for every $x \in E$ and every $\varepsilon>0$, there exists a strictly increasing sequence $\{n_{l}(x,\varepsilon)\}_{l=1}^{\infty}$
    such that for all integer $l>0$,
    $$\frac{-\log{\mu(B_{n_{l}}^{\vect{T}}(x,\varepsilon))}+S_{n_{l}}^{\vect{T}}\vect{f}(x)}{n_{l}} \leq t,$$
    which implies that
    $$\mu(B_{n_{l}}^{\vect{T}}(x,\varepsilon)) \geq \exp{\left(-n_{l}t + S_{n_{l}}^{\vect{T}}\vect{f}(x)\right)}.$$

    For each $N >0$ and $\varepsilon>0$, we write
    $$
\mathcal{F}=\{B_{n_{l}(x,\varepsilon)}^{\vect{T}}(x,\varepsilon): x \in E, n_{l}(x,\varepsilon) \geq N\},
$$
which is a cover of $E$.     By Lemma \ref{dn3rcovlem}, there exists a disjoint subfamily of $\mathcal{F}$, denoted by
    $\mathcal{F}^{\prime}=\{B_{n_{i}}^{\vect{T}}(x_{i},\varepsilon)\}_{i \in \mathcal{I}}$,
   such that
\begin{equation}\label{covere3e}
E \subset \bigcup_{B \in \mathcal{F}}B \subset \bigcup_{i \in \mathcal{I}}B_{n_{i}}^{\vect{T}}(x_{i},3\varepsilon),
\end{equation}
    and it is clear that
    $$
\mu(B_{n_{i}}^{\vect{T}}(x_{i},\varepsilon)) \geq \exp{\left(-n_{i}t + S_{n_{i}}^{\vect{T}}\vect{f}(x_{i})\right)}>0
$$
for all $i \in \mathcal{I}$. Since $\mu(X_{0})=1$ and $\mathcal{F}^{\prime}$ is disjoint, the index set $\mathcal{I}$  is  countable,  and we obtain that
    $$\sum_{i \in \mathcal{I}}\exp{\left(-n_{i}t + S_{n_{i}}^{\vect{T}}\vect{f}(x_{i})\right)}\leq \sum_{i \in \mathcal{I}}\mu(B_{n_{i}}^{\vect{T}}(x_{i},\varepsilon)) \leq 1.$$
By \eqref{eq:defmsrbow} and \eqref{covere3e}, it implies that
    $$
\msrbow_{N,3\varepsilon}^{t}(\vect{T},\vect{f},E) \leq \sum_{i \in \mathcal{I}}\exp{\left(-n_{i}t + S_{n_{i}}^{\vect{T}}\vect{f}(x_{i})\right)} \leq 1
$$
  for all $\varepsilon>0$ and  all  large  $N>0$. By Proposition \ref{prop:msrbowepsilon}, we have
$$
\msrbow^{t}(\vect{T},\vect{f},E) =\lim_{\varepsilon \to 0} \msrbow^{t}_\varepsilon (\vect{T},\vect{f},E)\leq 1 <+\infty,
$$
 and the conclusion  holds .

    (2) Suppose that $\prelow_{\mu}(\vect{T},\vect{f},x) \geq s$ for all $x \in E$ and  $\mu(E)>0$. It is sufficient to show that for every $t<s$,
$$
\prebow(\vect{T},\vect{f},E )\geq t,
$$

For every $l >0$,  we write
    $$
    E_{l}=\Big\{x \in E: \prelow_{\mu}(\vect{T},\vect{f},x,\frac{1}{l})>t \Big\}.
    $$
Since $\prelow_{\mu}(\vect{T},\vect{f},x) >t$,  by Proposition \ref{prop:localepsilon},  it is equivalent to write
    $$
    E_{l}=\Big\{x \in E: \prelow_{\mu}(\vect{T},\vect{f},x,\varepsilon)>t\ \text{for all}\ 0 < \varepsilon \leq \frac{1}{l } \Big\}.
    $$
It is clear that $\{E_{l}\}_{l=1}^{\infty}$ increases to $E$, and
$$
\lim_{l \to \infty}\mu(E_{l})=\mu(E)>0.
$$
Hence, there exists $L>0$ such that for all $l>L$,
$$
\mu(E_{l})>\frac{1}{2}\mu(E).
$$

For each fixed $l>L$, we write that for every integer $N>0$,
$$
E_{l,N}=\Big\{x \in E_{l}: \frac{  -\log{\mu(B_{n}^{\vect{T}}(x,\frac{1}{l}))} + S_{n}^{\vect{T}}\vect{f}(x)   }{n}>t\ \text{for all}\ n \geq N\Big\}.
$$
Since $\{E_{l,N}\}_{N=1}^{\infty}$ increases to $E_{l}$, we choose an integer $N_l>0$ such that
$$
\mu(E_{l,N_l})>\frac{1}{2}\mu(E_l)>\frac{1}{4}\mu(E)>0.
$$
It follows by Proposition~\ref{prop:localepsilon} that for all $x \in E_{l,N_l}$, we have that
$$
\frac{  -\log{\mu(B_{n}^{\vect{T}}(x,\varepsilon))} + S_{n}^{\vect{T}}\vect{f}(x)}{n}>t,
$$
and therefore
\begin{equation}\label{2}
      \mu(B_{n}^{\vect{T}}(x,\varepsilon)) \leq \exp{\left(-nt+S_{n}^{\vect{T}}\vect{f}(x)\right)}
    \end{equation}
for all  $0<\varepsilon\leq\frac{1}{l}$ and all  $n \geq N_l$.

For each $\varepsilon$ such that   $0<\varepsilon\leq\frac{1}{l}$ and $N \geq N_{l}$, arbitrarily choose a countable $(N,\frac{\varepsilon}{2})$-cover  $\left\{B_{n_{i}}^{\vect{T}}\left(x_{i},\frac{\varepsilon}{2}\right)\right\}_{i = 1}^{\infty}$ of $E_{l,N_l}$.
We assume that $B_{n_{i}}^{\vect{T}}\left(x_{i},\frac{\varepsilon}{2}\right) \cap E_{l,N_l} \neq \emptyset$ for each $i$. For each integer $i>0$, we choose $y_{i} \in E_{l,N_l} \cap B_{n_{i}}^{\vect{T}}\left(x_{i},\frac{\varepsilon}{2}\right)$, and  it is clear that  $B_{n_{i}}^{\vect{T}}\left(x_{i},\frac{\varepsilon}{2}\right) \subset B_{n_{i}}^{\vect{T}}(y_{i},\varepsilon)$.
    Combining this  with \eqref{2}, we have that
    \begin{align*}
      \sum_{i=1}^{\infty}\exp{\Big(-n_{i}t + \sup_{y \in B_{n_{i}}^{\vect{T}}(x_{i},\frac{\varepsilon}{2})} S_{n_{i}}^{\vect{T}}\vect{f}(y)\Big)} &\geq \sum_{i=1}^{\infty}\exp{\left(-n_{i}t+S_{n_{i}}^{\vect{T}}\vect{f}(y_{i})\right)} \\
  &\geq \sum_{i=1}^{\infty}\mu(B_{n_{i}}^{\vect{T}}(y_{i},\varepsilon)) \\
 &\geq \mu(E_{l,N_l}).
    \end{align*}
By  \eqref{eq:defmsrbpp}, it follows that
$$
\msrbpp_{N,\frac{\varepsilon}{2}}^{t}(\vect{T},\vect{f},E_{l,N_l}) \geq \mu(E_{l,N_l})
$$
 for all  $0<\varepsilon\leq\frac{1}{l}$ and  all $N \geq N_l$. Since $l>L$,   we have
$$
\msrbpp_{\frac{\varepsilon}{2}}^{t}(\vect{T},\vect{f},E_{l,N_l}) \geq \frac{1}{4}\mu(E)>0.
$$
By \eqref{def_QBT},  it follows that $\prebpp(\vect{T},\vect{f},E_{l,N_l}, \frac{\varepsilon}{2}) \geq t$ for all  $0<\varepsilon\leq\frac{1}{l}$. Since $\vect{f}$ is equicontinuous, by Proposition \ref{equivdefprebpplem}, we obtain that
$$
\prebow(\vect{T},\vect{f},E_{l,N_l})=\lim_{\varepsilon \to 0}\prebpp(\vect{T},\vect{f},E_{l,N_l}, \frac{\varepsilon}{2})   \geq t .
$$
Since $E_{l,N_l}\subset E$, by Proposition \ref{Prop_PBPP}, we have that
$$
\prebow(\vect{T},\vect{f},E )\geq \prebow(\vect{T},\vect{f},E_{l,N_l})\geq t,
$$
 which completes the proof.
  \end{proof}

Next, we  prove  the Billingsley type theorem for the packing pressure.  

  \begin{proof}[Proof of Theorem \ref{thm:btypethmpacking}]
    (1) Suppose that $\preup_{\mu}(\vect{T},\vect{f},x) \leq s$ for all $x \in E$. 

For every  $t>s$, by Proposition \ref{prop:localepsilon},   we have that
$$
\preup_{\mu}(\vect{T},\vect{f},x,\varepsilon) \leq \preup_{\mu}(\vect{T},\vect{f},x) \leq s < \frac{s+t}{2}
$$
    for all $x \in E$ and $\varepsilon>0$, and
    there exists $N>0$ such that for all $n \geq N$,
$$
\frac{-\log{\mu(B_{n}^{\vect{T}}(x,\varepsilon))}+S_{n}^{\vect{T}}\vect{f}(x)}{n}\leq \frac{s+t}{2}.
$$
Hence, for all $x \in E$ and all  $\varepsilon>0$,   there exists $N >0$ such that for all $n \geq N$,
\begin{equation}\label{inequgeeP}
\mu(B_{n}^{\vect{T}}(x,\varepsilon)) \geq \exp{\Big(-\frac{n(s+t)}{2} + S_{n}^{\vect{T}}\vect{f}(x)\Big)}.
\end{equation}

Fix $\varepsilon>0$. For each $N >0$, we write
    $$
E_{N}=\Big\{x \in E: \mu(B_{n}^{\vect{T}}(x,\varepsilon)) \geq \exp{\Big(-\frac{n(s+t)}{2} + S_{n}^{\vect{T}}\vect{f}(x)\Big)} \ \text{for all}\ n \geq N\Big\}.
$$
    It is clear that $E=\bigcup_{N=1}^{\infty}E_{N}$.

Fix $N >0$. For each $N^{\prime} \geq N$ and each countable $(N',\varepsilon)$-packing $\{\overline{B}_{n_{i}}^{\vect{T}}(x_{i},\varepsilon)\}_{i=1}^{\infty}$ of $E_N$,
by \eqref{inequgeeP}, we have that
    \begin{align*}
      \sum_{i=1}^{\infty}\exp{\left(-n_{i}t + S_{n_{i}}^{\vect{T}}\vect{f}(x_{i})\right)}  &= \sum_{i=1}^{\infty}\e^{-\frac{n_{i}(t-s)}{2}}\exp{\Big(-\frac{n_{i}(s+t)}{2} + S_{n_{i}}^{\vect{T}}\vect{f}(x_{i})\Big)} \\
 &\leq\e^{-\frac{N^{\prime}(t-s)}{2}}\sum_{i=1}^{\infty}\exp{\Big(-\frac{n_{i}(s+t)}{2} + S_{n_{i}}^{\vect{T}}\vect{f}(x_{i})\Big)} \\
   &\leq \e^{-\frac{N^{\prime}(t-s)}{2}}\sum_{i=1}^{\infty}\mu(B_{n_{i}}^{\vect{T}}(x_{i},\varepsilon)) \\
&\leq \e^{-\frac{N^{\prime}(t-s)}{2}}.
    \end{align*}
By \eqref{eq:defmsrPack}, it follows that
$$
\msrpac_{N^{\prime},\varepsilon}^{t}(\vect{T},\vect{f},E_{N}) \leq \e^{-\frac{N^{\prime}(t-s)}{2}}.
$$
By the definition of $\msrpac_{\infty,\varepsilon}^{t}$,  it is clear  that  $\msrpac_{\infty,\varepsilon}^{t}(\vect{T},\vect{f},E_{N})=0$.  Since $E=\bigcup_{N=1}^{\infty}E_{N}$,  by \eqref{def_pes}, we have that
$$
\msrpac_{\varepsilon}^{t}(\vect{T},\vect{f},E) \leq \sum_{N=1}^{\infty}\msrpac_{\infty, \varepsilon}^{t}(\vect{T},\vect{f},E_{N}) =0.
$$
By \eqref{def_PP},  we have that $\prepac(\vect{T},\vect{f},E,\varepsilon) \leq t$, and the inequality $\prepac(\vect{T},\vect{f},E) \leq t$ follows from Proposition \ref{prop:msrpacepsilon},

(2) Suppose that $\preup_{\mu}(\vect{T},\vect{f},x) \geq s$ for all $x \in E$ and $\mu(E)>0$. By  Proposition \ref{equivdefprepaclem} and \eqref{def_QP},   it is sufficient to show that for every $t<s$,  there exists $\varepsilon_0>0$ such that for all $0<\varepsilon <\varepsilon_0$, we have that
$$
\msrppp_{\frac{\varepsilon}{5}}^{t}(\vect{T},\vect{f},E)=+\infty.
$$
To show this,  by Proposition \ref{prop_NPclosure}, it suffices to prove that  for all Borel subsets $E^{*} \subset E$ satisfying $\mu(E^{*})>0$, we have that
 $$
\msrppp_{\infty, \frac{\varepsilon}{5} }^{t}(\vect{T},\vect{f},E^{*})=+\infty.
$$

    Fix $t<s$. For every $x \in E$, since $\preup_{\mu}(\vect{T},\vect{f},x)>t$, by Proposition~\ref{prop:localepsilon}, there exists an $\varepsilon_{0}>0$ such that for all $0<\varepsilon\leq\varepsilon_{0}$,
    $$
    \varlimsup_{n \to \infty}\frac{-\log{\mu(B_{n}^{\vect{T}}(x, \varepsilon  ))} + S_{n}^{\vect{T}}\vect{f}(x)}{n} > \frac{s+t}{2}.
    $$
Since  the LHS of the above inequality is monotone with respect to $\varepsilon$, there exists $N_{0}(x,\varepsilon_{0}) >0$ such that for all $0<\varepsilon\leq\varepsilon_{0}$ and $N \geq N_{0}(x,\varepsilon_{0})$,
    $$\sup_{n \geq N}\frac{-\log{\mu(B_{n}^{\vect{T}}(x, \varepsilon ))} + S_{n}^{\vect{T}}\vect{f}(x)}{n} > \frac{s+t}{2},$$
    which implies that there exists an integer $n \geq N$ for every given  $N >0$ such that for all $0<\varepsilon\leq\varepsilon_{0}$,
\begin{equation}\label{ineq_uleePP}
\mu(B_{n}^{\vect{T}}(x, \varepsilon )) < \exp{\Big(-\frac{n(s+t)}{2}+S_{n}^{\vect{T}}\vect{f}(x)\Big)}.
\end{equation}

Given a Borel subset $E^{*} \subset E$ with $\mu(E^{*})>0$, for each fixed integer  $l >0$, we write
    $$
E_{l}^{*}=\Big\{x \in E^{*}: \preup_{\mu}(\vect{T},\vect{f},x, \frac{1}{l}  ) > \frac{s+t}{2}\Big\}.
$$
By Proposition~\ref{prop:localepsilon}, it is equivalent to write
    $$
E_{l}^{*}=\Big\{x \in E^{*}: \preup_{\mu}(\vect{T},\vect{f},x, \varepsilon   ) > \frac{s+t}{2}\ \text{for all}\ 0<\varepsilon\leq\frac{1}{l}\Big\},
$$
and it is clear that $E_{l}^{*}$ increases to $E^{*}$.  Hence, for all sufficiently large integer $l>0$, we have $\mu(E_{l}^{*})>0$.
    For each integer $n>0$, we write
    $$
E_{l,n}^{*}=\Big\{x \in E_{l}^{*}: \mu(B_{n}^{\vect{T}}(x,  \frac{1}{l}  )) < \exp{\Big(-\frac{n(s+t)}{2}+S_{n}^{\vect{T}}\vect{f}(x)\Big)}\Big\}.
$$

For each fixed integer $M>0$, since $E_{l}^{*} \subset E$, by \eqref{ineq_uleePP},  we have that $E_{l}^{*}=\bigcup_{n=M}^{\infty}E_{l,n}^{*}$ and  $\mu\left(\bigcup_{n=M}^{\infty}E_{l,n}^{*}\right)=\mu(E_{l}^{*})>0$.
This implies that there exists $n \geq M$ such that
\begin{equation}\label{eq:msrElnstar}
    \mu(E_{l,n}^{*}) \geq \frac{1}{n(n+1)}\mu(E_{l}^{*}).
\end{equation}
Otherwise, we   have $\mu(E_{l,n}^{*}) < \frac{1}{n(n+1)}\mu(E_{l}^{*})$ for all $n \geq M$,  and it implies that
    $$
\mu(E_{l}^{*}) \leq \sum_{n=M}^{\infty}\mu(E_{l,n}^{*}) \leq \sum_{n=M}^{\infty}\frac{1}{n(n+1)}\mu(E_{l}^{*})=\frac{1}{M}\mu(E_{l}^{*}),
$$
 which contradicts the fact that $\mu(E_{l}^{*})>0$.

Choose an integer $l>0$ such that $\frac{1}{l}<\varepsilon_{0}$.
For each $0<\varepsilon\leq\varepsilon_{0}$ and $N>0$, let $M$ be an integer satisfying $M>N$.  Fix an integer $n \geq M$ satisfying \eqref{eq:msrElnstar}.
For the family $   \mathcal{B}_{l,n}=\{\overline{B}_{n}^{\vect{T}}\left(x, \frac{\varepsilon}{5}\right): x \in E_{l,n}^{*}\}   $, by Lemma \ref{coveringlem}, there exists a countable subfamily
    $   \{\overline{B}_{n}^{\vect{T}}\left(x_{i}, \frac{\varepsilon}{5}   \right)\}_{i=1}^{\infty}   $,
    which is a $(N,\frac{\varepsilon}{5})$-packing   of $E_{l,n}^{*}$, such that
    $$
E_{l,n}^{*} \subset \bigcup_{B \in \mathcal{B}_{l,n}}B \subset \bigcup_{i=1}^{\infty}\overline{B}_{n}^{\vect{T}}(x_{i}, \varepsilon).
$$
By \eqref{eqdef_pp} and \eqref{ineq_uleePP}, we have that
    \begin{align*}
       \msrppp_{N,\frac{\varepsilon}{5}}^{t}(\vect{T},\vect{f},E_{l,n}^{*}) &\geq \sum_{i=1}^{\infty}\exp{\Big(-nt +  \sup_{y \in \overline{B}_{n}^{\vect{T}}(x_{i},\frac{\varepsilon}{5})}S_{n}^{\vect{T}}\vect{f}(y)     \Big)} \\
 &\geq \e^{\frac{n(s-t)}{2}}\sum_{i=1}^{\infty}\exp{\Big(-\frac{n(s+t)}{2} + S_{n}^{\vect{T}}\vect{f}(x_{i})\Big)} \\
& \geq \e^{\frac{n(s-t)}{2}}\sum_{i=1}^{\infty}  \mu(B_{n}^{\vect{T}}(x_{i},\varepsilon   )) \\
&\geq \e^{\frac{n(s-t)}{2}}\mu(E_{l,n}^{*}) \\
 &\geq \frac{1}{n(n+1)} \e^{\frac{n(s-t)}{2}}\mu(E_{l}^{*}).
    \end{align*}

Since $E_{l,n}^{*}\subset E_{l}^{*}$, by Proposition \ref{prop_Nesinc}, it follows that for every $N>0$, there exists an integer $n>N$ such that
$$
 \msrppp_{N,\frac{\varepsilon}{5}}^{t}(\vect{T},\vect{f},E_{l}^{*}) \geq \frac{1}{n(n+1)} \e^{\frac{n(s-t)}{2}}\mu(E_{l}^{*}).
$$
Since $\lim_{n\to\infty}\frac{1}{n(n+1)}\e^{\frac{n(s-t)}{2}} =+\infty$ and $\mu(E_{l}^{*})>0$, we have that
$$
\msrppp_{\infty,\frac{\varepsilon}{5}}^{t}(\vect{T},\vect{f},E_{l}^{*})=\lim_{N \to \infty}\msrppp_{N,\frac{\varepsilon}{5}}^{t}(\vect{T},\vect{f},E_{l}^{*})=+\infty.
$$
Since $E_{l}^{*}\subset E^{*}$, by Proposition \ref{prop_Nesinc}, it follows that
 $$
\msrppp_{\infty,\frac{\varepsilon}{5}}^{t}(\vect{T},\vect{f},E^{*})=+\infty,
$$
 which completes the proof.
  \end{proof}

  \section{Variational Principles}\label{sect:pfvps}
  In this section, we study the variational principles for topological entropies and pressures.

    \subsection{Variational principle for the Bowen pressure}\label{ssect:pfvb}

    First, we show a variational inequality that the measure-theoretic pressure is bounded above by the Bowen pressure as a direct consequence of Theorem~\ref{thm:btypethmbowen}.
    \begin{lem}\label{lem:varneqprebow}
      Given an NDS $(\vect{X},\vect{T})$ and a non-empty Borel subset $E\subset X_{0}$,
      let $\mu$ be a Borel probability measure on $X_{0}$ with $\mu(E)=1$ and let $\vect{f} \in \vect{C}(\vect{X},\mathbb{R})$ be equicontinuous.
      Then
      \begin{equation}\label{eq:varneqprebow}
        \prelow_{\mu}(\vect{T},\vect{f}) \leq \prebow(\vect{T},\vect{f},E).
      \end{equation}
    \end{lem}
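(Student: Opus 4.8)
The plan is to derive \eqref{eq:varneqprebow} as a formal consequence of Theorem~\ref{thm:btypethmbowen}(2) together with the monotonicity of the Bowen pressure in sets (Proposition~\ref{Prop_PBPP}). Suppose, for contradiction, that $\prelow_{\mu}(\vect{T},\vect{f}) > \prebow(\vect{T},\vect{f},E)$; in particular $\prebow(\vect{T},\vect{f},E) < +\infty$ (otherwise there is nothing to prove). Write $g(x) = \prelow_{\mu}(\vect{T},\vect{f},x)$, a Borel measurable function on $X_{0}$ (this measurability is implicit in Definition~\ref{def:msrpre}, where $\prelow_{\mu}(\vect{T},\vect{f})$ is defined as $\int_{X_{0}} g\,\dif\mu$). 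Since $\int_{X_{0}} g\,\dif\mu > -\infty$, I may pick a real number $t$ with
$$
\prebow(\vect{T},\vect{f},E) < t < \prelow_{\mu}(\vect{T},\vect{f}).
$$

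Next I would pass to a suitable level set. Because $\int_{X_{0}} g\,\dif\mu > t$, the set $\{x \in X_{0} : g(x) > t\}$ has positive $\mu$-measure (otherwise $g \le t$ $\mu$-a.e. and $\int g\,\dif\mu \le t$), and since $\mu(E) = 1$ the Borel set $E_{t} := \{x \in E : g(x) \ge t\}$ also satisfies $\mu(E_{t}) > 0$. By construction $\prelow_{\mu}(\vect{T},\vect{f},x) \ge t$ for every $x \in E_{t}$, the potential $\vect{f}$ is equicontinuous, and $\mu(E_{t}) > 0$; hence Theorem~\ref{thm:btypethmbowen}(2), applied to $E_{t}$ with the value $t$, gives $\prebow(\vect{T},\vect{f},E_{t}) \ge t$. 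Since $E_{t} \subset E$, Proposition~\ref{Prop_PBPP} yields $\prebow(\vect{T},\vect{f},E) \ge \prebow(\vect{T},\vect{f},E_{t}) \ge t$, contradicting $t > \prebow(\vect{T},\vect{f},E)$. This establishes \eqref{eq:varneqprebow}.

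I do not expect a genuine obstacle here: the lemma is a routine corollary of the Billingsley type theorem, and it supplies precisely the ``easy'' inequality $\sup_{\mu}\prelow_{\mu}(\vect{T},\vect{f}) \le \prebow(\vect{T},\vect{f},E)$ toward the variational principle in Theorem~\ref{thm:varprinciplebowen} (the reverse inequality, which requires constructing a measure concentrated on $K$ with nearly maximal lower pressure, is the real work). The only points worth a line of justification are the Borel measurability of $x \mapsto \prelow_{\mu}(\vect{T},\vect{f},x)$ — needed both for $\int_{X_{0}} g\,\dif\mu$ to be meaningful and for $E_{t}$ to be a legitimate input to Theorem~\ref{thm:btypethmbowen}(2) — and the elementary measure-theoretic fact that $\int g\,\dif\mu > t$ forces $\mu(\{g > t\}) > 0$.
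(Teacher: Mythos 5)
Your argument is correct and is essentially the same as the paper's: both pass to a positive-$\mu$-measure subset of $E$ on which the local lower pressure is at least a threshold $t<\prelow_{\mu}(\vect{T},\vect{f})$, invoke Theorem~\ref{thm:btypethmbowen}(2) on that subset, and conclude by the monotonicity of $\prebow$ in Proposition~\ref{Prop_PBPP}. The only cosmetic difference is that you phrase it as a contradiction while the paper argues directly over all such thresholds; your remarks on the measurability of $x\mapsto\prelow_{\mu}(\vect{T},\vect{f},x)$ are apt, as the paper also takes this for granted.
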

    \begin{proof}
      Since the conclusion holds if $\prelow_{\mu}(\vect{T},\vect{f})=-\infty$, we assume that $\prelow_{\mu}(\vect{T},\vect{f}) > -\infty$.

      Fix $\varepsilon>0$ and $s<\prelow_{\mu}(\vect{T},\vect{f})$.
      Since $s<\int_{X_{0}}\prelow_{\mu}(\vect{T},\vect{f},x)\dif{\mu(x)}$, there exists a Borel set $A_{s} \subset X_{0}$ with $\mu(A_{s})>0$ such that
      $\prelow_{\mu}(\vect{T},\vect{f},x) \geq s$ for all $x \in A_{s}$.
      Since $\mu(X_0\setminus E)=0$, it is clear that $\mu(E \cap A_{s})>0$.
      Since $\vect{f}$ is equicontinuous, by Theorem \ref{thm:btypethmbowen}(2), we have $\prebow(\vect{T},\vect{f},E \cap A_{s}) \geq s$.
      By Proposition \ref{Prop_PBPP}, it  follows  that
      $$
      \prebow(\vect{T},\vect{f},E) \geq \prelow_{\mu}(\vect{T},\vect{f},E \cap A_{s}) \geq s.
      $$
      Hence, by the arbitrariness of $s<\prelow_{\mu}(\vect{T},\vect{f})$, the inequality \eqref{eq:varneqprebow} holds.
    \end{proof}

    Next, we give a lemma which is  analogous to the classical Frostman's lemma in compact metric spaces, and
  we adapt the argument of Howroyd \cite[Thm.2]{Howroyd1995} (see also \cite[Thm.8.17]{Mattila1995}) to our setting, where they studied the properties of Hausdorff measures and dimensions by applying Frostman's lemma.
    \begin{lem}\label{dynflemw}
      Given an NDS $(\vect{X},\vect{T})$ and a non-empty compact subset $K\subset X_{0}$,
      suppose that $\mathscr{W}_{N,\varepsilon}^{s}(\vect{T},\vect{f},K)>0$ for $s \in \mathbb{R}$, $N>0$ and $\varepsilon>0$.
      Then there exists a Borel probability measure $\mu$ on $X_{0}$ and a constant $C>0$
      such that $\mu(K)=1$ and
      \begin{equation}\label{eq:Frostmanestm}
        \mu(B_{n}^{\vect{T}}(x,\varepsilon)) \leq \frac{1}{C}\exp{\Big(-sn+\sup_{y \in B_{n}^{\vect{T}}(x,\varepsilon)}S_{n}^{\vect{T}}\vect{f}(y)\Big)}
      \end{equation}
   for all $x \in X_{0}$ and all $n \geq N$.
    \end{lem}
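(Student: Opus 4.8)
The plan is to adapt Howroyd's Hahn--Banach argument (as in \cite[Thm.2]{Howroyd1995} and \cite[Thm.8.17]{Mattila1995}) to the weighted quantity $\mathscr{W}_{N,\varepsilon}^{s}$, realizing the desired measure via the Riesz representation theorem. Fix $s\in\mathbb{R}$, $N>0$ and $\varepsilon>0$ with $\mathscr{W}_{N,\varepsilon}^{s}(\vect{T},\vect{f},K)=\mathscr{W}_{N,\varepsilon}^{s}(\vect{T},\vect{f},\chi_{K})>0$. Covering the compact space $X_{0}$ by finitely many open Bowen balls $B_{N}^{\vect{T}}(x_{j},\varepsilon)$ and taking all $c_{j}=1$ shows $\mathscr{W}_{N,\varepsilon}^{s}(\vect{T},\vect{f},\chi_{K})<+\infty$, so $C:=\mathscr{W}_{N,\varepsilon}^{s}(\vect{T},\vect{f},\chi_{K})\in(0,+\infty)$. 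First I would introduce, on the Banach space $C(K,\mathbb{R})$, the functional
$$
p(g)=\inf\Big\{\,\sum_{i}c_{i}\exp\Big(-n_{i}s+\sup_{y\in B_{n_{i}}^{\vect{T}}(x_{i},\varepsilon)}S_{n_{i}}^{\vect{T}}\vect{f}(y)\Big)\ :\ \sum_{i}c_{i}\chi_{B_{n_{i}}^{\vect{T}}(x_{i},\varepsilon)}\geq g\text{ on }K\,\Big\},
$$
where the infimum is over countable families with $x_{i}\in X_{0}$, $0<c_{i}<+\infty$ and $n_{i}\geq N$. By the same reasoning as in Proposition~\ref{prop:prebwprop} one checks that $p$ is positively homogeneous, subadditive (two admissible weighted covers for $g$ and $h$ combine to one for $g+h$), monotone in $g$, satisfies $p(g)=0$ whenever $g\leq0$ on $K$, is finite on $C(K,\mathbb{R})$ because $p(g)\leq\|g\|_{\infty}p(\mathbf{1}_{K})$ by monotonicity (here $\mathbf{1}_{K}$ is the constant function $1$ on $K$), and has $p(\mathbf{1}_{K})=\mathscr{W}_{N,\varepsilon}^{s}(\vect{T},\vect{f},\chi_{K})=C$, since the constraint $\sum c_{i}\chi_{B_{i}}\geq\chi_{K}$ on $X_{0}$ is exactly $\sum c_{i}\chi_{B_{i}\cap K}\geq1$ on $K$.

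Next I would run the Hahn--Banach extension theorem. On the one-dimensional subspace $\mathbb{R}\mathbf{1}_{K}$, set $L(\lambda\mathbf{1}_{K})=\lambda C$; then $L\leq p$ on this subspace (for $\lambda\geq0$ this is homogeneity of $p$, for $\lambda<0$ it holds because $p\geq0$), so $L$ extends to a linear functional on $C(K,\mathbb{R})$ with $L\leq p$. If $g\leq0$ on $K$ then $L(g)\leq p(g)=0$, hence $L$ is a positive linear functional, and the Riesz representation theorem provides a finite regular Borel measure $\mu$ on $K$ with $L(g)=\int_{K}g\,\dif\mu$ for all $g\in C(K,\mathbb{R})$; we regard $\mu$ as a Borel measure on $X_{0}$ with $\mu(X_{0}\setminus K)=0$, so $\mu(K)=L(\mathbf{1}_{K})=C$.

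For the ball estimate, fix $x\in X_{0}$ and $n\geq N$, and write $B=B_{n}^{\vect{T}}(x,\varepsilon)$ and $w=\exp\big(-sn+\sup_{y\in B}S_{n}^{\vect{T}}\vect{f}(y)\big)$. For any compact $F\subset B$, Urysohn's lemma gives $\varphi\in C(X_{0},\mathbb{R})$ with $\chi_{F}\leq\varphi\leq\chi_{B}$, so $\varphi|_{K}\leq\chi_{B\cap K}$ on $K$; the countable family consisting of copies of $B_{n}^{\vect{T}}(x,\varepsilon)$ with coefficients $c_{i}=2^{-i}$ is then admissible in the infimum defining $p(\varphi|_{K})$, whence
$$
\mu(F)\leq\int_{X_{0}}\varphi\,\dif\mu=L(\varphi|_{K})\leq p(\varphi|_{K})\leq w.
$$
By inner regularity of $\mu$ this gives $\mu(B)\leq w$, and finally $\mu':=\mu/C$ is a Borel probability measure on $X_{0}$ with $\mu'(K)=1$ satisfying $\mu'(B_{n}^{\vect{T}}(x,\varepsilon))\leq\tfrac{1}{C}\exp\big(-sn+\sup_{y\in B_{n}^{\vect{T}}(x,\varepsilon)}S_{n}^{\vect{T}}\vect{f}(y)\big)$ for all $x\in X_{0}$ and $n\geq N$, which is \eqref{eq:Frostmanestm} (after renaming $\mu'$ as $\mu$).

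The step I expect to be the main obstacle is setting up the functional-analytic framework correctly: one must run the argument with the weighted-covering functional $p$ \emph{restricted to} $C(K,\mathbb{R})$, rather than with $\mathscr{W}_{N,\varepsilon}^{s}$ on $C(X_{0},\mathbb{R})$, so that the Riesz measure is automatically supported on $K$ with total mass exactly $\mathscr{W}_{N,\varepsilon}^{s}(\vect{T},\vect{f},\chi_{K})$; and the passage from the abstract inequality $L\leq p$ back to a bound on $\mu$ of the \emph{open} Bowen balls has to be routed carefully through Urysohn functions and inner regularity. Verifying that $p$ is a genuine finite-valued sublinear functional with $p(\mathbf{1}_{K})=\mathscr{W}_{N,\varepsilon}^{s}(\vect{T},\vect{f},\chi_{K})$ — which is exactly where compactness of $X_{0}$ is used — is the only place that needs real care; the Hahn--Banach and Riesz steps are then standard.
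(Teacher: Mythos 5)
Your proof is correct and follows essentially the same Howroyd-style route as the paper: a sublinear weighted-covering functional, Hahn--Banach extension from the constants, the Riesz representation theorem, and Urysohn functions plus inner regularity for the ball estimate. The only (harmless) difference is bookkeeping: you work with the functional $p$ on $C(K,\mathbb{R})$ and normalize the resulting measure at the end, whereas the paper works on $C(X_{0},\mathbb{R})$ with the pre-normalized functional $g\mapsto \frac{1}{C}\mathscr{W}_{N,\varepsilon}^{s}(\vect{T},\vect{f},\chi_{K}\cdot g)$ and then proves $\mu(K)=1$ by a separate Urysohn argument.
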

    \begin{proof}
  We write
   $$
  C=\mathscr{W}_{N,\varepsilon}^{s}(\vect{T},\vect{f},K).
  $$

   Since $K \subset X_{0}$ is compact, for every $(N,\varepsilon)$-cover $\{B_{n_{i}}^{\vect{T}}(x_{i},\varepsilon)\}_{i\in \mathcal{I}}$,  there  exists a finite  subcover of $K$. Hence, $\mathscr{M}_{N,\varepsilon}^{s}(\vect{T},\vect{f},K)<+\infty$,  and it follows that $\mathscr{W}_{N,\varepsilon}^{s}(\vect{T},\vect{f},K) \leq \msrbpp_{N,\varepsilon}^{s}(\vect{T},\vect{f},Z)<+\infty$.

    We define a functional $\psi: C(X_{0},\mathbb{R}) \to \mathbb{R}$ by
  \begin{equation}\label{def_PsyW}
  \psi(g)=\frac{1}{C}\mathscr{W}_{N,\varepsilon}^{s}(\chi_{K} \cdot g)
  \end{equation}
for every $g \in C(X_{0},\mathbb{R})$.  By Proposition \ref{prop:prebwprop}, it is straightforward that $\psi$ satisfies the following properties:
      \begin{enumerate}[(1)]
        \item $\psi(ag)=a\psi(g)$ for any $a \geq 0$ and $g \in C(X_{0},\mathbb{R})$ (positive homogeneity);
        \item $\psi(f+g) \leq \psi(f)+\psi(g)$ for any $f,g \in C(X_{0},\mathbb{R})$ (subadditivity);
        \item $\psi(\mathbf{1})=1$, where $\mathbf{1} \in C(X)$ denotes the constant function $1$;
        \item $0 \leq \psi(g) \leq \|g\|_{\infty}$ for any $g \in C(X_{0},\mathbb{R})$;
        \item $\psi(f)=0$ for $f \in C(X_{0},\mathbb{R})$ with $f \leq 0$.
      \end{enumerate}

  Note that $\varPsi: t \mapsto t\psi(\mathbf{1})$ for all $t \in \mathbb{R}$  is a linear functional on $\mathbb{R}$, and that we may identify $\mathbb{R}$ as the subspace of real-valued constant functions of  $C(X_{0},\mathbb{R})$.  By the Hahn-Banach theorem (see \cite[Thm.5.6]{Folland1999}),
  we extend $\varPsi$ to a linear functional $\varLambda: C(X_{0},\mathbb{R}) \to \mathbb{R}$ with
  \begin{equation}\label{pro_LPg}
  \varLambda(\mathbf{1})=\psi(\mathbf{1})=1\quad \text{and}\ -\psi(-g) \leq \varLambda(g) \leq \psi(g),
  \end{equation}
      for all $g \in C(X_{0},\mathbb{R})$.
      It follows that $\varLambda(g) \geq -\psi(-g) \geq 0$ for all nonnegative continuous functions $g$ on $X_{0}$.
      Hence, $\varLambda$ is a positive linear functional on $C(X_{0},\mathbb{R})$ with $\varLambda(\mathbf{1})=1$.

  By the Riesz representation theorem (see \cite[Thm.7.2]{Folland1999}), there exists
      a unique Radon probability measure $\mu$ on $X_{0}$ such that
  $$
  \varLambda(g)=\int_{X_{0}}g\dif{\mu},
  $$
  for all $g \in C(X_{0},\mathbb{R})$.
  Recall 	 that a Radon measure  is a Borel measure that is finite on all compact sets, outer regular on all Borel sets,
      and inner regular on all open sets.

      Next, we verify that $\mu(K)=1$ and that $\mu$ satisfies \eqref{eq:Frostmanestm}.

      For every compact subset $F \subset X_{0} \setminus K$, there exists an open set $U \subset X_{0}$ such that $F \subset U$ and $U\cap K=\emptyset$. By the Urysohn lemma (see \cite[Thm.4.32]{Folland1999}),
      there exists $g \in C(X_{0},\mathbb{R})$ such that $0 \leq g \leq 1$, $g(x)=1$ for $x \in F$
      and $g(x)=0$ for $x \in X_{0}\setminus U$. Since $K\subset X_0\setminus U$, it is clear  that  $g(x)=0$ for $x \in K$. Since $g \cdot \chi_{K}=0$ for $x \in X_0$, by \eqref{def_PsyW} and Proposition \ref{prop:prebwprop}, we have that $\psi(g)=0$.
      Hence we have that
  $$
  \mu(F) \leq \varLambda(g) \leq \psi(g)=0,
  $$
  and it follows that  $\mu(X_{0} \setminus K)=0 $. Immediately, we obtain that  $\mu(K)=1$.

      Given $B_{n}^{\vect{T}}(x,\varepsilon)$ with $x \in X_{0}$ and $n \geq N$, arbitrarily choose a compact subset $F \subset B_{n}^{\vect{T}}(x,\varepsilon)$. By the Urysohn lemma,
      there exists $g \in C(X_{0},\mathbb{R})$ such that $0 \leq g \leq 1$, $g(y)=1$ for $y \in F$
      and $g(y)=0$ for $y \in X_{0} \setminus B_{n}^{\vect{T}}(x,\varepsilon)$. Hence we have that
  $$
  \chi_{B_{n}^{\vect{T}}(x,\varepsilon)}\geq g \cdot \chi_{K} .
  $$
   By \eqref{def_WBPP}, this implies that
  $$
  \mathscr{W}_{N,\varepsilon}^{s}(\vect{T},\vect{f}, g \cdot \chi_{K}) \leq  \exp{\Big(-ns+\sup_{y \in B_{n}^{\vect{T}}(x,\varepsilon)}S_{n}^{\vect{T}}\vect{f}(y)\Big)}.
  $$
  Meanwhile, by \eqref{def_PsyW} and \eqref{pro_LPg}, we have that
      $$
  \mu(F) \leq \varLambda(g) \leq \psi(g) = \frac{1}{C}\mathscr{W}_{N,\varepsilon}^{s}(\vect{T},\vect{f},g \cdot \chi_{K}).
  $$
  This implies that
  $$
  \mu(F) \leq \frac{1}{C}\exp{\Big(-ns+\sup_{y \in B_{n}^{\vect{T}}(x,\varepsilon)}S_{n}^{\vect{T}}\vect{f}(y)\Big)}
  $$
  for every compact  $F \subset B_{n}^{\vect{T}}(x,\varepsilon)$.

  Since $\mu$ is a Radon measure, the inequality \eqref{eq:Frostmanestm} follows immediately from the inner regularity of $\mu$, and we complete the proof.
    \end{proof}

  Now,  we are ready to prove the variational principle for the Bowen pressure.
    \begin{proof}[Proof of Theorem \ref{thm:varprinciplebowen}]
      By Lemma~\ref{lem:varneqprebow}, it remains to prove that
      \begin{equation}\label{eq:varprinbowpart2}
        \prebow(\vect{T},\vect{f},K) \leq \sup\{\prelow_{\mu}(\vect{T},\vect{f}): \mu \in M(X_{0}). \mu(K)=1\},
      \end{equation}
  We assume that $\prebow(\vect{T},\vect{f},K) > -\infty$. It is sufficient to prove that for every $\alpha>0$, there exists a Borel probability measure $\mu$ with $\mu(K)=1$ such that
  $$
        \prebow(\vect{T},\vect{f},K)-\alpha  \leq \prelow_{\mu}(\vect{T},\vect{f}).
  $$

    Fix $\alpha>0$.   Since $\vect{f}$ is equicontinuous, we have that
  $$
  \lim_{\varepsilon\to0}\varliminf_{n \to \infty}\frac{1}{n}\Big(S_{n}^{\vect{T}}\vect{f}(x)-\sup_{y \in B_{n}^{\vect{T}}(x,\varepsilon)}S_{n}^{\vect{T}}\vect{f}(y)\Big)=0
  $$
  for all $x \in X_{0}$, and it follows that for sufficiently small $\varepsilon>0$,
  \begin{equation}\label{ineq_LSA}
  \varliminf_{n \to \infty}\frac{1}{n}\Big(S_{n}^{\vect{T}}\vect{f}(x)-\sup_{y \in B_{n}^{\vect{T}}(x,\varepsilon)}S_{n}^{\vect{T}}\vect{f}(y)\Big) > -\frac{\alpha}{2},
  \end{equation}
   for all $x \in X_{0}$.

  We write $s=\prebow(\vect{T},\vect{f},K)-\frac{\alpha}{2}$.
  By Proposition \ref{prop:equivprebwetprebpp}, we have that
  $$
  \prebw(\vect{T},\vect{f},K)=\prebow(\vect{T},\vect{f},K)>s.
  $$
  By \eqref{def_PBW} and the definition of $\mathscr{W}_{\varepsilon}^{s}$,  there exist reals $\varepsilon_0>0$ and  $N_0 >0$ such that
  $$
  \mathscr{W}_{N,\varepsilon}^{s}(\vect{T},\vect{f},K)>0
  $$
  for all $0<\varepsilon<\varepsilon_0$ and all $N>N_0$.  Fix  $0<\varepsilon<\varepsilon_0$ and  $N>N_0$.
  By  Lemma \ref{dynflemw},  there exists $\mu \in M(X_{0})$ with $\mu(K)=1$
      such that for all $x \in X_{0}$ and $n \geq N$,
      $$\mu(B_{n}^{\vect{T}}(x,\varepsilon)) \leq \frac{1}{C}\exp{\Big(-ns+\sup_{y \in B_{n}^{\vect{T}}(x,\varepsilon)}S_{n}^{\vect{T}}\vect{f}(y)\Big)},$$
      where $C=\mathscr{W}_{N,\varepsilon}^{s}(\vect{T},\vect{f},K)$. Since $s=\prebow(\vect{T},\vect{f},K)-\frac{\alpha}{2}$, by Proposition \ref{prop:localepsilon} and \eqref{ineq_LSA}, we have that
      \begin{align*}
        \prelow_{\mu}(\vect{T},\vect{f},x) &\geq \prelow_{\mu}(\vect{T},\vect{f},x,\varepsilon) \\
                                          &\geq \varliminf_{n \to \infty}\frac{-1}{n}\Big(-ns+\sup_{y \in B_{n}^{\vect{T}}(x,\varepsilon)}S_{n}^{\vect{T}}\vect{f}(y)+\log{C}+S_{n}^{\vect{T}}\vect{f}(x)\Big) \\
                                          &= s + \varliminf_{n \to \infty} \frac{1}{n}\Big(S_{n}^{\vect{T}}\vect{f}(x)-\sup_{y \in B_{n}^{\vect{T}}(x,\varepsilon)}S_{n}^{\vect{T}}\vect{f}(y)\Big) \\
                                          &> s-\frac{\alpha}{2} \\
                                          &= \prebow(\vect{T},\vect{f},K)-\alpha
      \end{align*}
      for all $x \in X_{0}$. It follows that
      $$\prelow_{\mu}(\vect{T},\vect{f})=\int_{X_{0}}\prelow_{\mu}(\vect{T},\vect{f},x)\dif{\mu(x)}>\prebow(\vect{T},\vect{f},K)-\alpha,
  $$
  and we complete the proof.
    \end{proof}

    \subsection{Variational principle for the packing pressure}\label{ssect:pfvp}

    First, the following  variational inequality   is a direct consequence of Theorem~\ref{thm:btypethmpacking}.
    \begin{lem}\label{lem:varneqprepac}
      Given an NDS $(\vect{X},\vect{T})$ and a non-empty Borel subset $E\subset X_{0}$,
      let $\mu$ be a Borel probability measure on $X_{0}$ with $\mu(E)=1$ and $\vect{f} \in \vect{C}(\vect{X},\mathbb{R})$ be equicontinuous.
      Then
      \begin{equation}\label{eq:varneqprepac}
        \preup_{\mu}(\vect{T},\vect{f}) \leq \prepac(\vect{T},\vect{f},E).
      \end{equation}
    \end{lem}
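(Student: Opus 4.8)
The plan is to mirror the proof of Lemma~\ref{lem:varneqprebow}, replacing the Bowen pressure by the packing pressure and invoking the second part of the packing Billingsley type theorem instead of its Bowen counterpart. Since the inequality~\eqref{eq:varneqprepac} is vacuous when $\preup_{\mu}(\vect{T},\vect{f})=-\infty$, I would assume $\preup_{\mu}(\vect{T},\vect{f})>-\infty$ and prove that $\prepac(\vect{T},\vect{f},E)\geq s$ for every real $s<\preup_{\mu}(\vect{T},\vect{f})$; letting $s$ increase to $\preup_{\mu}(\vect{T},\vect{f})$ then yields~\eqref{eq:varneqprepac}.

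Fix such an $s$. By Definition~\ref{def:msrpre} we have $s<\int_{X_{0}}\preup_{\mu}(\vect{T},\vect{f},x)\dif{\mu(x)}$, so the Borel set
$$A_{s}=\{x\in X_{0}:\preup_{\mu}(\vect{T},\vect{f},x)\geq s\}$$
must satisfy $\mu(A_{s})>0$; indeed, if $\mu(A_{s})=0$ then $\preup_{\mu}(\vect{T},\vect{f},x)<s$ for $\mu$-a.e.\ $x$, forcing the integral to be at most $s$, a contradiction. Since $\mu(E)=1$, i.e.\ $\mu(X_{0}\setminus E)=0$, the set $E\cap A_{s}$ is a Borel subset of $X_{0}$ with $\mu(E\cap A_{s})=\mu(A_{s})>0$; in particular it is non-empty.

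Now $\preup_{\mu}(\vect{T},\vect{f},x)\geq s$ for all $x\in E\cap A_{s}$, and $\vect{f}$ is equicontinuous, so Theorem~\ref{thm:btypethmpacking}(2) applies to the Borel set $E\cap A_{s}$ and gives $\prepac(\vect{T},\vect{f},E\cap A_{s})\geq s$. By the monotonicity of $\prepac(\vect{T},\vect{f},\cdot)$ in sets (Proposition~\ref{Prop_PBPP}) together with $E\cap A_{s}\subset E$, we conclude that $\prepac(\vect{T},\vect{f},E)\geq\prepac(\vect{T},\vect{f},E\cap A_{s})\geq s$, which completes the argument.

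I do not expect a genuine obstacle here, as the proof is a short measure-theoretic reduction to Theorem~\ref{thm:btypethmpacking}. The only point requiring a line of care is the Borel measurability of the map $x\mapsto\preup_{\mu}(\vect{T},\vect{f},x)$, which is what guarantees that $A_{s}$ is Borel and hence that Theorem~\ref{thm:btypethmpacking}(2) is applicable; this follows since each $\preup_{\mu}(\vect{T},\vect{f},x,\varepsilon)$ is built from the measurable functions $x\mapsto\mu(B_{n}^{\vect{T}}(x,\varepsilon))$ and $x\mapsto S_{n}^{\vect{T}}\vect{f}(x)$ via countable $\limsup$ operations, and Proposition~\ref{prop:localepsilon}(2) expresses the limit over $\varepsilon$ as a countable supremum.
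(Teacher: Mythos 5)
Your proof is correct and follows essentially the same route as the paper: pick $s<\preup_{\mu}(\vect{T},\vect{f})$, extract a positive-measure Borel set on which the local upper pressure is at least $s$, intersect with $E$, apply Theorem~\ref{thm:btypethmpacking}(2), and finish by monotonicity (Proposition~\ref{Prop_PBPP}). Your explicit definition of $A_{s}$ as a superlevel set and the remark on its Borel measurability only make explicit what the paper leaves implicit.
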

    \begin{proof}
      Without loss of generality, we assume that $\preup_{\mu}(\vect{T},\vect{f},E) > -\infty$.

      Fix $s<\preup_{\mu}(\vect{T},\vect{f})$.
      There exists a Borel set $A_{s} \subset E$ with $\mu(A_{s})>0$
      such that $\preup_{\mu}(\vect{T},\vect{f},x)>s$ for all $x \in A_{s}$.
      Since $\mu(E)=1$, we have that $\mu(E \cap A_{s})>0$. Since  $\vect{f} \in \vect{C}(\vect{X},\mathbb{R})$ is equicontinuous, by Theorem \ref{thm:btypethmpacking}(2), we have $\prepac(\vect{T},\vect{f},E \cap A_{s}) \geq s$.
      By Proposition \ref{Prop_PBPP}, it follows that
      $$
      \prepac(\vect{T},\vect{f},E) \geq \prepac(\vect{T},\vect{f},E \cap A_{s}) \geq s.
      $$
      Since it holds for all $s<\preup_{\mu}(\vect{T},\vect{f})$, the inequality \eqref{eq:varneqprepac} follows.
    \end{proof}

    The variational principle for the packing pressure is much more difficult to study than the one for the Bowen pressure, and the following lemma plays a key role in the proof.
     
   \begin{lem}\label{badlem}
      Given  $\vect{f} \in \vect{C}_{b}(\vect{X},\mathbb{R})$ and $Z \subset X_{0}$, for  $\varepsilon>0$ and $s>\|\vect{f}\|$, assume that $\msrpac_{\infty,\varepsilon}^{s}(\vect{T},\vect{f},Z)=+\infty$.
      Then for real $a,b$ with $0 \leq a < b < +\infty$ and $M >0$,
      there exists a finite disjoint collection $\{\overline{B}_{m_{i}}^{\vect{T}}(x_{i},\varepsilon)\}_{i\in \mathcal{I}}$
      with $x_{i} \in Z$ and $m_{i} > M$ for all $i\in \mathcal{I} $ such that
      $$a < \sum_{i\in \mathcal{I} }\exp{\left(-m_{i}s+S_{m_{i}}^{\vect{T}}\vect{f}(x_{i})\right)} < b.$$
    \end{lem}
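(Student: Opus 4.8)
The plan is to run a \textbf{discrete intermediate value argument} on the partial sums of a suitably chosen $(N,\varepsilon)$-packing of $Z$. The hypothesis $s>\|\vect{f}\|$ will be used exactly once, and crucially: to force the individual terms $\exp\bigl(-m_i s+S_{m_i}^{\vect{T}}\vect{f}(x_i)\bigr)$ to be uniformly small once $m_i$ is large, so the partial sums cannot leap across the window $(a,b)$.

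First I would fix $a,b,M$ and choose a real number $N$ with $N>M$ large enough that $\exp\bigl(-N(s-\|\vect{f}\|)\bigr)<b-a$; this is possible precisely because $s-\|\vect{f}\|>0$. Since $\msrpac_{N,\varepsilon}^{s}(\vect{T},\vect{f},Z)$ is non-increasing in $N$ and its limit $\msrpac_{\infty,\varepsilon}^{s}(\vect{T},\vect{f},Z)$ equals $+\infty$ by assumption, we must have $\msrpac_{N,\varepsilon}^{s}(\vect{T},\vect{f},Z)=+\infty$ for this $N$ too (in particular $Z\neq\emptyset$, since otherwise this quantity would be $0$). By the definition of $\msrpac_{N,\varepsilon}^{s}$ as a supremum over $(N,\varepsilon)$-packings, there is a countable $(N,\varepsilon)$-packing $\{\overline{B}_{m_i}^{\vect{T}}(x_i,\varepsilon)\}_{i=1}^{\infty}$ with $x_i\in Z$ and $m_i\ge N$ for all $i$ such that $\sum_{i=1}^{\infty}\exp\bigl(-m_i s+S_{m_i}^{\vect{T}}\vect{f}(x_i)\bigr)>b$.

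Next I would record the elementary bound $\lvert S_{m_i}^{\vect{T}}\vect{f}(x_i)\rvert\le m_i\|\vect{f}\|$, which yields $\exp\bigl(-m_i s+S_{m_i}^{\vect{T}}\vect{f}(x_i)\bigr)\le\exp\bigl(-m_i(s-\|\vect{f}\|)\bigr)\le\exp\bigl(-N(s-\|\vect{f}\|)\bigr)<b-a$ for every $i$, using $m_i\ge N$ and $s>\|\vect{f}\|$. Now set $\sigma_0=0$ and $\sigma_k=\sum_{i=1}^{k}\exp\bigl(-m_i s+S_{m_i}^{\vect{T}}\vect{f}(x_i)\bigr)$. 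The sequence $(\sigma_k)_{k\ge 0}$ is strictly increasing with $\sigma_0=0\le a$ and $\sup_k\sigma_k\ge\sum_{i=1}^{\infty}\exp\bigl(-m_i s+S_{m_i}^{\vect{T}}\vect{f}(x_i)\bigr)>b$. Let $k_0$ be the least index with $\sigma_{k_0}>a$; it exists since the partial sums eventually exceed $b>a$. Then $\sigma_{k_0-1}\le a$ by minimality of $k_0$, while $\sigma_{k_0}=\sigma_{k_0-1}+\exp\bigl(-m_{k_0}s+S_{m_{k_0}}^{\vect{T}}\vect{f}(x_{k_0})\bigr)<a+(b-a)=b$. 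Taking $\mathcal{I}=\{1,\dots,k_0\}$ then gives a finite disjoint subcollection (a subfamily of a packing is again a packing) with $x_i\in Z$ and $m_i\ge N>M$ for all $i\in\mathcal{I}$, satisfying $a<\sum_{i\in\mathcal{I}}\exp\bigl(-m_i s+S_{m_i}^{\vect{T}}\vect{f}(x_i)\bigr)<b$, as required.

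I do not expect a genuine obstacle: the only point needing care is the coordination between the two ingredients, namely (i) choosing $N$ large enough that each contribution is strictly below the gap $b-a$, and (ii) using the non-increasing monotonicity of $\msrpac_{N,\varepsilon}^{s}$ in $N$ to deduce $\msrpac_{N,\varepsilon}^{s}=+\infty$ from $\msrpac_{\infty,\varepsilon}^{s}=+\infty$. Once both are in place the discrete intermediate value step closes the argument immediately.
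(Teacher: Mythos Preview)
Your proof is correct and follows essentially the same approach as the paper: choose $N>M$ so that each individual term is smaller than $b-a$, use $\msrpac_{\infty,\varepsilon}^{s}=+\infty$ to extract a packing with total sum exceeding $b$, then select a finite subcollection whose sum lands in $(a,b)$. Your presentation is in fact slightly cleaner---you note directly that non-increasing with limit $+\infty$ forces $\msrpac_{N,\varepsilon}^{s}=+\infty$ at every level, and you spell out the partial-sum ``discrete intermediate value'' step that the paper leaves implicit.
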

    \begin{proof}
  Since $s>\|\vect{f}\|$, there exists $N_{1}>M$ such that
  $$
  \exp{\left(-N_{1}s+N_{1}\|\vect{f}\|\right)}<b-a.
  $$
  Since $\msrpac_{\infty,\varepsilon}^{s}(\vect{T},\vect{f},Z)=+\infty$, there exists $N_2>N_1$ such that for all $N>N_2$,
      $\msrpac_{N,\varepsilon}^{s}(\vect{T},\vect{f},Z)>b$. By \eqref{eq:defmsrPack}, there exists a finite $(N,\varepsilon)$-packing $\{\overline{B}_{m_{i}}^{\vect{T}}(x_{i},\varepsilon)\}_{i\in \mathcal{I}_{0}}$ of $Z$
      such that
      $$
      \sum_{i\in \mathcal{I}_{0}}\exp{\left(-m_{i}s+S_{m_{i}}^{\vect{T}}\vect{f}(x_{i})\right)}>b.
      $$

     Since $s>\|\vect{f}\|$ and $m_i \geq N>N_1$ for every $i\in \mathcal{I}_{0}$, by \eqref{eq_fnorm}, we have that
  $$
  \exp{\left(-m_{i}s+S_{m_{i}}^{\vect{T}}\vect{f}(x_{i})\right)} \leq  \exp{\left(-N_{1}s+N_{1}\|\vect{f}\|\right)}<b-a.
  $$
 Hence there exists a subset  $\mathcal{I} \subset \mathcal{I}_0$ such that
      $$
      a < \sum_{i\in \mathcal{I}}\exp{\left(-m_{i}s+S_{m_{i}}^{\vect{T}}\vect{f}(x_{i})\right)} < b,
      $$
    which completes the proof of Lemma~\ref{badlem}.
    \end{proof}

Finally, we are ready to prove the variational principle for the packing pressure. 
    \begin{proof}[Proof of Theorem \ref{thm:varprinciplepacking}]
      By Lemma~\ref{lem:varneqprepac}, we only need to show that
  $$
        \prepac(\vect{T},\vect{f},K) \leq \sup\{\preup_{\mu}(\vect{T},\vect{f}): \mu \in M(X_{0}), \mu(K)=1\}.
  $$
   We assume that $\prepac(\vect{T},\vect{f},K)>-\infty$, and it is sufficient to prove that for every  $s<\prepac(\vect{T},\vect{f},K)$, there is  a Borel probability measure $\mu$ with $\mu(K)=1$   such that 
   $$
   \preup_{\mu}(\vect{T},\vect{f}) > s.
   $$

  Choose  $\varepsilon_{0}>0$ such that $s<\prepac(\vect{T},\vect{f},K,\varepsilon_{0})$. Fix $t$ such that $s < t < \prepac(\vect{T},\vect{f},K,\varepsilon_{0})$.    We construct the following sequences inductively:
      \begin{enumerate}[1.]
        \item a sequence $\{K_{i}\}_{i=1}^{\infty}$ of finite sets $K_{i} \subset K$;
        \item a sequence $\{n_{i}\}_{i=1}^{\infty}$ of integer-valued mappings $n_{i}: K_{i} \to \mathbb{N} \cap (0,+\infty)$;
        \item a sequence $\{\mu_{i}\}_{i=1}^{\infty}$ of finite measures $\mu_{i}$ each supported on $K_{i}$;
        \item a sequence $\{\gamma_{i}\}_{i=1}^{\infty}$ of positive numbers $\gamma_{i}$,
      \end{enumerate}
  and verify that they satisfy the following properties:
      \begin{enumerate}[(a)]
        \item\label{ass:a} for each integer $i >0$, the family $\mathcal{B}_i= \{\overline{B}(x,\gamma_{i}): x \in K_{i}\}$ is disjoint;
        \item\label{ass:b} there exists a positive number $0<\varepsilon<\varepsilon_{0}$ such that for all integers $i >0$ and every $x \in K_{i}$, we have
  $$      
                \overline{B}_{n_{i}(x)}^{\vect{T}}(z,\varepsilon) \bigcap \Big( \bigcup_{y \in K_{i}\setminus\{x\}}\overline{B}(y,\gamma_{i})\Big)= \emptyset
  $$      
   for all $z \in \overline{B}(x,\gamma_{i})$;
        \item\label{ass:c} for each integer $i >0$ and every $x \in K_{i+1}$, there exists $y \in K_{i}$ such that 
            $$
            \overline{B}(x,\gamma_{i+1}) \subset \overline{B}\left(y,\frac{\gamma_{i}}{2}\right);
            $$
        \item\label{ass:d} for each integer $i >0$ and every $x \in K_{i}$,
  $$      
                \mu_{i}(\overline{B}(x,\gamma_{i})) \leq \sum_{y \in K_{i+1}(x)}\exp{\Big(-n_{i+1}(y)s + S_{n_{i+1}(y)}^{\vect{T}}\vect{f}(y)\Big)} \leq \big(1+\frac{1}{2^{i+1}}\big)\mu_{i}(\overline{B}(x,\gamma_{i})),
  $$      
              where $K_{i+1}(x)=\overline{B}(x,\gamma_{i}) \cap K_{i+1}$.
      \end{enumerate}

        \textbf{Step 1.} Construct $K_{1}$, $n_{1}(\cdot)$, $\mu_{1}$ and $\gamma_{1}$ satisfying \eqref{ass:a} and \eqref{ass:b}.

  Since $t<\prepac(\vect{T},\vect{f},K,\varepsilon_{0})$, it is clear that  $\msrpac_{\varepsilon_{0}}^{t}(\vect{T},\vect{f},K)=+\infty$. Let
        $$
  H=\mathlarger{\bigcup_{\substack{G\ \text{is open} \\ \msrpac_{\varepsilon_{0}}^{t}(\vect{T},\vect{f},K \cap G)=0}}} G.
  $$
   Since $X_{0}$ is a compact metric space, $X_0$ is second-countable and therefore a Lindelöf space (see \cite{Munkres2000}). Recall that a Lindelöf space is a space for which every open cover admits a countable subcover. Since $K\subset X_0$ is compact, the cover
        $$
  \{K \cap G: G\ \text{is open, and}\ \msrpac_{\varepsilon_{0}}^{t}(\vect{T},\vect{f},K \cap G)=0\}
  $$
  of $K \cap H$ has a countable subcover. Recall that $\msrpac_{\varepsilon_{0}}^{t}$ is an outer measure, and  we have that
  $$
  \msrpac_{\varepsilon_{0}}^{t}(\vect{T},\vect{f},K \cap H)=0.
  $$

  Let $K^{\prime} = K \setminus H = K \cap (X_{0} \setminus H)$. Then for each open set $G \subset X_{0}$ such that  $K^{\prime} \cap G \neq \emptyset$, we have
  \begin{equation}\label{KpOP}
  \msrpac_{\varepsilon_{0}}^{t}(\vect{T},\vect{f},K^{\prime} \cap G)>0.
  \end{equation}
  Indeed, if  $\msrpac_{\varepsilon_{0}}^{t}(\vect{T},\vect{f},K^{\prime} \cap G)=0$ for an open set $G$,
        then
        $$\msrpac_{\varepsilon_{0}}^{t}(\vect{T},\vect{f},K \cap G) \leq \msrpac_{\varepsilon_{0}}^{t}(\vect{T},\vect{f},K^{\prime} \cap G) + \msrpac_{\varepsilon_{0}}^{t}(\vect{T},\vect{f},K \cap H)=0,$$
  which implies $G \subset H$, and we have $K^{\prime} \cap G=\emptyset$.

  Since  $K^{\prime} = K \setminus H$, it follows that
        $$\msrpac_{\varepsilon_{0}}^{t}(\vect{T},\vect{f},K) \leq \msrpac_{\varepsilon_{0}}^{t}(\vect{T},\vect{f},K^{\prime})+ \msrpac_{\varepsilon_{0}}^{t}(\vect{T},\vect{f},K \cap H) = \msrpac_{\varepsilon_{0}}^{t}(\vect{T},\vect{f},K^{\prime}).$$
  Since $s<t$, this implies that
  $$
  \msrpac_{\varepsilon_{0}}^{s}(\vect{T},\vect{f},K^{\prime}) \geq \msrpac_{\varepsilon_{0}}^{t}(\vect{T},\vect{f},K^{\prime}) = \msrpac_{\varepsilon_{0}}^{t}(\vect{T},\vect{f},K)=+\infty.
  $$
        By Lemma \ref{badlem}, for $a=1$, $b=2$ and   $M=1$, there exists a finite disjoint collection $\{\overline{B}_{m_{i}}^{\vect{T}}(x_i,\varepsilon_{0})\}_{i \in \mathcal{I}}$ with $x_i\in K'$ and $m_i>M$ for all $i \in \mathcal{I}$
        such that
  $$
          1 < \sum_{i\in \mathcal{I}}\exp{\left(-m_i s + S_{m_i}^{\vect{T}}\vect{f}(x_i)\right)} < 2.
  $$

  We write  $K_{1}=\{x_i : i\in \mathcal{I}\}$,  and we define the integer-valued function $n_1:K_1 \to \mathbb{N}$  by
  $$
  n_1(x_i)=m_i  \quad\textit{ for each } x_i \in K_1.
  $$
  Hence, we have that
        \begin{equation}\label{eq:*1}
          1 < \sum_{x \in K_{1}}\exp{\left(-n_{1}(x)s + S_{n_{1}(x)}^{\vect{T}}\vect{f}(x)\right)} < 2.
        \end{equation}
  We define the finite measure $\mu_{1}$ supported on $K_1$ by
  $$
  \mu_{1}=\sum_{x \in K_{1}}\exp{\left(-n_{1}(x)s + S_{n_{1}(x)}^{\vect{T}}\vect{f}(x)\right)}\delta_{x},
  $$
        where $\delta_{x}$ denotes the Dirac measure at $x$.

Since  $\{\overline{B}_{m_{i}}^{\vect{T}}(x_i,\varepsilon_{0})\}_{i \in \mathcal{I}}$ is disjoint, it is clear that  the family $\{\overline{B}_{m_{i}}^{\vect{T}}(x_{i},\varepsilon)\}_{i \in \mathcal{I}}$ is   disjoint for all  $0<\varepsilon<\varepsilon_{0}$,  while the sum in \eqref{eq:*1} remains unchanged. Moreover, by Proposition \ref{prop:msrpacepsilon},  we have that
  $$
  \prepac(\vect{T},\vect{f},K,\varepsilon) \geq \prepac(\vect{T},\vect{f},K,\varepsilon_{0}),
  $$
  and it follows  that  $s<t<\prepac(\vect{T},\vect{f},K,\varepsilon)$.
    Hence, we choose $\varepsilon$  and  $\gamma_{1}$ such that
    $$
    0 < \varepsilon < \min\Big\{ \frac{\varepsilon_{0}}{2}, \frac{1}{3}\min_{\substack{x,y \in K_{1} \\ x \neq y}}d_{X_{0}}(x,y) \Big\}
  \quad \textit{ and }\quad
  0< \gamma_{1}<\frac{1}{6}\min_{\substack{x,y \in K_{1} \\ x \neq y}}d_{X_{0}}(x,y),
    $$
  and for each $x \in K_{1}$,  we have that for all $z \in \overline{B}(x,\gamma_{1})$,

        \begin{equation}\label{eq:4.1}
          \left(\overline{B}(z,\gamma_{1})\cup\overline{B}_{n_{1}(x)}^{\vect{T}}(z,\varepsilon)\right) \bigcap \Big(\bigcup_{y \in K_{1}\setminus\{x\}}\left(\overline{B}(w,\gamma_{1})\cup\overline{B}_{n_{1}(y)}^{\vect{T}}(w,\varepsilon)\right)\Big)=\emptyset,
        \end{equation}
        where $w \in \overline{B}(y,\gamma_{1})$ for each $y \in K_{1}\setminus\{x\}$.

  It immediately follows from  \eqref{eq:4.1} that  $\mathcal{B}_{1}=\{\overline{B}(x,\gamma_{1}): x \in K_{1}\}$ is disjoint, and  for each $x \in K_{1}$,  we have that  for all $z \in \overline{B}(x,\gamma_{1})$,
  $$
  \overline{B}_{n_{1}(x)}^{\vect{T}}(z,\varepsilon) \bigcap \Big(\bigcup_{y \in K_{1}\setminus\{x\}}\overline{B}(y,\gamma_{1}) \Big)= \emptyset  .
  $$
  Therefore $K_{1}$, $n_{1}(\cdot)$, $\mu_{1}$ and $\gamma_{1}$  satisfy \eqref{ass:a} and \eqref{ass:b}.

  For each $x \in K_{1}$, we write  $G=B\left(x,\frac{1}{4}\gamma_{1}\right)$. Since $K_{1} \subset K^{\prime}\subset K$, it follows from \eqref{KpOP} that
        \begin{equation}\label{eq:4.1.1}
          \msrpac_{\varepsilon_{0}}^{t}\Big(\vect{T},\vect{f},K \cap B\left(x,\frac{\gamma_{1}}{4}\right)\Big) \geq \msrpac_{\varepsilon_{0}}^{t}\left(\vect{T},\vect{f},K^{\prime} \cap B\left(x,\frac{\gamma_{1}}{4}\right)\right)>0.
        \end{equation}

      \textbf{Step 2.} Construct $K_{2}$, $n_{2}(\cdot)$,  $\mu_{2}$ and $\gamma_{2}$, and verify that $K_{1}$, $n_{1}(\cdot)$, $\mu_{1}$ and $\gamma_{1}$ satisfy \eqref{ass:c} and \eqref{ass:d}.

  We construct these sequences locally and then glue them together
  since $\mathcal{B}_{1}=\{\overline{B}(x,\gamma_{1})\}_{x \in K_{1}}$ is disjoint.
        For each $x \in K_{1}$, we write
  $$
  F(x) = K \cap B\left(x,\frac{\gamma_{1}}{4}\right),
  $$
  and it is clear that
  $$
     \msrpac_{\varepsilon_{0}}^{t}\left(\vect{T},\vect{f},F(x)\right)=\msrpac_{\varepsilon_{0}}^{t}\left(\vect{T},\vect{f},K \cap B\left(x,\frac{\gamma_{1}}{4}\right)\right)>0.
  $$

  By the same argument as Step 1, we write
   $$
  H(x)=\mathlarger{\bigcup_{\substack{G\ \text{is open} \\ \msrpac_{\varepsilon_{0}}^{t}(\vect{T},\vect{f},F(x) \cap G)=0}}}G,
  $$
  and we have that $\msrpac_{\varepsilon_{0}}^{t}(\vect{T},\vect{f},F(x) \cap H(x))=0$. Set $F^{\prime}(x)=F(x) \setminus H(x)$, and it is clear that
  \begin{equation}\label{ineq_2ptKG>0}
  \msrpac_{\varepsilon_{0}}^{t}(\vect{T},\vect{f},F^{\prime}(x) \cap G)>0
  \end{equation}
  for every open set $G$ with $G \cap F^{\prime}(x) \neq \emptyset$. By \eqref{eq:4.1.1}, we have that
  $$
  \msrpac_{\varepsilon_{0}}^{t}(\vect{T},\vect{f},F^{\prime}(x))=\msrpac_{\varepsilon_0}^{t}(\vect{T},\vect{f},F(x))>0.
  $$
  Since $s<t$, it immediately follows that
        $$
  \msrpac_{\varepsilon_{0}}^{s}(\vect{T},\vect{f},F^{\prime}(x))=+\infty.
  $$
  By Lemma \ref{badlem},
  for $a= \mu_{1}(\{x\}),$ $b=(1+\frac{1}{2^{2}})\mu_{1}(\{x\})$ and $M=\max_{y \in K_{1}}n_{1}(y)$, there exists a finite disjoint collection $\{\overline{B}_{m_{i}}^{\vect{T}}(x_i,\varepsilon_{0})\}_{i \in \mathcal{I}}$ with $x_i\in F^{\prime}(x)$ and $m_i>M$ for all $i \in \mathcal{I}$ such that
  $$
  \mu_{1}(\{x\})< \sum_{i\in \mathcal{I}}\exp{\left(-m_i s + S_{m_i}^{\vect{T}}\vect{f}(x_i)\right)} < \Big(1+\frac{1}{2^{2}}\Big)\mu_{1}(\{x\}).
  $$

  We write  $K_{2}(x)=\{x_i : i\in \mathcal{I}\}$ and  define the integer-valued function $n_2:K_{2}(x) \to \mathbb{N}$  by
  \begin{equation}\label{def_n2x}
  n_2(x_i)=m_i  \qquad \textit{ for   } x_i \in K_{2}(x).
  \end{equation}
  It is clear that $K_{2}(x) \subset F^{\prime}(x)$ is finite and
  $$
  K_{2}(x) \subset F(x) = K \cap B\left(x,\frac{\gamma_{1}}{4}\right) \subset \overline{B}(x,\gamma_{1}).
  $$
  Moreover, since $\varepsilon<\varepsilon_{0}$, the collection $\{\overline{B}_{n_{2}(y)}^{\vect{T}}(y,\varepsilon)\}_{y \in K_{2}(x)}$ is disjoint, and the integer-valued function
        $$n_{2}: K_{2}(x) \to \mathbb{N} \cap \big(M,+\infty\big)$$
  satisfies that
  \begin{equation}\label{ineq_mu1K2}
  \mu_{1}(\{x\}) < \sum_{y \in K_{2}(x)}\exp{\left(-n_{2}(y)s+S_{n_{2}(y)}^{\vect{T}}\vect{f}(y)\right)} < \Big(1+\frac{1}{2^{2}}\Big)\mu_{1}(\{x\}).
  \end{equation}

        Let $K_{2}=\bigcup_{x \in K_{1}}K_{2}(x)$. Clearly, $K_{2}$ is finite. Since the family $\mathcal{B}_{1}=\{\overline{B}(x,\gamma_{1})\}_{x \in K_{1}}$ is disjoint,
        we have that $K_{2}(x) \cap K_{2}(x^{\prime})=\emptyset$ for $x\neq x^{\prime} \in K_{1}$.
  We put the local mapping $n_{2}$ given by \eqref{def_n2x} on every $K_{2}(x)$ together to one global mapping on $K_{2}$, and it is well defined on $K_{2}$.
  Moreover, for each $x \in K_{1}$, it is clear that $K_{2}(x)=\overline{B}(x,\gamma_{1}) \cap K_{2}$, and  by inequality \eqref{ineq_mu1K2}, we have that
  $$
  \mu_{1}(\overline{B}(x,\gamma_{1})) \leq \sum_{y \in K_{2}(x)}\exp{\Big(-n_{2}(y)s + S_{n_{2}(y)}^{\vect{T}}\vect{f}(y)\Big)} \leq \big(1+\frac{1}{2^{2}}\big)\mu_{1}(\overline{B}(x,\gamma_{1})),
  $$
  which verifies that $K_{1}$, $n_{1}(\cdot)$, $\mu_{1}$ and $\gamma_{1}$ satisfy the property \eqref{ass:d}.

   We define the finite measure $\mu_{2}$ supported on $K_{2}$ by
   $$
  \mu_{2}=\sum_{y \in K_{2}}\exp{\left(-n_{2}(y)s + S_{n_{2}(y)}^{\vect{T}}\vect{f}(y)\right)}\delta_{y}.
  $$
  Recall that $\{\overline{B}_{n_{2}(y)}^{\vect{T}}(y,\varepsilon)\}_{y \in K_{2}(x)}$ is disjoint for each $x \in K_{1}$,
  and that $n_{2}(\cdot) > M$.
  Combining this with \eqref{eq:4.1}, it follows that  $\{\overline{B}_{n_{2}(y)}^{\vect{T}}(y,\varepsilon)\}_{y \in K_{2}}$ is disjoint.

  For each $0<\eta <\frac{\varepsilon}{2}$, by the uniform continuity of   $\vect{T}^{j}$,
    there exists $\delta >0$ such that  for all $z,w \in X_{0}$ satisfying $d_{X_{0}}(z,w) \leq \delta$, we have that
    $$
    d_{X_{j}}(\vect{T}^{j}z,\vect{T}^{j}w) \leq \eta \qquad \text{for all} \quad 0 \leq j \leq \max_{y \in K_{2}}n_{2}(y)-1.
    $$
  Hence, given $x\in K_2$, for all $z \in B(x,\delta)$ and all $z^{\prime} \in X_{0}$ satisfying that $  d_{X_{0}}(z,z^{\prime}) \leq \delta$, we have that
    $$
    d_{n_{2}(x)}^{\vect{T}}(z,z^{\prime}) \leq \eta ,
    $$
  and this implies that $\overline{B}(z,\delta) \subset \overline{B}_{n_{2}(x)}^{\vect{T}}(z,\eta)$.
  Immediately,  for all $x \in K_{2}$ and all $z \in B(x,\delta)$,   we obtain that
    $$
    \overline{B}(z,\delta) \subset \overline{B}_{n_{2}(x)}^{\vect{T}}(x,\varepsilon)
    $$
and
    $$
    \overline{B}_{n_{2}(x)}^{\vect{T}}(z,\varepsilon) \subset \overline{B}_{n_{2}(x)}^{\vect{T}}(x,\varepsilon_{0}).
    $$
    Hence, we choose $\gamma_{2}$ satisfying that
    $$
    0<\gamma_{2}<\min\Big\{ \frac{\gamma_{1}}{4}, \delta\Big\}
    $$
    so that for each $x \in K_{2}$ and all $z \in B(x,\gamma_{2})$,
        \begin{equation}\label{eq:4.2}
          \left(\overline{B}(z,\gamma_{2}) \cup \overline{B}_{n_{2}(x)}^{\vect{T}}(z,\varepsilon)\right) \bigcap \Big(\bigcup_{y \in K_{2}\setminus\{x\}}\Big(\overline{B}(w,\gamma_{2}) \cup \overline{B}_{n_{2}(y)}^{\vect{T}}(w,\varepsilon)\Big) \Big)= \emptyset,
        \end{equation}
   where $w \in B(y,\gamma_{2})$ for every $y \in K_{2}\setminus\{x\}$.

        We write
        $$
        \mathcal{B}_{2}=\{\overline{B}(x,\gamma_{2}): x \in K_{2}\}.
        $$
  For each $x \in K_{2}$, there exists  $y \in K_{1}$ such that $x\in K_{2}(y) \subset K \cap B\left(y,\frac{1}{4}\gamma_{1}\right)$. Since $ 0<\gamma_{2}<\frac{\gamma_{1}}{4}$, we have that
        $$
  \overline{B}(x,\gamma_{2}) \subset \overline{B}\left(y,\frac{\gamma_{1}}{2}\right),
  $$
  which verifies that $K_{1}$, $n_{1}(\cdot)$, $\mu_{1}$ and $\gamma_{1}$ satisfy the property \eqref{ass:c}.

  By \eqref{eq:4.2}, it is clear that  $\mathcal{B}_{2}$ is disjoint, and for each $x \in K_{2}$, we have that
        $$
  \overline{B}_{n_{2}(x)}^{\vect{T}}(z,\varepsilon) \bigcap \Big( \bigcup_{y \in K_{2}\setminus\{x\}}\overline{B}(y,\gamma_{2}) \Big) = \emptyset 
  $$
   for all $z \in \overline{B}(x,\gamma_{2})$, which shows that $K_{2}$, $n_{2}(\cdot)$, $\mu_{2}$ and $\gamma_{2}$ satisfy \eqref{ass:a} and \eqref{ass:b}.

  Moreover, for every $x \in K_{2}$, there exists   $y \in K_{1}$ such that  $B\left(x,\frac{\gamma_{2}}{4}\right) \cap K_{2}(y) \neq \emptyset$   and  $B\left(x,\frac{\gamma_{2}}{4}\right) \cap F^{\prime}(y) \neq \emptyset$.  By \eqref{ineq_2ptKG>0},  it implies that for every $x \in K_{2}$,
        \begin{equation}\label{eq:4.2.1}
          \msrpac_{\varepsilon_{0}}^{t}\left(\vect{T},\vect{f},K \cap B\big(x,\frac{\gamma_{2}}{4}\big)\right) \geq \msrpac_{\varepsilon_{0}}^{t}\left(\vect{T},\vect{f},F^{\prime}(x) \cap B\big(x,\frac{\gamma_{2}}{4}\big)\right) >0.
        \end{equation}

       \textbf{Step 3.} Assume that $K_{i}$, $n_{i}(\cdot)$, $\mu_{i}$ and $\gamma_{i}$ have been
        defined for $i=1,\ldots,p$ such that

  \noindent (i). $K_{i}$, $n_{i}(\cdot)$, $\mu_{i}$ and $\gamma_{i}$ satisfy \eqref{ass:a}\eqref{ass:b}\eqref{ass:c}\eqref{ass:d} for  $i=1,,2,\ldots, p-1$;

  \noindent (ii). $K_{p}$, $n_{p}(\cdot)$, $\mu_{p}$ and $\gamma_{p}$ satisfy \eqref{ass:a} and \eqref{ass:b};

  \noindent (iii).   $\gamma_{i} < \frac{1}{4}\gamma_{i-1}$ for all $i=2,\ldots,p$;

  \noindent (iv).  for each $x \in K_{p}$, we have that   for all $z \in \overline{B}(x,\gamma_{p})$,
          \begin{equation}\label{eq:4.p}
            \left(\overline{B}(z,\gamma_{p}) \cup \overline{B}_{n_{p}(x)}^{\vect{T}}(z,\varepsilon)\right) \bigcap \Big(\bigcup_{y \in K_{p}\setminus\{x\}}\Big(\overline{B}(w,\gamma_{p}) \cup \overline{B}_{n_{p}(y)}^{\vect{T}}(w,\varepsilon)\Big) \Big)= \emptyset,
          \end{equation}
       where $w \in \overline{B}(y,\gamma_{p})$ for  $y \in K_{p}\setminus\{x\}$;

  \noindent (v).  for each $x \in K_{p}$, we have that
          \begin{equation}\label{eq:4.2.2}
            \msrpac_{\varepsilon_{0}}^{t}\left(\vect{T},\vect{f},K \cap B\left(x,\frac{\gamma_{p}}{4}\right)\right)>0.
          \end{equation}

  Next, we construct $K_{p+1}$, $n_{p+1}(\cdot)$, $\mu_{p+1}$ and $\gamma_{p+1}$. Meanwhile we show that
  $K_{p}$, $n_{p}(\cdot)$, $\mu_{p}$ and $\gamma_{p}$ satisfy \eqref{ass:c} and \eqref{ass:d}  and that $K_{p+1}$, $n_{p+1}(\cdot)$, $\mu_{p+1}$ and $\gamma_{p+1}$ satisfy \eqref{ass:a} and \eqref{ass:b}.

  Similar to Step 2, fix $x \in K_{p}$, and we write $F(x)=K \cap B\left(x,\frac{\gamma_{p}}{4}\right)$,
  $$
  H(x)=\mathlarger{\bigcup_{\substack{G\ \text{is open} \\ \msrpac_{\varepsilon_{0}}^{t}(\vect{T},\vect{f},F(x) \cap G)=0}}}G,
  $$
  and  $F^{\prime}(x)=F(x) \setminus H(x)$. Similarly, for every open set $G$ such that  $G \cap F^{\prime}(x) \neq \emptyset$,  we have that
  \begin{equation}\label{ineq_p+1tKG>0}
    \msrpac_{\varepsilon_{0}}^{t}(\vect{T},\vect{f},F^{\prime}(x) \cap G)>0.
  \end{equation}

  Since $s<t$, it follows that

  $$
  \msrpac_{\varepsilon_{0}}^{s}(\vect{T},\vect{f},F^{\prime}(x))=+\infty.
  $$
  By Lemma \ref{badlem},
  for $a= \mu_{p}(\{x\})$, $b=(1+\frac{1}{2^{p+1}})\mu_{p}(\{x\})$ and $M=\max_{y \in K_{p}}n_{p}(y)$, there exists a finite disjoint collection $\{\overline{B}_{m_{i}}^{\vect{T}}(x_i,\varepsilon_{0})\}_{i \in \mathcal{I}}$ with $x_i\in F^{\prime}(x)$ and $m_i>M$ for all $i \in \mathcal{I}$ such that
  $$
  \mu_{p}(\{x\})< \sum_{i\in \mathcal{I}}\exp{\left(-m_i s + S_{m_i}^{\vect{T}}\vect{f}(x_i)\right)} < (1+\frac{1}{2^{p+1}})\mu_{p}(\{x\}).
  $$

  We write  $K_{p+1}(x)=\{x_i : i\in \mathcal{I}\}$, and we define the integer-valued function $n_{p+1}:K_{p+1}(x) \to \mathbb{N}$  by
  \begin{equation}\label{def_np+1x}
  n_{p+1}(x_i)=m_i  \qquad \textit{ for each } x_i \in K_{p+1}(x).
  \end{equation}
  It is clear that $K_{p+1}(x) \subset F^{\prime}(x)$ is finite and that
  $$
  K_{p+1}(x) \subset F(x) = K \cap B\left(x,\frac{\gamma_{p}}{4}\right) \subset \overline{B}(x,\gamma_{p}).
  $$
  Moreover, since $\varepsilon<\varepsilon_{0}$, the collection $\{\overline{B}_{n_{p+1}(y)}^{\vect{T}}(y,\varepsilon)\}_{y \in K_{p+1}(x)}$ is disjoint, and the integer-valued function
        $$n_{p+1}: K_{p+1}(x) \to \mathbb{N} \cap \Big(\max_{y \in K_{p}}n_{p}(y),+\infty\Big)$$
  satisfies that
  \begin{equation}\label{ineq_mu1Kp}
  \mu_{p}(\{x\}) < \sum_{y \in K_{p+1}(x)}\exp{\left(-n_{p+1}(y)s+S_{n_{p+1}(y)}^{\vect{T}}\vect{f}(y)\right)} < \Big(1+\frac{1}{2^{p+1}}\Big)\mu_{p}(\{x\}),
  \end{equation}

  Let $K_{p+1}=\bigcup_{x \in K_{p}}K_{p+1}(x)$. Clearly $K_{p+1}$ is finite. We piece together the mapping $n_{p+1}(\cdot)$ given by \eqref{def_np+1x} on $ K_{p+1}(x)$ for every $x \in K_{p}$ to the one on $K_{p+1}$. The new mapping $n_{p+1}(\cdot)$ on $K_{p+1}$  is well defined since $K_{p+1}(x) \cap K_{p+1}(x^{\prime}) = \emptyset$ for $x \neq x^{\prime} \in K_{p}$.
  Furthermore, for each $x \in K_{p}$, we have that $K_{p+1}(x)=\overline{B}(x,\gamma_{p}) \cap K$. Hence by the inequality \eqref{ineq_mu1K2}, we obtain that
  $$
  \mu_{p}(\overline{B}(x,\gamma_{p})) \leq \sum_{y \in K_{p+1}(x)}\exp{\Big(-n_{p+1}(y)s + S_{n_{p+1}(y)}^{\vect{T}}\vect{f}(y)\Big)} \leq \big(1+\frac{1}{2^{p+1}}\big)\mu_{p}(\overline{B}(x,\gamma_{p})),
  $$
  which verifies that $K_{p}$, $n_{p}(\cdot)$, $\mu_{p}$ and $\gamma_{p}$ satisfy the property \eqref{ass:d}.

        We define the finite measure $\mu_{p+1}$ supported on $K_{p+1}$ by
  \begin{equation}\label{def_mup+1}
  \mu_{p+1}=\sum_{y \in K_{p+1}}\exp{\left(-n_{p+1}(y)s + S_{n_{p+1}}^{\vect{T}}\vect{f}(y)\right)}\delta_{y}.
  \end{equation}

        Recall that the collection $\{\overline{B}_{n_{p+1}(y)}^{\vect{T}}(y,\varepsilon)\}_{y \in K_{p+1}(x)}$ is disjoint for each $x \in K_{p}$
        and that $n_{p+1}(\cdot)>\max_{y \in K_{p}}{n_{p}(y)}$.
        Combining this with the induction hypothesis \eqref{eq:4.p}, we have that $\{\overline{B}_{n_{p+1}(y)}^{\vect{T}}(y,\varepsilon)\}_{y \in K_{p+1}}$
        is disjoint.

  Choose $0<\eta <\frac{\varepsilon}{2}$. By the uniform continuity of  $\vect{T}^{j}$,
   there exists  $\delta>0$ such that   for all $z,w \in X_{0}$ satisfying that  $d_{X_{0}}(z,w) \leq \delta$, we have that
    $$
    d_{X_{j}}(\vect{T}^{j}z,\vect{T}^{j}w) \leq \eta
   $$
   for all  $ 0 \leq j \leq \max_{y \in K_{p+1}}n_{p+1}(y)-1$. For every $x \in K_{p+1}$, this implies that
   $\overline{B}(z,\delta) \subset \overline{B}_{n_{p+1}(x)}^{\vect{T}}(z,\eta)$ for all $z \in B(x,\delta)$.
  Since $ \eta <\frac{\varepsilon}{2}$,     for every $x \in K_{p+1}$,  we immediately obtain  that
    $$
    \overline{B}(z,\delta) \subset \overline{B}_{n_{p+1}(x)}^{\vect{T}}(x,\varepsilon),
    $$
      and
      $$
      \overline{B}_{n_{p+1}(x)}^{\vect{T}}(z,\varepsilon) \subset \overline{B}_{n_{p+1}(x)}^{\vect{T}}(x,\varepsilon_{0})
      $$
   for all $z \in B(x,\delta)$.

  Hence we choose $\gamma_{p+1}$ such that
  $$
  0<\gamma_{p+1}<\min\Big\{ \frac{\gamma_{p}}{4}, \delta \Big\},
  $$
  so that for each $x \in K_{p+1}$ and all $z \in B(x,\gamma_{p+1})$,
        \begin{equation}\label{eq:4.4}
          \left(\overline{B}(z,\gamma_{p+1}) \cup \overline{B}_{n_{p+1}(x)}^{\vect{T}}(z,\varepsilon)\right) \bigcap \Big( \bigcup_{y \in K_{p+1}\setminus\{x\}}\big(\overline{B}(w,\gamma_{p+1}) \cup \overline{B}_{n_{p+1}(y)}^{\vect{T}}(w,\varepsilon)\big) \Big)= \emptyset,
        \end{equation}
   where $w \in B(y,\gamma_{p+1})$ for every $y \in K_{p+1}\setminus\{x\}$.

        Write
        $$
        \mathcal{B}_{p+1}=\{\overline{B}(x,\gamma_{p+1}): x \in K_{p+1}\}.
        $$
  For every $x \in K_{p+1}$, there exists $y\in K_p$ such that $x\in K_{p+1}(y) \subset K \cap B\left(y,\frac{\gamma_{p}}{4}\right)$.
        Since $\gamma_{p+1}<\frac{\gamma_{p}}{4}$, we obtain  that
        $$
  \overline{B}(x,\gamma_{p+1}) \subset \overline{B}\left(y,\frac{\gamma_{p}}{2}\right),
  $$
   which verifies that $K_{p}$, $n_{p}(\cdot)$, $\mu_{p}$ and $\gamma_{p}$ satisfy the property \eqref{ass:c}.

        By \eqref{eq:4.4}, it is clear that $\mathcal{B}_{p+1}$ is disjoint, and  for every $x \in K_{p+1}$. we have that
        $$
  \overline{B}_{n_{p+1}(x)}^{\vect{T}}(z,\varepsilon) \bigcap \Big(\bigcup_{y \in K_{p+1}\setminus\{x\}}\overline{B}(y,\gamma_{2})\Big) = \emptyset ,
  $$
  which verifies that $K_{p+1}$, $n_{p+1}(\cdot)$, $\mu_{p+1}$ and $\gamma_{p+1}$ satisfy \eqref{ass:a} and \eqref{ass:b}.

  Moreover, for each $x \in K_{p+1}$, there exists   $y \in K_{p}$ such that   $B\left(x,\frac{\gamma_{p+1}}{4}\right) \cap K_{p+1}(y) \neq \emptyset$ and $B\left(x,\frac{\gamma_{p+1}}{4}\right) \cap F^{\prime}(y) \neq \emptyset$,  and by \eqref{ineq_p+1tKG>0}, this implies that
        \begin{equation}
          \msrpac_{\varepsilon_{0}}^{t}\left(\vect{T},\vect{f},K \cap B\left(x,\frac{\gamma_{p+1}}{4}\right)\right) \geq \msrpac_{\varepsilon_{0}}^{t}\Big(\vect{T},\vect{f},F^{\prime}(x) \cap B\left(x,\frac{\gamma_{p+1}}{4}\right)\Big) >0 ,
        \end{equation}
  which  completes  the induction process.

  Therefore,  we obtain the sequences  $\{K_{i}\}_{i=1}^{\infty}$, $\{n_{i}(\cdot)\}_{i=1}^{\infty}$, $\{\mu_{i}\}_{i=1}^{\infty}$ and $\{\gamma_{i}\}_{i=1}^{\infty}$  satisfying the desired properties \eqref{ass:a},\eqref{ass:b},\eqref{ass:c} and \eqref{ass:d}.
  
  We are ready to construct the desired measure $\mu$ by using these sequences.  For each integer $i>0$, it follows from \eqref{ass:a}  that
        $$
        \mu_{i+1}(\overline{B}_{i})=\sum_{\substack{\overline{B}\in\mathcal{B}_{i+1}:\\ \overline{B} \subset \overline{B}_{i}}}\mu_{i+1}(\overline{B}).
        $$
  Combining this with \eqref{ass:d}, we immediately obtain that for all $\overline{B}_{i}\in\mathcal{B}_{i}$,
        $$
        \mu_{i}(\overline{B}_{i}) \leq \mu_{i+1}(\overline{B}_{i}) \leq \left(1+\frac{1}{2^{i+1}}\right)\mu_{i}(\overline{B}_{i}).
        $$
  Applying the  inequalities repeatedly, for all $k>i$ and for all $\overline{B}_{i} \in \mathcal{B}_{i}$, we have that
        \begin{equation}\label{eq:4.5}
          \mu_{i}(\overline{B}_{i}) \leq \mu_{k}(\overline{B}_{i}) \leq \prod_{l=i+1}^{k}\left(1+\frac{1}{2^{l}}\right)\mu_{i}(\overline{B}_{i}) \leq C\mu_{i}(\overline{B}_{i}),
        \end{equation}
        where $C=\prod_{l=1}^{\infty}(1+2^{-l})<+\infty$. Let
        $$
        K^{*}=\bigcap_{l=1}^{\infty}\overline{\bigcup_{i=l}^{\infty}K_{i}}.
        $$
        It is straightforward that $K^{*} \subset K$ is compact,  and
        for every integer $i>0$,  by \eqref{ass:c} we have that
        $$
        K^{*} \subset \bigcup_{x \in K_{i}}\overline{B}\left(x,\frac{\gamma_{i}}{2}\right).
        $$

  Since all the measures $\mu_{i}$    are linear combinations of Dirac measures on $X_{0}$, they  are  Radon measures on $X_{0}$.
  By the Riesz representation theorem (see \cite[Thm.7.2]{Folland1999}), there exists a weak* limit point of $\{\mu_{i}\}_{i=1}^{\infty}$,  denoted by $\widetilde{\mu}$,
  and  $\widetilde{\mu}$ is  a Borel (in fact Radon) measure supported on $K^{*}$.

  For each integer $i>0$,  since $K^{*} \subset \bigcup_{x \in K_{i}}\overline{B}\left(x,\frac{\gamma_{i}}{2}\right)$,
  we have that $\widetilde{\mu}(\partial{B(x,\gamma_{i})})=0$ for every $x \in K_{i}$, and it follows by  \cite[Chap.1 Ex.9]{Mattila1995}  that
  $$
  \lim_{k\to \infty} \mu_{k }(B(x,\gamma_{i})) = \widetilde{\mu}(B(x,\gamma_{i})).
  $$
  For all $x \in K_{i}$, we have $\mu_{k}(\overline{B}(x,\gamma_{i})) = \mu_{k}(B(x,\gamma_{i}))$ for all $k>i$, which combined with \eqref{eq:4.5} implies that
  $$
  \mu_{i}(\overline{B}(x,\gamma_{i})) \leq \widetilde{\mu}(B(x,\gamma_{i})) \leq C\mu_{i}(\overline{B}(x,\gamma_{i})).
  $$
  By \eqref{def_mup+1}, we obtain that for each $x\in K_i$,
  \begin{equation}\label{mugmbd}
   \exp{\left(-n_{i}(x)s+S_{n_{i}(x)}^{\vect{T}}\vect{f}(x)\right)}
          \leq \widetilde{\mu}( B(x,\gamma_{i}))
          \leq  C\exp{\left(-n_{i}(x)s+S_{n_{i}(x)}^{\vect{T}}\vect{f}(x)\right)}.
  \end{equation}
  Taking $i=1$,  we immediately have that $ \widetilde{\mu}(K^{*})$ is positive and finite, i.e.,
        $$
        1 \leq \sum_{x \in K_{1}}\mu_{1}(B(x,\gamma_{1})) \leq \widetilde{\mu}(K^{*}) \leq \sum_{x \in K_{1}}C\mu_{1}(B(x,\gamma_{1})) \leq 2C.
        $$
  Combining \eqref{mugmbd} with the properties \eqref{ass:b} and \eqref{ass:c}, for each $x \in K_{i}$ and all $z \in \overline{B}(x,\gamma_{i})$, we obtain that
        $$
        \widetilde{\mu}(\overline{B}_{n_{i}(x)}^{\vect{T}}(z,\varepsilon)) \leq \widetilde{\mu}\Big(\overline{B}\big(x,\frac{\gamma_{i}}{2}\big)\Big) \leq C\exp{\left(-n_{i}(x)s+S_{n_{i}(x)}^{\vect{T}}\vect{f}(x)\right)}.
        $$

  Since  $K^{*} \subset \bigcup_{x \in K_{i}}\overline{B}(x,\frac{\gamma_{i}}{2})$ for all integers $i>0$,    we have that  for each $z \in K^{*}$ and each integer $i>0$, there exists $x \in K_{i}$ such that $z \in \overline{B}\left(x,\frac{\gamma_{i}}{2}\right)$, which implies that
  \begin{equation}\label{ineq_mutleq ce}
        \widetilde{\mu}(B_{n_{i}(x)}^{\vect{T}}(z,\varepsilon)) \leq C\exp{\left(-n_{i}(x)s+S_{n_{i}(x)}^{\vect{T}}\vect{f}(x)\right)}.
  \end{equation}

        Finally, we define a probability measure $\mu$ on $X_{0}$ by
        $$
        \mu=\frac{\widetilde{\mu}}{\widetilde{\mu}(K^{*})}.
        $$
  It is clear that $\mu \in M(X_{0})$ and  $\mu(K)=\mu(K^{*})=1$. Moreover, by \eqref{ineq_mutleq ce},  for each $z \in K^{*}$, there exists a strictly increasing sequence of integers $k_{i}$ such that
        $$
  \mu(B_{k_{i}}^{\vect{T}}(z,\varepsilon)) \leq \frac{C}{{\widetilde{\mu}(K)}}\exp{\left(-k_{i}s+S_{k_{i}}^{\vect{T}}\vect{f}(z)\right)}.
  $$
  Combining this with Proposition \ref{prop:localepsilon}, we have that
  \begin{align*}
    &\preup_{\mu}(\vect{T},\vect{f}) = \int_{X_{0}}\preup_{\mu}(\vect{T},\vect{f},x)\dif{\mu(x)} \\
          &\geq \int_{X_{0}}\preup_{\mu}(\vect{T},\vect{f},x,\varepsilon)\dif{\mu(x)} \\
          &= \int_{K^{*}}\preup_{\mu}(\vect{T},\vect{f},z,\varepsilon)\dif{\mu(z)} \\
          &\geq \int_{K^{*}}\lim_{i \to \infty}\frac{-\left(\log{C}-\log{\widetilde{\mu}(K)}-k_{i}s+S_{k_{i}}^{\vect{T}}\vect{f}(z)\right)+S_{k_{i}}^{\vect{T}}\vect{f}(z)}{k_{i}}\dif{\mu(z)} \\
          &= s.
        \end{align*}
  This implies that  for every $s<\prepac(\vect{T},\vect{f},K)$, we have constructed a measure $\mu$ with $\mu(K)=1$ such that  $\preup_{\mu}(\vect{T},\vect{f})>s$, which  completes the proof of Theorem \ref{thm:varprinciplepacking}.
    \end{proof}

\section{Acknowledge*}
On behalf of all authors, the corresponding author states that there is no conflict of interest.


\end{document}